\documentclass[a4paper]{article}
\usepackage{amsmath,amsthm,amsfonts,amssymb,amscd,bm,makeidx,indentfirst,tocloft}
\renewcommand\cftsecfont{\normalsize}
\newcommand{\e}{\ensuremath{\epsilon}}
\newcommand{\ve}{\ensuremath{\varepsilon}}
\newcommand{\kk}{\ensuremath{\tilde{k}}}
\newcommand{\rto}{\ensuremath{\rightarrow}}
\newcommand{\lem}{\ensuremath{\lesssim}}
\newcommand{\W}{\ensuremath{\mathcal{W}}}

\newtheorem{theorem}{Theorem}[section]

\newtheorem{lemma}[theorem]{Lemma}

\newtheorem{remark}[theorem]{Remark}

\numberwithin{equation}{section}

\title{\Large Full Regularity and Well-Posedness of the Nonlinear Unsteady Prandtl Equations with Robin or Dirichlet Boundary Condition}
\author{\normalsize Fuzhou Wu\thanks{E-mail: michael8723@gmail.com; fuzhou.wu@yahoo.com; wufz12@mails.tsinghua.edu.cn} \\
\small\it  Yau Mathematical Sciences Center, Tsinghua University\\
\small\it  Beijing 100084, China \\[3pt]
\small\it  Center of Mathematical Sciences and Applications, Harvard University\\
\small\it  Cambridge, Massachusetts 02138, USA
}
\date{}

\begin{document}
\maketitle
\setlength\parindent{2em}
\setlength\parskip{5pt}

\begin{abstract}
\normalsize{
In this paper, we study the full regularity and well-posedness of classical solutions to the nonlinear unsteady Prandtl equations with Robin or Dirichlet boundary condition in half space. Under Oleinik's monotonicity assumption, we prove the large time existence of classical solutions to the nonlinear Prandtl equations with Robin or Dirichlet boundary condition, when both initial vorticity and the general Euler flow are sufficiently small. For the general Euler flow, the vertical velocity of the Prandtl flow is unbounded. We prove that the Prandtl solutions preserve the full regularities in our solution spaces. The uniqueness and stability are also proved in the weighted Sobolev spaces.
}
\\
\par
\small{
\textbf{Keywords}: Prandtl equations, well-posedness, full regularity, decay rates, Oleinik's monotonicity assumption
}
\end{abstract}

%\newpage
\tableofcontents

%%% find 1
\section{Introduction}
In this paper, we study the full regularities and well-posedness of classical solutions to the following Prandtl system in half space:
\begin{equation}\label{Sect1_PrandtlEq}
\left\{\begin{array}{ll}
u_t + u u_x + v u_y + p_x = u_{yy},\quad (x,y)\in\mathbb{R}_{+}^2, \ t>0, \\[9pt]
u_x + v_y =0, \\[9pt]
(u_y - \beta u)|_{y=0} = 0,\quad v|_{y=0} =0,\\[9pt]
\lim\limits_{y\rto +\infty}u = U(t,x), \\[10pt]
u|_{t=0} = u_0(x,y),
\end{array}\right.
\end{equation}
where $u,v$ denote the tangential and normal velocities of the boundary layer, with $y$ being the scaled normal variable to the boundary, the parameter $\beta>0$. $\omega=u_y$ is the vorticity. $U,p$ denote the values on the boundary of the tangential velocity and pressure of the Euler flow which satisfies the Bernoulli's law:
\begin{equation}\label{Sect1_Bernoulli_Law}
\begin{array}{ll}
U_t + U U_x + p_x = 0.
\end{array}
\end{equation}

For $(u_y - \beta u)|_{y=0} = 0$ in $(\ref{Sect1_PrandtlEq})$, $0<\beta<+\infty$ corresponds to Robin boundary condition.
$\beta=+\infty$ corresponds to the following Prandtl equations with Dirichlet boundary condition:
\begin{equation}\label{Sect1_PrandtlEq_Dirichlet}
\left\{\begin{array}{ll}
u_t + u u_x + v u_y + p_x = u_{yy},\quad (x,y)\in\mathbb{R}_{+}^2, \ t>0, \\[9pt]
u_x + v_y =0, \\[9pt]
u|_{y=0} = v|_{y=0} =0,\\[9pt]
\lim\limits_{y\rto +\infty}u = U(t,x), \\[10pt]
u|_{t=0} = u_0(x,y).
\end{array}\right.
\end{equation}

While $\beta=0$ corresponds to Neumann boundary condition.
To our best knowledge, the Prandtl equations with Neumann boundary condition have no physical background, and their well-posedness is unknown in mathematical viewpoint. In this paper, the parameter $\beta\rto 0+$ is not allowed.

$(\ref{Sect1_PrandtlEq_Dirichlet})$ was proposed by L. Prandtl (see \cite{Prandtl_1905}), while $(\ref{Sect1_PrandtlEq})$ was proposed in
$\cite{Wang_Wang_Xin_2010}$, which studied the asymptotic behaviors of the solutions to incompressible Navier-Stokes equations with Navier-slip boundary condition in which the slip length depends on the viscosity:
\begin{equation}\label{Sect1_NS_Equation}
\left\{\begin{array}{ll}
u_t + u u_x + v u_y + p_x = \e\triangle u, \\[7pt]
v_t + u v_x + v v_y + p_y = \e\triangle v, \\[7pt]
u_x + v_y =0, \\[7pt]
(\e^{\gamma}u_y - \beta u)|_{y=0} = 0,\quad v|_{y=0} =0, \\[7pt]
(u,v)|_{t=0} = (u_0,v_0)(x,y).
\end{array}\right.
\end{equation}
When $\gamma>\frac{1}{2}$ (super-critical), the leading boundary layer profile satisfies $(\ref{Sect1_PrandtlEq_Dirichlet})$.
When $\gamma=\frac{1}{2}$ (critical), the leading boundary layer profile satisfies $(\ref{Sect1_PrandtlEq})$ where $\beta\in (0,+\infty)$.
When $\gamma<\frac{1}{2}$ (sub-critical), the leading boundary layer profile appears in the $O(\e^{1-2\gamma})$ order terms of the solutions and satisfies the linearized Prandtl equations.

\subsection*{1.1\ Motivation of This Work}
The motivations for this work are as follows:

1. The establishment of the well-posedness of the Prandtl systems $(\ref{Sect1_PrandtlEq})$ and $(\ref{Sect1_PrandtlEq_Dirichlet})$ is the first step to study the inviscid limit of Navier-Stokes equations $(\ref{Sect1_NS_Equation})$. For $(\ref{Sect1_NS_Equation})$, One of interesting problems is to investigate convergence rate of the inviscid limit of $(\ref{Sect1_NS_Equation})$ which depends on the parameters $\e,\beta,\gamma$.
By now, we have no Sobolev well-posedness theory about Robin boundary problem $(\ref{Sect1_PrandtlEq})$ which exists objectively in physics.
Without developing boundary estimates, the well-posedness of the Robin boundary problem $(\ref{Sect1_PrandtlEq})$
can not be established.

When we develop a priori estimates which are uniform with respect to $\beta$,
the Robin boundary condition $\partial_{t,x}^{\alpha}u_y =\beta\partial_{t,x}^{\alpha}u$ can not simplify our boundary estimates.
In this paper, $\partial_{t,x}^{\alpha}u_y =\beta\partial_{t,x}^{\alpha}u$ is used to derive a new evolution equations on the boundary such that a priori estimates can be closed by coupling the evolution equations in the interior of the domain and the evolution equations on the boundary.

2. Without using  Crocco transformation, Nash-Moser-H$\ddot{o}$rmander iteration, uniform regularity approach, nonlinear cancellation method, mollification or regularization, we introduce new transformation of equations, new boundary conditions, new a priori estimates to prove the full regularities and the well-posedness
of classical solutions to the Prandtl equations $(\ref{Sect1_PrandtlEq})$ and $(\ref{Sect1_PrandtlEq_Dirichlet})$.

We want to know whether the Prandtl solutions preserve the full regularities and decay rates in our solution spaces, since the Prandtl equations $(\ref{Sect1_PrandtlEq})$ and $(\ref{Sect1_PrandtlEq_Dirichlet})$ have the vertical viscous terms but lack the horizontal viscous term, $v$ and its derivatives bring difficulties into each order estimates.
Though the Dirichlet boundary problem $(\ref{Sect1_PrandtlEq_Dirichlet})$ admits global weak solutions (see \cite{Xin_Zhang_2004}), classical solutions (see \cite{Masmoudi_Wong_2012,Alexandre_Yang_2014,Liu_Wang_Yang_2014,Wang_Xie_Yang_2014,Xu_Zhang_2015,Xu_Zhang_2015_NonMonotone}), we prove in this paper that our classical solutions preserve the full regularities
which are more than \cite{Masmoudi_Wong_2012,Alexandre_Yang_2014,Liu_Wang_Yang_2014,Wang_Xie_Yang_2014,Xu_Zhang_2015,Xu_Zhang_2015_NonMonotone})
and exist in the large time interval, we determine exactly the relationship between the regularities of Prandtl solutions and those of initial vorticity.

We want to know the relationship between the lifespan of Prandtl solutions and the size of the initial data, which needs elaborate estimates of the growth of weighted Sobolev norms by using differential inequalities or comparison principles.
 Note that the local existence is easier to prove by applying Gr$\ddot{o}$nwall's inequality to a priori estimates,
the global existence of classical Prandtl solutions is open.

3. We treat the general Euler flow, where $\partial_x U(t,x)\neq 0$ in general. As $y\rto +\infty$, $u_x(t,x,y)$ converges to $U_x(t,x)$ rather than
decay to zero, thus $v= -\int_0^y u_x(t,x,\tilde{y})\,\mathrm{d}\tilde{y}$ may diverge as $y\rto +\infty$.
$v$ grows with the order $O(y)$, thus we have to control $(1+y)^{-1} v$, where the weight $(1+y)^{-1}$ emerges in the a priori estimates due to the other faster-decaying terms.
Therefore, $v$ brings difficulties into our a priori estimates.
Note that when the Euler flow is constant, $v=O(1)$, thus the space weights and a priori estimates are simpler.

It is reasonable to assume the far field of the Euler flow is static, namely $U(t,x) \rto 0$ as $|x|\rto +\infty$,
$\|U(t,x)\|_{H^{|\alpha|+1}(\mathbb{R})}$ is bounded.
Obviously, we can simply assume the domain is periodic in x-direction, that is $\mathbb{T}\times\mathbb{R}_{+}$ (see \cite{Masmoudi_Wong_2012,Xin_Zhang_2004}).

4. Our a priori estimates are uniform with respect to $\beta$. As $\beta\rto +\infty$, the solutions of the Robin boundary problem $(\ref{Sect1_PrandtlEq})$ converge uniformly to those of the Dirichlet problem $(\ref{Sect1_PrandtlEq_Dirichlet})$ not only in the interior of the domain but also on its boundary.
Then we want to know the boundary behaviors of the solutions and their derivatives as $\beta\rto +\infty$.

\subsection*{1.2\ Survey of Previous Results}

For the nonlinear Prandtl equations, the known results are mainly about the Dirichlet boundary case $(\ref{Sect1_PrandtlEq_Dirichlet})$, where the solutions vanish on the boundary, namely $u|_{y=0}=v|_{y=0}=0$, we survey there some results:

After L. Prandtl (see \cite{Prandtl_1905}) proposed the Prandtl equations with Dirichlet boundary condition, their well-posedness theories attract much attention. Under Oleinik's monotonicity assumption $u_y>0$, the Prandtl equations with Dirichlet boundary condition can be reduced to a single quasilinear equation of $u_y$ via Crocco transformation, then O. A. Oleinik and V. N. Samokhin (see \cite{Oleinik_Samokhin_1999}) proved the local in time well-posedness. Under Oleinik's monotonicity assumption $u_y>0$ and favorable pressure condition $p_x\leq 0$, Xin and Zhang (see \cite{Xin_Zhang_2004}) proved the global existence of BV weak solutions via splitting viscosity method and Crocco transformation. This results are extended to three dimensional setting (see \cite{Liu_Wang_Yang_2015}).

Under Oleinik's monotonicity assumption, N. Masmoudi and T. K. Wong (see \cite{Masmoudi_Wong_2012}) proved local existence and uniqueness for 2D Prandtl equations in periodic domain $\mathbb{T}\times\mathbb{R}_{+}$ by using uniform regularity approach and the nonlinear cancelation of the vertical velocity. The vertical velocity is canceled by coupling the velocity equations and the vorticity equations.

Shear flow means the vertical velocity vanishes and the horizontal velocity approaches a constant as $y\rto +\infty$.
When the initial data is a small perturbation of a monotonic shear flow and Oleinik's monotonicity assumption is satisfied,
Alexander, Wang, Xu and Yang (see \cite{Alexandre_Yang_2014}) proved the well-posedness of the 2D Prandtl equations
by applying the energy method and Nash-Moser-H$\ddot{o}$rmander iteration. This framework makes solutions lose some regularities.
By using this framework, \cite{Liu_Wang_Yang_2014} proved the well-posedness of the 3D Prandtl equations under constraints on its flow structure,
\cite{Wang_Xie_Yang_2014} proved the well-posedness of 2D compressible flow.

By using uniform regularity approach (see \cite{Masmoudi_Wong_2012}),
the long time well-posedness was proved in \cite{Xu_Zhang_2015} when the initial data is a sufficient small perturbation of a monotonic shear flow,
\cite{Xu_Zhang_2015_NonMonotone} constructed a local in time solution as a perturbation of a non-monotonic shear flow.
However, there are a loss of regularities and a loss of decay rates.

However, we prove in this paper that the Prandlt solutions exist in the large time interval and have full regularities, our solution spaces are different from the above works.

When Oleinik's monotonicity assumption is violated, E and Engquist (see \cite{E_Engquist_1997}) proved the unsteady Prandtl equations do not have global strong solutions, namely, local solutions either do not exist or blow up; Grenier (see \cite{Grenier_2000}), Hong and Hunter (see \cite{Hong_Hunter_2003}) proved the nonlinear instability of the unsteady Prandtl equations; \cite{Varet_Dormy_2010,Varet_Nguyen_2010,Guo_Nguyen_2011} proved ill-posedness of Prandtl equations in Sobolev spaces for some data or in some weak sense.

Additionally, as to the nonlinear steady Prandtl equations with Dirichlet boundary condition, O. A. Oleinik (see \cite{Oleinik_1963}) used von Mise transformation to prove strong solutions are global in space for favorable pressure $p_x\leq 0$. While for adverse pressure $p_x>0$, boundary layer separation may happen (see \cite{Caffarelli_E_1995}).

Without Oleinik's monotonicity assumption, the data and solutions are required to be in the analytic or Gevrey classes.
For the data that are analytic in both $x$ and $y$ variables, the abstract Cauchy-Kowalewski theorem (see \cite{Safonov_1995}) can be applied, then the local existence of analytic solutions is proved in \cite{Sammartino_Caflisch_1998,Lombardo_Cannone_Sammartino_2003} for the Dirichlet boundary case, and in \cite{Ding_Jiang_2014} for the Robin boundary case.
For the data that are analytic in $x$ variable and have Sobolev regularity in $y$ variable, the existence is proved in \cite{Kukavica_Vicol_2013,Zhang_Zhang_2014} by using the energy method. For the data that belong to the Gevrey class $\frac{7}{4}$ in $x$ variable,
D. G$\acute{e}$rard-Varet and N. Masmoudi (see \cite{Varet_Masmoudi_2013}) proved local well-posedness. As to the Gevrey class regularity, see \cite{Li_Wu_Xu_2015}.

\subsection*{1.3\ Main Results of This Paper and Strategies of the Proofs}
\vspace{0.15cm}

The prandtl equations match the Bernoulli's law $(\ref{Sect1_Bernoulli_Law})$,
the term $v u_y$ appears in the Prandtl equations, but it disappears in the Bernoulli's law $(\ref{Sect1_Bernoulli_Law})$.
Since $v= O(y)$, $u_y$ must decay faster than $u_t + u u_x + p_x = u_t + u u_x - U_t -U U_x$.
If we assume $u_t + u u_x + p_x  = o(y^{-\zeta})$, $u_y = o(y^{-\zeta_1})$, then $\zeta_1-\zeta\geq 1$.
Similarly, $u_{yy}$ decays faster than $u_y$ due to $v$, etc. It is reasonable that the more y-derivatives the solution has,
the faster it decays. So the solutions must have algebraic decay rates. Thus, we introduce the functional norms
\begin{equation*}
\begin{array}{ll}
\|f\|_{\mathcal{H}_{\ell}^k(\Omega)} = \sum\limits_{|\alpha|+\sigma\leq k}\|(1+y)^{\ell+\sigma}
\partial_{t,x}^{\alpha}\partial_y^{\sigma} f\|_{L_{\ell+\sigma}^2(\Omega)},
\end{array}
\end{equation*}
where $\ell>1$, the weight $(1+y)^{\ell+\sigma}$ was introduced by \cite{Masmoudi_Wong_2012}.

In this paper, the time derivatives of initial data $u_0$ can be expressed in terms of the space derivatives of $u_0,v_0$ by solving the Prandtl equations.
The time derivatives, space derivatives of the initial data must satisfy the Prandtl equations, we say the initial data are compatible.

Under Oleinik's monotonicity assumption, we have
the following results for the Robin boundary problem $(\ref{Sect1_PrandtlEq})$:
\begin{theorem}\label{Sect1_Main_Thm}
Considering the nonlinear unsteady Prandtl equations with Robin boundary condition $(\ref{Sect1_PrandtlEq})$
under Oleinik's monotonicity assumption $\omega=u_y>0$.
Giving any integer $k\geq 6$, $U(t,x)\in C^{k+1}([0,+\infty)\times\mathbb{R})$ and $U(t,x)>0$,
we have the following existence, uniqueness and stability results:

$1$. For any fixed finite number $T\in (0,+\infty)$, there exist sufficiently small real number $0<\ve_1 = o(T^{-1})$ and
suitably large real numbers $\ell_0>1$, $\delta_{\beta}>0$ such that if $\ell\geq\ell_0$, $\beta\in [\delta_{\beta},+\infty)$, the compatible initial data satisfies
\begin{equation}\label{Sect1_Data_Conditions1}
\left\{\begin{array}{ll}
\omega_0>0, \quad u_0|_{y=0}>0, \quad
(\partial_y u_0 - \beta u_0)|_{y=0} =0, \quad \lim\limits_{y\rto +\infty} u_0 =U|_{t=0}, \\[6pt]

\|\omega_0\|_{\mathcal{H}_{\ell}^k(\mathbb{R}_{+}^2)} + \frac{1}{\sqrt{\beta}}\big\|\omega_0|_{y=0}\big\|_{\mathcal{H}_{\ell}^k(\mathbb{R})} \leq \ve_1,
\quad \|U(t,x)\|_{H^{k+1}([0,T]\times\mathbb{R})} \leq C_0\ve_1, \\[6pt]

0< c_1 (1+y)^{-\theta} \leq \omega_0 \leq c_2 (1+y)^{-\theta}, \ \theta>\frac{\ell+1}{2},
\end{array}\right.
\end{equation}
 then the Prandtl system $(\ref{Sect1_PrandtlEq})$ admits a unique classical solution $(\omega,u,v)$ in $[0,T]$ satisfying
\begin{equation}\label{Sect1_Solution_Regularity}
\begin{array}{ll}
\omega \in \mathcal{H}_{\ell}^k(\mathbb{R}_{+}^2), \hspace{1.3cm}
\omega,\ \omega_y \in \mathcal{H}_{\ell}^k([0,T]\times\mathbb{R}_{+}^2),
\\[8pt]

u-U \in \mathcal{H}_{\ell-1}^k(\mathbb{R}_{+}^2)\cap \mathcal{H}_{\ell-1}^k([0,T]\times\mathbb{R}_{+}^2),
\\[8pt]

\partial_y^{j} u|_{y=0} \in H^{k-j}(\mathbb{R})\cap H^{k-j}([0,T]\times\mathbb{R}), \hspace{0.3cm} 0\leq j\leq k, \\[8pt]

\partial_{t,x}^{\alpha} v + y\cdot \partial_{t,x}^{\alpha}\partial_x U
\in L_{y, \ell-1}^{\infty}(L_{t,x}^2), \hspace{0.3cm} |\alpha|\leq k-1.
\end{array}
\end{equation}

$2$. The classical solution to $(\ref{Sect1_PrandtlEq})$ is stable with respect to the initial data in the following sense: for any given two initial
data satisfying $(\ref{Sect1_Data_Conditions1})$, then for all $p\leq k-1$, the corresponding solutions of the Prandtl system $(\ref{Sect1_PrandtlEq})$ satisfy
\begin{equation}\label{Sect1_Stability}
\begin{array}{ll}
\|u^1-u^2\|_{\mathcal{H}_{\ell-1}^p(\mathbb{R}_{+}^2)}
+ \sum\limits_{j=0}^{p}\big\|\partial_y^j u^1|_{y=0} - \partial_y^j u^2|_{y=0} \big\|_{H^{p-j}(\mathbb{R})}
\\[10pt]\quad

+ \|\omega^1-\omega^2\|_{\mathcal{H}_{\ell}^p(\mathbb{R}_{+}^2)}
+\sum\limits_{|\alpha|\leq p-1} \|\partial_{t,x}^{\alpha} v^1 - \partial_{t,x}^{\alpha} v^2\|_{L_{y, \ell-1}^{\infty}(L_{t,x}^2)} \\[10pt]

\leq C(\ve_1,T)\big[\|\omega^1_0-\omega^2_0\|_{\mathcal{H}_{\ell}^p(\mathbb{R}_{+}^2)}
+ \frac{1}{\beta-\delta_{\beta}} \big\|\omega_0^1|_{y=0} -\omega_0^2|_{y=0}\big\|_{H^p(\mathbb{R})}^2 \big].
\end{array}
\end{equation}

$3$. As $\beta\rto +\infty$, $\big\|u|_{y=0}\big\|_{H^k(\mathbb{R})} =O(\frac{1}{\sqrt{\beta}})$,
$\big\|\omega|_{y=0}\big\|_{H^k(\mathbb{R})} =O(\sqrt{\beta})$ and $(\omega,u,v)$ satisfy
the regularities $(\ref{Sect1_Solution_Regularity})$ uniformly.
\end{theorem}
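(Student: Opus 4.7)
The plan is to work with the vorticity $\omega=u_y$, recover $u,v$ from $\omega$ via $u=U-\int_y^{\infty}\omega\,\mathrm{d}\tilde{y}$ and $v=-\int_0^y u_x\,\mathrm{d}\tilde{y}$, and derive closed weighted energy estimates in $\mathcal{H}_\ell^k$ by applying $\partial_{t,x}^\alpha\partial_y^\sigma$ to the vorticity equation $\omega_t+u\omega_x+v\omega_y=\omega_{yy}$ and testing against $(1+y)^{2(\ell+\sigma)}\partial_{t,x}^\alpha\partial_y^\sigma\omega$. Once a priori control of $\|\omega\|_{\mathcal{H}_\ell^k}+\|\omega_y\|_{L^2_t\mathcal{H}_\ell^k}$ is obtained on $[0,T]$ with constants depending only on the smallness parameter $\ve_1$ and on $T$, existence will follow from a standard linearization iteration (solving at each step the linear parabolic $\omega^{n+1}$-equation with transport coefficients built from $u^n,v^n$) plus a weak-strong compactness passage to the limit; uniqueness, stability and the $\beta\rto+\infty$ limit will reduce to applying the same estimates at one lower order to differences $\omega^1-\omega^2$.

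The core analytical difficulty is the classical Prandtl loss of one derivative coming from the $v\omega_y$ term: the top-order contribution $\partial_{t,x}^\alpha v\cdot\omega_y$ requires one more $x$-derivative of $u$ than is controlled. Under Oleinik's hypothesis $\omega>0$ I would remove this by introducing a Masmoudi--Wong-type cancellation quantity of the schematic form $g_{\alpha,\sigma}=\partial_{t,x}^\alpha\partial_y^\sigma\omega-\tfrac{\partial_y^{\sigma+1}\omega}{\omega}\,\partial_{t,x}^\alpha u$, whose evolution equation kills the dangerous $v$-term pointwise, reducing the estimate to ones where $v$ appears only with a harmless weight $(1+y)^{-1}v$. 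The algebraic lower bound $\omega\ge c_1(1+y)^{-\theta}$ with $\theta>(\ell+1)/2$ is exactly what makes the division by $\omega$ compatible with the weight $(1+y)^{\ell+\sigma}$, and this lower bound is propagated by a maximum principle on the scalar parabolic equation for $\omega$ as long as $\|\omega-\omega_0\|_{L^\infty}$ remains small, which forces the choice $\ve_1=o(T^{-1})$ through Grönwall.

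The second delicate point is closing boundary terms generated by integration by parts in $y$, which at $y=0$ produce $\partial_{t,x}^\alpha\omega\,\partial_{t,x}^\alpha\omega_y$. Following the motivation stated in the introduction, I would use $(\omega-\beta u)|_{y=0}=0$ to rewrite $\partial_{t,x}^\alpha\omega|_{y=0}=\beta\,\partial_{t,x}^\alpha u|_{y=0}$ and then derive a tangential evolution equation on the boundary by taking the trace of the momentum equation (using $v|_{y=0}=0$): $u_t+uu_x+p_x=\omega_y$ at $y=0$, and more generally evolution equations for $\partial_y^j u|_{y=0}$, $0\le j\le k$, obtained by differentiating the equation $j$ times in $y$ and evaluating. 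Coupling the resulting $H^{k-j}(\mathbb{R})$ estimates for $\partial_y^j u|_{y=0}$ with the interior $\mathcal{H}_\ell^k$ estimates closes the system; the factor $\tfrac{1}{\sqrt{\beta}}$ in the data hypothesis and the splitting $\|\omega|_{y=0}\|^2=\beta^2\|u|_{y=0}\|^2$ make all constants uniform for $\beta\in[\delta_\beta,+\infty)$, which is exactly what is needed to read off $\|u|_{y=0}\|_{H^k}=O(\beta^{-1/2})$ and $\|\omega|_{y=0}\|_{H^k}=O(\sqrt\beta)$ in part~$3$.

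For the far-field growth $v=O(y)$ caused by a general Euler flow $U_x\neq 0$, I would work with $\partial_{t,x}^\alpha v+y\partial_{t,x}^\alpha\partial_x U$ instead of $v$ itself: this object is bounded in $L_{y,\ell-1}^\infty L_{t,x}^2$ because $u_x-U_x$ decays, and the auxiliary weight $(1+y)^{-1}$ that appears in the estimates precisely absorbs the linear growth. The stability estimate $(\ref{Sect1_Stability})$ is obtained by applying the same scheme at level $p\le k-1$ to $(u^1-u^2,\omega^1-\omega^2)$, where the worst terms are controlled by the already-bounded $\mathcal{H}_\ell^k$ norms of $u^i,\omega^i$; the factor $\tfrac{1}{\beta-\delta_\beta}$ comes from the boundary evolution equation for $\omega^1-\omega^2$ at $y=0$. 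The main obstacle will be executing the $g_{\alpha,\sigma}$ cancellation simultaneously with the boundary evolution coupling in a way that keeps every constant uniform in $\beta$ and preserves the full regularity (rather than losing derivatives as in the Nash--Moser or uniform-regularity approaches of the prior literature); this is precisely where the new transformation and new boundary conditions announced in the introduction must do their work.
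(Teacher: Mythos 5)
Your overall scaffold (weighted good unknown, boundary evolution coupling, $(1+y)^{-1}$-weight on $v$, linear iteration, smallness driven by a differential inequality) is in the same spirit as the paper's, but the central object you propose will not close. With $g_{\alpha,\sigma}=\partial_{t,x}^{\alpha}\partial_y^{\sigma}\omega-\tfrac{\partial_y^{\sigma+1}\omega}{\omega}\,\partial_{t,x}^{\alpha}u$, the coefficient $\partial_y^{\sigma+1}\omega/\omega$ carries $\sigma+1$ vertical derivatives of $\omega$; at $|\alpha|=0,\ \sigma=k$ that is $\partial_y^{k+1}\omega$, one order above what is controlled, and it is paired with the factor $u$ which does not decay as $y\to\infty$, so $g_{0,k}$ is not even in $L^2_{\ell+k}$ for generic data. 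Using $u$ rather than $\tilde u=u-U$ already destroys the decay at $\sigma=0$. The paper's choice is $W_{\alpha,\sigma}=u_y\,\partial_y\big(\partial_{t,x}^{\alpha}\partial_y^{\sigma}\tilde u/u_y\big)=\partial_{t,x}^{\alpha}\partial_y^{\sigma}\omega-\tfrac{u_{yy}}{u_y}\,\partial_{t,x}^{\alpha}\partial_y^{\sigma}\tilde u$: the coefficient $u_{yy}/u_y$ stays of fixed low order for every $\sigma$, and the full $\partial_{t,x}^{\alpha}\partial_y^{\sigma}$ acts on $\tilde u$, which is precisely what lets the estimate close at the top order $k$ while preserving decay. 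The cancellation is also more specific than ``kills the $v$-term pointwise'': applying $\partial_y(\cdot/u_y)$ to the momentum equation converts the top-order term $u_y\partial_{t,x}^{\alpha}v$ into $\partial_y\partial_{t,x}^{\alpha}v=-\partial_x\partial_{t,x}^{\alpha}u$, which cancels the $\partial_x\partial_{t,x}^{\alpha}u$ produced by the $uu_x$ term via the divergence-free condition; unlike Masmoudi--Wong, there is no coupling with a separate vorticity equation, and you would have to verify that your $g_{\alpha,\sigma}$ produces a comparable algebraic identity.

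Two further points. First, $\beta$-uniformity: writing $\partial_{t,x}^{\alpha}\omega|_{y=0}=\beta\,\partial_{t,x}^{\alpha}u|_{y=0}$ and tracking $\partial_y^j u|_{y=0}$ equation by equation does not by itself identify a boundary energy whose Gr\"onwall constant is independent of $\beta$. The paper derives a boundary transport equation for $W_\alpha|_{y=0}$ carrying the weight $\tfrac{1}{\beta-u_{yy}/u_y}$ (this is precisely why $\beta\ge\delta_\beta$ with $\delta_\beta$ sufficiently large is imposed), so that the boundary energy $\int_{\mathbb R}\tfrac{1}{\beta-u_{yy}/u_y}(W_\alpha|_{y=0})^2\,dx$ pairs against the viscous-flux boundary term from the interior integration by parts and remains finite uniformly as $\beta\to\infty$; this is also what makes the rate $\|u|_{y=0}\|_{H^k}=O(\beta^{-1/2})$ drop out directly. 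Second, lifespan: the smallness $\ve_1=o(T^{-1})$ does not come from linear Gr\"onwall but from a comparison-principle bound for a superlinear differential inequality $\dot E\lesssim E^{s/2}+\ve_1^2$, $s\ge3$, whose solution blows up at a time comparable to $E(0)^{-(s-1)/2}$. You should also note that with the paper's good unknown $W_{0,1}\equiv 0$ (because $\partial_y\tilde u=u_y$), so $\omega_y$ must be estimated separately via its own transport equation with the boundary datum $\omega_{yy}|_{y=0}=u_{yt}+u\,u_{yx}$; with your $g_{0,1}$ the degeneracy takes a different form, but the top-$\sigma$ obstruction above remains.
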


Next, we give some remarks on the results in Theorem $\ref{Sect1_Main_Thm}$:
\begin{remark}\label{Sect1_Remark}
(i) If $U(t,x)$ and $u|_{y=0}$ are large, we only have local existence of classical solutions.
In order to have the large time existence of $(\ref{Sect1_PrandtlEq})$, $U(t,x)$ must be small.
Since $\|\omega_0\|_{\mathcal{H}_{\ell}^k(\mathbb{R}_{+}^2)}\leq \ve_1$ implies $\frac{1}{\beta}\big\|\omega_0|_{y=0}\big\|_{H^k(\mathbb{R})}\lem \ve_1$
due to the trace theorem and Robin boundary condition,
we do not need $\frac{1}{\sqrt{\beta}}\big\|\omega_0|_{y=0}\big\|_{H^k(\mathbb{R})}\lem \ve_1$ if $\beta$ is bounded above.
However, in order to develop uniform estimates as $\beta\rto +\infty$, we needs
$\frac{1}{\sqrt{\beta}}\big\|\omega_0|_{y=0}\big\|_{H^k(\mathbb{R})}\lem \ve_1$.

(ii). $\ell_0>1$ is suitably large, the solutions decay very fast in the y-direction, then
some boundary terms of boundary estimates can be absorbed by the viscous terms of interior estimates.
We do not need the favorable pressure condition $p_x\leq 0$.
Without suitable largeness of $\ell_0$, $p_x\leq 0$ does not suffice to close higher order boundary estimates
in this paper.

(iii). When $\beta<+\infty$, due to the Robin boundary condition, $\omega|_{y=0}\in H^k(\mathbb{R})\cap H^k([0,T]\times\mathbb{R})$ and
$\big\|\omega|_{y=0}\big\|_{H^p(\mathbb{R})}$ is stable. In this paper, $\beta\rto 0+$ is not allowed, actually we need that $\beta\geq\delta_{\beta}$
is suitably large, such that $\beta-\frac{u_{yy}}{u_y}|_{y=0}$ is positive and away from zero, then no degeneracy arises on the boundary, which is necessary for the boundary estimates.

(iv). $U(t,x)\rto 0$ as $|x|\rto +\infty$, but $U(t,x)>0$ when $|x|\neq +\infty$, then Oleinik's monotonicity assumption $u_y>0$ makes sense.
If $U(t,x)$ approaches some positive constant as $|x|\rto +\infty$, then we assume the Prandtl flow is a small perturbation of a shear flow $(u^s,0)$, where $u^s$ satisfies the heat equation with Robin boundary condition. $u^s|_{y=0}$ provides a non-zero reference for $u|_{y=0}$ such that $\|u|_{y=0} -u^s|_{y=0}\|_{H^s(\mathbb{R})}$ can be bounded.
\end{remark}

The a priori estimates for the Robin boundary problem $(\ref{Sect1_PrandtlEq})$ are uniform with respect to the parameter $\beta$. By passing to the limit
$\beta\rto +\infty$, we have the following results for the Dirichlet boundary problem $(\ref{Sect1_PrandtlEq_Dirichlet})$ under Oleinik's monotonicity assumption.
\begin{theorem}\label{Sect1_Main_Thm_Dirichlet}
Considering the nonlinear unsteady Prandtl equations with Dirichlet boundary condition $(\ref{Sect1_PrandtlEq_Dirichlet})$
under Oleinik's monotonicity assumption $\omega=u_y>0$.
Giving any integer $k\geq 6$, $U(t,x)\in C^{k+1}([0,+\infty)\times\mathbb{R})$ and $U(t,x)>0$, we have the following existence, uniqueness and stability results:

$1$. For any fixed finite number $T\in (0,+\infty)$, there exist sufficiently small real number $0<\ve_2 =o(T^{-1})$
and suitably large real numbers $\ell_0>1$ such that if $\ell\geq\ell_0$, the compatible initial data and $U(t,x)$ satisfy
\begin{equation}\label{Sect1_Data_Conditions2}
\left\{\begin{array}{ll}
\omega_0>0, \quad u_0|_{y=0}=0, \quad \lim\limits_{y\rto +\infty} u_0 =U|_{t=0}, \\[9pt]

\|\omega_0\|_{\mathcal{H}_{\ell}^k(\mathbb{R}_{+}^2)} \leq \ve_2,
\quad \|U(t,x)\|_{H^{k+1}([0,T]\times\mathbb{R})} \leq C_0\ve_2,\\[8pt]

0< c_1 (1+y)^{-\theta} \leq \omega_0 \leq c_2 (1+y)^{-\theta}, \ \theta>\frac{\ell+1}{2},
\end{array}\right.
\end{equation}
then the Prandtl system $(\ref{Sect1_PrandtlEq_Dirichlet})$ admits a unique classical solution $(\omega,u,v)$ in $[0,T]$ satisfying
\begin{equation}\label{Sect1_Solution_Regularity_Dirichlet}
\begin{array}{ll}
\omega \in \mathcal{H}_{\ell}^k(\mathbb{R}_{+}^2), \hspace{1.3cm}
\omega,\ \omega_y \in \mathcal{H}_{\ell}^k([0,T]\times\mathbb{R}_{+}^2),
\\[8pt]

u-U \in \mathcal{H}_{\ell-1}^k(\mathbb{R}_{+}^2)\cap \mathcal{H}_{\ell-1}^k([0,T]\times\mathbb{R}_{+}^2),
\\[8pt]

\partial_y^{j} u|_{y=0} \in H^{k-j}(\mathbb{R})\cap H^{k-j}([0,T]\times\mathbb{R}), \hspace{0.3cm} 0\leq j\leq k, \\[8pt]

\partial_{t,x}^{\alpha} v + y\cdot \partial_{t,x}^{\alpha}\partial_x U
\in L_{y, \ell-1}^{\infty}(L_{t,x}^2), \hspace{0.3cm} |\alpha|\leq k-1.
\end{array}
\end{equation}

$2$. The classical solution to $(\ref{Sect1_PrandtlEq})$ is stable with respect to the initial data in the following sense: for any given two initial data satisfy $(\ref{Sect1_Data_Conditions2})$, then for all $p\leq k-1$, the corresponding solutions of the Prandtl system $(\ref{Sect1_PrandtlEq_Dirichlet})$ satisfy
\begin{equation}\label{Sect1_Solution_Stable_Dirichlet}
\begin{array}{ll}
\|u^1-u^2\|_{\mathcal{H}_{\ell-1}^p(\mathbb{R}_{+}^2)}
+ \sum\limits_{j=0}^{p}\big\|\partial_y^j u^1|_{y=0} - \partial_y^j u^2|_{y=0} \big\|_{H^{p-j}(\mathbb{R})}

\\[10pt]\quad
+\|\omega^1-\omega^2\|_{\mathcal{H}_{\ell}^p(\mathbb{R}_{+}^2)}
+\sum\limits_{|\alpha|\leq p-1} \|\partial_{t,x}^{\alpha} v^1 - \partial_{t,x}^{\alpha} v^2\|_{L_{y, \ell-1}^{\infty}(L_{t,x}^2)} \\[10pt]

\leq  C(\ve_2,T)\|\omega^1_0-\omega^2_0\|_{\mathcal{H}_{\ell}^p(\mathbb{R}_{+}^2)}.
\end{array}
\end{equation}
\end{theorem}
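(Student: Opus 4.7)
The plan is to obtain Theorem \ref{Sect1_Main_Thm_Dirichlet} as the $\beta\to+\infty$ limit of Theorem \ref{Sect1_Main_Thm}. The essential fact we exploit is the last assertion of Theorem \ref{Sect1_Main_Thm}: the a priori bounds $(\ref{Sect1_Solution_Regularity})$ hold uniformly in $\beta\in[\delta_\beta,+\infty)$, while the boundary traces satisfy $\|u|_{y=0}\|_{H^k(\mathbb{R})}=O(\beta^{-1/2})$ and $\|\omega|_{y=0}\|_{H^k(\mathbb{R})}=O(\sqrt{\beta})$. Thus the interior norms stay controlled while the Dirichlet condition is recovered in the trace sense as $\beta\to+\infty$. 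Given compatible Dirichlet data satisfying $(\ref{Sect1_Data_Conditions2})$, I will construct a sequence of Robin-compatible initial data $\{(u_0^\beta,\omega_0^\beta)\}_{\beta\ge\delta_\beta}$ for which Theorem \ref{Sect1_Main_Thm} applies, then pass to the limit in the equations.

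\textbf{Step 1: Approximation of the initial data.} For each $\beta\in[\delta_\beta,+\infty)$ I modify $u_0$ near $y=0$ so that the Robin compatibility $(\partial_y u_0^\beta-\beta u_0^\beta)|_{y=0}=0$ holds, without destroying $\omega_0>0$ or the decay bounds. A concrete choice is $u_0^\beta(x,y)=u_0(x,y)+\beta^{-1}\chi(y)\omega_0(x,0)$, where $\chi\in C_c^\infty([0,\infty))$ equals $1$ near $0$; this produces a perturbation of size $O(\beta^{-1})$ in the $\mathcal{H}_{\ell-1}^k$ norm and of size $O(\beta^{-1/2})$ after weighting by $\beta^{-1/2}$ on the trace, so the smallness hypothesis $(\ref{Sect1_Data_Conditions1})$ is satisfied whenever $(\ref{Sect1_Data_Conditions2})$ is, provided $\beta$ is large enough. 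The corresponding $\omega_0^\beta=\partial_y u_0^\beta$ inherits the pointwise bounds in $(\ref{Sect1_Data_Conditions1})$ for large $\beta$, with $\|\omega_0^\beta\|_{\mathcal{H}_\ell^k}\le 2\ve_2$ and $\beta^{-1/2}\|\omega_0^\beta|_{y=0}\|_{\mathcal{H}_\ell^k}\lesssim\ve_2$. The compatibility of the higher-order time derivatives follows by reading off the equations at $t=0$.

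\textbf{Step 2: Uniform bounds and compactness.} Theorem \ref{Sect1_Main_Thm} produces a unique classical solution $(u^\beta,v^\beta,\omega^\beta)$ on $[0,T]$ whose norms in $(\ref{Sect1_Solution_Regularity})$ are controlled by $C(\ve_1,T)$ independently of $\beta$. Since $\mathcal{H}_\ell^k$ embeds compactly into any $\mathcal{H}_{\ell'}^{k-1}$ on bounded $(t,x,y)$ boxes for $\ell'<\ell$, a diagonal extraction yields a subsequence $\beta_n\to+\infty$ and a limit $(u,v,\omega)$ such that $u^{\beta_n}-U\to u-U$ strongly in $\mathcal{H}_{\ell-1}^{k-1}$ locally, $\omega^{\beta_n}\to\omega$ strongly in $\mathcal{H}_\ell^{k-1}$ locally, $\partial_{t,x}^\alpha v^{\beta_n}+y\,\partial_{t,x}^\alpha\partial_x U\to \partial_{t,x}^\alpha v+y\,\partial_{t,x}^\alpha\partial_x U$ in $L^\infty_{y,\ell-1}(L^2_{t,x})$ for $|\alpha|\le k-2$, while the top-order norms pass to the limit by weak lower semicontinuity. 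The trace bound $\|u^{\beta_n}|_{y=0}\|_{H^k(\mathbb{R})}=O(\beta_n^{-1/2})\to 0$ delivers the Dirichlet condition $u|_{y=0}=0$; the no-penetration and far-field conditions are stable under this convergence.

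\textbf{Step 3: Uniqueness, stability, and the main obstacle.} The stability estimate $(\ref{Sect1_Solution_Stable_Dirichlet})$ follows by writing $(\ref{Sect1_Stability})$ for the approximate Robin solutions with common $\beta$, sending $\beta\to+\infty$ so that the term $(\beta-\delta_\beta)^{-1}\|\omega_0^1|_{y=0}-\omega_0^2|_{y=0}\|_{H^p}^2$ vanishes, and using lower semicontinuity on the left. Uniqueness for the Dirichlet problem is then the specialization $\omega_0^1=\omega_0^2$. The main obstacle I anticipate is ensuring that the boundary-estimate machinery of the Robin case remains \emph{genuinely uniform} as $\beta\to+\infty$ at the top-order level $k$: the auxiliary evolution equation on $\{y=0\}$ used in Theorem \ref{Sect1_Main_Thm} is driven by terms of size $\sqrt{\beta}$, and only the normalization $\beta^{-1/2}\|\omega|_{y=0}\|$ is well-behaved. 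Closing the compactness argument at order $k$ therefore requires verifying that the $\beta$-independent constants in the proof of Theorem \ref{Sect1_Main_Thm} truly do not degenerate, and that the condition $\beta-u_{yy}/u_y|_{y=0}\ge c>0$ used for boundary non-degeneracy survives uniformly in $\beta$, which it does precisely because $\ell_0$ is chosen large enough that the right-hand side is $O(1)$ while $\beta\to+\infty$.
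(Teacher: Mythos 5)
Your proposal takes a genuinely different route from the paper's. The paper does not produce the Dirichlet solution as a $\beta\to+\infty$ limit of actual Robin solutions: in Section 3 the iteration scheme $(\ref{Sect1_PrandtlEq_Iteration})$ is run directly with the Dirichlet boundary conditions $u^{n+1}|_{y=0}=v^{n+1}|_{y=0}=0$ (Theorem $\ref{Sect3_Existence_Thm_Dirichlet}$), and the stability inequality is obtained directly from $(\ref{Sect5_Stability_1_Dirichlet})$--$(\ref{Sect5_Stability_2_Dirichlet})$ rather than by letting $\beta\to+\infty$ in $(\ref{Sect1_Stability})$. The ``$\beta\to+\infty$'' language in the paper only concerns the a priori estimate \emph{formulae} (e.g.\ $(\ref{Sect2_FullRegularity_1_Dirichlet})$ is the formal limit of $(\ref{Sect2_FullRegularity_1})$), not the solutions themselves. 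Your compactness scheme is conceptually appealing --- it exhibits the Dirichlet problem as a genuine limit of the Robin family --- but it converts the boundary-condition mismatch into an initial-data approximation problem, which the paper's direct construction entirely avoids.

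That initial-data approximation is where your argument has a real gap. Theorem $\ref{Sect1_Main_Thm}$ requires \emph{compatible} data, meaning the full hierarchy $(\partial_y-\beta)\partial_{t,x}^\alpha u_0^\beta|_{y=0}=0$ for $|\alpha|\le k$, where the time derivatives of $u_0^\beta$ are determined nonlinearly from the Prandtl equations. The perturbation $u_0^\beta=u_0+\beta^{-1}\chi(y)\omega_0(x,0)$ enforces only the zeroth-order Robin relation and in general violates the higher-order ones; ``reading off the equations at $t=0$'' does not make the violated identities hold. There is also a regularity loss: $\omega_0(\cdot,0)$ is only a trace, hence lies in $H^{k-1/2}(\mathbb{R})$ rather than $H^k$, so $\omega_0^\beta$ need not belong to $\mathcal{H}_\ell^k(\mathbb{R}_+^2)$. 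Both points are repairable (mollify the trace in $x$; construct the boundary correction order-by-order so the compatibility conditions hold with $O(\beta^{-1/2})$-small norm increments), but that construction is nontrivial and is left unaddressed. Once uniformly admissible Robin data is in hand, your Steps 2--3 --- uniform bounds, local strong compactness at order $k-1$, weak lower semicontinuity at order $k$, and the trace decay $\|u^\beta|_{y=0}\|_{H^k}=O(\beta^{-1/2})$ recovering the Dirichlet condition --- are sound and would close the existence and stability claims.
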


\begin{remark}\label{Sect1_Remark_Dirichlet}
(i) Since $u|_{y=0} =0$ is fixed and $\|\omega\|_{\mathcal{H}_{\ell}^k}$ is small, $\|U(t,x)\|_{H^{k}}$ can not be arbitrarily large.
$\partial_y^{j} u|_{y=0} \in H^{k-j}(\mathbb{R})\cap H^{k-j}([0,T]\times\mathbb{R}), 0\leq j\leq k$ in $(\ref{Sect1_Solution_Regularity_Dirichlet})$
and the stability of $\sum_{j=0}^{p}\big\|\partial_y^j u^1|_{y=0} - \partial_y^j u^2|_{y=0} \big\|_{H^{p-j}(\mathbb{R})}$
in $(\ref{Sect1_Solution_Stable_Dirichlet})$
are derived by using the trace theorem rather than direct a priori estimates.

(ii). When $\ell_0>1$ is suitably large, we do not need the favorable pressure condition $p_x\leq 0$.
Without suitable largeness of $\ell_0$, $p_x\leq 0$ does not suffice to close higher order boundary estimates
in this paper. In \cite{Xin_Zhang_2004}, $p_x\leq 0$ is one of necessary conditions for the global existence of BV weak solutions of
the Dirichlet boundary problem $(\ref{Sect1_PrandtlEq_Dirichlet})$.

(iii). For the Dirichlet boundary problem $(\ref{Sect1_PrandtlEq_Dirichlet})$, our methods, estimates, iteration scheme and solution spaces are different from those of \cite{Alexandre_Yang_2014,Masmoudi_Wong_2012}.

(iv). $U(t,x)\rto 0$ as $|x|\rto +\infty$, but $U(t,x)>0$ when $|x|\neq +\infty$, then Oleinik's monotonicity assumption $u_y>0$ makes sense.
If $U(t,x)$ approaches some positive constant $\bar{U}$ as $|x|\rto +\infty$, we do not need using the shear flow $(u^s,0)$, since $u|_{y=0} =0$
and we can bound $\|U(t,x)-\bar{U}\|_{H^s(\mathbb{R})}$ for $0\leq s\leq k+1$.
\end{remark}

\vspace{0.25cm}
Finally, we show our strategies of our proofs.

Step 1. A priori estimates and full regularities of classical solutions.

Before constructing classical Prandtl solutions, we want to know the regularities and space decay rates of classical Prandtl solutions, find out suitable solution spaces, determine the relationship between the lifespan of Prandtl solutions and the size of the initial data.
Note that the space decay rates play an important role in proving the full regularities of classical solutions.

Denote $\tilde{u}(t,x,y) =u(t,x,y)-U(t,x)$, $\partial_{t,x}^{\alpha} = \partial_t^{\sigma_1}\partial_x^{\sigma_2}$ where $\alpha=(\sigma_1,\sigma_2)$,
$W_{\alpha} = u_y\partial_y\big(\frac{\partial_{t,x}^{\alpha}(u-U)}{u_y}\big)$ and
$W_{\alpha,\sigma} =u_y\partial_y\big(\frac{\partial_{t,x}^{\alpha}\partial_y^{\sigma}(u-U)}{u_y}\big)$. Note that $u_y>0$, we have the following system for
$W_{\alpha}$:
\begin{equation}\label{Sect1_Existence_VorticityEq_1}
\left\{\begin{array}{ll}
\partial_t W_{\alpha} + u\partial_x W_{\alpha} - \partial_{yy} W_{\alpha} + Q_1 W_{\alpha} \\[7pt]\quad
+ u_y\partial_y(\frac{\partial_{t,x}^{\alpha} \tilde{u}}{u_y}\frac{u_{yt}}{u_y})
+ u u_y\partial_y(\frac{\partial_{t,x}^{\alpha} \tilde{u}}{u_y}\frac{u_{yx}}{u_y})
- u_y\partial_y(\frac{\partial_{t,x}^{\alpha} \tilde{u}}{u_y}\frac{u_{yyy}}{u_y})\\[8pt]\quad

= - u_y\partial_y\frac{[\partial_{t,x}^{\alpha},\, u_y]v}{u_y}
- u_y\partial_y\frac{[\partial_{t,x}^{\alpha},\, u\partial_x]\tilde{u}}{u_y}
- u_y\partial_y\frac{[\partial_{t,x}^{\alpha},\, \tilde{u}\partial_x]U}{u_y}
-\tilde{u} u_y\partial_y\frac{\partial_x\partial_{t,x}^{\alpha} U}{u_y}, \\[14pt]

\frac{1}{\beta - \frac{u_{yy}}{u_y}}(\partial_t W_{\alpha} + u\partial_x W_{\alpha})
- \partial_y W_{\alpha} - \frac{u_{yy}}{u_y}W_{\alpha}
+ Q_2 \cdot\frac{1}{\beta - \frac{u_{yy}}{u_y}}W_{\alpha}

\\[8pt]\quad
= -[\partial_{t,x}^{\alpha},\,u\partial_x]\tilde{u} -[\partial_{t,x}^{\alpha},\,\tilde{u}\partial_x]U
+ Q_3, \quad y=0, \\[12pt]

W_{\alpha}|_{t=0} = \partial_y u_0(x,y)
\partial_y\big(\frac{\partial_{t,x}^{\alpha}u_0(x,y)-\partial_{t,x}^{\alpha} U_0(x)}{\partial_y u_0(x,y)}\big),
\end{array}\right.
\end{equation}
where the lower order terms $Q_1, Q_2, Q_3$ are defined as
\begin{equation}\label{Sect1_Quantity_Definition_1}
\begin{array}{ll}
Q_1 := - \frac{u_{yt}}{u_y} - \frac{u u_{yx}}{u_y} - \frac{u_{yyy}}{u_y} +2(\frac{u_{yy}}{u_y})^2, \\[8pt]

Q_2 := \frac{(\frac{u_{yy}}{u_y})_t}{\beta - \frac{u_{yy}}{u_y}}
+ \frac{u(\frac{u_{yy}}{u_y})_x}{\beta - \frac{u_{yy}}{u_y}}
 - \frac{u_{yyy}}{u_y}, \quad y=0,
\\[10pt]

Q_3 :=-\tilde{u}\partial_x\partial_{t,x}^{\alpha}U
+ \partial_t\big[\frac{\beta}{\beta - \frac{u_{yy}}{u_y}} \partial_{t,x}^{\alpha} U\big]
+ u\partial_x\big[\frac{\beta}{\beta - \frac{u_{yy}}{u_y}} \partial_{t,x}^{\alpha} U\big] \\[6pt]\hspace{1cm}
- \frac{u_{yyy}}{u_y}\cdot
\frac{\beta}{\beta - \frac{u_{yy}}{u_y}} \partial_{t,x}^{\alpha} U.
\end{array}
\end{equation}

\vspace{-0.2cm}
The equations $(\ref{Sect1_Existence_VorticityEq_1})$ produce the estimates for $W_{\alpha}$ as follows:
\begin{equation}\label{Sect1_Existence_Estimate1}
\begin{array}{ll}
\frac{\mathrm{d}}{\mathrm{d}t} \|W_{\alpha}\|_{L_{\ell}^2(\mathbb{R}_{+}^2)}^2
+\frac{\mathrm{d}}{\mathrm{d}t} \int\limits_{\mathbb{R}} \frac{1}{\beta - \frac{u_{yy}}{u_y}|_{y=0}}
 (W_{\alpha}|_{y=0})^2 \,\mathrm{d}x
+ \|\partial_y W_{\alpha}\|_{L_{\ell}^2(\mathbb{R}_{+}^2)}^2 \\[10pt]

\lem \mathcal{L}\sum\limits_{|\alpha^{\prime}|\leq |\alpha|}\|W_{\alpha^{\prime}}\|_{L_{\ell}^2(\mathbb{R}_{+}^2)}^2
+ \mathcal{L}\sum\limits_{|\alpha^{\prime}|\leq |\alpha|-1}\|W_{\alpha^{\prime},1}\|_{L_{\ell+1}^2(\mathbb{R}_{+}^2)}^2

+ \mathcal{L}\|\omega_y\|_{L^2_{\ell+1}(\mathbb{R}_{+}^2)}^2 \\[10pt]\quad
+ \mathcal{L}\sum\limits_{|\alpha^{\prime}|\leq |\alpha|}\int\limits_{\mathbb{R}} \frac{1}{\beta -\frac{u_{yy}}{u_y}} (W_{\alpha^{\prime}})^2
\,\mathrm{d}x
+ \|U\|_{H^{|\alpha|+1}(\mathbb{R})}^2,\ |\alpha|\leq k,
\end{array}
\end{equation}
where $\mathcal{L}$ represents the lower order terms relating to $\tilde{u}$ or $\omega$, which is bounded by
$(\sum\limits_{|\alpha|+\sigma\leq 6}\|W_{\alpha,\sigma}\|_{L_{\ell+\sigma}^2}^2)^{\frac{s}{2}}$, where $s\geq 1$.
$W_{0,1}\equiv 0$, but $\|\omega_y\|_{L^2_{\ell+1}(\mathbb{R}_{+}^2)}^2$ appears in the estimates.
$f\lem g$ means there exists a constant $C>0$ such that $f\leq Cg$.

When $\sigma>0$, we have the following equations for $W_{\alpha,\sigma}$:
\begin{equation}\label{Sect1_Existence_VorticityEq_2}
\left\{\begin{array}{ll}
\partial_t W_{\alpha,\sigma} + u\partial_x W_{\alpha,\sigma}
- \partial_{yy} W_{\alpha,\sigma} +Q_1 W_{\alpha,\sigma} \\[6pt]\quad

+ u_y\partial_y[\frac{\partial_{t,x}^{\alpha}\partial_y^{\sigma} u}{u_y}\frac{u_{yt}}{u_y}]
+ u u_y\partial_y[\frac{\partial_{t,x}^{\alpha}\partial_y^{\sigma} u}{u_y}\frac{u_{yx}}{u_y}]
- u_y\partial_y\big[\frac{\partial_{t,x}^{\alpha}\partial_y^{\sigma} u}{u_y} \frac{u_{yyy}}{u_y} \big] \\[9pt]\quad

=  - u_y\partial_y\frac{[\partial_{t,x}^{\alpha}\partial_y^{\sigma},\, u\partial_x]u}{u_y}
- u_y\partial_y\frac{[\partial_{t,x}^{\alpha}\partial_y^{\sigma},\, u_y]v}{u_y}, \\[7pt]

-\partial_y W_{\alpha,\sigma} - \frac{u_{yy}}{u_y}W_{\alpha,\sigma}

= \mathcal{P}_1\Big(\sum\limits_{m=0}^{\sigma_1-1} (W_{\alpha_1+(1,0),\sigma_1-1-m}) (\frac{u_{yy}}{u_y})^m \\[7pt]\quad
 + \frac{1}{\beta - \frac{u_{yy}}{u_y}}(W_{\alpha_1+(1,0)} - \beta \partial_x\partial_{t,x}^{\alpha_1} U)(\frac{u_{yy}}{u_y})^{\sigma_1},

 \sum\limits_{m=0}^{\sigma_2-1} (W_{\alpha_2+(0,1),\sigma_2-1-m} (\frac{u_{yy}}{u_y})^m \\[7pt]\quad
 + \frac{1}{\beta - \frac{u_{yy}}{u_y}}(W_{\alpha_2+(0,1)} - \beta \partial_{t,x}^{\alpha_2} U)(\frac{u_{yy}}{u_y})^{\sigma_2} \Big),
 \quad y=0, \\[9pt]

W_{\alpha,\sigma}|_{t=0} = \partial_y u_0(x,y) \partial_y(\frac{\partial_{t,x}^{\alpha}\partial_y^{\sigma} u_0(x,y)}{\partial_y u_0(x,y)}).
\end{array}\right.
\end{equation}
where $\alpha_1\leq\alpha, \alpha_2\leq\alpha, \sigma_1\leq\sigma,\sigma_2\leq\sigma$,
$\mathcal{P}_1$ is a polynomial whose degree is less than or equal to $2$, the explicit form of $\mathcal{P}_1$
is given by the right hand side of $(\ref{Appendix_2_Sigma1_Existence_BC_5})$.

$(\ref{Sect1_Existence_VorticityEq_2})$ produces the following estimates:
\begin{equation*}
\begin{array}{ll}
\frac{\mathrm{d}}{\mathrm{d}t} \|W_{\alpha,\sigma}\|_{L_{\ell+\sigma}^2(\mathbb{R}_{+}^2)}^2
+ \|\partial_y W_{\alpha,\sigma}\|_{L_{\ell+\sigma}^2(\mathbb{R}_{+}^2)}^2  \\[11pt]\quad

\leq C\mathcal{L}\sum\limits_{|\alpha^{\prime}|\leq |\alpha|-1}\|W_{\alpha^{\prime},\sigma+1}\|_{L^2_{\ell+\sigma+1}(\mathbb{R}_{+}^2)}^2
+ C\mathcal{L}\sum\limits_{|\alpha^{\prime}|\leq |\alpha|,\sigma^{\prime}\leq \sigma}\|W_{\alpha^{\prime},\sigma^{\prime}}\|_{L^2_{\ell+\sigma^{\prime}}(\mathbb{R}_{+}^2)}^2 \\[12pt]\quad

+ q \sum\limits_{|\alpha^{\prime}|\leq |\alpha|+1,\sigma^{\prime}\leq \sigma-1}\|\partial_y W_{\alpha^{\prime},\sigma^{\prime}}\|_{L_{\ell+\sigma^{\prime}}^2(\mathbb{R}_{+}^2)}^2
+ C\|U\|_{H^{|\alpha|+1}(\mathbb{R})}^2  \\[12pt]\quad

 + C\mathcal{L}\int\limits_{\mathbb{R}}\frac{1}{\beta - \frac{u_{yy}}{u_y}}(W_{\alpha+(1,0)}^2 + W_{\alpha+(0,1)}^2) \,\mathrm{d}x,

 \ 0<\sigma\leq k-1, \ 0<|\alpha|\leq k-\sigma,
\end{array}
\end{equation*}

\begin{equation}\label{Sect1_Existence_Estimates2}
\begin{array}{ll}
\frac{\mathrm{d}}{\mathrm{d}t} \|W_{\alpha,\sigma}\|_{L_{\ell+\sigma}^2(\mathbb{R}_{+}^2)}^2
+ \|\partial_y W_{\alpha,\sigma}\|_{L_{\ell+\sigma}^2(\mathbb{R}_{+}^2)}^2

\leq  C\mathcal{L}\sum\limits_{|\alpha^{\prime}|\leq |\alpha|,\sigma^{\prime}\leq \sigma}\|W_{\alpha^{\prime},\sigma^{\prime}}\|_{L^2_{\ell+\sigma^{\prime}}(\mathbb{R}_{+}^2)}^2  \\[12pt]\quad

+ q \sum\limits_{|\alpha^{\prime}|\leq |\alpha|+1,\sigma^{\prime}\leq \sigma-1}\|\partial_y W_{\alpha^{\prime},\sigma^{\prime}}\|_{L_{\ell+\sigma^{\prime}}^2(\mathbb{R}_{+}^2)}^2

+ C\|U\|_{H^{|\alpha|+1}(\mathbb{R})}^2  \\[12pt]\quad
 + C\mathcal{L}\int\limits_{\mathbb{R}}\frac{1}{\beta - \frac{u_{yy}}{u_y}}(W_{\alpha+(1,0)}^2 + W_{\alpha+(0,1)}^2) \,\mathrm{d}x,

 \ 0<\sigma\leq k,\ |\alpha|=0,
\end{array}
\end{equation}
where $q\in (0,1)$ is sufficiently small when $\ell_0$ is suitably large.

Note that by using Hardy's inequality (see \cite{Masmoudi_Wong_2012}, Lemma B.1),
the terms like $u_y\partial_y(\frac{\partial_{t,x}^{\alpha}\tilde{u}}{u_y}F)$ are bounded by $W_{\alpha}$,
the terms like $u_y\partial_y(\frac{\partial_{t,x}^{\alpha}\partial_y^{\sigma}\tilde{u}}{u_y}F)$ are bounded by $W_{\alpha,\sigma}$.
Then the a priori estimates for $(\ref{Sect1_Existence_VorticityEq_1})$ and $(\ref{Sect1_Existence_VorticityEq_2})$ are easy to close.

Since only $W_{0,1}\equiv 0$, we have to estimate $\|\omega_y\|_{L^2_{\ell+1}}$ directly.
Denote $\tilde{W} =\omega_y$, we have the following equations:
\begin{equation}\label{Sect1_VorticityY_Eq}
\left\{\begin{array}{ll}
\tilde{W}_t + u\tilde{W}_x +v\tilde{W}_y -u_x\tilde{W} - \omega_x\int\limits_y^{+\infty} \tilde{W}\,\mathrm{d}\tilde{y} =\tilde{W}_{yy},\quad (x,y)\in\mathbb{R}_{+}^2, \ t>0, \\[5pt]
\tilde{W}_y = u_{yt} + u u_{yx}, \quad y=0,\\[5pt]
\tilde{W}|_{t=0} = \partial_{yy} u_0(x,y).
\end{array}\right.
\end{equation}
Then $(\ref{Sect1_VorticityY_Eq})$ has the estimate of $\|\omega_y\|_{L_{\ell+1}^2} = \|\tilde{W}\|_{L_{\ell+1}^2}$:
\begin{equation}\label{Sect1_VorticityY_Estimate}
\begin{array}{ll}
\frac{\mathrm{d}}{\mathrm{d}t}\|\omega_y\|_{L_{\ell+1}^2(\mathbb{R}_{+}^2)}^2
+ \|\omega_{yy}\|_{L_{\ell+1}^2(\mathbb{R}_{+}^2)}^2 \lem \mathcal{L}\|\omega_y\|_{L_{\ell+1}^2(\mathbb{R}_{+}^2)}^2 + \|U\|_{H^1(\mathbb{R})}^2
+ \mathcal{L} \\[8pt]\hspace{5.4cm}

+ q\|\partial_y W_{(1,0)}\|_{L_{\ell}^2(\mathbb{R}_{+}^2)}^2 + q\|\partial_y W_{(0,1)}\|_{L_{\ell}^2(\mathbb{R}_{+}^2)}^2.
\end{array}
\end{equation}

\vspace{-0.2cm}
Our a priori estimates are uniform with respect to $\beta$.
Note that let $\beta\rto +\infty$, the limit of $(\ref{Sect1_Existence_VorticityEq_1})_2$ is the following equation, which is exactly
the boundary condition for the Dirichlet boundary problem $(\ref{Sect1_PrandtlEq_Dirichlet})$:
\begin{equation}\label{Sect1_BC_Dirichlet_1}
\begin{array}{ll}
-\partial_y W_{\alpha} - \frac{u_{yy}}{u_y}W_{\alpha}
= \partial_t \partial_{t,x}^{\alpha} U +U\partial_x \partial_{t,x}^{\alpha} U
+ [\partial_{t,x}^{\alpha},\, U\partial_x]U - \frac{u_{yyy}}{u_y} \partial_{t,x}^{\alpha} U.
\end{array}
\end{equation}

Without coupling the estimates $(\ref{Sect1_Existence_Estimate1}),
(\ref{Sect1_Existence_Estimates2}),
(\ref{Sect1_VorticityY_Estimate})$ together, a priori estimates can not be closed.
By summing the estimates $(\ref{Sect1_Existence_Estimate1}),
(\ref{Sect1_Existence_Estimates2}),
(\ref{Sect1_VorticityY_Estimate})$ together and apply differential inequalities to the sum, we can
control the growth of Prandtl solutions, determine the relationship between the lifespan of Prandtl solutions and the size of the initial data.
Thus, giving any fixed finite $T$, we can find out a class of sufficiently small data such that there exist a Prandtl solution in $[0,T]$.

\vspace{0.25cm}
Step 2. The iteration scheme and the existence of the Prandtl systems.

We construct the Prandtl solutions to the Robin boundary problem $(\ref{Sect1_PrandtlEq})$. Note that the Prandtl solutions to the Dirichlet boundary problem $(\ref{Sect1_PrandtlEq_Dirichlet})$ are constructed similarly.

Assume we have a sequence of approximate solutions $\{(u^n, v^n)\}$ of the Prandtl system $(\ref{Sect1_PrandtlEq})$, where the zero-th order
approximate solution is chosen as the initial data, i. e.,
$(u^0,v^0) \equiv (u_0,v_0)$,
then we define the equations of the $(n+1)$-order approximate solution $\{(u^{n+1}, v^{n+1})\}$ as follows:
\begin{equation}\label{Sect1_PrandtlEq_Iteration}
\left\{\begin{array}{ll}
\partial_t u^{n+1} + u^n \partial_x u^{n+1} + v^{n+1} \partial_y u^n + p_x = \partial_{yy} u^{n+1},\quad (x,y)\in\mathbb{R}_{+}^2, \ t>0, \\[6pt]
\partial_x u^{n+1} + \partial_y v^{n+1} =0, \\[6pt]
(\partial_y u^{n+1} - \beta u^{n+1})|_{y=0} = 0,\quad v^{n+1}|_{y=0} =0,\\[6pt]
\lim\limits_{y\rto +\infty}u^{n+1} = U(t,x), \\[7pt]
u^{n+1}|_{t=0} = u_0(x,y).
\end{array}\right.
\end{equation}
Note that for the Dirichlet boundary problem $(\ref{Sect1_PrandtlEq_Dirichlet})$, the boundary conditions in $(\ref{Sect1_PrandtlEq_Iteration})$
need to be replaced with $u^{n+1}|_{y=0} = v^{n+1}|_{y=0} =0$. Our iteration scheme $(\ref{Sect1_PrandtlEq_Iteration})$ is different from \cite{Masmoudi_Wong_2012,Alexandre_Yang_2014,Liu_Wang_Yang_2014,Wang_Xie_Yang_2014,Xu_Zhang_2015,Xu_Zhang_2015_NonMonotone}.

Similar to $(\ref{Sect1_Existence_VorticityEq_1})$, we have the equations of
$W_{\alpha}^{n+1} = \partial_y u^n \partial_y(\frac{\partial_{t,x}^{\alpha}(u^{n+1}-U)}{\partial_y u^n})$,
see $(\ref{Sect3_Existence_VorticityEq_1})$. When $\sigma\geq 1$, we have the equation of
$W_{\alpha,\sigma}^{n+1} = \partial_y u^n \partial_y(\frac{\partial_{t,x}^{\alpha}\partial_y^{\sigma}u^{n+1}}{\partial_y u^n})$,
see $(\ref{Sect3_Existence_VorticityEq_2})$.
Since the equations $(\ref{Sect1_PrandtlEq_Iteration})$ are linear,
it is much easier to prove the estimates of approximate solutions
$\{(\omega^{n+1},u^{n+1},v^{n+1})|n\geq 0\}$ than the a priori estimates of the Prandtl solutions developed in Step 1.
Note that $W_{0,1}^{n+1} \neq 0$ here,
$u^n$ satisfies Oleinik's monotonicity assumption $\omega^n>0$.

The equations of approximate solutions $W_{\alpha,\sigma}^{n+1}$ are linear, but $(u^{n},v^{n})$ also grows. However, similar to Step 1, we can control the growth of approximate solutions, determine the relationship between the lifespan of approximate solutions and the size of the initial data. The limits of the approximate solutions lie in our solution spaces which are complete.
Thus, the Prandtl solutions can be constructed.

\vspace{0.3cm}
Step 3. A priori estimates and the stability of the Prandtl systems.

Suppose $(u^1,v^1)$ and $(u^2,v^2)$ are two classical Prandtl solutions with data $u_0^1(x,y)$ and $u_0^2(x,y)$ respectively,
denote
\begin{equation}\label{Sect1_Define_Variables}
\begin{array}{ll}
\delta u=u^1-u^2,\quad \delta v=v^1-v^2,\quad \delta \omega = \partial_y \delta u, \\[5pt]

\bar{u}=\frac{u^1+u^2}{2}, \quad \bar{v}=\frac{v^1+v^2}{2}, \quad \bar{\omega} = \frac{\omega^1+\omega^2}{2},
\end{array}
\end{equation}
then $(\delta u,\delta v)$ satisfies the following system:
\begin{equation}\label{Sect1_SolDifference_Eq}
\left\{\begin{array}{ll}
(\delta u)_t + \bar{u} (\delta u)_x + (\delta u) \bar{u}_x + \bar{v} (\delta u)_y + (\delta v) \bar{u}_y - (\delta u)_{yy} = 0, \\[6pt]
(\delta u)_x + (\delta v)_y =0, \\[6pt]
((\delta u)_y - \beta (\delta u))|_{y=0} =0,\quad  \delta v|_{y=0} =0,\\[6pt]
 \lim\limits_{y\rto +\infty} \delta u(t,x,y) =0, \\[6pt]
\delta u|_{t= 0} = u_0^1 - u_0^2.
\end{array}\right.
\end{equation}
Note that when $\beta=+\infty$, the boundary conditions in $(\ref{Sect1_SolDifference_Eq})$ should be replaced with $\delta u|_{y=0} =\delta v|_{y=0}=0$.
The first equation $(\ref{Sect1_SolDifference_Eq})_1$ was used in \cite{Alexandre_Yang_2014}.

Denote $\W_{\alpha} = \bar{u}_y\partial_y(\frac{\partial_{t,x}^{\alpha}\delta u}{\bar{u}_y})$ and
$\W_{\alpha,\sigma} = \bar{u}_y\partial_y(\frac{\partial_{t,x}^{\alpha}\partial_y^{\sigma}\delta u}{\bar{u}_y})$. Note that $\bar{u}_y>0$, we have the following system for $\W_{\alpha,\sigma}$:
\begin{equation}\label{Sect1_Stability_VorticityEq_1}
\left\{\begin{array}{ll}
\partial_t \W_{\alpha,\sigma} + \bar{u}\partial_x \W_{\alpha,\sigma} + \bar{v}\partial_y \W_{\alpha,\sigma} - \partial_{yy} \W_{\alpha,\sigma}
+ Q_4\W_{\alpha,\sigma} + \bar{u}_y\partial_y[\frac{\partial_{t,x}^{\alpha}\partial_y^{\sigma}\delta u}{\bar{u}_y} Q_5] \\[8pt]\quad

= -\bar{u}_y\partial_y\big(\frac{[\partial_{t,x}^{\alpha}\partial_y^{\sigma},\, \bar{u} \partial_x]\delta u}{\bar{u}_y}\big)
-\bar{u}_y\partial_y\big(\frac{[\partial_{t,x}^{\alpha}\partial_y^{\sigma},\, \bar{u}_x]\delta u}{\bar{u}_y}\big)
-\bar{u}_y\partial_y\big(\frac{[\partial_{t,x}^{\alpha}\partial_y^{\sigma},\, \bar{v}\partial_y]\delta u}{\bar{u}_y}\big) \\[8pt]\quad
-\bar{u}_y\partial_y\big(\frac{[\partial_{t,x}^{\alpha}\partial_y^{\sigma},\, \bar{u}_y]\delta v}{\bar{u}_y}\big),
\quad \sigma\geq 0, \\[14pt]

\frac{1}{\beta - \frac{\bar{u}_{yy}}{\bar{u}_y}}(\partial_t \W_{\alpha} + \bar{u}\partial_x \W_{\alpha})
- \partial_y \W_{\alpha} - \frac{\bar{u}_{yy}}{\bar{u}_y}\W_{\alpha}
+ \frac{1}{\beta - \frac{\bar{u}_{yy}}{\bar{u}_y}}\W_{\alpha}Q_6 \\[10pt]\quad

 = -\sum\limits_{\alpha_1>0}\partial_{t,x}^{\alpha_1}\bar{u}\cdot \frac{\W_{\alpha_2+(0,1)}}{\beta-\frac{\bar{u}_{yy}}{\bar{u}_y} }
 -\sum\limits_{\alpha_1>0}\partial_{t,x}^{\alpha_1}\partial_x\bar{u} \cdot \frac{\W_{\alpha_2}}{\beta-\frac{\bar{u}_{yy}}{\bar{u}_y} },

\quad y=0,\ \sigma=0, \\[15pt]

-\partial_y \W_{\alpha,\sigma} - \frac{\bar{u}_{yy}}{\bar{u}_y}\W_{\alpha,\sigma}

= \mathcal{P}_2\Big(\mathcal{L}+ \sum\limits_{|\alpha^{\prime}|\leq |\alpha|+1}\sum\limits_{m=0}^{\sigma} (W_{\alpha^{\prime},\sigma-m}) (\frac{\bar{u}_{yy}}{\bar{u}_y})^m \\[10pt]\quad
 + \partial_x \partial_{t,x}^{\alpha^{\prime}} (\bar{u}-U)(\frac{\bar{u}_{yy}}{\bar{u}_y})^{\sigma},

\sum\limits_{|\alpha^{\prime}|\leq |\alpha|+1}\sum\limits_{m=0}^{\sigma-1} (\W_{\alpha^{\prime},\sigma-1-m}) (\frac{\bar{u}_{yy}}{\bar{u}_y})^m
 \\[10pt]\quad
 + \sum\limits_{|\alpha^{\prime}|\leq |\alpha|+1}\frac{1}{\beta - \frac{\bar{u}_{yy}}{\bar{u}_y}}\W_{\alpha^{\prime}}(\frac{\bar{u}_{yy}}{\bar{u}_y})^{\sigma}\Big), \quad y=0,\ \sigma\geq 1, \\[14pt]

\W_{\alpha,\sigma}|_{t=0} = (\partial_y u_0^1 + \partial_y u_0^2)\partial_y(\frac{\partial_{t,x}^{\alpha}\partial_y^{\sigma}\delta u}
{\partial_y u_0^1 + \partial_y u_0^2}),
\end{array}\right.
\end{equation}
where the terms $Q_4, Q_5, Q_6$ are defined as
\begin{equation}\label{Sect1_Quantity_Definition_2}
\begin{array}{ll}
Q_4 = -\frac{\bar{u}_{yt} +\bar{u}\bar{u}_{yx} +\bar{v}\bar{u}_{yy} +\bar{u}_{yyy}}{\bar{u}_y} + 2(\frac{\bar{u}_{yy}}{\bar{u}_y})^2, \\[11pt]

Q_5 =  \frac{\bar{u}_{yt} + \bar{u}\bar{u}_{yx} + \bar{v}\bar{u}_{yy} - \bar{u}_{yyy}}{\bar{u}_y}, \\[10pt]

Q_6 = \frac{(\frac{\bar{u}_{yy}}{\bar{u}_y})_t}{\beta - \frac{\bar{u}_{yy}}{\bar{u}_y}}
+ \frac{\bar{u}(\frac{\bar{u}_{yy}}{\bar{u}_y})_x}{\beta - \frac{\bar{u}_{yy}}{\bar{u}_y}}
- \frac{\bar{u}_{yyy}}{\bar{u}_y} + \bar{u}_x,
\end{array}
\end{equation}
and $\mathcal{P}_2$ is a polynomial with lower order degrees, whose explicit form is determined by $(\ref{Appendix_4_Stability1_BC_7})$
and $(\ref{Appendix_4_Stability1_BC_4})$.

Letting $\beta\rto +\infty$, the limit of $(\ref{Sect1_Stability_VorticityEq_1})_2$ is the following equation, which is exactly the boundary condition for the Dirichlet boundary case.
\begin{equation}\label{Sect1_BC_Dirichlet_2}
\begin{array}{ll}
\partial_y \W_{\alpha} + 2\frac{\bar{u}_{yy}}{\bar{u}_y} \W_{\alpha} =0, \quad y=0.
\end{array}
\end{equation}
This boundary condition $(\ref{Sect1_BC_Dirichlet_2})$ appeared in \cite{Alexandre_Yang_2014}.

When $\sigma=0,\ |\alpha|\leq p$, we have the following estimate:
\begin{equation}\label{Sect1_Estimates_4}
\begin{array}{ll}
\frac{\mathrm{d}}{\mathrm{d}t} \|\W_{\alpha}\|_{L_{\ell}^2}^2
+\frac{\mathrm{d}}{\mathrm{d}t} \int\limits_{\mathbb{R}} \frac{1}{\beta - \frac{\bar{u}_{yy}}{\bar{u}_y}|_{y=0}}
 (\W_{\alpha}|_{y=0})^2 \,\mathrm{d}x
+ \|\partial_y \W_{\alpha}\|_{L_{\ell}^2}^2
 \\[12pt]\quad

\lem \big(\mathcal{L}\sum\limits_{|\alpha^{\prime}|\leq |\alpha|}\|W_{\alpha^{\prime},1}\|_{L_{\ell+1}^2}
+ \mathcal{L}\sum\limits_{|\alpha^{\prime}|\leq |\alpha|+1}\|W_{\alpha^{\prime}}\|_{L_{\ell}^2} + \|U\|_{H^{|\alpha|+1}} +\mathcal{L}\big) \\[13pt]\quad
\cdot \Big[\sum\limits_{|\alpha^{\prime}|\leq |\alpha|-1}\|\W_{\alpha^{\prime},1}\|_{L_{\ell+1}^2}^2
+ \sum\limits_{|\alpha^{\prime}|\leq |\alpha|} \big(\|\W_{\alpha^{\prime}}\|_{L_{\ell}^2}^2
+ \int\limits_{\mathbb{R}}
 \frac{1}{\beta - \frac{\bar{u}_{yy}}{\bar{u}_y}|_{y=0}} (\W_{\alpha^{\prime}}|_{y=0})^2 \,\mathrm{d}x \big) \Big].
\end{array}
\end{equation}

When $\sigma>0$, we have the following estimate:
\begin{equation}\label{Sect1_Estimates_5}
\begin{array}{ll}
\frac{\mathrm{d}}{\mathrm{d}t} \|\W_{\alpha,\sigma}\|_{L_{\ell+\sigma}^2}^2
+\frac{\mathrm{d}}{\mathrm{d}t} \int\limits_{\mathbb{R}} \frac{1}{\beta - \frac{\bar{u}_{yy}}{\bar{u}_y}|_{y=0}}
 (\W_{\alpha}|_{y=0})^2 \,\mathrm{d}x
+ \|\partial_y \W_{\alpha,\sigma}\|_{L_{\ell+\sigma}^2}^2  \\[13pt]\quad

\leq CQ_7 \big(\sum\limits_{|\alpha^{\prime}|\leq |\alpha|,\sigma^{\prime}\leq \sigma}\|\W_{\alpha^{\prime},\sigma^{\prime}}\|_{L_{\ell+\sigma^{\prime}}^2}^2
+ \sum\limits_{|\alpha^{\prime}|\leq |\alpha|+1,\sigma^{\prime}\leq \sigma-1}\|\W_{\alpha^{\prime},\sigma^{\prime}}\|_{L_{\ell+\sigma^{\prime}}^2}^2
\\[15pt]\quad
+ \sum\limits_{|\alpha^{\prime}|\leq |\alpha|-1}\|\W_{\alpha^{\prime},\sigma+1}\|_{L_{\ell+\sigma+1}^2}^2
+ \sum\limits_{|\alpha^{\prime}|\leq |\alpha|+1}\int\limits_{\mathbb{R}} \frac{1}{\beta - \frac{\bar{u}_{yy}}{\bar{u}_y}|_{y=0}} (\W_{\alpha}|_{y=0})^2 \,\mathrm{d}x \big) \\[14pt]\quad

+ q\sum\limits_{|\alpha^{\prime}| \leq |\alpha|+1, \sigma^{\prime}\leq \sigma-1}\|\partial_y \W_{\alpha^{\prime},\sigma^{\prime}}\|_{L_{\ell+\sigma}^2}^2,
\ 0< \sigma \leq p-1,\ |\alpha|\leq p-\sigma, \\[18pt]

\frac{\mathrm{d}}{\mathrm{d}t} \|\W_{\alpha,\sigma}\|_{L_{\ell+\sigma}^2}^2
+\frac{\mathrm{d}}{\mathrm{d}t} \int\limits_{\mathbb{R}} \frac{1}{\beta - \frac{\bar{u}_{yy}}{\bar{u}_y}|_{y=0}}
 (\W_{\alpha}|_{y=0})^2 \,\mathrm{d}x
+ \|\partial_y \W_{\alpha,\sigma}\|_{L_{\ell+\sigma}^2}^2 \\[10pt]

\leq C Q_7 \big(\sum\limits_{|\alpha^{\prime}|\leq |\alpha|,\sigma^{\prime}\leq \sigma}\|\W_{\alpha^{\prime},\sigma^{\prime}}\|_{L_{\ell+\sigma^{\prime}}^2}^2
+ \sum\limits_{|\alpha^{\prime}|\leq |\alpha|+1,\sigma^{\prime}\leq \sigma-1}\|\W_{\alpha^{\prime},\sigma^{\prime}}\|_{L_{\ell+\sigma^{\prime}}^2}^2
\\[12pt]\quad
+ \sum\limits_{|\alpha^{\prime}|\leq |\alpha|+1}\int\limits_{\mathbb{R}} \frac{1}{\beta - \frac{\bar{u}_{yy}}{\bar{u}_y}|_{y=0}} (\W_{\alpha}|_{y=0})^2 \,\mathrm{d}x \big)

+ q\sum\limits_{|\alpha^{\prime}| \leq |\alpha|+1, \sigma^{\prime}\leq \sigma-1}\|\partial_y \W_{\alpha^{\prime},\sigma^{\prime}}\|_{L_{\ell+\sigma}^2}^2,

\\[12pt]\quad
\ 0<\sigma\leq p,\ |\alpha|=0,
\end{array}
\end{equation}
where
\begin{equation}\label{Sect1_Estimates_6}
\begin{array}{ll}
Q_7(\alpha,\sigma) := \mathcal{L}\sum\limits_{|\alpha^{\prime}| \leq |\alpha|-1, \sigma^{\prime}\leq \sigma+1}\|W_{\alpha^{\prime},\sigma^{\prime}}\|_{L^2(\mathbb{R}_{+}^2)}

\\[10pt]\quad
+ \mathcal{L}\sum\limits_{|\alpha^{\prime}| \leq |\alpha|+1, \sigma^{\prime}\leq \sigma-1}(\|W_{\alpha^{\prime},\sigma^{\prime}}\|_{L^2(\mathbb{R}_{+}^2)}
+ \|W_{\alpha^{\prime},\sigma^{\prime}}|_{y=0}\|_{L^2(\mathbb{R})})

\\[10pt]\quad
+ \mathcal{L}\sum\limits_{|\alpha^{\prime}| \leq |\alpha|, \sigma^{\prime}\leq \sigma}(\|W_{\alpha^{\prime},\sigma^{\prime}}\|_{L^2(\mathbb{R}_{+}^2)}
+ \|W_{\alpha^{\prime},\sigma^{\prime}}|_{y=0}\|_{L^2(\mathbb{R})})

\\[10pt]\quad
+ \mathcal{L}\sum\limits_{|\alpha^{\prime}|\leq |\alpha|+1}\int\limits_{\mathbb{R}}
 \frac{1}{\beta - \frac{\bar{u}_{yy}}{\bar{u}_y}|_{y=0}} (W_{\alpha^{\prime}}|_{y=0})^2 \,\mathrm{d}x
 +\mathcal{L} <+\infty,
\end{array}
\end{equation}
In $(\ref{Sect1_Stability_VorticityEq_1}), (\ref{Sect1_Estimates_4}), (\ref{Sect1_Estimates_6})$, the lower order term
$\mathcal{L}$ is related to $\bar{u},\bar{v},\bar{\omega}$, and
 $\mathcal{L} \lem (\sum\limits_{|\alpha|+\sigma\leq 6} \|W_{\alpha,\sigma}\|_{L_{\ell+\sigma}^2}^2)^{\frac{s}{2}}$, where $s\geq 1$.

By summing $(\ref{Sect1_Estimates_4}),
(\ref{Sect1_Estimates_5}),
(\ref{Sect1_Estimates_6})$, and apply differential inequalities to the sum, it is easy to close a priori estimates and
prove the stability results. The stability argument implies the uniqueness of the Prandtl systems.
The a priori estimates are uniform with respect to $\beta$, then we also have the stability and uniqueness of $(\ref{Sect1_PrandtlEq_Dirichlet})$.

\begin{remark}\label{Sect1_Estimates_Remark}
(i). When $\sigma\geq 1$,
$\partial_y(\frac{\partial_{t,x}^{\alpha}\partial_y^{\sigma}(u-U)}{u_y})$
and $\partial_y(\frac{\partial_{t,x}^{\alpha}\partial_y^{\sigma}\delta u}{\bar{u}_y})$
decay to zero, but
$\partial_y(\frac{\partial_{t,x}^{\alpha}(u-U)}{u_y})$
and $\partial_y(\frac{\partial_{t,x}^{\alpha}\delta u}{\bar{u}_y})$ may not vanish at $\{y=+\infty\}$, then they
may not be integrable.
so we define and estimate the variables $W_{\alpha} = u_y\partial_y(\frac{\partial_{t,x}^{\alpha}(u-U)}{u_y})$
and $\W_{\alpha} = \bar{u}_y\partial_y(\frac{\partial_{t,x}^{\alpha}\delta u}{\bar{u}_y})$.

(ii). Though $(\ref{Sect1_PrandtlEq})$ and $(\ref{Sect1_PrandtlEq_Dirichlet})$ contain $v$ which loses $\partial_x$-regularity and decay rate, we
use the divergence free condition to
eliminate $\partial_{t,x}^{\alpha}\partial_y^{\sigma} v$ and $\partial_{t,x}^{\alpha}\partial_y^{\sigma} \partial_x u$,
 $\partial_{t,x}^{\alpha}\partial_y^{\sigma}\delta v$ and $\partial_{t,x}^{\alpha}\partial_y^{\sigma}\partial_x \delta u$ in each order estimate
to recover the full regularities of classical Prandtl solutions.
\end{remark}

The rest of the paper is organized as follows: In Section 2, we develop a priori estimates for the existence of Prandtl solutions.
In Section 3, we prove the existence theorems of the Prandtl equations. In Section 4, we develop a priori estimates for the stability
of Prandtl solutions. In Section 5, we prove the stability and uniqueness of the Prandtl equations. In the Appendix, we derive the equations and their boundary conditions.

%%% find 2
\section{A Priori Estimates and Full Regularities of Prandtl Solutions}
In this section, we develop a priori estimates for the existence of the Robin boundary problem $(\ref{Sect1_PrandtlEq})$. These estimates are uniform with respect to $\beta$, then we have a priori estimates for the Dirichlet boundary problem $(\ref{Sect1_PrandtlEq_Dirichlet})$ by passing to the limit.

Assume $[0,T]$ is the lifespan of the Prandtl solutions, thus $T$ is not a blow-up time, the weighted Sobolev norms of the Prandtl solutions remain bounded
at $t=T$.
In order to control the growth of the weighted Sobolev norms in $[0,T]$, we fix a large constant $1\ll M<+\infty$ such that the Sobolev norms of the solutions at $t=T$ are $M$ times the norms of the initial data.

\subsection{Preliminary Estimates}
In this subsection, we prove some preliminary lemmas which are useful for a priori estimates.
In this subsection, $\Omega = [0,T]\times\mathbb{R}_{+}^2$ or $\Omega = \mathbb{R}_{+}^2$.
\begin{lemma}\label{Sect2_Preliminary_Lemma0}
Assume if $0< c_1 (1+y)^{-\theta} \leq \omega_0 \leq c_2 (1+y)^{-\theta}$, where $\theta>\frac{\ell+1}{2}$, we have the properties:
\begin{equation}\label{Sect2_Preliminary_Estimates0}
\begin{array}{ll}
0< \tilde{c}_1(t)\cdot (1+y)^{-\theta} \leq \omega \leq \tilde{c}_2(t)\cdot (1+y)^{-\theta},\quad t\geq 0,\ y\gg 1, \\[7pt]

|u_{yy}| \leq c_3(t)\cdot (1+y)^{-\theta-1}, \quad t\geq 0,\ y\gg 1, \\[7pt]

|(1+y)\frac{u_{yy}}{u_y}|_{\infty} \leq c_4M|(1+y)\frac{\partial_{yy} u_0}{\partial_y u_0}|_{\infty}, \quad t\geq 0,
\end{array}
\end{equation}
where $\tilde{c}_1(t)>0,\tilde{c}_2(t)>0$ are different from $c_1,c_2$.
\end{lemma}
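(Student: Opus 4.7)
\noindent\textbf{Proof proposal for Lemma \ref{Sect2_Preliminary_Lemma0}.} The plan is to drive all three bounds from the vorticity equation
\[
\omega_t + u\omega_x + v\omega_y - \omega_{yy} = 0,
\]
obtained by differentiating $(\ref{Sect1_PrandtlEq})_1$ in $y$, together with the parabolic maximum/comparison principle. For property (i), I would construct barriers of the form $\bar{\omega}_{\pm}(t,y) = \kappa_{\pm}(t)(1+y)^{-\theta}$, plug into the operator $\partial_t + u\partial_x + v\partial_y - \partial_{yy}$, and observe that the resulting expression equals
\[
\bar{\omega}_{\pm}\Bigl[\tfrac{\dot{\kappa}_{\pm}}{\kappa_{\pm}} - \theta\,\tfrac{v}{1+y} - \theta(\theta+1)(1+y)^{-2}\Bigr].
\]
Because $v + yU_x = -\int_0^y (u_x-U_x)\,\mathrm{d}\tilde y$ and $u_x - U_x$ decays faster than $(1+y)^{-1}$ (by the a priori Sobolev bounds combined with $\theta > (\ell+1)/2$), the quotient $v/(1+y)$ stays uniformly bounded, and the bracketed factor can be made non-negative (resp.\ non-positive) by choosing $\kappa_{\pm}$ growing (resp.\ decaying) exponentially at a suitable rate depending on that uniform bound. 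Since $\omega_0$ is sandwiched between $c_1(1+y)^{-\theta}$ and $c_2(1+y)^{-\theta}$, and $\omega|_{y=0}$ is controlled by the Robin boundary data, the comparison principle applied on the strip $\{y \geq y_0\}$ with $y_0 \gg 1$ yields the desired two-sided bound $\tilde c_1(t)(1+y)^{-\theta}\leq \omega\leq \tilde c_2(t)(1+y)^{-\theta}$.

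For property (ii), I would differentiate the vorticity equation once more in $y$ to get an equation for $\eta := \omega_y = u_{yy}$,
\[
\eta_t + u\eta_x + v\eta_y - \eta_{yy} = -\omega\,\omega_x + u_x\,\eta,
\]
and repeat the barrier argument with $\tilde{\kappa}(t)(1+y)^{-\theta-1}$. The right-hand side is controlled by the weighted Sobolev norm of $\omega$ (hence by the a priori bound $\leq c\ve_1 M$), so it can be absorbed into the barrier's growth factor. This upgrades the pointwise decay one power faster and gives $|u_{yy}|\leq c_3(t)(1+y)^{-\theta-1}$ for $y\gg 1$. Alternatively, (ii) can be obtained by combining (i) with a weighted Sobolev embedding once $\|\omega\|_{\mathcal{H}_{\ell}^k}$ is known to be bounded.

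For property (iii), the natural variable is $\phi := (1+y)\,u_{yy}/u_y = (1+y)\,\partial_y(\log \omega)$. Dividing the vorticity equation by $\omega$, differentiating once in $y$, and multiplying by $(1+y)$ yields an equation of the schematic form
\[
\phi_t + u\phi_x + (v-2G)\phi_y - \phi_{yy} + \mathcal{A}\,\phi_y + \mathcal{B}\,\phi = -(1+y)\omega_x,
\]
with $G=u_{yy}/u_y$ and coefficients $\mathcal{A},\mathcal{B}$ bounded in $L^\infty_{t,x,y}$ by the Sobolev norms of $(u,v,\omega)$, hence ultimately by $M$ times the initial Sobolev norms thanks to the standing assumption on the blow-up time. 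Applying the maximum principle to $\phi$, with the right-hand side $-(1+y)\omega_x$ also controlled by the interior a priori bounds, produces $\|\phi\|_{L^\infty}\leq c_4 M\,\|\phi|_{t=0}\|_{L^\infty}$, which is the third assertion.

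The main obstacle is the growth $v = O(y)$ due to the general (non-constant) Euler trace $U$: it makes the transport coefficient in the vorticity equation unbounded, so the barrier functions and the maximum principle have to be applied with the decomposition $v = -yU_x + (v+yU_x)$, where the first piece is handled explicitly by the weight $(1+y)^{-\theta}$ (it produces exactly a $v/(1+y)\sim -U_x$ contribution that is $L^\infty$-bounded) and the second piece is shown to be integrable in $y$ by Hardy's inequality and the Sobolev control of $u_x - U_x$. Once this decomposition is in place, each of the three barrier arguments closes without requiring the favorable pressure condition $p_x\leq 0$.
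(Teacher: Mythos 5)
Your plan is genuinely different from the paper's and, for parts (ii) and (iii), arguably more rigorous. The paper proves (i) by integrating the vorticity equation in time and asserting the source terms decay faster in $y$ than $\omega$ itself; it proves (ii) by a difference-quotient argument applied to the two-sided bound from (i); and it obtains (iii) by dividing (ii) by the lower bound in (i) pointwise. You instead invoke the parabolic comparison principle throughout: barriers $\kappa_\pm(t)(1+y)^{-\theta}$ for (i), a barrier for $\eta = u_{yy}$ from the differentiated vorticity equation for (ii), and a maximum principle for the evolution equation you derive for $\phi = (1+y)u_{yy}/u_y$ in (iii). Your decomposition $v = -yU_x + (v + yU_x)$ to tame the unbounded transport coefficient is in the spirit of the paper's later estimates. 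What your approach buys is robustness: the paper's difference-quotient display for (ii) does not converge to $-\theta\tilde c_2(1+y)^{-\theta-1}$ when $\tilde c_1\neq\tilde c_2$ (the numerator fails to vanish as $h\to 0$), whereas deriving a genuine parabolic equation for $\eta$ and comparing is the standard and reliable way to upgrade the decay by one power.

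One warning and one correction. Your suggested "alternatively, combine (i) with weighted Sobolev embedding" does not by itself reach the $(1+y)^{-\theta-1}$ rate in (ii): the lower bound $\omega_0\geq c_1(1+y)^{-\theta}$ together with $\omega_0\in L^2_\ell$ forces $\theta>\ell+\tfrac12$, while the weighted Sobolev bound on $\omega\in\mathcal{H}^k_\ell$ only gives the pointwise decay $|u_{yy}|\lesssim(1+y)^{-\ell-1}$, which is strictly weaker. The right way to initialize the $\eta$-barrier at $t=0$ is not by Sobolev embedding but by writing $u_{0,yy}=\tfrac{\partial_{yy}u_0}{\partial_y u_0}\cdot\omega_0$ and using the two-sided bound on $\omega_0$ together with $\big|(1+y)\tfrac{\partial_{yy}u_0}{\partial_y u_0}\big|_\infty<\infty$ (implicit in the lemma's third conclusion being a meaningful bound), which gives $|u_{0,yy}|\leq c_2\big|(1+y)\tfrac{\partial_{yy}u_0}{\partial_y u_0}\big|_\infty(1+y)^{-\theta-1}$; you should state this chain explicitly so that the comparison argument for (ii) is fully initialized. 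You should also note that the comparison on $\{y\geq y_0\}$ requires the trace of $\eta$ at $y=y_0$ to be controlled uniformly for $t\leq T$ (available from the standing a priori Sobolev bounds), at the price of allowing $\tilde c_1(t),\tilde c_2(t),c_3(t)$ to degrade exponentially in $t$, which the lemma's statement permits.
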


\begin{proof}
The proof of $(\ref{Sect2_Preliminary_Estimates0})$ can follow from Taylor series expansion argument and pointwise interpolation argument (see \cite{Masmoudi_Wong_2012}).

$\omega_t = \omega_{yy} - u\omega_x - v\omega_y$, whose right hand side only contains higher order space decay terms than $\omega$ as long as $\ell>1$.
Thus,
\begin{equation*}
\begin{array}{ll}
\omega = \omega_0 + \int_0^t \text{ [higher order space decay terms] } \,\mathrm{d}t.
\end{array}
\end{equation*}
Namely,
except for the coefficients evolving in time, $\omega$ and $\omega_0$ have the same leading order of space decay, by which
higher order space decay terms are dominated.
That is $0< \tilde{c}_1(t)\cdot (1+y)^{-\theta} \leq \omega \leq \tilde{c}_2(t)\cdot (1+y)^{-\theta},\ y\gg 1$.

By the definition of the limit, for any small $h>0,\ y\gg 1$,
\begin{equation}\label{Sect2_Preliminary_Estimates_0}
\begin{array}{ll}
u_{yy}(t,x,y) \leq \frac{\tilde{c}_2(1+y+h)^{-\theta} - \tilde{c}_1(1+y)^{-\theta}}{h}
\rto -\theta \tilde{c}_2 (1+y)^{-\theta-1}, \quad \text{as}\ h\rto 0, \\[7pt]

u_{yy}(t,x,y) \geq \frac{\tilde{c}_1(1+y+h)^{-\theta} - \tilde{c}_2(1+y)^{-\theta}}{h}
\rto -\theta \tilde{c}_1 (1+y)^{-\theta-1}, \quad \text{as}\ h\rto 0,
\end{array}
\end{equation}
Take $c_3(t) = \max\{\theta \tilde{c}_1(t), \theta \tilde{c}_2(t)\}$.

$|(1+y)\frac{u_{yy}}{u_y}|_{\infty} \leq c_4M|(1+y)\frac{\partial_{yy} u_0}{\partial_y u_0}|_{\infty}$, due to the estimates of $(1+y)^2 u_{yy}$ and $(1+y)u_y$.
Thus, Lemma $\ref{Sect2_Preliminary_Lemma0}$ is proved.
\end{proof}

\begin{remark}\label{Sect2_Preliminary_Remark0}
For the shear flow which is the heat equation, \cite{Xu_Zhang_2015} proved the shear flow has the same lower and upper orders of algebraic decay rates with those of the initial data by using Peetre's inequality.
\end{remark}

\begin{lemma}\label{Sect2_Preliminary_Lemma1}
If $|(1+y)\frac{u_{yy}}{u_y}|_{\infty} \leq c_4M|(1+y)\frac{\partial_{yy} u_0}{\partial_y u_0}|_{\infty}$,
then there exist two positive constants $c_5,c_6$ such that
\begin{equation}\label{Sect2_Preliminary_Estimates1}
\begin{array}{ll}
c_5\|W_{\alpha,\sigma}\|_{L^2_{\ell+\sigma}(\Omega)}\leq \|\partial_{t,x}^{\alpha}\partial_y^{\sigma}\omega\|_{L^2_{\ell+\sigma}(\Omega)}
\leq c_6\|W_{\alpha,\sigma}\|_{L^2_{\ell+\sigma}(\Omega)}, \ \sigma\geq 0,\\[8pt]

\|\partial_{t,x}^{\alpha}\tilde{u}\|_{L^2_{\ell-1}(\Omega)}
\lem \|\partial_{t,x}^{\alpha}\omega\|_{L^2_{\ell}(\Omega)}
\lem \|W_{\alpha,0}\|_{L^2_{\ell}(\Omega)}, \ \sigma= 0, \\[8pt]

\|\partial_{t,x}^{\alpha}\partial_y^{\sigma}u\|_{L^2_{\ell+\sigma-1}(\Omega)}
\lem \|\partial_{t,x}^{\alpha}\partial_y^{\sigma}\omega\|_{L^2_{\ell+\sigma}(\Omega)}
\lem \|W_{\alpha,\sigma}\|_{L^2_{\ell+\sigma}(\Omega)}, \ \sigma\geq 1.
\end{array}
\end{equation}
\end{lemma}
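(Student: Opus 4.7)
The plan is to reduce everything to a Leibniz expansion of the defining formula for $W_{\alpha,\sigma}$, combined with the weighted Hardy inequality (Lemma B.1 of \cite{Masmoudi_Wong_2012}); the two-sided equivalence in the first assertion is then obtained by absorbing a cross-term using the largeness of $\ell$. First, by the Leibniz rule,
\begin{equation*}
W_{\alpha,\sigma} \;=\; u_y\,\partial_y\!\Big(\tfrac{\partial_{t,x}^{\alpha}\partial_y^{\sigma}(u-U)}{u_y}\Big) \;=\; \partial_{t,x}^{\alpha}\partial_y^{\sigma+1}(u-U) \;-\; \tfrac{u_{yy}}{u_y}\,\partial_{t,x}^{\alpha}\partial_y^{\sigma}(u-U).
\end{equation*}
Since $U=U(t,x)$ is $y$-independent, this collapses to $W_{\alpha,\sigma} = \partial_{t,x}^{\alpha}\partial_y^{\sigma}\omega - \tfrac{u_{yy}}{u_y}\partial_{t,x}^{\alpha}\partial_y^{\sigma}u$ for $\sigma\geq 1$, and to $W_{\alpha,0} = \partial_{t,x}^{\alpha}\omega - \tfrac{u_{yy}}{u_y}\partial_{t,x}^{\alpha}\tilde{u}$ for $\sigma=0$.

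Next, because $\tilde{u}=u-U\to 0$ and, for $\sigma\geq 1$, $\partial_y^{\sigma}u = \partial_y^{\sigma-1}\omega \to 0$ as $y\to +\infty$ (from the far-field condition and the finiteness of the weighted Sobolev norms of $\omega$), weighted Hardy's inequality yields
\begin{equation*}
\|\partial_{t,x}^{\alpha}\tilde{u}\|_{L^2_{\ell-1}(\Omega)} \;\leq\; \tfrac{C}{\ell-1/2}\,\|\partial_{t,x}^{\alpha}\omega\|_{L^2_{\ell}(\Omega)},\qquad \|\partial_{t,x}^{\alpha}\partial_y^{\sigma}u\|_{L^2_{\ell+\sigma-1}(\Omega)} \;\leq\; \tfrac{C}{\ell+\sigma-1/2}\,\|\partial_{t,x}^{\alpha}\partial_y^{\sigma}\omega\|_{L^2_{\ell+\sigma}(\Omega)},
\end{equation*}
which are already the first (Hardy-type) inequalities in the second and third assertions. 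Inserting these into the identity of Step~1, together with the pointwise factorization $\tfrac{u_{yy}}{u_y} = (1+y)^{-1}\cdot(1+y)\tfrac{u_{yy}}{u_y}$, gives
\begin{equation*}
\bigl\|W_{\alpha,\sigma}-\partial_{t,x}^{\alpha}\partial_y^{\sigma}\omega\bigr\|_{L^2_{\ell+\sigma}} \;\lem\; \tfrac{|(1+y)u_{yy}/u_y|_{L^{\infty}}}{\ell+\sigma-1/2}\,\|\partial_{t,x}^{\alpha}\partial_y^{\sigma}\omega\|_{L^2_{\ell+\sigma}}.
\end{equation*}

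By Lemma \ref{Sect2_Preliminary_Lemma0} the pointwise factor is dominated by $c_4 M\,|(1+y)\partial_{yy}u_0/\partial_y u_0|_{\infty}$, a constant fixed by the initial data. Choosing $\ell_0$ large enough that this estimate is $\leq \tfrac12$ permits absorption and yields $\tfrac12\|\partial_{t,x}^{\alpha}\partial_y^{\sigma}\omega\|_{L^2_{\ell+\sigma}}\leq \|W_{\alpha,\sigma}\|_{L^2_{\ell+\sigma}}\leq \tfrac32\|\partial_{t,x}^{\alpha}\partial_y^{\sigma}\omega\|_{L^2_{\ell+\sigma}}$, establishing the first assertion with admissible constants $c_5,c_6$; the remaining bounds in the second and third assertions follow by combining this equivalence with the Hardy step. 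The main obstacle is precisely this absorption: since $|(1+y)u_{yy}/u_y|_{L^{\infty}}$ is only an $O(1)$ quantity (not small, as one sees from $\omega_0\geq c_1(1+y)^{-\theta}$ and $|u_{yy,0}|\lem(1+y)^{-\theta-1}$), the smallness required for absorption must come from the Hardy constant $1/(\ell+\sigma-1/2)$, which explains the need for $\ell_0$ to be taken suitably large as in Remark \ref{Sect1_Remark}(ii).
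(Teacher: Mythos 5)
Your proposal is correct and follows essentially the same route as the paper: expand the product in the definition of $W_{\alpha,\sigma}$ to obtain the identity $W_{\alpha,\sigma} = \partial_{t,x}^{\alpha}\partial_y^{\sigma}\omega - \frac{u_{yy}}{u_y}\,\partial_{t,x}^{\alpha}\partial_y^{\sigma}\tilde{u}$, apply the weighted Hardy inequality to the second term, and absorb it using the largeness of $\ell_0$ together with the uniform bound on $|(1+y)u_{yy}/u_y|_{\infty}$ from Lemma \ref{Sect2_Preliminary_Lemma0}. The paper's proof is the same in all essentials, the only cosmetic difference being that it records explicit constants $c_5 = (1 + \tfrac{2}{2\ell_0-1}c_4 M|(1+y)\partial_{yy}u_0/\partial_y u_0|_{\infty})^{-1}$ and $c_6=2$ rather than the student's symmetric $\tfrac12, \tfrac32$ obtained from the deviation bound.
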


\begin{proof}
Apply Hardy's inequality (see \cite{Masmoudi_Wong_2012}, Lemma B.1) to $W_{\alpha,\sigma} = \partial_{t,x}^{\alpha}\partial_y^{\sigma}\omega - \partial_{t,x}^{\alpha}\partial_y^{\sigma} \tilde{u}\frac{u_{yy}}{u_y}$,
we get the estimate:
\begin{equation}\label{Sect2_Preliminary_Estimates1_1}
\begin{array}{ll}
\|W_{\alpha,\sigma}\|_{L^2_{\ell+\sigma}(\Omega)} = \|\partial_{t,x}^{\alpha}\partial_y^{\sigma}\omega\|_{L^2_{\ell}(\Omega)}
 + \|(1+y)^{\ell+\sigma}\partial_{t,x}^{\alpha}\partial_y^{\sigma} \tilde{u}\frac{u_{yy}}{u_y}\|_{L^2(\Omega)} \\[7pt]

\leq \|\partial_{t,x}^{\alpha}\partial_y^{\sigma}\omega\|_{L^2_{\ell+\sigma}(\Omega)}
 + |(1+y)\frac{u_{yy}}{u_y}|_{\infty}\|(1+y)^{\ell+\sigma-1}\partial_{t,x}^{\alpha}\partial_y^{\sigma} \tilde{u}\|_{L^2(\Omega)} \\[7pt]

\leq \|\partial_{t,x}^{\alpha}\partial_y^{\sigma}\omega\|_{L^2_{\ell+\sigma}(\Omega)}
 + |(1+y)\frac{u_{yy}}{u_y}|_{\infty}\cdot\frac{2}{2\ell+2\sigma-1}\|(1+y)^{\ell+\sigma}\partial_{t,x}^{\alpha}\partial_y^{\sigma} \omega\|_{L^2(\Omega)} \\[7pt]

\leq \frac{1}{c_5} \|\partial_{t,x}^{\alpha}\partial_y^{\sigma}\omega\|_{L^2_{\ell+\sigma}(\Omega)}.
\end{array}
\end{equation}
where $c_5: = (1+\frac{2}{2\ell_0-1}c_4M|(1+y)\frac{\partial_{yy} u_0}{\partial_y u_0}|_{\infty})^{-1}$ is independent of $T$.

\vspace{0.2cm}
Since $\partial_{t,x}^{\alpha}\partial_y^{\sigma}\omega = W_{\alpha,\sigma} + \partial_{t,x}^{\alpha}\partial_y^{\sigma} \tilde{u}\frac{u_{yy}}{u_y}$,
\begin{equation}\label{Sect2_Preliminary_Estimates1_2}
\begin{array}{ll}
\|\partial_{t,x}^{\alpha}\partial_y^{\sigma}\omega\|_{L^2_{\ell+\sigma}(\Omega)}
\leq \|W_{\alpha,\sigma}\|_{L^2_{\ell+\sigma}(\Omega)}
+ \|\partial_{t,x}^{\alpha}\partial_y^{\sigma} \tilde{u}\frac{u_{yy}}{u_y}\|_{L^2_{\ell+\sigma}(\Omega)} \\[7pt]

\leq \|W_{\alpha,\sigma}\|_{L^2_{\ell+\sigma}(\Omega)}
+ |(1+y)\frac{u_{yy}}{u_y}|_{\infty}\|(1+y)^{\ell+\sigma-1}\partial_{t,x}^{\alpha}\partial_y^{\sigma} \tilde{u}\|_{L^2(\Omega)} \\[7pt]

\leq \|W_{\alpha,\sigma}\|_{L^2_{\ell+\sigma}(\Omega)}
+ |(1+y)\frac{u_{yy}}{u_y}|_{\infty}\cdot\frac{2}{2\ell+2\sigma-1}\|(1+y)^{\ell+\sigma}\partial_{t,x}^{\alpha}\partial_y^{\sigma} \omega\|_{L^2(\Omega)}\\[7pt]

\leq \|W_{\alpha,\sigma}\|_{L^2_{\ell+\sigma}(\Omega)}
+ c_4M|(1+y)\frac{\partial_{yy} u_0}{\partial_y u_0}|_{\infty}\cdot\frac{2}{2\ell+2\sigma-1}\|(1+y)^{\ell+\sigma}\partial_{t,x}^{\alpha}\partial_y^{\sigma} \omega\|_{L^2(\Omega)}.
\end{array}
\end{equation}

Let $\ell_0 \geq 2c_4M|(1+y)\frac{\partial_{yy} u_0}{\partial_y u_0}|_{\infty} +\frac{1}{2}$ which is independent of $T$, then it follows from $(\ref{Sect2_Preliminary_Estimates1_2})$ that
\begin{equation}\label{Sect2_Preliminary_Estimates1_3}
\begin{array}{ll}
\|\partial_{t,x}^{\alpha}\partial_y^{\sigma}\omega\|_{L^2_{\ell+\sigma}(\Omega)}
\leq 2\|W_{\alpha,\sigma}\|_{L^2_{\ell+\sigma}(\Omega)}.
\end{array}
\end{equation}
Take $c_6=2$. Thus, Lemma $\ref{Sect2_Preliminary_Lemma1}$ is proved.
\end{proof}

\begin{lemma}\label{Sect2_Preliminary_Lemma2}
Assume $|F|_{\infty} + |(1+y)\partial_y F|_{\infty} \leq c_7\mathcal{L}$, there exists a positive constant $c_8$ such that
\begin{equation}\label{Sect2_Preliminary_Estimates2}
\begin{array}{ll}
\|u_y\partial_y(\frac{\partial_{t,x}^{\alpha}\partial_y^{\sigma}(u-U)}{u_y}F)\|_{L^2_{\ell+\sigma}(\Omega)}
\leq c_8\mathcal{L}\|W_{\alpha,\sigma}\|_{L^2_{\ell+\sigma}(\Omega)}, \ \sigma\geq 0.
\end{array}
\end{equation}
\end{lemma}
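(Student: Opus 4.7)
The plan is to expand the derivative by the Leibniz rule, peel off the factor $F$ from the expression that already equals $W_{\alpha,\sigma}$, and then convert the leftover term by one application of the weighted Hardy inequality packaged in Lemma \ref{Sect2_Preliminary_Lemma1}. Explicitly, I would write
\begin{equation*}
u_y\partial_y\Bigl(\frac{\partial_{t,x}^{\alpha}\partial_y^{\sigma}(u-U)}{u_y}\,F\Bigr)
= F\cdot u_y\partial_y\Bigl(\frac{\partial_{t,x}^{\alpha}\partial_y^{\sigma}(u-U)}{u_y}\Bigr)
+ \partial_{t,x}^{\alpha}\partial_y^{\sigma}(u-U)\cdot\partial_y F
= F\cdot W_{\alpha,\sigma} + \partial_{t,x}^{\alpha}\partial_y^{\sigma}(u-U)\cdot\partial_y F,
\end{equation*}
and then control the two summands separately in $L^2_{\ell+\sigma}(\Omega)$.

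For the first summand, the hypothesis $|F|_{\infty}\leq c_7\mathcal{L}$ immediately yields $\|F\cdot W_{\alpha,\sigma}\|_{L^2_{\ell+\sigma}(\Omega)}\leq c_7\mathcal{L}\,\|W_{\alpha,\sigma}\|_{L^2_{\ell+\sigma}(\Omega)}$. For the second summand I would absorb one factor of $(1+y)$ into $\partial_y F$, using the other half of the hypothesis $|(1+y)\partial_y F|_{\infty}\leq c_7\mathcal{L}$, to obtain
\begin{equation*}
\|\partial_{t,x}^{\alpha}\partial_y^{\sigma}(u-U)\cdot\partial_y F\|_{L^2_{\ell+\sigma}(\Omega)}
\leq |(1+y)\partial_y F|_{\infty}\,\|\partial_{t,x}^{\alpha}\partial_y^{\sigma}(u-U)\|_{L^2_{\ell+\sigma-1}(\Omega)}
\leq c_7\mathcal{L}\,\|\partial_{t,x}^{\alpha}\partial_y^{\sigma}(u-U)\|_{L^2_{\ell+\sigma-1}(\Omega)}.
\end{equation*}

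At this stage I would split into the two cases and invoke Lemma \ref{Sect2_Preliminary_Lemma1}: when $\sigma=0$, the second and third inequalities of \eqref{Sect2_Preliminary_Estimates1} give $\|\partial_{t,x}^{\alpha}\tilde u\|_{L^2_{\ell-1}(\Omega)}\lesssim\|W_{\alpha,0}\|_{L^2_{\ell}(\Omega)}$; when $\sigma\geq 1$, since $\partial_y^{\sigma}U\equiv 0$ we have $\partial_{t,x}^{\alpha}\partial_y^{\sigma}(u-U)=\partial_{t,x}^{\alpha}\partial_y^{\sigma}u$, and the last line of \eqref{Sect2_Preliminary_Estimates1} gives $\|\partial_{t,x}^{\alpha}\partial_y^{\sigma}u\|_{L^2_{\ell+\sigma-1}(\Omega)}\lesssim\|W_{\alpha,\sigma}\|_{L^2_{\ell+\sigma}(\Omega)}$. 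Summing the two summands then yields the conclusion with $c_8$ depending only on $c_7$ and the constants supplied by Lemma \ref{Sect2_Preliminary_Lemma1} (and hence ultimately on $\ell_0$, $c_4$, $M$, and $|(1+y)\partial_{yy}u_0/\partial_y u_0|_{\infty}$).

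There is no real obstacle here beyond careful weight bookkeeping; the one point that deserves attention is the legitimacy of the Hardy step, which requires that $\partial_{t,x}^{\alpha}\partial_y^{\sigma}(u-U)$ decays at $y=+\infty$. This is automatic for $\sigma\geq 1$ because $\partial_y^{\sigma}u\to 0$, and for $\sigma=0$ it follows from $u\to U$ together with the assumed Sobolev regularity of $\tilde u=u-U$, so Lemma \ref{Sect2_Preliminary_Lemma1} applies in both regimes and the argument closes cleanly.
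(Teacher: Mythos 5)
Your proof is correct and follows essentially the same route as the paper's: the identical Leibniz split $u_y\partial_y(\tfrac{\partial_{t,x}^{\alpha}\partial_y^{\sigma}\tilde u}{u_y}F)=F\cdot W_{\alpha,\sigma}+\partial_y F\cdot\partial_{t,x}^{\alpha}\partial_y^{\sigma}\tilde u$, the identical trade of a weight factor against $|(1+y)\partial_y F|_{\infty}$, and then the conversion of $\|\partial_{t,x}^{\alpha}\partial_y^{\sigma}\tilde u\|_{L^2_{\ell+\sigma-1}}$ into $\|W_{\alpha,\sigma}\|_{L^2_{\ell+\sigma}}$. The only cosmetic difference is that you cite the packaged estimates of Lemma~\ref{Sect2_Preliminary_Lemma1} (which already encode the Hardy step), whereas the paper re-applies Hardy's inequality inline and then invokes only the first line of $(\ref{Sect2_Preliminary_Estimates1})$; your version is arguably tidier and avoids the stray $\mathcal{L}$ factor that appears mid-chain in the paper's display $(\ref{Sect2_Preliminary_Estimates2_1})$.
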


\begin{proof}
Since $u_y\partial_y(\frac{\partial_{t,x}^{\alpha}\partial_y^{\sigma}(u-U)}{u_y}F) = F\cdot W_{\alpha,\sigma}
+ \partial_y F\cdot \partial_{t,x}^{\alpha}\partial_y^{\sigma}\tilde{u}$,
\begin{equation}\label{Sect2_Preliminary_Estimates2_1}
\begin{array}{ll}
\|u_y\partial_y(\frac{\partial_{t,x}^{\alpha}(u-U)}{u_y}F)\|_{L^2_{\ell+\sigma}(\Omega)} \\[7pt]

\leq |F|_{\infty}\|W_{\alpha,\sigma}\|_{L^2_{\ell+\sigma}(\Omega)}
+ |(1+y)\partial_y F|_{\infty}\|(1+y)^{\ell+\sigma-1}\partial_{t,x}^{\alpha}\partial_y^{\sigma}\tilde{u}\|_{L^2(\Omega)} \\[7pt]

\leq |F|_{\infty}\|W_{\alpha,\sigma}\|_{L^2_{\ell+\sigma}(\Omega)}
+ |(1+y)\partial_y F|_{\infty}\cdot\frac{2}{2\ell+2\sigma-1}\|(1+y)^{\ell+\sigma}\partial_{t,x}^{\alpha}\partial_y^{\sigma} \omega\|_{L^2(\Omega)} \\[7pt]

\leq |F|_{\infty}\|W_{\alpha,\sigma}\|_{L^2_{\ell+\sigma}(\Omega)}
+ |(1+y)\partial_y F|_{\infty}\cdot\frac{2}{2\ell+2\sigma-1}c_6 \mathcal{L}\|W_{\alpha,\sigma}\|_{L^2_{\ell+\sigma}(\Omega)} \\[7pt]

= c_8\mathcal{L} \|W_{\alpha,\sigma}\|_{L^2_{\ell+\sigma}(\Omega)},
\end{array}
\end{equation}
where $c_8 = c_7\max\{1, \frac{2}{c_6(2\ell_0-1)}\}<+\infty$.
Thus, Lemma $\ref{Sect2_Preliminary_Lemma2}$ is proved.
\end{proof}

\begin{lemma}\label{Sect2_Preliminary_Lemma3}
Assume $|U_x|_{\infty}$, $|F|_{\infty}$ and $\|F\|_{L^2_y(\mathbb{R}_{+})}$ are bounded, then
\begin{equation}\label{Sect2_Preliminary_Estimates3}
\begin{array}{ll}
\|(1+y)^{-1} v\|_{L^{\infty}(\Omega)}
\leq c_9\|\tilde{u}_x\|_{H^2_{\ell}(\Omega)} + |U_x|_{\infty}, \\[8pt]

\|F\cdot(1+y)^{-1} \partial_{t,x}^{\alpha^{\prime}} v\|_{L^2(\Omega)}
\leq 2|F|_{\infty}\|\partial_{t,x}^{\alpha^{\prime}}\partial_x\tilde{u} \|_{L^2(\Omega)} \\[3pt]\hspace{4.3cm}

+ \|\partial_{t,x}^{\alpha^{\prime}}\partial_x U\|_{\Omega\cap{\{y=+\infty\}}}\|F\|_{L^2_y(\mathbb{R}_{+})}.
\end{array}
\end{equation}
\end{lemma}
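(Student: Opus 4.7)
The plan is to start from the representation
\[
v(t,x,y) \;=\; -\int_0^y u_x(t,x,\tilde y)\, d\tilde y,
\]
obtained by integrating the divergence-free condition $u_x + v_y = 0$ against $v|_{y=0} = 0$. I would then decompose $u_x = \tilde u_x + U_x$ and use that $U_x = U_x(t,x)$ is $y$-independent to rewrite
\[
v(t,x,y) \;=\; -\int_0^y \tilde u_x(t,x,\tilde y)\, d\tilde y \;-\; y\, U_x(t,x).
\]
This cleanly isolates the decaying piece carried by $\tilde u_x$ from the $O(y)$ growth forced by the nontrivial Euler flow, which is exactly what motivates the weight $(1+y)^{-1}$ in the statement.

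For the first inequality in $(\ref{Sect2_Preliminary_Estimates3})$, I divide by $1+y$ and take $L^\infty(\Omega)$. The $U$-piece is immediate: $|\tfrac{y}{1+y}\,U_x| \leq |U_x|_\infty$. For the $\tilde u_x$-piece the pointwise bound
\[
(1+y)^{-1}\Big|\int_0^y \tilde u_x(t,x,\tilde y)\, d\tilde y\Big| \;\leq\; \tfrac{y}{1+y}\,\|\tilde u_x(t,x,\cdot)\|_{L^\infty_y} \;\leq\; \|\tilde u_x\|_{L^\infty(\Omega)}
\]
reduces matters to the Sobolev embedding $H^2(\Omega) \hookrightarrow L^\infty(\Omega)$, which is valid since $\dim\Omega \leq 3$. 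Because $(1+y)^\ell \geq 1$ makes $H^2_\ell$ stronger than unweighted $H^2$, one obtains $\|\tilde u_x\|_{L^\infty(\Omega)} \leq c_9\|\tilde u_x\|_{H^2_\ell(\Omega)}$, which closes the first estimate.

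For the second inequality, I would apply $\partial_{t,x}^{\alpha^{\prime}}$ to the representation of $v$, multiply by $F/(1+y)$, and split:
\[
F(1+y)^{-1}\partial_{t,x}^{\alpha^{\prime}} v \;=\; -F(1+y)^{-1}\int_0^y \partial_{t,x}^{\alpha^{\prime}}\partial_x \tilde u\, d\tilde y \;-\; F\,\tfrac{y}{1+y}\,\partial_{t,x}^{\alpha^{\prime}}\partial_x U.
\]
Taking $L^2(\Omega)$ norms and invoking the triangle inequality, I treat the two pieces separately. For the first, I extract $|F|_\infty$ and apply Hardy's inequality in $y$: the bound $\|(1+y)^{-1}\int_0^y f\, d\tilde y\|_{L^2_y} \leq 2\|f\|_{L^2_y}$ follows from the classical Hardy inequality $\|y^{-1}\int_0^y f\, d\tilde y\|_{L^2_y} \leq 2\|f\|_{L^2_y}$ via the pointwise comparison $(1+y)^{-1} \leq y^{-1}$; integrating in $(t,x)$ by Fubini then yields $2|F|_\infty\|\partial_{t,x}^{\alpha^{\prime}}\partial_x \tilde u\|_{L^2(\Omega)}$. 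For the second piece, $\partial_{t,x}^{\alpha^{\prime}}\partial_x U$ is independent of $y$, so it factors out of the $y$-integral; using $\tfrac{y}{1+y} \leq 1$ one gets $\|F\|_{L^2_y(\mathbb{R}_+)}\|\partial_{t,x}^{\alpha^{\prime}}\partial_x U\|_{L^2_{t,x}}$, and the latter norm coincides with $\|\partial_{t,x}^{\alpha^{\prime}}\partial_x U\|_{\Omega\cap\{y=+\infty\}}$ since $U$ has no $y$-dependence.

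This is a preliminary bookkeeping lemma and there is essentially no serious obstacle; the only mild point is verifying that Hardy's inequality transfers cleanly from the weight $y^{-1}$ to $(1+y)^{-1}$, which is immediate from the pointwise comparison above. The real content of the lemma is the structural splitting of $v$ into the $\tilde u_x$ integral, on which Sobolev/Hardy bite, and the explicit $y\,U_x$ term, which accounts separately for the unbounded growth arising from the general Euler flow.
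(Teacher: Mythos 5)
Your proof is correct. For the second inequality you follow essentially the same route as the paper: split $\partial_{t,x}^{\alpha^\prime}v$ into the $\tilde u_x$-integral and the $y$-linear Euler term, control the former via Hardy's inequality in $y$ (extracting $|F|_\infty$), and control the latter by Fubini using $\tfrac{y}{1+y}\le 1$; your derivation of the weighted Hardy bound $\|(1+y)^{-1}\int_0^y f\|_{L^2_y}\le 2\|f\|_{L^2_y}$ from the classical $y^{-1}$ version via pointwise comparison is a valid substitute for the Masmoudi--Wong lemma (with boundary term) that the paper invokes.

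Where you genuinely diverge from the paper is in the $L^\infty$ estimate. The paper estimates $(1+y)^{-1}\int_0^y\tilde u_x$ in $L^2$ via Hardy, and then ``continuing using Hardy's inequality'' bounds the derivatives of this quantity as well, so that Sobolev embedding applied to $(1+y)^{-1}\int_0^y\tilde u_x$ itself yields the $L^\infty$ bound. You instead make the pointwise observation $(1+y)^{-1}\bigl|\int_0^y\tilde u_x\bigr|\le\tfrac{y}{1+y}\,\|\tilde u_x(t,x,\cdot)\|_{L^\infty_y}\le\|\tilde u_x\|_{L^\infty(\Omega)}$ and then apply Sobolev embedding directly to $\tilde u_x$, using that the weight $(1+y)^{\ell+\sigma}\ge 1$ makes $H^2_\ell$ dominate unweighted $H^2$. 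This is cleaner: it eliminates the repeated Hardy applications entirely and makes the dependence on $\|\tilde u_x\|_{H^2_\ell}$ transparent. Both arguments deliver the stated bound; yours is more elementary and more self-contained, while the paper's keeps everything routed through the Hardy machinery it uses systematically elsewhere.
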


\begin{proof}
We estimate $(\ref{Sect2_Preliminary_Estimates3})_1$:
\begin{equation}\label{Sect2_Preliminary_Estimates3_1}
\begin{array}{ll}
\|(1+y)^{-1} v\|_{L^{\infty}}
\leq \|(1+y)^{-1} \int_0^y \tilde{u}_x \,\mathrm{d}\tilde{y}\|_{L^{\infty}}
+ \|(1+y)^{-1} \int_0^y U_x \,\mathrm{d}\tilde{y}\|_{L^{\infty}} \\[9pt]\hspace{2.55cm}
\leq \|(1+y)^{-1} \int_0^y \tilde{u}_x \,\mathrm{d}\tilde{y}\|_{L^{\infty}}
+ |U_x|_{\infty}.
\end{array}
\end{equation}
At first, we investigate $L^2$-norm $\|(1+y)^{-1} \int_0^y \tilde{u}_x \,\mathrm{d}\tilde{y}\|_{L^2(\Omega)}$. Apply Hardy's inequality (see \cite{Masmoudi_Wong_2012}, Lemma B.1) to $\|(1+y)^{-1} \int_0^y \tilde{u}_x \,\mathrm{d}\tilde{y}\|_{L^2(\Omega)}$, we get
\begin{equation}\label{Sect2_Preliminary_Estimates3_2}
\begin{array}{ll}
\|(1+y)^{-1} \int_0^y \tilde{u}_x \,\mathrm{d}\tilde{y}\|_{L^2(\Omega)} \\[8pt]
\leq \|(1+y)^{-1} \int_0^y \tilde{u}_x \,\mathrm{d}\tilde{y}\big|_{y=0}\|_{L^2(\partial\Omega)}
+2\|\partial_y\int_0^y \tilde{u}_x \,\mathrm{d}\tilde{y}\|_{L^2(\Omega)}
\leq 2\|\tilde{u}_x\|_{L^2(\Omega)}.
\end{array}
\end{equation}
Continuing using Hardy's inequality,
it is easy to show $\|(1+y)^{-1} v\|_{L^{\infty}(\Omega)}\leq c_9\|\tilde{u}_x\|_{H^2_{\ell}(\Omega)} + |U_x|_{\infty}$ for some $c_9>0$.

Next, we estimate $(\ref{Sect2_Preliminary_Estimates3})_2$:
\begin{equation}\label{Sect2_Preliminary_Estimates3_3}
\begin{array}{ll}
\|F\cdot(1+y)^{-1} \partial_{t,x}^{\alpha^{\prime}} v\|_{L^2(\Omega)}
\\[6pt]
\leq \|F\cdot(1+y)^{-1} \int\limits_0^y \partial_{t,x}^{\alpha^{\prime}}\partial_x\tilde{u} \,\mathrm{d}\tilde{y}\|_{L^2(\Omega)}
+ \|F\cdot(1+y)^{-1} \int\limits_0^y \partial_{t,x}^{\alpha^{\prime}}\partial_x U \,\mathrm{d}\tilde{y}\|_{L^2(\Omega)} \\[9pt]
\leq 2|F|_{\infty}\|\partial_{t,x}^{\alpha^{\prime}}\partial_x\tilde{u} \|_{L^2(\Omega)}
+ \|\partial_{t,x}^{\alpha^{\prime}}\partial_x U\|_{\Omega\cap{\{y=+\infty\}}}\|F\|_{L^2_y(\mathbb{R}_{+})}.
\end{array}
\end{equation}
Thus, Lemma $\ref{Sect2_Preliminary_Lemma3}$ is proved.
\end{proof}

\subsection{Estimates for Tangential Derivatives}
We have the interior estimates and boundary estimates for the tangential derivatives as the following lemma stated:
\begin{lemma}\label{Sect2_1_Existence_Estimate_Lemma}
Assume $\delta_{\beta}\leq\beta<+\infty$, $\ell\geq\ell_0$,
$W_{\alpha}$ satisfies $(\ref{Sect1_Existence_VorticityEq_1})$, then $W_{\alpha}$ satisfies the a priori estimate $(\ref{Sect1_Existence_Estimate1})$.
\end{lemma}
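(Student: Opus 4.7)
The plan is to prove (\ref{Sect1_Existence_Estimate1}) by a weighted $L^2_\ell$ energy estimate on the interior equation (\ref{Sect1_Existence_VorticityEq_1})$_1$, using the Robin boundary condition (\ref{Sect1_Existence_VorticityEq_1})$_2$ to convert the $y$-boundary trace that arises upon integration by parts into the boundary time-derivative term on the left-hand side of (\ref{Sect1_Existence_Estimate1}), plus controllable lower-order pieces. First I would multiply (\ref{Sect1_Existence_VorticityEq_1})$_1$ by $(1+y)^{2\ell}W_\alpha$ and integrate over $\mathbb{R}^2_+$. The time derivative yields $\frac{1}{2}\frac{d}{dt}\|W_\alpha\|^2_{L^2_\ell}$; the transport $u\partial_x W_\alpha$ integrated by parts in $x$ produces $-\frac{1}{2}\int u_x(1+y)^{2\ell}W_\alpha^2$, a lower-order $\mathcal{L}$-term; and integration by parts in $y$ on $-\partial_{yy}W_\alpha$ gives the dissipation $\|\partial_y W_\alpha\|^2_{L^2_\ell}$, a weight-crossing term $-2\ell\int(1+y)^{2\ell-1}\partial_y W_\alpha\cdot W_\alpha$ that for $\ell_0$ sufficiently large is absorbed into the dissipation via Young's inequality, and the crucial boundary trace $\int_{\mathbb{R}}\partial_y W_\alpha|_{y=0}\,W_\alpha|_{y=0}\,dx$.

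The central step is to eliminate $\partial_y W_\alpha|_{y=0}$ via (\ref{Sect1_Existence_VorticityEq_1})$_2$, which rewrites this trace as
\[
\int_{\mathbb{R}}\frac{(\partial_t+u\partial_x)W_\alpha}{\beta-u_{yy}/u_y}W_\alpha\,dx - \int_{\mathbb{R}}\frac{u_{yy}}{u_y}W_\alpha^2\,dx + \int_{\mathbb{R}}\frac{Q_2 W_\alpha^2}{\beta-u_{yy}/u_y}\,dx - \int_{\mathbb{R}}\text{RHS}\cdot W_\alpha\,dx.
\]
Writing $(\partial_t+u\partial_x)W_\alpha\cdot W_\alpha = \frac{1}{2}(\partial_t+u\partial_x)W_\alpha^2$ and integrating by parts in $t$ and $x$ against $1/(\beta-u_{yy}/u_y)$ extracts exactly $\frac{1}{2}\frac{d}{dt}\int(W_\alpha|_{y=0})^2/(\beta-u_{yy}/u_y)\,dx$; the remainders coming from $\partial_t$ or $\partial_x$ hitting $1/(\beta-u_{yy}/u_y)$, from the $u_x$ term, and from $Q_2$ are absorbed into the boundary sum $\mathcal{L}\sum_{|\alpha'|\leq|\alpha|}\int W_{\alpha'}^2/(\beta-u_{yy}/u_y)\,dx$ on the right. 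The hypothesis $\beta\geq\delta_\beta$ with $\delta_\beta$ suitably large is essential here: combined with Lemma~\ref{Sect2_Preliminary_Lemma0}, it keeps $\beta-u_{yy}/u_y$ uniformly bounded away from zero, so that $1/(\beta-u_{yy}/u_y)$ and its first derivatives have uniform $L^\infty$ bounds.

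It remains to bound the right-hand side of (\ref{Sect1_Existence_VorticityEq_1})$_1$ after pairing with $(1+y)^{2\ell}W_\alpha$. The $Q_1 W_\alpha$ piece and all $u_y\partial_y(\cdots)$ pieces fit the structure of Lemma~\ref{Sect2_Preliminary_Lemma2}, giving $\mathcal{L}\|W_{\alpha'}\|_{L^2_\ell}$ bounds for $|\alpha'|\leq|\alpha|$. The commutator $-u_y\partial_y\frac{[\partial^\alpha_{t,x},u_y]v}{u_y}$ requires the most care: its leading piece after expansion is $\partial^{\alpha_1}u_{yy}\cdot\partial^{\alpha_2}v$ with $|\alpha_1|\geq 1$ and $|\alpha_1|+|\alpha_2|=|\alpha|$, for which an integration by parts in $x$ (shifting one derivative off $u_{yy}$) together with Lemma~\ref{Sect2_Preliminary_Lemma3} to handle $(1+y)^{-1}v$ produces the $\mathcal{L}\sum_{|\alpha'|\leq|\alpha|-1}\|W_{\alpha',1}\|^2_{L^2_{\ell+1}}$ and $\|U\|^2_{H^{|\alpha|+1}}$ contributions; the bottom-level case where $W_{\alpha',1}$ reduces to $W_{0,1}\equiv 0$ cannot be absorbed and yields precisely the $\mathcal{L}\|\omega_y\|^2_{L^2_{\ell+1}}$ term in (\ref{Sect1_Existence_Estimate1}). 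The remaining commutators $[\partial^\alpha_{t,x},u\partial_x]\tilde{u}$ and $[\partial^\alpha_{t,x},\tilde{u}\partial_x]U$ are handled by Moser-type estimates with $L^\infty$ bounds on low-order factors absorbed into $\mathcal{L}$, returning $\mathcal{L}\sum\|W_{\alpha'}\|^2$ and the $\|U\|^2_{H^{|\alpha|+1}}$ source.

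I expect the hard part to be the boundary bookkeeping in the second paragraph: pulling out a clean $\frac{d}{dt}$ of the boundary quadratic form requires tracking every $\partial_t$ and $\partial_x$ that lands on the weight $1/(\beta-u_{yy}/u_y)$ and verifying that all the remainders are either absorbable by the already-stated RHS terms or controllable by $\mathcal{L}$ times norms already on the RHS, without any hidden $\beta$-dependence. This step is simultaneously where the $\beta$-uniformity is secured, as needed later for passing $\beta\to+\infty$ to the Dirichlet problem (\ref{Sect1_PrandtlEq_Dirichlet}).
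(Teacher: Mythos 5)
Your strategy coincides with the paper's: pair $(\ref{Sect1_Existence_VorticityEq_1})_1$ with $(1+y)^{2\ell}W_\alpha$ over $\mathbb{R}_{+}^2$ and $(\ref{Sect1_Existence_VorticityEq_1})_2$ with $W_\alpha$ over $\mathbb{R}$, sum so the trace $\int_{\mathbb{R}}\partial_y W_\alpha|_{y=0}\,W_\alpha|_{y=0}\,\mathrm{d}x$ cancels, then pull $\tfrac12\frac{d}{dt}$ of the boundary quadratic form out of $\int\frac{\partial_t W_\alpha\cdot W_\alpha}{\beta - u_{yy}/u_y}\,\mathrm{d}x$ with the remainders controlled by $\mathcal L$ and the lower bound on $\beta-u_{yy}/u_y$. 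Your readings of the role of $\beta\geq\delta_\beta$ and of the origin of the $\mathcal L\|\omega_y\|^2_{L^2_{\ell+1}}$ term (the degeneracy $W_{0,1}\equiv 0$) are both correct.

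One step as written would not close, though. For the top commutator contribution
\[
I_{2,1}=\iint\limits_{\mathbb{R}_{+}^2}\sum_{\alpha_1>0}\partial_y\partial_{t,x}^{\alpha_1}\tilde u\cdot\partial_{t,x}^{\alpha_2}v\,(1+y)^{2\ell}\partial_y W_\alpha\,\mathrm{d}x\,\mathrm{d}y,
\]
you propose an integration by parts in $x$ to shift a derivative off the high-order factor. Integrating by parts in $x$ deposits $\partial_x$ on $W_\alpha$, producing $\partial_x W_\alpha$, a quantity of order $|\alpha|+1$ in tangential derivatives; nothing of that order sits on the right of $(\ref{Sect1_Existence_Estimate1})$, and the Prandtl system has no $x$-dissipation to absorb it, so the term cannot be closed. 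The paper integrates by parts in $y$, the dissipative direction. In the top case $\alpha_1=\alpha$ (see $(\ref{Sect2_1_Estimates_5_2})$) it then uses $W_\alpha=\partial_{t,x}^\alpha\omega-\partial_{t,x}^\alpha\tilde u\cdot u_{yy}/u_y$ and integrates by parts in $y$ once more, converting the worst product into $\tfrac12\partial_y\bigl(\partial_{t,x}^\alpha\omega\bigr)^2\cdot v$ and thence into $\bigl(\partial_{t,x}^\alpha\omega\bigr)^2\bigl(v_y+2\ell(1+y)^{-1}v\bigr)$, all of order $\mathcal L\|W_\alpha\|^2_{L^2_\ell}$; in the case $0<\alpha_1<\alpha$ (see $(\ref{Sect2_1_Estimates_5_3})$) the same $y$-integration by parts delivers $\partial_y\partial_{t,x}^{\alpha_1}\omega$, controlled by $W_{\alpha_1,1}$ via Lemma~$\ref{Sect2_Preliminary_Lemma1}$. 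With that single change of direction, the rest of your plan matches the paper.
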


\begin{proof}
Multiple $(\ref{Sect1_Existence_VorticityEq_1})_1$ with $ (1+y)^{2\ell} W_{\alpha}$, integrate in $\mathbb{R}_{+}^2$,
we get
\begin{equation}\label{Sect2_1_Estimates_1}
\begin{array}{ll}
\frac{1}{2}\frac{\mathrm{d}}{\mathrm{d}t} \|W_{\alpha}\|_{L_{\ell}^2}^2
+ \|\partial_y W_{\alpha}\|_{L_{\ell}^2}^2

+ \int\limits_{\mathbb{R}} \partial_y W_{\alpha}|_{y=0} \cdot W_{\alpha}|_{y=0} \,\mathrm{d}x
:= I_1 + I_2,
\end{array}
\end{equation}
where
\begin{equation*}
\begin{array}{ll}
I_1 := \iint\limits_{\mathbb{R}_{+}^2} \big[- u\partial_x W_{\alpha}
- Q_1 W_{\alpha} \big]
\cdot (1+y)^{2\ell} W_{\alpha} \mathrm{d}x\mathrm{d}y \\[8pt]\hspace{0.85cm}
+ \iint\limits_{\mathbb{R}_{+}^2} \big[ - u_y\partial_y(\frac{\partial_{t,x}^{\alpha} \tilde{u}}{u_y}\frac{u_{yt}}{u_y})
- u u_y\partial_y(\frac{\partial_{t,x}^{\alpha} \tilde{u}}{u_y}\frac{u_{yx}}{u_y})
+ u_y\partial_y(\frac{\partial_{t,x}^{\alpha} \tilde{u}}{u_y}\frac{u_{yyy}}{u_y}) \big] \\[12pt]\hspace{0.85cm}
\cdot (1+y)^{2\ell} W_{\alpha} \mathrm{d}x\mathrm{d}y

- 2\ell\iint\limits_{\mathbb{R}_{+}^2} \partial_y W_{\alpha}
\cdot (1+y)^{2\ell-1} W_{\alpha} \mathrm{d}x\mathrm{d}y, \\[14pt]

I_2 := - \iint\limits_{\mathbb{R}_{+}^2} \big[u_y\partial_y\frac{[\partial_{t,x}^{\alpha},\, u_y]v}{u_y}
+ u_y\partial_y\frac{[\partial_{t,x}^{\alpha},\, u\partial_x]\tilde{u}}{u_y}
+ u_y\partial_y\frac{[\partial_{t,x}^{\alpha},\, \tilde{u}\partial_x]U}{u_y} \\[9pt]\hspace{0.85cm}
+ \tilde{u} u_y\partial_y\frac{\partial_x\partial_{t,x}^{\alpha} U}{u_y} \big]
\cdot (1+y)^{2\ell} W_{\alpha} \mathrm{d}x\mathrm{d}y.
\end{array}
\end{equation*}

\noindent
Multiple $(\ref{Sect1_Existence_VorticityEq_1})_2$ with $  W_{\alpha}$, integrate in $\mathbb{R}$,
we get
\begin{equation}\label{Sect2_1_Estimates_2}
\begin{array}{ll}
\frac{1}{2}\frac{\mathrm{d}}{\mathrm{d}t} \int\limits_{\mathbb{R}} \frac{1}{\beta - \frac{u_{yy}}{u_y}|_{y=0}}
 (W_{\alpha}|_{y=0})^2 \,\mathrm{d}x

- \int\limits_{\mathbb{R}} \partial_y W_{\alpha}|_{y=0}
\cdot W_{\alpha}|_{y=0} \,\mathrm{d}x

:= I_3 + I_4,
\end{array}
\end{equation}
where
\begin{equation*}
\begin{array}{ll}
I_3 := \frac{1}{2} \int\limits_{\mathbb{R}} \partial_t(\frac{u_{yy}}{u_y})
 \frac{1}{(\beta - \frac{u_{yy}}{u_y})^2} (W_{\alpha})^2 \,\mathrm{d}x
+ \int\limits_{\mathbb{R}} \frac{u_{yy}}{u_y}
 (W_{\alpha})^2 \,\mathrm{d}x\\[11pt]\hspace{0.88cm}

- \int\limits_{\mathbb{R}}  \big(\frac{1}{\beta - \frac{u_{yy}}{u_y}}
 u\partial_x W_{\alpha} \big) \cdot W_{\alpha} \,\mathrm{d}x
- \int\limits_{\mathbb{R}}  Q_2\cdot\frac{1}{\beta - \frac{u_{yy}}{u_y}} W_{\alpha} \cdot W_{\alpha} \,\mathrm{d}x \\[10pt]\hspace{0.88cm}

+ \int\limits_{\mathbb{R}}  Q_3|_{|\alpha|=0} \cdot W_{\alpha} \,\mathrm{d}x,
\\[12pt]

I_4 := - \int\limits_{\mathbb{R}} [\partial_{t,x}^{\alpha},\,u\partial_x]\tilde{u} \cdot W_{\alpha} \,\mathrm{d}x

- \int\limits_{\mathbb{R}} [\partial_{t,x}^{\alpha},\,\tilde{u}\partial_x]U \cdot W_{\alpha} \,\mathrm{d}x .
\end{array}
\end{equation*}

\vspace{-0.2cm}
By $(\ref{Sect2_1_Estimates_1}) + (\ref{Sect2_1_Estimates_2})$, we get
\begin{equation}\label{Sect2_1_Estimates_3}
\begin{array}{ll}
\frac{\mathrm{d}}{\mathrm{d}t} \|W_{\alpha}\|_{L_{\ell}^2}^2
+\frac{\mathrm{d}}{\mathrm{d}t} \int\limits_{\mathbb{R}} \frac{1}{\beta - \frac{u_{yy}}{u_y}|_{y=0}}
 (W_{\alpha}|_{y=0})^2 \,\mathrm{d}x

+ 2\|\partial_y W_{\alpha}\|_{L_{\ell}^2}^2 \\[8pt]

=2(I_1 + I_2 + I_3 + I_4).
\end{array}
\end{equation}

\vspace{-0.3cm}
By Lemma $\ref{Sect2_Preliminary_Lemma2}$, it is easy to obtain the estimate of $I_1$:
\begin{equation}\label{Sect2_1_Estimates_4}
\begin{array}{ll}
I_1 \leq q\| \partial_y W_{\alpha}\|_{L_{\ell}^2}^2 + C_1 \mathcal{L}\|W_{\alpha}\|_{L_{\ell}^2}^2.
\end{array}
\end{equation}

\vspace{-0.2cm}
Next we estimate $I_2$:

By applying the integration by parts to the first term of $I_2$, we have
\begin{equation*}
\begin{array}{ll}
-\iint\limits_{\mathbb{R}_{+}^2} u_y\partial_y\frac{[\partial_{t,x}^{\alpha},\, u_y]v}{u_y} \cdot (1+y)^{2\ell} W_{\alpha} \mathrm{d}x\mathrm{d}y \\[9pt]

= \iint\limits_{\mathbb{R}_{+}^2} \sum\limits_{\alpha_1>0}[(u_y\partial_y\frac{\partial_{t,x}^{\alpha_1}\tilde{u}
}{u_y})+ \partial_{t,x}^{\alpha_1}\tilde{u}\frac{u_{yy}}{u_y}]\partial_{t,x}^{\alpha_2}v
\cdot \big[\frac{u_{yy}}{u_y}(1+y)^{2\ell} W_{\alpha} \\[10pt]\quad
+ 2\ell(1+y)^{2\ell-1}W_{\alpha} +(1+y)^{2\ell}\partial_y W_{\alpha}\big] \mathrm{d}x\mathrm{d}y
\end{array}
\end{equation*}

\begin{equation}\label{Sect2_1_Estimates_5}
\begin{array}{ll}
= \iint\limits_{\mathbb{R}_{+}^2} \sum\limits_{\alpha_1>0}[(1+y)^{\ell}W_{\alpha_1}
+ (1+y)^{\ell-1}\partial_{t,x}^{\alpha_1}\tilde{u}\cdot (1+y)\frac{u_{yy}}{u_y}]\cdot(1+y)^{-1}\partial_{t,x}^{\alpha_2}v \\[11pt]\quad
\cdot [(1+y)\frac{u_{yy}}{u_y}\cdot(1+y)^{\ell} W_{\alpha}
+ 2\ell(1+y)^{\ell}W_{\alpha}] \mathrm{d}x\mathrm{d}y \\[7pt]\quad

+\iint\limits_{\mathbb{R}_{+}^2} \sum\limits_{\alpha_1>0}
\partial_y\partial_{t,x}^{\alpha_1}\tilde{u}\cdot \partial_{t,x}^{\alpha_2}v
\cdot [(1+y)^{2\ell}\partial_y W_{\alpha}] \mathrm{d}x\mathrm{d}y \\[12pt]

\lem \iint\limits_{\mathbb{R}_{+}^2} \sum\limits_{\alpha_1>0}
\partial_y\partial_{t,x}^{\alpha_1}\tilde{u}\cdot \partial_{t,x}^{\alpha_2}v
\cdot [(1+y)^{2\ell}\partial_y W_{\alpha}] \mathrm{d}x\mathrm{d}y

+ \mathcal{L}\sum\limits_{|\alpha^{\prime}|\leq |\alpha|}\|W_{\alpha^{\prime}}\|_{L^2_{\ell}(\mathbb{R}_{+}^2)}^2,
\end{array}
\end{equation}
thus we need to estimate
$I_{2,1}:=\iint\limits_{\mathbb{R}_{+}^2} \sum\limits_{\alpha_1>0}
\partial_y\partial_{t,x}^{\alpha_1}\tilde{u}\cdot \partial_{t,x}^{\alpha_2}v
\cdot [(1+y)^{2\ell}\partial_y W_{\alpha}] \mathrm{d}x\mathrm{d}y$.

When $\alpha_1=\alpha$, then we estimate by using the integration by parts:
\begin{equation}\label{Sect2_1_Estimates_5_2}
\begin{array}{ll}
I_{2,1} =\iint\limits_{\mathbb{R}_{+}^2} \partial_y\partial_{t,x}^{\alpha}\tilde{u}\cdot v
\cdot [(1+y)^{2\ell}\partial_y W_{\alpha}] \mathrm{d}x\mathrm{d}y \\[9pt]

= -\iint\limits_{\mathbb{R}_{+}^2}
\partial_y\partial_{t,x}^{\alpha}\omega \cdot v
\cdot [(1+y)^{2\ell} (\partial_{t,x}^{\alpha}\omega -\partial_{t,x}^{\alpha}\tilde{u}\frac{u_{yy}}{u_y})
] \mathrm{d}x\mathrm{d}y \\[8pt]\quad
- 2\ell\iint\limits_{\mathbb{R}_{+}^2}
(1+y)^{\ell}\partial_{t,x}^{\alpha}\omega \cdot (1+y)^{-1}v
\cdot [(1+y)^{\ell} W_{\alpha}] \mathrm{d}x\mathrm{d}y \\[8pt]\quad
- \iint\limits_{\mathbb{R}_{+}^2}
(1+y)^{\ell}\partial_{t,x}^{\alpha}\omega \cdot \partial_y v
\cdot [(1+y)^{\ell} W_{\alpha}] \mathrm{d}x\mathrm{d}y \\[8pt]

\leq \iint\limits_{\mathbb{R}_{+}^2}
(1+y)^{2\ell} (\partial_{t,x}^{\alpha}\omega)^2  \cdot [\partial_y v
+ 2\ell (1+y)^{-1}v] \mathrm{d}x\mathrm{d}y \\[4pt]\quad

- \iint\limits_{\mathbb{R}_{+}^2}
(1+y)^{\ell} \partial_{t,x}^{\alpha}\omega \cdot \Big[\partial_y v \big((1+y)^{\ell-1}\partial_{t,x}^{\alpha}\tilde{u}\cdot(1+y)\frac{u_{yy}}{u_y}\big) \\[7pt]\quad
+ 2\ell(1+y)^{-1}v \cdot\big((1+y)^{\ell-1}\partial_{t,x}^{\alpha}\tilde{u}\cdot(1+y)\frac{u_{yy}}{u_y}\big) \\[7pt]\quad
+ (1+y)^{-1}v \cdot \big((1+y)^{\ell}\partial_{t,x}^{\alpha}\omega (1+y)\frac{u_{yy}}{u_y} \\[7pt]\quad
+ (1+y)^{\ell-1}\partial_{t,x}^{\alpha}\tilde{u}
\cdot(1+y)^2(\partial_y\frac{u_{yy}}{u_y}) \big)
\Big] \mathrm{d}x\mathrm{d}y + C\|W_{\alpha}\|_{L^2_{\ell}(\mathbb{R}_{+}^2)}^2 \\[7pt]

\lem \mathcal{L}\|W_{\alpha}\|_{L^2_{\ell}(\mathbb{R}_{+}^2)}^2.
\end{array}
\end{equation}

\vspace{-0.2cm}
When $\alpha_1\neq\alpha$, obviously $|\alpha_1|\leq |\alpha|-1$, then we estimate by using the integration by parts:
\begin{equation}\label{Sect2_1_Estimates_5_3}
\begin{array}{ll}
I_{2,1} =\iint\limits_{\mathbb{R}_{+}^2} \sum\limits_{0<\alpha_1<\alpha}
\partial_y\partial_{t,x}^{\alpha_1}\tilde{u}\cdot \partial_{t,x}^{\alpha_2}v
\cdot [(1+y)^{2\ell}\partial_y W_{\alpha}] \mathrm{d}x\mathrm{d}y \\[10pt]

= -\iint\limits_{\mathbb{R}_{+}^2} \sum\limits_{0<\alpha_1<\alpha}
(1+y)^{\ell+1}\partial_y\partial_{t,x}^{\alpha_1}\omega\cdot (1+y)^{-1}\partial_{t,x}^{\alpha_2}v
\cdot [(1+y)^{\ell}W_{\alpha}] \mathrm{d}x\mathrm{d}y\\[10pt]\quad
- 2\ell\iint\limits_{\mathbb{R}_{+}^2} \sum\limits_{0<\alpha_1<\alpha}
(1+y)^{\ell}\partial_{t,x}^{\alpha_1}\omega\cdot (1+y)^{-1}\partial_{t,x}^{\alpha_2}v
\cdot [(1+y)^{\ell}W_{\alpha}] \mathrm{d}x\mathrm{d}y\\[10pt]\quad
- \iint\limits_{\mathbb{R}_{+}^2} \sum\limits_{0<\alpha_1<\alpha}
(1+y)^{\ell}\partial_{t,x}^{\alpha_1}\omega\cdot \partial_y\partial_{t,x}^{\alpha_2}v
\cdot [(1+y)^{\ell}W_{\alpha}] \mathrm{d}x\mathrm{d}y \\[10pt]

\lem \mathcal{L}\sum\limits_{|\alpha^{\prime}|\leq |\alpha|-1}\|W_{\alpha^{\prime},1}\|_{L^2_{\ell+1}(\mathbb{R}_{+}^2)}^2
+ \mathcal{L}\|\omega_y\|_{L^2_{\ell+1}(\mathbb{R}_{+}^2)}^2
+ \mathcal{L}\sum\limits_{|\alpha^{\prime}|\leq |\alpha|}\|W_{\alpha^{\prime}}\|_{L^2_{\ell}(\mathbb{R}_{+}^2)}^2.
\end{array}
\end{equation}

By $(\ref{Sect2_1_Estimates_5}),(\ref{Sect2_1_Estimates_5_2}),(\ref{Sect2_1_Estimates_5_3})$, we have estimated the first term of $I_2$:
 \begin{equation}\label{Sect2_1_Estimates_5_4}
\begin{array}{ll}
\iint\limits_{\mathbb{R}_{+}^2} u_y\partial_y\frac{[\partial_{t,x}^{\alpha},\, u_y]v}{u_y} \cdot (1+y)^{2\ell} W_{\alpha} \mathrm{d}x\mathrm{d}y \\[8pt]
\lem \mathcal{L}\sum\limits_{|\alpha^{\prime}|\leq |\alpha|-1}\|W_{\alpha^{\prime},1}\|_{L^2_{\ell+1}(\mathbb{R}_{+}^2)}^2
+ \mathcal{L}\sum\limits_{|\alpha^{\prime}|\leq |\alpha|}\|W_{\alpha^{\prime}}\|_{L^2_{\ell}(\mathbb{R}_{+}^2)}^2.
\end{array}
\end{equation}

We estimate the second term of $I_2$:
\begin{equation}\label{Sect2_1_Estimates_6}
\begin{array}{ll}
- \iint\limits_{\mathbb{R}_{+}^2} u_y\partial_y\frac{[\partial_{t,x}^{\alpha},\, u\partial_x]\tilde{u}}{u_y} \cdot (1+y)^{2\ell} W_{\alpha} \mathrm{d}x\mathrm{d}y \\[8pt]

=- \iint\limits_{\mathbb{R}_{+}^2} \sum\limits_{\alpha_1>0}u_y\partial_y\frac{(\partial_{t,x}^{\alpha_1}\tilde{u} + \partial_{t,x}^{\alpha_1}U) \partial_{t,x}^{\alpha_2}\partial_x\tilde{u}
}{u_y} \cdot (1+y)^{2\ell} W_{\alpha} \mathrm{d}x\mathrm{d}y \\[11pt]

\lem \mathcal{L}\sum\limits_{|\alpha^{\prime}|\leq |\alpha|}\|W_{\alpha^{\prime}}\|_{L_{\ell}^2(\mathbb{R}_{+}^2)}^2
+ \mathcal{L}\sum\limits_{|\alpha^{\prime}|\leq |\alpha|}\|W_{\alpha^{\prime}+(-1,1)}\|_{L_{\ell}^2(\mathbb{R}_{+}^2)}^2
+ \sum\limits_{|\alpha_1|\leq |\alpha|}\|\partial_{t,x}^{\alpha_1}U\|_{L^2(\mathbb{R})}^2.
\end{array}
\end{equation}

We estimate the third term of $I_2$:
\begin{equation}\label{Sect2_1_Estimates_7}
\begin{array}{ll}
- \iint\limits_{\mathbb{R}_{+}^2} u_y\partial_y\frac{[\partial_{t,x}^{\alpha},\, \tilde{u}\partial_x]U}{u_y} \cdot (1+y)^{2\ell} W_{\alpha} \mathrm{d}x\mathrm{d}y \\[10pt]

- \iint\limits_{\mathbb{R}_{+}^2} \sum\limits_{\alpha_1>0} u_y\partial_y\frac{\partial_{t,x}^{\alpha_1}\tilde{u}\partial_{t,x}^{\alpha_2}
\partial_x U}{u_y} \cdot (1+y)^{2\ell} W_{\alpha} \mathrm{d}x\mathrm{d}y \\[10pt]

\lem \mathcal{L}\sum\limits_{|\alpha^{\prime}|\leq |\alpha|} \|W_{\alpha^{\prime}}\|_{L_{\ell}^2(\mathbb{R}_{+}^2)}^2
+ \sum\limits_{|\alpha^{\prime}|\leq |\alpha|}\|\partial_{t,x}^{\alpha^{\prime}}U\|_{L^2(\mathbb{R})}^2.
\end{array}
\end{equation}

\vspace{-0.2cm}
Thus,
\begin{equation}\label{Sect2_1_Estimates_8}
\begin{array}{ll}
I_2 \leq
\mathcal{L}\sum\limits_{|\alpha^{\prime}|\leq |\alpha|-1}\|W_{\alpha^{\prime},1}\|_{L^2_{\ell+1}(\mathbb{R}_{+}^2)}^2
+ \mathcal{L}\|\omega_y\|_{L^2_{\ell+1}(\mathbb{R}_{+}^2)}^2 \\[8pt]\qquad
+ \mathcal{L}\sum\limits_{|\alpha^{\prime}|\leq |\alpha|}\|W_{\alpha^{\prime}}\|_{L^2_{\ell}(\mathbb{R}_{+}^2)}^2
+ \|U\|_{H^{|\alpha|+1}(\mathbb{R})}^2.
\end{array}
\end{equation}

Next we estimate $I_3$:
\begin{equation}\label{Sect2_1_Estimates_9}
\begin{array}{ll}
- \int\limits_{\mathbb{R}}  \big(\frac{1}{\beta - \frac{u_{yy}}{u_y}}
 u\partial_x W_{\alpha} \big) \cdot W_{\alpha} \,\mathrm{d}x

\lem \mathcal{L}\int\limits_{\mathbb{R}}
\frac{1}{\beta - \frac{u_{yy}}{u_y}}
 ( W_{\alpha})^2 \,\mathrm{d}x, \\[15pt]

- \int\limits_{\mathbb{R}}  Q_2\cdot\frac{1}{\beta - \frac{u_{yy}}{u_y}} W_{\alpha} \cdot W_{\alpha} \,\mathrm{d}x

\lem \mathcal{L}\int\limits_{\mathbb{R}}
\frac{1}{\beta - \frac{u_{yy}}{u_y}}
 ( W_{\alpha})^2 \,\mathrm{d}x.
\end{array}
\end{equation}

Since $\mathcal{L}\leq M\mathcal{L}|_{t=0}$, let $\ell_0$ is suitably large, we have the following two estimates:
\begin{equation}\label{Sect2_1_Estimates_10}
\begin{array}{ll}
\int\limits_{\mathbb{R}}  Q_3 \cdot W_{\alpha} \,\mathrm{d}x

\leq q \|\partial_y W_{\alpha}\|_{L_{\ell}^2(\mathbb{R}_{+}^2)}^2
 + \|U\|_{H^{|\alpha|+1}(\mathbb{R})}^2, \\[15pt]

\int\limits_{\mathbb{R}} \frac{u_{yy}}{u_y}
 (W_{\alpha})^2 \,\mathrm{d}x
\leq q\| \partial_y W_{\alpha}\|_{L_{\ell}^2(\mathbb{R}_{+}^2)}^2,
\end{array}
\end{equation}
where $q$ is small.

Thus,
\begin{equation}\label{Sect2_1_Estimates_11}
\begin{array}{ll}
I_3 \leq C_2 \mathcal{L}\int\limits_{\mathbb{R}}
\frac{1}{\beta - \frac{u_{yy}}{u_y}}
 ( W_{\alpha})^2 \,\mathrm{d}x + q \|\partial_y W_{\alpha}\|_{L_{\ell}^2(\mathbb{R}_{+}^2)}^2
 + C_2\|U\|_{H^{|\alpha|+1}(\mathbb{R})}^2.
\end{array}
\end{equation}

Next we estimate $I_4$ which has two terms:
\begin{equation}\label{Sect2_1_Estimates_12}
\begin{array}{ll}
- \int\limits_{\mathbb{R}}  [\partial_{t,x}^{\alpha},\,u\partial_x]\tilde{u} \cdot W_{\alpha} \,\mathrm{d}x \\[10pt]

= - \int\limits_{\mathbb{R}}  [\partial_{t,x}^{\alpha},\,\tilde{u}\partial_x]\tilde{u} \cdot W_{\alpha} \,\mathrm{d}x
- \int\limits_{\mathbb{R}}  [\partial_{t,x}^{\alpha},\,U\partial_x]\tilde{u} \cdot W_{\alpha} \,\mathrm{d}x\\[10pt]

\lem \mathcal{L}\sum\limits_{|\alpha^{\prime}|\leq |\alpha|}\int\limits_{\mathbb{R}} \frac{1}{\beta -\frac{u_{yy}}{u_y}} (W_{\alpha^{\prime}})^2
\,\mathrm{d}x
+ \sum\limits_{|\alpha^{\prime}|\leq |\alpha|} \|\partial_{t,x}^{\alpha^{\prime}} U\|_{L^2(\mathbb{R})}^2.
\end{array}
\end{equation}
\begin{equation}\label{Sect2_1_Estimates_13}
\begin{array}{ll}
- \int\limits_{\mathbb{R}}  [\partial_{t,x}^{\alpha},\,\tilde{u}\partial_x]U \cdot W_{\alpha} \,\mathrm{d}x
\\[7pt]
\lem \mathcal{L}\sum\limits_{|\alpha^{\prime}|\leq |\alpha|}\int\limits_{\mathbb{R}} \frac{1}{\beta -\frac{u_{yy}}{u_y}} (W_{\alpha^{\prime}})^2
\,\mathrm{d}x
+ \sum\limits_{|\alpha^{\prime}|\leq |\alpha|} \|\partial_{t,x}^{\alpha^{\prime}} U\|_{L^2(\mathbb{R})}^2.
\end{array}
\end{equation}

\vspace{-0.2cm}
Thus,
\begin{equation}\label{Sect2_1_Estimates_14}
\begin{array}{ll}
I_4 \leq \mathcal{L}\sum\limits_{|\alpha^{\prime}|\leq |\alpha|}\int\limits_{\mathbb{R}} \frac{1}{\beta -\frac{u_{yy}}{u_y}} (W_{\alpha^{\prime}})^2
\,\mathrm{d}x
+ \|U\|_{H^{|\alpha|}(\mathbb{R})}^2.
\end{array}
\end{equation}

Plug $(\ref{Sect2_1_Estimates_4}), (\ref{Sect2_1_Estimates_8})$, $(\ref{Sect2_1_Estimates_11}), (\ref{Sect2_1_Estimates_14})$ into $(\ref{Sect2_1_Estimates_3})$, we have
\begin{equation}\label{Sect2_1_Estimates_15}
\begin{array}{ll}
\frac{\mathrm{d}}{\mathrm{d}t}\|W_{\alpha}\|_{L_{\ell}^2(\mathbb{R}_{+}^2)}^2
+ \frac{\mathrm{d}}{\mathrm{d}t}\int\limits_{\mathbb{R}} \frac{1}{\beta - \frac{u_{yy}}{u_y}}
 (W_{\alpha})^2 \,\mathrm{d}x
+ \|\partial_y W_{\alpha}\|_{L_{\ell}^2(\mathbb{R}_{+}^2)}^2 \\[7pt]

\lem  \mathcal{L}\sum\limits_{|\alpha^{\prime}|\leq |\alpha|}\|W_{\alpha^{\prime}}\|_{L_{\ell}^2(\mathbb{R}_{+}^2)}^2
+ \mathcal{L}\sum\limits_{|\alpha^{\prime}|\leq |\alpha|-1}\|W_{\alpha^{\prime},1}\|_{L_{\ell+1}^2(\mathbb{R}_{+}^2)}^2 \\[10pt]\quad

+ \mathcal{L}\|\omega_y\|_{L^2_{\ell+1}(\mathbb{R}_{+}^2)}^2
+ \mathcal{L}\sum\limits_{|\alpha^{\prime}|\leq |\alpha|}\int\limits_{\mathbb{R}} \frac{1}{\beta -\frac{u_{yy}}{u_y}} (W_{\alpha^{\prime}})^2
\,\mathrm{d}x
+\|U\|_{H^{|\alpha|+1}(\mathbb{R})}^2.
\end{array}
\end{equation}
Thus, Lemma $\ref{Sect2_1_Existence_Estimate_Lemma}$ is proved.
\end{proof}

\subsection{Estimates for Normal Derivatives}
Next, We have the interior estimates and boundary estimates for the normal derivatives as the following lemma stated:
\begin{lemma}\label{Sect2_2_MixDerivatives_Lemma}
Assume $\delta_{\beta}\leq\beta<+\infty$, $\ell\geq\ell_0$, if $W_{\alpha,\sigma}$ satisfies $(\ref{Sect1_Existence_VorticityEq_2})$ when $\sigma\geq 1$,
then $W_{\alpha,\sigma}$ satisfies the a priori estimate $(\ref{Sect1_Existence_Estimates2})$.
\end{lemma}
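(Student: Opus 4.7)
The strategy is to run an energy estimate on $(\ref{Sect1_Existence_VorticityEq_2})_1$ analogous to Lemma \ref{Sect2_1_Existence_Estimate_Lemma}, but with the Robin-type boundary contribution replaced by a Dirichlet-type trace reduction using $(\ref{Sect1_Existence_VorticityEq_2})_2$. Concretely, I multiply $(\ref{Sect1_Existence_VorticityEq_2})_1$ by $(1+y)^{2(\ell+\sigma)}W_{\alpha,\sigma}$ and integrate over $\mathbb{R}_{+}^2$. Integration by parts in $y$ on the $-\partial_{yy}W_{\alpha,\sigma}$ term produces the time derivative of $\|W_{\alpha,\sigma}\|_{L^2_{\ell+\sigma}}^2$, the dissipation $\|\partial_y W_{\alpha,\sigma}\|_{L^2_{\ell+\sigma}}^2$, a cross term $2(\ell+\sigma)\int\partial_y W_{\alpha,\sigma}(1+y)^{2\ell+2\sigma-1}W_{\alpha,\sigma}$ absorbed by Cauchy--Schwarz into $q\|\partial_y W_{\alpha,\sigma}\|_{L^2_{\ell+\sigma}}^2 + C\|W_{\alpha,\sigma}\|_{L^2_{\ell+\sigma}}^2$ (where $q$ is small by choosing $\ell_0$ large), and the boundary term $-\int_{\mathbb{R}}\partial_y W_{\alpha,\sigma}|_{y=0} W_{\alpha,\sigma}|_{y=0}\,\mathrm{d}x$. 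Integration by parts in $x$ on $u\partial_x W_{\alpha,\sigma}$ gives $-\frac{1}{2}\int u_x W_{\alpha,\sigma}^2$, which is a lower-order term.

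For the boundary term I substitute the trace relation $(\ref{Sect1_Existence_VorticityEq_2})_2$, which expresses $-\partial_y W_{\alpha,\sigma}|_{y=0}$ as $\frac{u_{yy}}{u_y}W_{\alpha,\sigma}|_{y=0}$ plus the polynomial $\mathcal{P}_1$ in $W_{\alpha',\sigma'}|_{y=0}$ and $\frac{1}{\beta-u_{yy}/u_y}W_{\alpha+(1,0)},\,\frac{1}{\beta-u_{yy}/u_y}W_{\alpha+(0,1)}$. Using $|u_{yy}/u_y|_\infty \leq c_4 M|(1+y)\partial_{yy}u_0/\partial_y u_0|_\infty$ from Lemma \ref{Sect2_Preliminary_Lemma0}, each boundary bilinear form is pairwise estimated by Cauchy--Schwarz and the trace inequality $\|W_{\alpha',\sigma'}|_{y=0}\|_{L^2(\mathbb{R})}^2 \lesssim \|W_{\alpha',\sigma'}\|_{L^2_{\ell+\sigma'}(\mathbb{R}_{+}^2)}\|\partial_y W_{\alpha',\sigma'}\|_{L^2_{\ell+\sigma'}(\mathbb{R}_{+}^2)}$, so the boundary contribution of ``lower than $\alpha$" terms is absorbed into $\mathcal{L}\sum_{|\alpha'|\leq|\alpha|,\sigma'\leq\sigma}\|W_{\alpha',\sigma'}\|_{L^2_{\ell+\sigma'}}^2 + q\sum_{|\alpha'|\leq|\alpha|+1,\sigma'\leq\sigma-1}\|\partial_y W_{\alpha',\sigma'}\|_{L^2_{\ell+\sigma'}}^2$, while the ``top order" pieces $W_{\alpha+(1,0)},\,W_{\alpha+(0,1)}$ generate precisely $C\mathcal{L}\int_{\mathbb{R}}\frac{1}{\beta-u_{yy}/u_y}(W_{\alpha+(1,0)}^2+W_{\alpha+(0,1)}^2)\,\mathrm{d}x$ on the right-hand side of $(\ref{Sect1_Existence_Estimates2})$. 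The bulk terms $u_y\partial_y[\frac{\partial_{t,x}^\alpha\partial_y^\sigma u}{u_y}F]$ for $F\in\{u_{yt}/u_y,\,uu_{yx}/u_y,\,u_{yyy}/u_y\}$ are controlled via Lemma \ref{Sect2_Preliminary_Lemma2} by $\mathcal{L}\|W_{\alpha,\sigma}\|_{L^2_{\ell+\sigma}}$ times $\|W_{\alpha,\sigma}\|_{L^2_{\ell+\sigma}}$, after verifying $|F|_\infty + |(1+y)\partial_y F|_\infty \lesssim \mathcal{L}$ using the Taylor/pointwise bounds of Lemma \ref{Sect2_Preliminary_Lemma0}.

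The hard part is the two commutator forcings on the right of $(\ref{Sect1_Existence_VorticityEq_2})_1$, and especially $-u_y\partial_y\frac{[\partial_{t,x}^\alpha\partial_y^\sigma, u_y]v}{u_y}$. When all tangential derivatives fall on $u_y$ and none on $v$ (the $\alpha_1=\alpha$, $\sigma_1=\sigma$ case), integration by parts in $y$ against $(1+y)^{2(\ell+\sigma)}W_{\alpha,\sigma}$ throws a derivative onto $v$ and produces $\partial_y v=-u_x$, recovering a decaying factor; this mirrors $(\ref{Sect2_1_Estimates_5_2})$. In all other cases ($\alpha_1<\alpha$ or $\sigma_1<\sigma$) I use the divergence-free identity $\partial_y^{\sigma'+1}v=-\partial_y^{\sigma'}\partial_x u$ whenever at least one $\partial_y$ hits $v$ — this exchanges a bad $v=O(y)$ factor for a controlled $\partial_y^{\sigma'}\partial_x u$ expressible via $W_{\alpha'+(0,1),\sigma'-1}$ (or via the $W_{\alpha'+(0,1)}$/$W_{\alpha'+(1,0)}$ hierarchy when $\sigma'=0$). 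When the derivatives stop before producing $\partial_y v$, the factor $v$ itself must be extracted as $(1+y)\cdot(1+y)^{-1}v$ and estimated in $L^\infty$ by Lemma \ref{Sect2_Preliminary_Lemma3}, which is exactly where the $\|U\|_{H^{|\alpha|+1}(\mathbb{R})}^2$ source arises (from the $|U_x|_\infty$ or trace contribution at $y=+\infty$). The commutator $[\partial_{t,x}^\alpha\partial_y^\sigma,u\partial_x]u$ is handled analogously, with Leibniz expansion, Lemma \ref{Sect2_Preliminary_Lemma2} for the middle-order terms, and Sobolev embedding of one of the two factors absorbed into $\mathcal{L}$. Summing all contributions and choosing $q$ small (i.e.\ $\ell_0$ large) yields the first half of $(\ref{Sect1_Existence_Estimates2})$ for $0<\sigma\leq k-1$, $0<|\alpha|\leq k-\sigma$; the case $|\alpha|=0$, $0<\sigma\leq k$ is identical except that the $\sum_{|\alpha'|\leq|\alpha|-1}\|W_{\alpha',\sigma+1}\|$ term is vacuous, giving the second half. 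The only genuine subtlety is bookkeeping the $v$ commutator so that every term either decays via divergence-free substitution, is absorbed by the dissipation, or falls into the allowed right-hand sides involving $W_{\alpha+(1,0)}|_{y=0}$ and $W_{\alpha+(0,1)}|_{y=0}$.
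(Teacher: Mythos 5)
Your proposal is correct and follows essentially the same route as the paper: a weighted energy estimate on $(\ref{Sect1_Existence_VorticityEq_2})_1$, with the boundary flux term recast via the constraint $(\ref{Sect1_Existence_VorticityEq_2})_2$ (algebraically equivalent to the paper's device of multiplying $(\ref{Sect1_Existence_VorticityEq_2})_2$ by $W_{\alpha,\sigma}$ and adding), Lemma~\ref{Sect2_Preliminary_Lemma2} for the bulk coefficient terms, integration by parts in $y$ on the top-order $v$-commutator contribution (yielding $\partial_y v = -u_x$), and the divergence-free substitution $\partial_y^{\sigma'+1}v=-\partial_y^{\sigma'}\partial_x u$ combined with Lemma~\ref{Sect2_Preliminary_Lemma3} for the remaining commutator pieces, exactly as in the paper's $(\ref{Sect2_2_Estimates_5})$--$(\ref{Sect2_2_Estimates_17})$. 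You also correctly identify that the $|\alpha|=0$ case drops the $\sum_{|\alpha'|\leq|\alpha|-1}\|W_{\alpha',\sigma+1}\|$ term, matching the paper's split of $(\ref{Sect1_Existence_Estimates2})$.
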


\begin{proof}
Multiple $(\ref{Sect1_Existence_VorticityEq_2})_1$ with $ (1+y)^{2\ell+2\sigma}  W_{\alpha}$, integrate in $\mathbb{R}_{+}^2$,
we get
\begin{equation}\label{Sect2_2_Estimates_1}
\begin{array}{ll}
\frac{1}{2}\frac{\mathrm{d}}{\mathrm{d}t} \|W_{\alpha,\sigma}\|_{L_{\ell+\sigma}^2}^2
+ \|\partial_y W_{\alpha,\sigma}\|_{L_{\ell+\sigma}^2}^2

+ \int\limits_{\mathbb{R}} \partial_y W_{\alpha,\sigma}|_{y=0} \cdot W_{\alpha,\sigma}|_{y=0} \,\mathrm{d}x
:= I_5 + I_6,
\end{array}
\end{equation}
where
\begin{equation*}
\begin{array}{ll}
I_5 := \iint\limits_{\mathbb{R}_{+}^2} \big[- u\partial_x W_{\alpha,\sigma}
- Q_1 W_{\alpha,\sigma} \big]
\cdot (1+y)^{2\ell+2\sigma} W_{\alpha,\sigma} \mathrm{d}x\mathrm{d}y \\[8pt]\hspace{0.84cm}

- (2\ell+2\sigma)\iint\limits_{\mathbb{R}_{+}^2} \partial_y W_{\alpha}
\cdot (1+y)^{2\ell+2\sigma-1} W_{\alpha} \mathrm{d}x\mathrm{d}y \\[8pt]\hspace{0.84cm}

+ \iint\limits_{\mathbb{R}_{+}^2} \big[ - u_y\partial_y(\frac{\partial_{t,x}^{\alpha} \partial_y^{\sigma}u}{u_y}\frac{u_{yt}}{u_y})
- u u_y\partial_y(\frac{\partial_{t,x}^{\alpha} \partial_y^{\sigma}u}{u_y}\frac{u_{yx}}{u_y})
+ u_y\partial_y(\frac{\partial_{t,x}^{\alpha} \partial_y^{\sigma}u}{u_y}\frac{u_{yyy}}{u_y}) \big] \\[13pt]\hspace{0.84cm}
\cdot (1+y)^{2\ell+2\sigma} W_{\alpha,\sigma} \mathrm{d}x\mathrm{d}y, \\[13pt]

I_6 := - \iint\limits_{\mathbb{R}_{+}^2} \big[ u_y\partial_y\frac{[\partial_{t,x}^{\alpha}\partial_y^{\sigma},\, u_y]v}{u_y}
+ u_y\partial_y\frac{[\partial_{t,x}^{\alpha}\partial_y^{\sigma},\, u\partial_x]u}{u_y}\big]
\cdot (1+y)^{2\ell+2\sigma} W_{\alpha,\sigma} \mathrm{d}x\mathrm{d}y.
\end{array}
\end{equation*}

\noindent
Multiple $(\ref{Sect1_Existence_VorticityEq_2})_2$ with $  W_{\alpha,\sigma}$, integrate in $\mathbb{R}$,
we get
\begin{equation}\label{Sect2_2_Estimates_2}
\begin{array}{ll}
- \int\limits_{\mathbb{R}} \partial_y W_{\alpha,\sigma}|_{y=0}
\cdot W_{\alpha,\sigma}|_{y=0} \,\mathrm{d}x

:= I_7,
\end{array}
\end{equation}
where
\begin{equation*}
\begin{array}{ll}
I_7 := \int\limits_{\mathbb{R}} \frac{u_{yy}}{u_y}
 (W_{\alpha,\sigma})^2 \,\mathrm{d}x

+ \int\limits_{\mathbb{R}}  \mathcal{P}_1\Big(\sum\limits_{m=0}^{\sigma_1-1} (W_{\alpha_1+(1,0),\sigma_1-1-m}) (\frac{u_{yy}}{u_y})^m
\end{array}
\end{equation*}

\begin{equation*}
\begin{array}{ll}
\quad
 + \frac{1}{\beta - \frac{u_{yy}}{u_y}}(W_{\alpha_1+(1,0)} - \beta \partial_x\partial_{t,x}^{\alpha_1} U)(\frac{u_{yy}}{u_y})^{\sigma_1},

 \sum\limits_{m=0}^{\sigma_2-1} (W_{\alpha_2+(0,1),\sigma_2-1-m} (\frac{u_{yy}}{u_y})^m \\[9pt]\quad
 + \frac{1}{\beta - \frac{u_{yy}}{u_y}}(W_{\alpha_2+(0,1)} - \beta \partial_{t,x}^{\alpha_2} U)(\frac{u_{yy}}{u_y})^{\sigma_2} \Big)
 \cdot W_{\alpha,\sigma} \,\mathrm{d}x.
\end{array}
\end{equation*}

By $(\ref{Sect2_2_Estimates_1}) + (\ref{Sect2_2_Estimates_2})$, we get
\begin{equation}\label{Sect2_2_Estimates_3}
\begin{array}{ll}
\frac{\mathrm{d}}{\mathrm{d}t} \|W_{\alpha,\sigma}\|_{L_{\ell+\sigma}^2}^2
+ 2\|\partial_y W_{\alpha,\sigma}\|_{L_{\ell+\sigma}^2}^2

=2(I_5 + I_6 + I_7).
\end{array}
\end{equation}

By Lemma $\ref{Sect2_Preliminary_Lemma2}$, it is easy to obtain the estimate of $I_5$:
\begin{equation}\label{Sect2_2_Estimates_4}
\begin{array}{ll}
I_5 \leq q\| \partial_y W_{\alpha,\sigma}\|_{L_{\ell+\sigma}^2}^2
+ C_3\mathcal{L}\|W_{\alpha,\sigma}\|_{L_{\ell+\sigma}^2}^2.
\end{array}
\end{equation}

Next we estimate $I_6$:

When $\sigma_2 =0$,
$(1+y)^{\sigma_2-1}\partial_{t,x}^{\alpha_2}\partial_y^{\sigma_2}v
=(1+y)^{-1}\partial_{t,x}^{\alpha_2}v$ can be bounded.

\vspace{0.15cm}
When $\sigma_2 \geq 1$, $(1+y)^{\sigma_2-1}\partial_{t,x}^{\alpha_2}\partial_y^{\sigma_2}v$ can be bounded, because
\begin{equation*}
\begin{array}{ll}
(1+y)^{\sigma_2-1}\partial_{t,x}^{\alpha_2}\partial_y^{\sigma_2}v
= (1+y)^{\sigma_2-1}\partial_{t,x}^{\alpha_2}\partial_x\partial_y^{\sigma_2-1} u \\[7pt]

= (1+y)^{-\ell+1}(1+y)^{\ell+\sigma_2-2}\partial_{t,x}^{\alpha_2}\partial_x\partial_y^{\sigma_2-1} u
\leq (1+y)^{\ell+\sigma_2-2}\partial_{t,x}^{\alpha_2}\partial_x\partial_y^{\sigma_2-1} u.
\end{array}
\end{equation*}

When $|\alpha|>0$, by using the integration by parts, we estimate the first term of $I_6$ which is denoted by $I_{6,1}$:
\begin{equation}\label{Sect2_2_Estimates_5}
\begin{array}{ll}
I_{6,1}:= -\iint\limits_{\mathbb{R}_{+}^2} u_y\partial_y\frac{[\partial_{t,x}^{\alpha}\partial_y^{\sigma},\, u_y]v}{u_y} \cdot (1+y)^{2\ell+2\sigma} W_{\alpha,\sigma} \mathrm{d}x\mathrm{d}y \\[9pt]

= \iint\limits_{\mathbb{R}_{+}^2}  \sum\limits_{|\alpha_1|+\sigma_1>0}[(u_y\partial_y\frac{\partial_{t,x}^{\alpha_1}\partial_y^{\sigma_1}\tilde{u}
}{u_y})+ \partial_{t,x}^{\alpha_1}\partial_y^{\sigma_1}\tilde{u}\frac{u_{yy}}{u_y}]\partial_{t,x}^{\alpha_2}\partial_y^{\sigma_2}v
\\[13pt]\quad
\cdot[\frac{u_{yy}}{u_y}(1+y)^{2\ell+2\sigma} W_{\alpha,\sigma}
+ (2\ell+2\sigma)(1+y)^{2\ell+2\sigma-1}W_{\alpha,\sigma} \\[10pt]\quad
+(1+y)^{2\ell+2\sigma}\partial_y W_{\alpha,\sigma}] \mathrm{d}x\mathrm{d}y \\[9pt]

= \iint\limits_{\mathbb{R}_{+}^2} \sum\limits_{|\alpha_1|+\sigma_1>0}[(1+y)^{\ell+\sigma_1}W_{\alpha_1,\sigma_1}
+ (1+y)^{\ell+\sigma_1-1}\partial_{t,x}^{\alpha_1}\partial_y^{\sigma_1}\tilde{u}\cdot (1+y)\frac{u_{yy}}{u_y}]\\[13pt]\quad
\cdot(1+y)^{\sigma_2-1}\partial_{t,x}^{\alpha_2}\partial_y^{\sigma_2}v \cdot
 \big[(1+y)\frac{u_{yy}}{u_y}\cdot(1+y)^{\ell+\sigma} W_{\alpha,\sigma} \\[8pt]\quad
+ (2\ell+2\sigma)(1+y)^{\ell+\sigma}W_{\alpha,\sigma}\big] \mathrm{d}x\mathrm{d}y

+\iint\limits_{\mathbb{R}_{+}^2} \sum\limits_{|\alpha_1|+\sigma_1>0}
\partial_y\partial_{t,x}^{\alpha_1}\partial_y^{\sigma_1}\tilde{u}\cdot \partial_{t,x}^{\alpha_2}\partial_y^{\sigma_2}v \\[8pt]\quad
\cdot [(1+y)^{2\ell+2\sigma}\partial_y W_{\alpha,\sigma}] \mathrm{d}x\mathrm{d}y \\[9pt]

\lem  \mathcal{L}\sum\limits_{|\alpha^{\prime}|\leq |\alpha|,\sigma^{\prime}\leq\sigma}
\|W_{\alpha^{\prime},\sigma^{\prime}}\|_{L^2_{\ell+\sigma^{\prime}}(\mathbb{R}_{+}^2)}^2 \\[9pt]\quad

+ \iint\limits_{\mathbb{R}_{+}^2} \sum\limits_{|\alpha_1|+\sigma_1>0}
\partial_y\partial_{t,x}^{\alpha_1}\partial_y^{\sigma_1}\tilde{u}\cdot \partial_{t,x}^{\alpha_2}\partial_y^{\sigma_2}v
\cdot [(1+y)^{2\ell+2\sigma}\partial_y W_{\alpha,\sigma}] \mathrm{d}x\mathrm{d}y
\end{array}
\end{equation}

We need to estimate the last line of $(\ref{Sect2_2_Estimates_5})$, that is \\
$I_{6,1,1} := \iint\limits_{\mathbb{R}_{+}^2} \sum\limits_{|\alpha_1|+\sigma_1>0}
\partial_y\partial_{t,x}^{\alpha_1}\partial_y^{\sigma_1}\tilde{u}\cdot \partial_{t,x}^{\alpha_2}\partial_y^{\sigma_2}v
\cdot [(1+y)^{2\ell+2\sigma}\partial_y W_{\alpha,\sigma}] \mathrm{d}x\mathrm{d}y$.

When $\alpha_1=\alpha, \sigma_1=\sigma$, we have
\begin{equation}\label{Sect2_2_Estimates_7}
\begin{array}{ll}
I_{6,1,1} =\iint\limits_{\mathbb{R}_{+}^2} \sum\limits_{|\alpha_1|+\sigma_1>0}
\partial_y\partial_{t,x}^{\alpha}\partial_y^{\sigma}\tilde{u}\cdot v
\cdot [(1+y)^{2\ell+2\sigma}\partial_y W_{\alpha,\sigma}] \mathrm{d}x\mathrm{d}y \\[9pt]

= -\iint\limits_{\mathbb{R}_{+}^2}
\partial_y\partial_{t,x}^{\alpha}\partial_y^{\sigma}\omega \cdot v
\cdot [(1+y)^{2\ell+2\sigma} (\partial_{t,x}^{\alpha}\partial_y^{\sigma}\omega
-\partial_{t,x}^{\alpha}\partial_y^{\sigma}\tilde{u}\frac{u_{yy}}{u_y})
] \mathrm{d}x\mathrm{d}y \\[9pt]\quad
- (2\ell+2\sigma)\iint\limits_{\mathbb{R}_{+}^2}
(1+y)^{\ell+\sigma}\partial_{t,x}^{\alpha}\partial_y^{\sigma}\omega \cdot (1+y)^{-1}v
\cdot [(1+y)^{\ell+\sigma} W_{\alpha,\sigma}] \mathrm{d}x\mathrm{d}y \\[9pt]\quad
- \iint\limits_{\mathbb{R}_{+}^2}
(1+y)^{\ell+\sigma}\partial_{t,x}^{\alpha}\partial_y^{\sigma}\omega \cdot \partial_y v
\cdot [(1+y)^{\ell+\sigma} W_{\alpha,\sigma}] \mathrm{d}x\mathrm{d}y \\[9pt]

\leq \iint\limits_{\mathbb{R}_{+}^2}
(1+y)^{2\ell+2\sigma} (\partial_{t,x}^{\alpha}\partial_y^{\sigma}\omega)^2  \cdot [\partial_y v
+ (2\ell+2\sigma) (1+y)^{-1}v] \mathrm{d}x\mathrm{d}y \\[9pt]\quad

+ \iint\limits_{\mathbb{R}_{+}^2}
(1+y)^{\ell+\sigma} \partial_{t,x}^{\alpha}\partial_y^{\sigma}\omega \cdot \Big[\partial_y v \big((1+y)^{\ell-1}\partial_{t,x}^{\alpha}\partial_y^{\sigma}\tilde{u}\cdot(1+y)\frac{u_{yy}}{u_y}\big) \\[9pt]\quad
+ (2\ell+2\sigma)(1+y)^{-1}v \cdot\big((1+y)^{\ell-1}\partial_{t,x}^{\alpha}\partial_y^{\sigma}\tilde{u}\cdot(1+y)\frac{u_{yy}}{u_y}\big) \\[9pt]\quad
+ (1+y)^{-1}v \cdot \big((1+y)^{\ell+\sigma}\partial_{t,x}^{\alpha}\partial_y^{\sigma}\omega (1+y)\frac{u_{yy}}{u_y}
\\[9pt]\quad

+ (1+y)^{\ell-1}\partial_{t,x}^{\alpha}\partial_y^{\sigma}\tilde{u}
\cdot(1+y)^2(\partial_y\frac{u_{yy}}{u_y}) \big)
\Big] \mathrm{d}x\mathrm{d}y + C\|W_{\alpha,\sigma}\|_{L^2_{\ell+\sigma}(\mathbb{R}_{+}^2)}^2 \\[9pt]

\lem \mathcal{L}\sum\limits_{|\alpha^{\prime}|\leq |\alpha|,\sigma^{\prime}\leq \sigma}\|W_{\alpha^{\prime},\sigma^{\prime}}\|_{L^2_{\ell+\sigma^{\prime}}(\mathbb{R}_{+}^2)}^2.
\end{array}
\end{equation}

When $\alpha_1=\alpha,\sigma_1<\sigma$, we have
\begin{equation}\label{Sect2_2_Estimates_8}
\begin{array}{ll}
I_{6,1,1} =\iint\limits_{\mathbb{R}_{+}^2} \sum\limits_{|\alpha_1|+\sigma_1>0}
\partial_y\partial_{t,x}^{\alpha}\partial_y^{\sigma_1}\tilde{u}\cdot \partial_y^{\sigma_2}v
\cdot [(1+y)^{2\ell+2\sigma}\partial_y W_{\alpha,\sigma}] \mathrm{d}x\mathrm{d}y \\[9pt]

= -\iint\limits_{\mathbb{R}_{+}^2} \sum\limits_{|\alpha_1|+\sigma_1>0}
(1+y)^{\ell+\sigma_1+1}\partial_y\partial_{t,x}^{\alpha}\partial_y^{\sigma_1}\omega\cdot (1+y)^{\sigma_2-1}\partial_y^{\sigma_2}v
\\[9pt]\quad
\cdot [(1+y)^{\ell+\sigma}W_{\alpha,\sigma}] \mathrm{d}x\mathrm{d}y\\[9pt]\quad

- \iint\limits_{\mathbb{R}_{+}^2} \sum\limits_{|\alpha_1|+\sigma_1>0}
(1+y)^{\ell+\sigma_1}\partial_{t,x}^{\alpha}\partial_y^{\sigma_1}\omega\cdot (1+y)^{\sigma_2}\partial_y^{\sigma_2+1}v
\\[9pt]\quad
\cdot [(1+y)^{\ell+\sigma}W_{\alpha,\sigma}] \mathrm{d}x\mathrm{d}y\\[9pt]\quad

- (2\ell+2\sigma)\iint\limits_{\mathbb{R}_{+}^2} \sum\limits_{|\alpha_1|+\sigma_1>0}
(1+y)^{\ell+\sigma_1}\partial_{t,x}^{\alpha}\partial_y^{\sigma_1}\omega\cdot (1+y)^{\sigma_2-1}\partial_y^{\sigma_2}v
\\[9pt]\quad
\cdot [(1+y)^{\ell+\sigma}W_{\alpha,\sigma}] \mathrm{d}x\mathrm{d}y \\[8pt]

\lem  \mathcal{L}\sum\limits_{|\alpha^{\prime}|\leq |\alpha|,\sigma^{\prime}\leq \sigma}\|W_{\alpha^{\prime},\sigma^{\prime}}\|_{L^2_{\ell+\sigma^{\prime}}(\mathbb{R}_{+}^2)}^2.
\end{array}
\end{equation}

When $\alpha_1\neq\alpha$, obviously $|\alpha_1|\leq|\alpha|-1$, we have
\begin{equation*}
\begin{array}{ll}
I_{6,1,1} =\iint\limits_{\mathbb{R}_{+}^2} \sum\limits_{|\alpha_1|+\sigma_1>0}
\partial_y\partial_{t,x}^{\alpha_1}\partial_y^{\sigma_1}\tilde{u}\cdot \partial_{t,x}^{\alpha_2}\partial_y^{\sigma_2}v
\cdot [(1+y)^{2\ell+2\sigma}\partial_y W_{\alpha,\sigma}] \mathrm{d}x\mathrm{d}y \\[9pt]

= -\iint\limits_{\mathbb{R}_{+}^2} \sum\limits_{|\alpha_1|+\sigma_1>0}
(1+y)^{\ell+\sigma_1+1}\partial_y\partial_{t,x}^{\alpha_1}\partial_y^{\sigma_1}\omega\cdot (1+y)^{\sigma_2-1}\partial_{t,x}^{\alpha_2}\partial_y^{\sigma_2}v
\\[9pt]\quad
\cdot [(1+y)^{\ell+\sigma}W_{\alpha,\sigma}] \mathrm{d}x\mathrm{d}y\\[9pt]\quad

- \iint\limits_{\mathbb{R}_{+}^2} \sum\limits_{|\alpha_1|+\sigma_1>0}
(1+y)^{\ell+\sigma_1}\partial_{t,x}^{\alpha_1}\partial_y^{\sigma_1}\omega\cdot (1+y)^{\sigma_2}\partial_{t,x}^{\alpha_2}\partial_y^{\sigma_2+1}v
\\[9pt]\quad
\cdot [(1+y)^{\ell+\sigma}W_{\alpha,\sigma}] \mathrm{d}x\mathrm{d}y
\end{array}
\end{equation*}

\begin{equation}\label{Sect2_2_Estimates_9}
\begin{array}{ll}
\quad
- (2\ell+2\sigma)\iint\limits_{\mathbb{R}_{+}^2} \sum\limits_{|\alpha_1|+\sigma_1>0}
(1+y)^{\ell+\sigma_1}\partial_{t,x}^{\alpha_1}\partial_y^{\sigma_1}\omega\cdot (1+y)^{\sigma_2-1}\partial_{t,x}^{\alpha_2}\partial_y^{\sigma_2}v
\\[9pt]\quad
\cdot [(1+y)^{\ell+\sigma}W_{\alpha,\sigma}] \mathrm{d}x\mathrm{d}y \\[9pt]

\lem \mathcal{L}\sum\limits_{|\alpha^{\prime}|\leq |\alpha|-1}\|W_{\alpha^{\prime},\sigma+1}\|_{L^2_{\ell+\sigma+1}(\mathbb{R}_{+}^2)}^2
+ \mathcal{L}\sum\limits_{|\alpha^{\prime}|\leq |\alpha|,\sigma^{\prime}\leq \sigma}\|W_{\alpha^{\prime},\sigma^{\prime}}\|_{L^2_{\ell+\sigma^{\prime}}(\mathbb{R}_{+}^2)}^2.
\end{array}
\end{equation}

Thus, when $|\alpha|>0$, the first term of $I_6$ has the estimate:
\begin{equation}\label{Sect2_2_Estimates_10}
\begin{array}{ll}
I_{6,1} \lem \mathcal{L}\sum\limits_{|\alpha^{\prime}|\leq |\alpha|-1}\|W_{\alpha^{\prime},\sigma+1}\|_{L^2_{\ell+\sigma+1}(\mathbb{R}_{+}^2)}^2
+ \mathcal{L}\sum\limits_{|\alpha^{\prime}|\leq |\alpha|,\sigma^{\prime}\leq \sigma}\|W_{\alpha^{\prime},\sigma^{\prime}}\|_{L^2_{\ell+\sigma^{\prime}}(\mathbb{R}_{+}^2)}^2.
\end{array}
\end{equation}

When $|\alpha|=0$, the estimate $(\ref{Sect2_2_Estimates_9})$ will not appear for this case. Thus,
for $|\alpha|=0$, the first term of $I_6$ has the estimate:
\begin{equation}\label{Sect2_2_Estimates_11}
\begin{array}{ll}
I_{6,1} \lem \mathcal{L}\sum\limits_{|\alpha^{\prime}|\leq |\alpha|,\sigma^{\prime}\leq \sigma}\|W_{\alpha^{\prime},\sigma^{\prime}}\|_{L^2_{\ell+\sigma^{\prime}}(\mathbb{R}_{+}^2)}^2.
\end{array}
\end{equation}

Next we estimate the second term of $I_6$ which is easier:
\begin{equation}\label{Sect2_2_Estimates_12}
\begin{array}{ll}
- \iint\limits_{\mathbb{R}_{+}^2} u_y\partial_y\frac{[\partial_{t,x}^{\alpha}\partial_y^{\sigma},\, u\partial_x]u}{u_y} \cdot (1+y)^{2\ell+2\sigma} W_{\alpha,\sigma} \mathrm{d}x\mathrm{d}y \\[7pt]

=- \iint\limits_{\mathbb{R}_{+}^2} \sum\limits_{|\alpha_1|+\sigma_1>0}(1+y)^{-\ell+1} \big[(1+y)^{\ell+\sigma_1}
u_y\partial_y\frac{\partial_{t,x}^{\alpha_1}\partial_y^{\sigma_1}\tilde{u} }{u_y}
(1+y)^{\ell+\sigma_2-1}\partial_{t,x}^{\alpha_2}\partial_y^{\sigma_2}\partial_x\tilde{u}
\\[13pt]\quad
+ (1+y)^{\ell+\sigma_1-1}\partial_{t,x}^{\alpha_1}\partial_y^{\sigma_1}\tilde{u} (1+y)^{\ell+\sigma_2}\partial_{t,x}^{\alpha_2}\partial_y^{\sigma_2}\partial_x\omega \big]
\cdot (1+y)^{\ell+\sigma} W_{\alpha,\sigma} \mathrm{d}x\mathrm{d}y \\[7pt]\quad

- \iint\limits_{\mathbb{R}_{+}^2} \sum\limits_{|\alpha_1|+\sigma_1>0}(1+y)^{\ell+\sigma}
u_y\partial_y\frac{\partial_{t,x}^{\alpha_1}U \partial_{t,x}^{\alpha_2}\partial_y^{\sigma}\partial_x\tilde{u}
}{u_y} \cdot (1+y)^{\ell+\sigma} W_{\alpha,\sigma} \mathrm{d}x\mathrm{d}y \\[9pt]\quad

- \iint\limits_{\mathbb{R}_{+}^2} \sum\limits_{|\alpha_1|+\sigma_1>0}(1+y)^{\ell+\sigma}
u_y\partial_y\frac{\partial_{t,x}^{\alpha_1}\partial_y^{\sigma}\tilde{u} \partial_{t,x}^{\alpha_2}\partial_x U
}{u_y} \cdot (1+y)^{\ell+\sigma} W_{\alpha,\sigma} \mathrm{d}x\mathrm{d}y \\[9pt]

\lem \mathcal{L}\sum\limits_{|\alpha^{\prime}|\leq |\alpha|,\sigma^{\prime}\leq\sigma}\|W_{\alpha^{\prime},\sigma^{\prime}}\|_{L_{\ell+\sigma^{\prime}}^2(\mathbb{R}_{+}^2)}^2
+ \mathcal{L}\sum\limits_{|\alpha^{\prime}|\leq |\alpha|,\sigma^{\prime}\leq\sigma}\|W_{\alpha^{\prime}+(-1,1),\sigma^{\prime}}\|_{L_{\ell+\sigma^{\prime}}^2(\mathbb{R}_{+}^2)}^2 \\[9pt]\quad
+ \sum\limits_{|\alpha_1|\leq |\alpha|+1}\|\partial_{t,x}^{\alpha_1}U\|_{L^2(\mathbb{R})}^2.
\end{array}
\end{equation}

Thus, when $|\alpha|>0$,
\begin{equation}\label{Sect2_2_Estimates_13}
\begin{array}{ll}
I_6 \leq
\mathcal{L}\sum\limits_{|\alpha^{\prime}|\leq |\alpha|-1}\|W_{\alpha^{\prime},\sigma+1}\|_{L^2_{\ell+\sigma+1}(\mathbb{R}_{+}^2)}^2
+ C_4\|U\|_{H^{|\alpha|+1}(\mathbb{R})}^2\\[12pt]\qquad
+ C_4\mathcal{L}\sum\limits_{|\alpha^{\prime}|\leq |\alpha|,\sigma^{\prime}\leq \sigma}\|W_{\alpha^{\prime},\sigma^{\prime}}\|_{L^2_{\ell+\sigma^{\prime}}(\mathbb{R}_{+}^2)}^2.
\end{array}
\end{equation}
when $|\alpha|=0$,
\begin{equation}\label{Sect2_2_Estimates_14}
\begin{array}{ll}
I_6 \leq
C_5\|U\|_{H^{|\alpha|+1}(\mathbb{R})}^2
+ C_5\mathcal{L}\sum\limits_{|\alpha^{\prime}|\leq |\alpha|,\sigma^{\prime}\leq \sigma}\|W_{\alpha^{\prime},\sigma^{\prime}}\|_{L^2_{\ell+\sigma^{\prime}}(\mathbb{R}_{+}^2)}^2.
\end{array}
\end{equation}

Next we estimate $I_7$, the first term of $I_7$ is
\begin{equation}\label{Sect2_2_Estimates_15}
\begin{array}{ll}
\int\limits_{\mathbb{R}} \frac{u_{yy}}{u_y}
 (W_{\alpha,\sigma})^2 \,\mathrm{d}x
\leq q\| \partial_y W_{\alpha,\sigma}\|_{L_{\ell}^2(\mathbb{R}_{+}^2)}^2,
\end{array}
\end{equation}
where $q$ is small if $\ell_0$ is suitably large.

The second term of $I_7$ is
\begin{equation}\label{Sect2_2_Estimates_16}
\begin{array}{ll}
- \int\limits_{\mathbb{R}}  \mathcal{P}_1\Big(\sum\limits_{m=0}^{\sigma_1-1} (W_{\alpha_1+(1,0),\sigma_1-1-m}) (\frac{u_{yy}}{u_y})^m
 + \frac{1}{\beta - \frac{u_{yy}}{u_y}}(W_{\alpha_1+(1,0)}  \\[7pt]\quad
 - \beta \partial_x\partial_{t,x}^{\alpha_1} U)(\frac{u_{yy}}{u_y})^{\sigma_1},

 \sum\limits_{m=0}^{\sigma_2-1} (W_{\alpha_2+(0,1),\sigma_2-1-m} (\frac{u_{yy}}{u_y})^m
 + \frac{1}{\beta - \frac{u_{yy}}{u_y}}(W_{\alpha_2+(0,1)} \\[9pt]\quad

 - \beta \partial_{t,x}^{\alpha_2} U)(\frac{u_{yy}}{u_y})^{\sigma_2} \Big)\cdot W_{\alpha,\sigma} \,\mathrm{d}x \\[8pt]

\leq q \sum\limits_{|\alpha^{\prime}|\leq |\alpha|+1,\sigma^{\prime}\leq \sigma-1}\|\partial_y W_{\alpha^{\prime},\sigma^{\prime}}\|_{L_{\ell+\sigma^{\prime}}^2(\mathbb{R}_{+}^2)}^2 \\[9pt]\quad

 + C\frac{\beta^2}{(\beta - \frac{u_{yy}}{u_y})^2} \|U\|_{H^{|\alpha|+1}(\mathbb{R})}^2
 + C\int\limits_{\mathbb{R}}\frac{1}{\beta - \frac{u_{yy}}{u_y}}(W_{\alpha+(1,0)}^2 + W_{\alpha+(0,1)}^2) \,\mathrm{d}x.
\end{array}
\end{equation}
where $q$ is small if $\ell_0$ is suitably large.

Thus,
\begin{equation}\label{Sect2_2_Estimates_17}
\begin{array}{ll}
I_7 \leq q\| \partial_y W_{\alpha,\sigma}\|_{L_{\ell}^2(\mathbb{R}_{+}^2)}^2
+ q \sum\limits_{|\alpha^{\prime}|\leq |\alpha|+1,\sigma^{\prime}\leq \sigma-1}\|\partial_y W_{\alpha^{\prime},\sigma^{\prime}}\|_{L_{\ell+\sigma^{\prime}}^2(\mathbb{R}_{+}^2)}^2 \\[10pt]\hspace{0.73cm}

 + C_6\frac{\beta^2}{(\beta - \frac{u_{yy}}{u_y})^2} \|U\|_{H^{|\alpha|+1}(\mathbb{R})}^2
 + C_6\mathcal{L}\int\limits_{\mathbb{R}}\frac{1}{\beta - \frac{u_{yy}}{u_y}}(W_{\alpha+(1,0)}^2 + W_{\alpha+(0,1)}^2) \,\mathrm{d}x.
\end{array}
\end{equation}

Plug $(\ref{Sect2_2_Estimates_4})$, $(\ref{Sect2_2_Estimates_13})$, $(\ref{Sect2_2_Estimates_17})$ into $(\ref{Sect2_2_Estimates_3})$. When $|\alpha|>0$, we have
\begin{equation}\label{Sect2_2_Estimates_18}
\begin{array}{ll}
\frac{\mathrm{d}}{\mathrm{d}t} \|W_{\alpha,\sigma}\|_{L_{\ell+\sigma}^2}^2
+ \|\partial_y W_{\alpha,\sigma}\|_{L_{\ell+\sigma}^2}^2 \\[11pt]

\leq  C\mathcal{L}\sum\limits_{|\alpha^{\prime}|\leq |\alpha|-1}\|W_{\alpha^{\prime},\sigma+1}\|_{L^2_{\ell+\sigma+1}(\mathbb{R}_{+}^2)}^2
+ C\mathcal{L}\sum\limits_{|\alpha^{\prime}|\leq |\alpha|,\sigma^{\prime}\leq \sigma}\|W_{\alpha^{\prime},\sigma^{\prime}}\|_{L^2_{\ell+\sigma^{\prime}}(\mathbb{R}_{+}^2)}^2 \\[12pt]\quad

+ q \sum\limits_{|\alpha^{\prime}|\leq |\alpha|+1,\sigma^{\prime}\leq \sigma-1}\|\partial_y W_{\alpha^{\prime},\sigma^{\prime}}\|_{L_{\ell+\sigma^{\prime}}^2(\mathbb{R}_{+}^2)}^2
+ C\|U\|_{H^{|\alpha|+1}(\mathbb{R})}^2  \\[12pt]\quad

 + C\mathcal{L}\int\limits_{\mathbb{R}}\frac{1}{\beta - \frac{u_{yy}}{u_y}}(W_{\alpha+(1,0)}^2 + W_{\alpha+(0,1)}^2) \,\mathrm{d}x.
\end{array}
\end{equation}

Plug $(\ref{Sect2_2_Estimates_4})$, $(\ref{Sect2_2_Estimates_14})$, $(\ref{Sect2_2_Estimates_17})$ into $(\ref{Sect2_2_Estimates_3})$. When $|\alpha|=0$, we have
\begin{equation}\label{Sect2_2_Estimates_19}
\begin{array}{ll}
\frac{\mathrm{d}}{\mathrm{d}t} \|W_{\alpha,\sigma}\|_{L_{\ell+\sigma}^2(\mathbb{R}_{+}^2)}^2
+ \|\partial_y W_{\alpha,\sigma}\|_{L_{\ell+\sigma}^2(\mathbb{R}_{+}^2)}^2

\leq  C\mathcal{L}\sum\limits_{|\alpha^{\prime}|\leq |\alpha|,\sigma^{\prime}\leq \sigma}\|W_{\alpha^{\prime},\sigma^{\prime}}\|_{L^2_{\ell+\sigma^{\prime}}(\mathbb{R}_{+}^2)}^2  \\[12pt]\quad

+ q \sum\limits_{|\alpha^{\prime}|\leq |\alpha|+1,\sigma^{\prime}\leq \sigma-1}\|\partial_y W_{\alpha^{\prime},\sigma^{\prime}}\|_{L_{\ell+\sigma^{\prime}}^2(\mathbb{R}_{+}^2)}^2

+ C\|U\|_{H^{|\alpha|+1}(\mathbb{R})}^2  \\[12pt]\quad
 + C\mathcal{L}\int\limits_{\mathbb{R}}\frac{1}{\beta - \frac{u_{yy}}{u_y}}(W_{\alpha+(1,0)}^2 + W_{\alpha+(0,1)}^2) \,\mathrm{d}x.
\end{array}
\end{equation}
Thus, Lemma $\ref{Sect2_2_MixDerivatives_Lemma}$ is proved.
\end{proof}

Since $W_{0,1}\equiv 0$, we estimate $\tilde{W} =\omega_y$ alternatively.
\begin{lemma}\label{Sect2_2_TildeW_Lemma}
Assume $\delta_{\beta}\leq\beta<+\infty$, $\ell\geq\ell_0$,
if $\tilde{W}$ satisfies the system $(\ref{Sect1_VorticityY_Eq})$,
then $\tilde{W}$ satisfies the a priori estimate $(\ref{Sect1_VorticityY_Estimate})$.
\end{lemma}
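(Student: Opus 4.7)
The plan is to perform a weighted $L^2$ energy estimate on $(\ref{Sect1_VorticityY_Eq})_1$ by multiplying with $(1+y)^{2\ell+2}\tilde{W}$ and integrating over $\mathbb{R}_{+}^2$. The viscous term $\tilde{W}_{yy}$ is handled by integrating by parts in $y$: this yields the positive dissipation $\|\partial_y\tilde{W}\|_{L_{\ell+1}^2}^2 = \|\omega_{yy}\|_{L_{\ell+1}^2(\mathbb{R}_{+}^2)}^2$, a benign interior commutator $2(\ell+1)\iint \partial_y\tilde{W}\cdot(1+y)^{2\ell+1}\tilde{W}\,\mathrm{d}x\mathrm{d}y$ which is absorbed using Young's inequality against the dissipation, and a boundary contribution $-\int_{\mathbb{R}}\tilde{W}_y|_{y=0}\,\tilde{W}|_{y=0}\,\mathrm{d}x$ whose boundary data is provided by $(\ref{Sect1_VorticityY_Eq})_2$.

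For the transport terms, I would integrate $u\tilde{W}_x$ by parts in $x$ (picking up $u_x$ in $L^\infty$, contributing to $\mathcal{L}\|\omega_y\|_{L_{\ell+1}^2}^2$), and similarly integrate $v\tilde{W}_y$ by parts in $y$; the divergence-free condition combined with the weight derivative $\partial_y(1+y)^{2\ell+2}$ turns the symmetrized form into $\iint (\partial_y v + 2(\ell+1)(1+y)^{-1} v)(1+y)^{2\ell+2}\tilde{W}^2\,\mathrm{d}x\mathrm{d}y$, which is bounded via Lemma $\ref{Sect2_Preliminary_Lemma3}$ (giving $|U_x|_\infty + \|\tilde{u}_x\|_{H^2_\ell}$ in $L^\infty$) and produces $\mathcal{L}\|\omega_y\|_{L_{\ell+1}^2}^2$. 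The algebraic term $u_x\tilde{W}$ is bounded directly in $L^\infty$. The nonlocal term $\omega_x\int_y^{+\infty}\tilde{W}\,\mathrm{d}\tilde{y}$ is controlled by Hardy's inequality: $\|(1+y)^{\ell+1}\int_y^{+\infty}\tilde{W}\,\mathrm{d}\tilde{y}\|_{L^2} \lesssim \|(1+y)^{\ell+2}\tilde{W}\|_{L^2}$, whose weight is absorbed because the $L^\infty$ norm of $\omega_x$ carries an extra $(1+y)^{-1}$ decay built into $\mathcal{L}$.

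The crux of the argument is the boundary term $\int_{\mathbb{R}}(u_{yt}+uu_{yx})|_{y=0}\,\omega_y|_{y=0}\,\mathrm{d}x$. I apply Young's inequality to split it, bound the factor of $u$ on the boundary by $|u|_{L_{t,x}^\infty}$, and then invoke the trace theorem $\|f|_{y=0}\|_{L^2(\mathbb{R})}^2 \leq C(\|f\|_{L^2(\mathbb{R}_{+}^2)}^2 + \|\partial_y f\|_{L^2(\mathbb{R}_{+}^2)}^2)$ applied to both $\omega_t$, $\omega_x$ and to $\omega_y$ itself. Since $\omega_t = W_{(1,0)} + \tilde{u}_t\frac{u_{yy}}{u_y}$ and $\omega_x = W_{(0,1)} + \tilde{u}_x\frac{u_{yy}}{u_y}$ modulo lower order (Lemma $\ref{Sect2_Preliminary_Lemma1}$), their $\partial_y$ traces bring in $\partial_y W_{(1,0)}$ and $\partial_y W_{(0,1)}$; the coefficients carry a factor $(2\ell_0-1)^{-1}$ through the weighted Hardy estimate $(\ref{Sect2_Preliminary_Estimates1_2})$, so choosing $\ell_0$ suitably large makes them as small as the required $q$. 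The trace of $\omega_y$ on the other side is bounded by $\|\omega_y\|_{L_{\ell+1}^2}^2 + \|\omega_{yy}\|_{L_{\ell+1}^2}^2$, whose dissipative piece is absorbed into the left-hand side.

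The main obstacle is precisely this boundary contribution: unlike the interior estimates in Lemmas $\ref{Sect2_1_Existence_Estimate_Lemma}$–$\ref{Sect2_2_MixDerivatives_Lemma}$, no nonlinear cancellation structure is available because $W_{0,1}\equiv 0$ forces a direct estimate on $\omega_y$, and the boundary data $u_{yt}+uu_{yx}$ involves one more tangential derivative than is naturally controlled. The successful closure hinges on the trace inequality together with the smallness coming from $\ell_0$ being large, which both produces the factor $q\in(0,1)$ in front of $\|\partial_y W_{(1,0)}\|_{L_{\ell}^2}^2 + \|\partial_y W_{(0,1)}\|_{L_{\ell}^2}^2$ and ensures the remaining boundary contributions fit into $\mathcal{L}\|\omega_y\|_{L_{\ell+1}^2}^2 + \|U\|_{H^1}^2 + \mathcal{L}$.
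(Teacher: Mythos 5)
Your handling of $I_{10}$ is broadly consistent with the paper's, but your treatment of the boundary term $I_{11}$ contains a genuine gap, and it is precisely the part you identify as "the crux."

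The boundary contribution is $I_{11} = -\int_{\mathbb{R}}(u_{yt}+uu_{yx})\cdot u_{yy}|_{y=0}\,\mathrm{d}x$. You propose to split it by Young's inequality and then invoke the trace theorem on both factors. This cannot close. After the Young split one factor carries $\frac{1}{\epsilon}$, and the trace theorem gives $\|f|_{y=0}\|_{L^2(\mathbb{R})}^2 \leq 2\|f\|_{L^2(\mathbb{R}_+^2)}\|\partial_y f\|_{L^2(\mathbb{R}_+^2)}$ \emph{with no small prefactor}; the factor $(2\ell_0-1)^{-1}$ comes from Hardy's inequality applied to \emph{weighted interior integrals} (as in $(\ref{Sect2_Preliminary_Estimates1_1})$--$(\ref{Sect2_Preliminary_Estimates1_3})$), not from a trace estimate, so taking $\ell_0$ large does not shrink the coefficient of $\|\partial_y W_{(1,0)}\|_{L_\ell^2}^2$ that your decomposition produces. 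Moreover, after the square of $(u_{yt}+uu_{yx})$ appears with the large coefficient $1/\epsilon$, there is no mechanism to absorb it into the dissipation.

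What the paper actually does is use a \emph{nonlinear cancellation on the boundary} that you have not seen because it exploits the Robin condition in a nonobvious way. On $\{y=0\}$ one has $v=0$, so the equation gives $u_{yy}|_{y=0} = (u_t + uu_x + p_x)|_{y=0}$; and the Robin condition $u_y=\beta u$ at $y=0$ gives $u_{yt}+uu_{yx} = \beta(u_t+uu_x)$ there. Substituting both,
\begin{equation*}
I_{11} = -\beta\int_{\mathbb{R}}(u_t+uu_x)^2\,\mathrm{d}x - \int_{\mathbb{R}} p_x\,(u_{yt}+uu_{yx})|_{y=0}\,\mathrm{d}x.
\end{equation*}
The quadratic piece $-\beta\int(u_t+uu_x)^2\,\mathrm{d}x$ is \emph{nonpositive and dropped}; this is the essential sign observation you are missing. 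Only the cross term with $p_x$ survives, and since $p_x=-(U_t+UU_x)$ is small (controlled by $\|U\|_{H^1}$), the remaining integral is \emph{linear} in the dangerous boundary traces $W_{(1,0)}|_{y=0}, W_{(0,1)}|_{y=0}$, so the trace estimate does close with the required small $q$. Without exposing that favorable sign, the boundary term cannot be controlled: $u_t|_{y=0}$ is not a prescribed quantity in the Robin problem and a Young split leaves a quadratic boundary integral with a non-small constant. Your proof as written would not produce $(\ref{Sect1_VorticityY_Estimate})$.
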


\begin{proof}
Multiple $(\ref{Sect1_VorticityY_Eq})_1$ with $(1+y)^{2\ell+2} \tilde{W}$, integrate in $\mathbb{R}_{+}^2$, we get
\begin{equation}\label{Sect2_TildeW_1}
\begin{array}{ll}
\frac{1}{2}\frac{\mathrm{d}}{\mathrm{d}t}\| \tilde{W}\|_{L_{\ell+1}^2}^2
+ \| \partial_y\tilde{W}\|_{L_{\ell+1}^2}^2

+\int\limits_{\mathbb{R}} \partial_y\tilde{W}|_{y=0} \cdot \tilde{W}|_{y=0} \,\mathrm{d}x := I_{10},
\end{array}
\end{equation}
where
\vspace{-0.2cm}
\begin{equation*}
\begin{array}{ll}
I_{10} := \iint\limits_{\mathbb{R}_{+}^2} [-u\tilde{W}_x -v\tilde{W}_y +u_x\tilde{W} +\omega_x\int\limits_y^{+\infty} \tilde{W}\,\mathrm{d}\tilde{y}]
\cdot (1+y)^{2\ell+2} \tilde{W} \,\mathrm{d}x\mathrm{d}y \\[10pt]\hspace{1cm}

- (2\ell+2)\iint\limits_{\mathbb{R}_{+}^2} \tilde{W}_y \cdot (1+y)^{2\ell+1} \tilde{W} \,\mathrm{d}x\mathrm{d}y.
\end{array}
\end{equation*}

Multiple $(\ref{Sect1_VorticityY_Eq})_2$ with $\tilde{W}$, integrate in $\mathbb{R}$, we get
\begin{equation}\label{Sect2_TildeW_2}
\begin{array}{ll}
- \int\limits_{\mathbb{R}} \partial_y\tilde{W}|_{y=0} \cdot \tilde{W}|_{y=0} \,\mathrm{d}x
= -\int\limits_{\mathbb{R}} (u_{yt} + u u_{yx})|_{y=0} \cdot  u_{yy}|_{y=0} \,\mathrm{d}x := I_{11}.
\end{array}
\end{equation}

By $(\ref{Sect2_TildeW_1}) + (\ref{Sect2_TildeW_2})$, we get
\begin{equation}\label{Sect2_TildeW_3}
\begin{array}{ll}
\frac{1}{2}\frac{\mathrm{d}}{\mathrm{d}t}\| \tilde{W}\|_{L_{\ell+1}^2}^2
+ \| \partial_y\tilde{W}\|_{L_{\ell+1}^2}^2 = I_{10} + I_{11},
\end{array}
\end{equation}

Next we estimate $I_{10}$:
\begin{equation}\label{Sect2_TildeW_4}
\begin{array}{ll}
I_{10} = \iint\limits_{\mathbb{R}_{+}^2} (1+y)^{2\ell+2}\tilde{W}^2[\frac{3}{2}u_x  + \frac{1}{2}
v_y  + \ell v(1+y)^{-1}] \,\mathrm{d}x\mathrm{d}y \\[5pt]\hspace{0.9cm}

 +\iint\limits_{\mathbb{R}_{+}^2} [(1+y)u_{yx}] \cdot\int\limits_y^{+\infty} \tilde{W}\,\mathrm{d}\tilde{y}
\cdot (1+y)^{2\ell+1} \tilde{W} \,\mathrm{d}x\mathrm{d}y \\[10pt]\hspace{0.9cm}

+ (\ell+1)(2\ell+2)\iint\limits_{\mathbb{R}_{+}^2} (1+y)^{2\ell} \tilde{W}^2 \,\mathrm{d}x\mathrm{d}y

\lem \mathcal{L}\|\tilde{W}\|_{L_{\ell+1}^2}^2.
\end{array}
\end{equation}

Next we estimate $I_{11}$:
\begin{equation}\label{Sect2_TildeW_5}
\begin{array}{ll}
I_{11} = - \beta\int\limits_{\mathbb{R}} (u_{t} + u u_{x})|_{y=0} \cdot  (u_t + u u_x + p_x)|_{y=0} \,\mathrm{d}x \\[9pt]\hspace{0.54cm}

= - \beta\int\limits_{\mathbb{R}} (u_{t} + u u_{x})^2 \,\mathrm{d}x
 - \int\limits_{\mathbb{R}}  p_x \cdot(u_{yt} + u u_{yx})|_{y=0} \,\mathrm{d}x \\[9pt]\hspace{0.54cm}

\leq - \int\limits_{\mathbb{R}}  p_x \cdot(\tilde{u}_{yt} + u \tilde{u}_{yx})|_{y=0} \,\mathrm{d}x \\[9pt]\hspace{0.54cm}

= - \int\limits_{\mathbb{R}}  p_x \cdot \big(W_{(1,0)} + u W_{(0,1)}
+ \tilde{u}_{t}\frac{u_{yy}}{u_y}
+ u\tilde{u}_{x} \frac{u_{yy}}{u_y} \big)|_{y=0} \,\mathrm{d}x \\[9pt]\hspace{0.54cm}

= - \int\limits_{\mathbb{R}}  p_x \cdot \frac{\beta}{\beta -\frac{u_{yy}}{u_y}} \big(W_{(1,0)} + u W_{(0,1)}
- \frac{u_{yy}}{u_y} (U_t + u U_x) \big)|_{y=0} \,\mathrm{d}x \\[9pt]\hspace{0.54cm}

\lem \|W_{(1,0)}\|_{L^2(\mathbb{R})}^2 + \|W_{(0,1)}\|_{L^2(\mathbb{R})}^2 + \|U\|_{H^1(\mathbb{R})}^2 + \mathcal{L} \\[9pt]\hspace{0.54cm}

\leq q\|\partial_y W_{(1,0)}\|_{L_{\ell}^2(\mathbb{R}_{+}^2)}^2 + q\|\partial_y W_{(0,1)}\|_{L_{\ell}^2(\mathbb{R}_{+}^2)}^2 + C_7\|U\|_{H^1(\mathbb{R})}^2
+ C_7\mathcal{L}.
\end{array}
\end{equation}

Plug $(\ref{Sect2_TildeW_4})$ and $(\ref{Sect2_TildeW_5})$ into $(\ref{Sect2_TildeW_3})$, we have
\begin{equation}\label{Sect2_TildeW_6}
\begin{array}{ll}
\frac{\mathrm{d}}{\mathrm{d}t}\| \tilde{W}\|_{L_{\ell+1}^2(\mathbb{R}_{+}^2)}^2
+ \| \partial_y\tilde{W}\|_{L_{\ell+1}^2(\mathbb{R}_{+}^2)}^2 \lem \mathcal{L}\| \tilde{W}\|_{L_{\ell+1}^2(\mathbb{R}_{+}^2)}^2 + \|U\|_{H^1(\mathbb{R})}^2
+ \mathcal{L} \\[8pt]\hspace{4cm}

+ q\|\partial_y W_{(1,0)}\|_{L_{\ell}^2(\mathbb{R}_{+}^2)}^2 + q\|\partial_y W_{(0,1)}\|_{L_{\ell}^2(\mathbb{R}_{+}^2)}^2.
\end{array}
\end{equation}

When $\beta =+\infty$,
\begin{equation}\label{Sect2_TildeW_7}
\begin{array}{ll}
I_{11} = - \int\limits_{\mathbb{R}} u_{yt} \cdot  p_x|_{y=0} \,\mathrm{d}x
= - \int\limits_{\mathbb{R}}  p_x \tilde{u}_{yt} |_{y=0} \,\mathrm{d}x \\[9pt]\quad

= - \int\limits_{\mathbb{R}}  p_x \cdot \big(W_{(1,0)}
+ \tilde{u}_{t}\frac{u_{yy}}{u_y}\big)|_{y=0} \,\mathrm{d}x \\[9pt]\quad

= - \int\limits_{\mathbb{R}}  p_x \cdot \frac{\beta}{\beta -\frac{u_{yy}}{u_y}} \big(W_{(1,0)}
- \frac{u_{yy}}{u_y} U_t \big)|_{y=0} \,\mathrm{d}x \\[9pt]\quad

\leq q\|\partial_y W_{(1,0)}\|_{L_{\ell}^2(\mathbb{R}_{+}^2)}^2 + C_8\|U\|_{H^1(\mathbb{R})}^2+ C_8\mathcal{L}.
\end{array}
\end{equation}

Thus, Lemma $\ref{Sect2_2_TildeW_Lemma}$ is proved.
\end{proof}

The following lemma states that the Prandtl systems $(\ref{Sect1_PrandtlEq})$ and $(\ref{Sect1_PrandtlEq_Dirichlet})$ preserve
the full regularities and space decay rates.
\begin{lemma}\label{Sect2_FullRegularity_Lemma}
Assume the conditions in Theorem $\ref{Sect1_Main_Thm}$ are satisfied, $(u,v)$ is the solution of the Prandtl equations $(\ref{Sect1_PrandtlEq})$,
then $(\omega,u,v)$ satisfy the regularities $(\ref{Sect1_Solution_Regularity})$.

When $\beta=+\infty$, assume the conditions in Theorem $\ref{Sect1_Main_Thm_Dirichlet}$, $(u,v)$ is the solution of the Prandtl equations $(\ref{Sect1_PrandtlEq_Dirichlet})$, then $(\omega,u,v)$ satisfy the regularities $(\ref{Sect1_Solution_Regularity_Dirichlet})$.
\end{lemma}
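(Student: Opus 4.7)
The plan is to assemble the estimates from Lemmas $\ref{Sect2_1_Existence_Estimate_Lemma}$, $\ref{Sect2_2_MixDerivatives_Lemma}$, and $\ref{Sect2_2_TildeW_Lemma}$ into a single differential inequality for a total energy, close it by a bootstrap argument using the smallness of $\ve_1$ and $\|U\|$, and then translate the resulting bounds on $W_{\alpha,\sigma}$ into the regularity statements in $(\ref{Sect1_Solution_Regularity})$.

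First I would define
\begin{equation*}
E(t) := \sum_{|\alpha|+\sigma\leq k} \|W_{\alpha,\sigma}\|_{L_{\ell+\sigma}^2(\mathbb{R}_{+}^2)}^2
+ \sum_{|\alpha|\leq k}\int_{\mathbb{R}} \frac{(W_{\alpha}|_{y=0})^2}{\beta - \frac{u_{yy}}{u_y}|_{y=0}}\,\mathrm{d}x
+ \|\omega_y\|_{L_{\ell+1}^2(\mathbb{R}_{+}^2)}^2,
\end{equation*}
and the companion dissipation $D(t) := \sum_{|\alpha|+\sigma\leq k}\|\partial_y W_{\alpha,\sigma}\|_{L_{\ell+\sigma}^2}^2 + \|\omega_{yy}\|_{L_{\ell+1}^2}^2$. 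Summing $(\ref{Sect1_Existence_Estimate1})$ over $|\alpha|\leq k$, summing $(\ref{Sect1_Existence_Estimates2})$ over $|\alpha|+\sigma\leq k$ with $\sigma\geq 1$, and adding $(\ref{Sect1_VorticityY_Estimate})$, the $q$-multiplied dissipation terms on the right can be absorbed into $D(t)$ on the left provided $\ell_0$ is chosen suitably large so that $q\in(0,1)$ is sufficiently small. The remaining right-hand side is bounded by $C\mathcal{L}\,E(t) + C\|U\|_{H^{k+1}}^2$.

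Next I would close the nonlinear inequality by bootstrap. Since $k\geq 6$, Sobolev embedding gives $\mathcal{L}\lem E(t)^{s/2}$ for some $s\geq 1$, so under the bootstrap hypothesis $E(t)\leq M\,E(0)\leq CM\ve_1^2$ on $[0,T]$ (which encodes Lemma $\ref{Sect2_Preliminary_Lemma0}$ and the constant used in Lemmas $\ref{Sect2_Preliminary_Lemma1}$--$\ref{Sect2_Preliminary_Lemma2}$), one obtains
\begin{equation*}
\frac{\mathrm{d}}{\mathrm{d}t} E(t) + D(t) \lem (M\ve_1^2)^{s/2} E(t) + C_0^2\ve_1^2,
\end{equation*}
whence Gr\"onwall's inequality yields $E(t)\leq e^{C(M\ve_1^2)^{s/2}T}\bigl(E(0) + C_0^2\ve_1^2 T\bigr)$. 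Taking $\ve_1=o(T^{-1})$ sufficiently small, the right-hand side is strictly less than $M\,E(0)$ on $[0,T]$, which closes the bootstrap and shows $E(t)$ stays uniformly bounded. Together with the bound on $D(t)$ this gives the space--time norms $\|W_{\alpha,\sigma}\|_{L^2([0,T]; L_{\ell+\sigma}^2)}$ and $\|\partial_y W_{\alpha,\sigma}\|_{L^2([0,T]; L_{\ell+\sigma}^2)}$.

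Finally I would translate back to the variables appearing in $(\ref{Sect1_Solution_Regularity})$. Lemma $\ref{Sect2_Preliminary_Lemma1}$ converts the $W_{\alpha,\sigma}$-bounds into $\omega\in\mathcal{H}_\ell^k(\mathbb{R}_{+}^2)$ and $\omega, \omega_y \in \mathcal{H}_\ell^k([0,T]\times\mathbb{R}_+^2)$; integrating $u-U=-\int_y^{+\infty}\omega\,\mathrm{d}\tilde{y}$ with Hardy's inequality gives $u-U\in\mathcal{H}_{\ell-1}^k$; the trace theorem applied to $\partial_y^j u\in H^{k-j}(\mathbb{R}_{+}^2)\cap H^{k-j}([0,T]\times\mathbb{R}_{+}^2)$ yields $\partial_y^j u|_{y=0}\in H^{k-j}(\mathbb{R})\cap H^{k-j}([0,T]\times\mathbb{R})$; and Lemma $\ref{Sect2_Preliminary_Lemma3}$ applied to $\partial_{t,x}^{\alpha} v + y\partial_{t,x}^{\alpha}\partial_x U = -\int_0^y \partial_{t,x}^{\alpha}\partial_x\tilde u\,\mathrm{d}\tilde y$ produces the last line of $(\ref{Sect1_Solution_Regularity})$. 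The Dirichlet case $(\ref{Sect1_Solution_Regularity_Dirichlet})$ follows because all the preceding estimates are uniform in $\beta$, so passing to the limit $\beta\rto +\infty$ preserves the same bounds, the boundary integrals simply dropping out. The principal obstacle is the consistent choice of the three parameters $\ell_0$, $\delta_\beta$, and $\ve_1$: $\ell_0$ must be large enough to make both $q$ and the Hardy constants in Lemmas $\ref{Sect2_Preliminary_Lemma1}$--$\ref{Sect2_Preliminary_Lemma2}$ small; $\delta_\beta$ must keep $\beta-u_{yy}/u_y|_{y=0}$ away from zero so that the boundary energy term in $E(t)$ is coercive; and $\ve_1=o(T^{-1})$ must dominate the exponential Gr\"onwall factor over the prescribed interval $[0,T]$.
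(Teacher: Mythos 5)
Your strategy is the same in outline as the paper's -- sum the estimates of Lemmas~\ref{Sect2_1_Existence_Estimate_Lemma}, \ref{Sect2_2_MixDerivatives_Lemma}, \ref{Sect2_2_TildeW_Lemma}, absorb the $q$-weighted dissipation into the left-hand side using the largeness of $\ell_0$, then translate the $W_{\alpha,\sigma}$ bounds into the norms of $(\ref{Sect1_Solution_Regularity})$ and pass $\beta\rto +\infty$ for the Dirichlet case -- but you close the differential inequality by a different technique. The paper treats the nonlinear inequality
\begin{equation*}
\frac{\mathrm{d}}{\mathrm{d}t}E(t)+D(t)\le\lambda_1 E(t)^{s/2}+\lambda_1\|U\|_{H^{k+1}}^{2}
\end{equation*}
directly with a comparison-principle ODE argument, producing the explicit algebraic blow-up bound $(\ref{Sect2_FullRegularity_2})$ and then reading off the smallness threshold $(\ref{Sect2_FullRegularity_3})$ for the data in terms of $T$ and $M$. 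You instead freeze the nonlinearity under a bootstrap ansatz $E\le M E(0)$ and apply linear Gr\"onwall. Both routes are valid; the paper's is tighter and constant-transparent because it never needs a continuation argument, while yours is more elementary and flexible. Two remarks to make your version airtight. First, be explicit that the bootstrap constant $M$ must grow with $T$: with $E(0)\sim\ve_1^2$ and $\|U\|_{H^{k+1}}^2\sim\ve_1^2$, Gr\"onwall gives a right-hand side of order $\ve_1^2(1+T)$, so closing the inequality $\ <M E(0)$ forces $M\gtrsim T$ -- exactly the dependence implicit in the paper's choice of $M$ and of the threshold in $(\ref{Sect2_FullRegularity_3})$. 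Second, you should record the estimation order the paper notes ($W_{\alpha,\sigma_1}$ before $W_{\alpha,\sigma_2}$ when $\sigma_1<\sigma_2$, and $W_{\alpha+(-1,1),\sigma}$ before $W_{\alpha,\sigma}$) so that when you ``absorb the $q$-multiplied dissipation terms'' the indices $|\alpha'|\le|\alpha|+1,\,\sigma'\le\sigma-1$ from $(\ref{Sect1_Existence_Estimates2})$ actually land inside the total dissipation $D(t)$ after summation over $|\alpha|+\sigma\le k$.
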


\begin{proof}
If $\sigma_1<\sigma_2$, we estimate $W_{\alpha,\sigma_1}$ before estimating $W_{\alpha,\sigma_2}$.
Also, we estimate $W_{\alpha+(-1,+1),\sigma}$ before estimating $W_{\alpha,\sigma}$.

Coupling the estimates $(\ref{Sect1_Existence_Estimate1})$ with the estimates $(\ref{Sect1_Existence_Estimates2})$, we get
\begin{equation}\label{Sect2_FullRegularity_1}
\begin{array}{ll}
\frac{\mathrm{d}}{\mathrm{d}t}
\Big[\sum\limits_{|\alpha|+\sigma\leq k}\|W_{\alpha,\sigma}\|_{L_{\ell+\sigma}^2(\mathbb{R}_{+}^2)}^2
+ \sum\limits_{|\alpha|\leq k}\int\limits_{\mathbb{R}} \frac{1}{\beta - \frac{u_{yy}}{u_y}|_{y=0}}
 (W_{\alpha}|_{y=0})^2 \,\mathrm{d}x \Big] \\[10pt]\quad
+ \sum\limits_{|\alpha|+\sigma\leq k}\|\partial_y W_{\alpha,\sigma}\|_{L_{\ell+\sigma}^2(\mathbb{R}_{+}^2)}^2
\\[6pt]
\leq \lambda_1 \Big[\sum\limits_{|\alpha|+\sigma\leq k}\|W_{\alpha,\sigma}\|_{L_{\ell+\sigma}^2(\mathbb{R}_{+}^2)}^2
+ \sum\limits_{|\alpha|\leq k}\int\limits_{\mathbb{R}} \frac{1}{\beta - \frac{u_{yy}}{u_y}|_{y=0}} (W_{\alpha}|_{y=0})^2 \,\mathrm{d}x \Big]^{\frac{s}{2}}
\\[9pt]\quad
+ \lambda_1\|U\|_{H^{k+1}(\mathbb{R})}^2,
\end{array}
\end{equation}
where $\lambda_1>0$ is a constant, $s\geq 3$ is an integer.

Then we can prove the following estimate by using the comparison principle of ODE (see \cite{Masmoudi_Wong_2012}), which is easy to prove.
\begin{equation}\label{Sect2_FullRegularity_2}
\begin{array}{ll}
\sum\limits_{|\alpha|+\sigma\leq k}\Big[\|W_{\alpha,\sigma}\|_{L_{\ell+\sigma}^2(\mathbb{R}_{+}^2)}^2
+ \|\partial_y W_{\alpha,\sigma}\|_{L_{\ell+\sigma}^2([0,T]\times\mathbb{R}_{+}^2)}^2 \Big] \\[9pt]\quad

+ \sum\limits_{|\alpha|\leq k}\int\limits_{\mathbb{R}} \frac{1}{\beta - \frac{u_{yy}}{u_y}|_{y=0}}
 (W_{\alpha}|_{y=0})^2 \,\mathrm{d}x
\\[12pt]

\leq \Big[\sum\limits_{|\alpha|+\sigma\leq k}\|W_{\alpha,\sigma}|_{t=0}\|_{L_{\ell+\sigma}^2(\mathbb{R}_{+}^2)}^2
+ \sum\limits_{|\alpha|\leq k}\int\limits_{\mathbb{R}} \frac{1}{\beta - \frac{u_{yy}}{u_y}|_{t=0,y=0}}
 (W_{\alpha}|_{t=0,y=0})^2 \,\mathrm{d}x \\[10pt]\quad
 + \|U\|_{H^{k+1}([0,T]\times\mathbb{R})}^2 \Big]

 \cdot \Big\{ 1 - (s-1)\lambda_1\big[\sum\limits_{|\alpha|+\sigma\leq k}\|W_{\alpha,\sigma}|_{t=0}\|_{L_{\ell+\sigma}^2(\mathbb{R}_{+}^2)}^2 \\[8pt]\quad
+ \sum\limits_{|\alpha|\leq k}\int\limits_{\mathbb{R}} \frac{1}{\beta - \frac{u_{yy}}{u_y}|_{t=0,y=0}}
 (W_{\alpha}|_{t=0,y=0})^2 \,\mathrm{d}x
 + \|U\|_{H^{k+1}([0,T]\times\mathbb{R})}^2 \big]^{\frac{s-1}{2}} t \Big\}^{\frac{-2}{s-1}}, k\geq 6.
\end{array}
\end{equation}

Thus, when
\begin{equation}\label{Sect2_FullRegularity_3}
\begin{array}{ll}
\sum\limits_{|\alpha|+\sigma\leq k} \|W_{\alpha,\sigma}|_{t=0}\|_{L_{\ell+\sigma}^2(\mathbb{R}_{+}^2)}^2
+ \sum\limits_{|\alpha|\leq k}\int\limits_{\mathbb{R}} \frac{1}{\beta - \frac{u_{yy}}{u_y}|_{t=0,y=0}}
 (W_{\alpha}|_{t=0,y=0})^2 \,\mathrm{d}x  \\[9pt]\quad
 + \|U\|_{H^{k+1}([0,T]\times\mathbb{R})}^2 \leq \big(\frac{1-M^{-\frac{2}{s-1}}}{(s-1)\lambda_1 T}\big)^{\frac{2}{s-1}},
\end{array}
\end{equation}
we have
\begin{equation*}
\begin{array}{ll}
\sum\limits_{|\alpha|+\sigma\leq k}\Big[\|W_{\alpha,\sigma}\|_{L_{\ell+\sigma}^2(\mathbb{R}_{+}^2)}^2
+ \|\partial_y W_{\alpha,\sigma}\|_{L_{\ell+\sigma}^2([0,T]\times\mathbb{R}_{+}^2)}^2 \Big]  \\[10pt]\quad
+ \sum\limits_{|\alpha|\leq k}\int\limits_{\mathbb{R}} \frac{1}{\beta - \frac{u_{yy}}{u_y}|_{y=0}}
 (W_{\alpha}|_{y=0})^2 \,\mathrm{d}x
\end{array}
\end{equation*}

\begin{equation}\label{Sect2_FullRegularity_4}
\begin{array}{ll}
\leq M^2 \sum\limits_{|\alpha|+\sigma\leq k}\|W_{\alpha,\sigma}|_{t=0}\|_{L_{\ell+\sigma}^2(\mathbb{R}_{+}^2)}^2 \\[10pt]\quad
+ M^2 \sum\limits_{|\alpha|\leq k}\int\limits_{\mathbb{R}} \frac{1}{\beta - \frac{u_{yy}}{u_y}|_{t=0,y=0}}
 (W_{\alpha}|_{t=0,y=0})^2 \,\mathrm{d}x <+\infty.
\end{array}
\end{equation}

Take $\delta_{\beta} = \max\limits_{t\in [0,T]}\{\big|\frac{u_{yy}}{u_y}|_{y=0}\big|_{\infty}\} +1 <+\infty$.

Let $\beta\rto +\infty$ in the a priori estimate $(\ref{Sect2_FullRegularity_1})$, we get
\begin{equation}\label{Sect2_FullRegularity_1_Dirichlet}
\begin{array}{ll}
\frac{\mathrm{d}}{\mathrm{d}t}
\sum\limits_{|\alpha|+\sigma\leq k}\|W_{\alpha,\sigma}\|_{L_{\ell+\sigma}^2(\mathbb{R}_{+}^2)}^2
+ \sum\limits_{|\alpha|+\sigma\leq k}\|\partial_y W_{\alpha,\sigma}\|_{L_{\ell+\sigma}^2(\mathbb{R}_{+}^2)}^2
\\[12pt]

\leq \lambda_2\sum\limits_{|\alpha|+\sigma\leq k}\|W_{\alpha,\sigma}\|_{L_{\ell+\sigma}^2(\mathbb{R}_{+}^2)}^{\frac{s}{2}}
+ \lambda_2\|U\|_{H^{k+1}(\mathbb{R})}^2,
\end{array}
\end{equation}
where $\lambda_2>0$ is a constant, $s\geq 3$ is an integer.

Then we can prove the following estimate by using the comparison principle of ODE:
\begin{equation}\label{Sect2_FullRegularity_2_Dirichlet}
\begin{array}{ll}
\sum\limits_{|\alpha|+\sigma\leq k}\|W_{\alpha,\sigma}\|_{L_{\ell+\sigma}^2(\mathbb{R}_{+}^2)}^2
+ \sum\limits_{|\alpha|+\sigma\leq k}\|\partial_y W_{\alpha,\sigma}\|_{L_{\ell+\sigma}^2([0,T]\times\mathbb{R}_{+}^2)}^2
\\[12pt]

\leq \Big[\sum\limits_{|\alpha|+\sigma\leq k}\|W_{\alpha,\sigma}|_{t=0}\|_{L_{\ell+\sigma}^2(\mathbb{R}_{+}^2)}^2
 + \|U\|_{H^{k+1}([0,T]\times\mathbb{R})}^2 \Big]

 \cdot \Big\{ 1 - (s-1)\lambda_2 \\[10pt]\quad

 \cdot\big[\sum\limits_{|\alpha|+\sigma\leq k}\|W_{\alpha,\sigma}|_{t=0}\|_{L_{\ell+\sigma}^2(\mathbb{R}_{+}^2)}^2
 + \|U\|_{H^{k+1}([0,T]\times\mathbb{R})}^2 \big]^{\frac{s-1}{2}} t \Big\}^{-\frac{2}{s-1}}, \quad k\geq 6.
\end{array}
\end{equation}

Thus, when
\begin{equation}\label{Sect2_FullRegularity_3_Dirichlet}
\begin{array}{ll}
\sum\limits_{|\alpha|+\sigma\leq k} \|W_{\alpha,\sigma}|_{t=0}\|_{L_{\ell+\sigma}^2(\mathbb{R}_{+}^2)}^2
 + \|U\|_{H^{k+1}([0,T]\times\mathbb{R})}^2 \leq \big(\frac{1-M^{-\frac{2}{s-1}}}{(s-1)\lambda_2 T}\big)^{\frac{2}{s-1}},
\end{array}
\end{equation}
we have
\begin{equation}\label{Sect2_FullRegularity_4_Dirichlet}
\begin{array}{ll}
\sum\limits_{|\alpha|+\sigma\leq k}\Big[\|W_{\alpha,\sigma}\|_{L_{\ell+\sigma}^2(\mathbb{R}_{+}^2)}^2
+ \|\partial_y W_{\alpha,\sigma}\|_{L_{\ell+\sigma}^2([0,T]\times\mathbb{R}_{+}^2)}^2 \Big] \\[10pt]

\leq M^2 \sum\limits_{|\alpha|+\sigma\leq k}\Big[\|W_{\alpha,\sigma}|_{t=0}\|_{L_{\ell+\sigma}^2(\mathbb{R}_{+}^2)}^2
<+\infty.
\end{array}
\end{equation}

Thus, Lemma $\ref{Sect2_FullRegularity_Lemma}$ is proved.
\end{proof}

%%% find 3
\section{The Existence of the Prandtl Equations}
In this section, we construct Prandtl solutions and study their behaviors on the boundary.

\subsection{Iteration Scheme and Convergence of Approximate Solutions}
In this subsection, we construct the Prandtl solutions by using the iteration scheme $(\ref{Sect1_PrandtlEq_Iteration})$.

For the iteration scheme $(\ref{Sect1_PrandtlEq_Iteration})$, we have the equations of $W_{\alpha}^{n+1}$ as follows:
\begin{equation}\label{Sect3_Existence_VorticityEq_1}
\left\{\begin{array}{ll}
\partial_t W_{\alpha}^{n+1} + u^n\partial_x W_{\alpha}^{n+1} - \partial_{yy} W_{\alpha}^{n+1} + Q_1^n W_{\alpha}^{n+1}
+ \partial_y u^n\partial_y(\frac{\partial_{t,x}^{\alpha} \tilde{u}^{n+1}}{\partial_y u^n}\frac{\partial_{yt} u^n}{\partial_y u^n}) \\[8pt]\quad
+ u^n\partial_y u^n\partial_y(\frac{\partial_{t,x}^{\alpha} \tilde{u}^{n+1}}{\partial_y u^n}\frac{\partial_{yx} u^n}{\partial_y u^n})
- \partial_y u^n\partial_y(\frac{\partial_{t,x}^{\alpha} \tilde{u}^{n+1}}{\partial_y u^n}\frac{\partial_{yyy} u^n}{\partial_y u^n})\\[8pt]\quad

= - \partial_y u^n\partial_y\frac{[\partial_{t,x}^{\alpha},\, \partial_y u^n]v^{n+1}}{\partial_y u^n}
- \partial_y u^n\partial_y\frac{[\partial_{t,x}^{\alpha},\, u^n\partial_x]\tilde{u}^{n+1}}{\partial_y u^n} \\[8pt]\quad
- \partial_y u^n\partial_y\frac{[\partial_{t,x}^{\alpha},\, \tilde{u}^n\partial_x]U}{\partial_y u^n}
-\tilde{u}^n \partial_y u^n\partial_y\frac{\partial_x\partial_{t,x}^{\alpha} U}{\partial_y u^n}, \\[13pt]

\frac{1}{\beta - \frac{\partial_{yy}u^n}{\partial_y u^n}}(\partial_t W_{\alpha}^{n+1} + u^n\partial_x W_{\alpha}^{n+1})
- \partial_y W_{\alpha}^{n+1} - \frac{\partial_{yy}u^n}{\partial_y u^n}W_{\alpha}^{n+1} \\[11pt]\quad
+ Q_2 \cdot\frac{1}{\beta - \frac{\partial_{yy}u^n}{\partial_y u^n}}W_{\alpha}^{n+1}

= -[\partial_{t,x}^{\alpha},\,u^n\partial_x]\tilde{u}^{n+1} -[\partial_{t,x}^{\alpha},\,\tilde{u}^n\partial_x]U
+ Q_3, \ \, y=0, \\[13pt]

W_{\alpha}^{n+1}|_{t=0} = \partial_y u_0(x,y)
\partial_y\big(\frac{\partial_{t,x}^{\alpha}u_0(x,y)-\partial_{t,x}^{\alpha} U_0(x)}{\partial_y u_0(x,y)}\big) =W_{\alpha}^0,
\end{array}\right.
\end{equation}
where the lower order terms $Q_1, Q_2, Q_3$ are defined as
\begin{equation}\label{Sect3_Quantity_Definition_1}
\begin{array}{ll}
Q_1^n := - \frac{\partial_{yt} u^n}{\partial_y u^n} - \frac{u^n \partial_{yx}u^n}{\partial_y u^n} - \frac{\partial_{yyy}u^n}{\partial_y u^n} +2(\frac{\partial_{yy}u^n}{\partial_y u^n})^2, \\[10pt]

Q_2^n := \frac{(\frac{\partial_{yy}u^n}{\partial_y u^n})_t}{\beta - \frac{\partial_{yy} u^n}{\partial_y u^n}}
+ \frac{u^n(\frac{\partial_{yy}u^n}{\partial_y u^n})_x}{\beta - \frac{\partial_{yy} u^n}{\partial_y u^n}}
 - \frac{\partial_{yyy}u^n}{\partial_y u^n}, \quad y=0,
\\[14pt]

Q_3^n :=-\tilde{u}^n\partial_x\partial_{t,x}^{\alpha}U
+ \partial_t\big[\frac{\beta}{\beta - \frac{\partial_{yy}u^n}{\partial_y u^n}} \partial_{t,x}^{\alpha} U\big]
+ u^n\partial_x\big[\frac{\beta}{\beta - \frac{\partial_{yy} u^n}{\partial_y u^n}} \partial_{t,x}^{\alpha} U\big] \\[9pt]\hspace{1.05cm}
- \frac{\partial_{yyy}u^n}{\partial_y u^n}\cdot
\frac{\beta}{\beta - \frac{\partial_{yy} u^n}{\partial_y u^n}} \partial_{t,x}^{\alpha} U.
\end{array}
\end{equation}

For $(\ref{Sect1_PrandtlEq_Iteration})$, when $\sigma\geq 1$, we have the following equations for $W_{\alpha,\sigma}$:
\begin{equation}\label{Sect3_Existence_VorticityEq_2}
\left\{\begin{array}{ll}
\partial_t W_{\alpha,\sigma}^{n+1} + u^n\partial_x W_{\alpha,\sigma}^{n+1}
- \partial_{yy} W_{\alpha,\sigma}^{n+1} +Q_1^n W_{\alpha,\sigma}^{n+1}

+ \partial_y u^n\partial_y[\frac{\partial_{t,x}^{\alpha}\partial_y^{\sigma} u^{n+1}}{\partial_y u^n}\frac{\partial_{yt}u^n}{\partial_y u^n}] \\[6pt]\quad
+ u^n \partial_y u^n\partial_y[\frac{\partial_{t,x}^{\alpha}\partial_y^{\sigma} u^{n+1}}{\partial_y u^n}\frac{\partial_{yx}u^n}{\partial_y u^n}]
- \partial_y u^n\partial_y\big[\frac{\partial_{t,x}^{\alpha}\partial_y^{\sigma} u^{n+1}}{\partial_y u^n} \frac{\partial_{yyy}u^n}{\partial_y u^n} \big] \\[9pt]\quad

=  - \partial_y u^n\partial_y\frac{[\partial_{t,x}^{\alpha}\partial_y^{\sigma},\, u^n\partial_x]u^{n+1}}{\partial_y u^n}
- \partial_y u^n\partial_y\frac{[\partial_{t,x}^{\alpha}\partial_y^{\sigma},\, \partial_y u^n]v^{n+1}}{\partial_y u^n}, \\[8pt]

-\partial_y W_{\alpha,\sigma}^{n+1} - \frac{\partial_{yy}u^n}{\partial_y u^n}W_{\alpha,\sigma}^{n+1}

= \mathcal{P}_1\Big(\sum\limits_{m=0}^{\sigma_1-1} (W_{\alpha_1+(1,0),\sigma_1-1-m}^{n+1}) (\frac{\partial_{yy}u^n}{\partial_y u^n})^m \\[7pt]\quad
 + \frac{1}{\beta - \frac{\partial_{yy}u^n}{\partial_y u^n}}(W_{\alpha_1+(1,0)}^{n+1} - \beta \partial_x\partial_{t,x}^{\alpha_1} U)
 (\frac{\partial_{yy}u^n}{\partial_y u^n})^{\sigma_1}, \\[7pt]\quad

 \sum\limits_{m=0}^{\sigma_2-1} (W_{\alpha_2+(0,1),\sigma_2-1-m}^{n+1} (\frac{\partial_{yy}u^n}{\partial_y u^n})^m
 + \frac{1}{\beta - \frac{\partial_{yy}u^n}{\partial_y u^n}}(W_{\alpha_2+(0,1)}^{n+1} \\[7pt]\quad
 - \beta \partial_{t,x}^{\alpha_2} U)(\frac{\partial_{yy}u^n}{\partial_y u^n})^{\sigma_2} \Big),
 \quad y=0, \\[9pt]

W_{\alpha,\sigma}^{n+1}|_{t=0} = \partial_y u_0(x,y) \partial_y(\frac{\partial_{t,x}^{\alpha}\partial_y^{\sigma} u_0(x,y)}{\partial_y u_0(x,y)})
=W_{\alpha,\sigma}^0.
\end{array}\right.
\end{equation}
where $\alpha_1\leq\alpha, \alpha_2\leq\alpha, \sigma_1\leq\sigma,\sigma_2\leq\sigma$,
$\mathcal{P}_1$ is a polynomial whose degree is less than or equal to $2$, the explicit form of $\mathcal{P}_1$
is given by the right hand side of $(\ref{Appendix_2_Sigma1_Existence_BC_5})$.

The local existence of $(\ref{Sect1_PrandtlEq})$ is easy to prove by by applying Gr$\ddot{o}$nwall's inequality to a priori estimates, we give the following lemma without proof.
\begin{lemma}\label{Sect3_Existence_Lemma_Robin}
Considering the Robin boundary problem $(\ref{Sect1_PrandtlEq})$
under Oleinik's monotonicity assumption, for any integer $k\geq 6$, $U(t,x)\in C^{k+1}([0,+\infty)\times\mathbb{R})$, $U(t,x)>0$,
$\|U(t,x)\|_{H^{k+1}(\mathbb{R})}<+\infty$, $\|\omega_0\|_{\mathcal{H}_{\ell}^k(\mathbb{R}_{+}^2)}
+ \frac{1}{\sqrt{\beta}}\big\|\omega_0|_{y=0}\big\|_{\mathcal{H}_{\ell}^k(\mathbb{R})}<+\infty$,
there exist suitably large real numbers $\ell_0>1$, $\delta_{\beta}>0$ such that if $\ell\geq\ell_0$, $\beta\in [\delta_{\beta},+\infty)$, the compatible initial data satisfies
\begin{equation}\label{Sect3_Data_Conditions1_LocalExistence}
\left\{\begin{array}{ll}
\omega_0>0, \quad u_0|_{y=0}>0, \quad
(\partial_y u_0 - \beta u_0)|_{y=0} =0, \quad
\lim\limits_{y\rto +\infty} u_0 =U|_{t=0}, \\[6pt]

0< c_1 (1+y)^{-\theta} \leq \omega_0 \leq c_2 (1+y)^{-\theta}, \ \theta>\frac{\ell+1}{2},
\end{array}\right.
\end{equation}
 then there exist $T>0$ and a classical solution $(\omega,u,v)$ to the Prandtl system $(\ref{Sect1_PrandtlEq})$ in $[0,T]$ satisfying the regularities $(\ref{Sect1_Solution_Regularity})$.
\end{lemma}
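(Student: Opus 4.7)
The plan is to construct the classical solution as the limit of the approximate sequence $\{(u^n,v^n)\}$ defined by the iteration scheme $(\ref{Sect1_PrandtlEq_Iteration})$, starting from $(u^0,v^0)\equiv(u_0,v_0)$. First I would verify that each step of the iteration is well-posed: $(\ref{Sect1_PrandtlEq_Iteration})$ is a \emph{linear} initial-boundary value problem for $u^{n+1}$ of parabolic type in $y$, coupled with $v^{n+1}=-\int_0^y \partial_x u^{n+1}\,\mathrm{d}\tilde y$, with Robin boundary condition $(\partial_y u^{n+1}-\beta u^{n+1})|_{y=0}=0$ and far-field condition $u^{n+1}\to U$. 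Provided $u^n$ is smooth, strictly positive in $y$-derivative, and has adequate decay, standard linear parabolic theory on the half-space (for example a Galerkin approximation or mollified heat-kernel representation, combined with the Robin trace theory) yields the existence of a smooth $u^{n+1}$ at each step. The compatibility assumed on $u_0$ propagates through the iteration since the boundary/initial trace relations are preserved by the linear system.

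The core of the proof is a uniform-in-$n$ a priori estimate for $W_{\alpha,\sigma}^{n+1}$ defined by $(\ref{Sect3_Existence_VorticityEq_1})$--$(\ref{Sect3_Existence_VorticityEq_2})$. I would repeat, essentially verbatim, the computations of Lemmas $\ref{Sect2_1_Existence_Estimate_Lemma}$, $\ref{Sect2_2_MixDerivatives_Lemma}$ and $\ref{Sect2_2_TildeW_Lemma}$, but now with the coefficients depending on $u^n$ instead of on the unknown itself. Because the equations are \emph{linear} in the unknown $(\omega^{n+1},u^{n+1},v^{n+1})$, the estimates close in the simpler form
\[
\frac{\mathrm{d}}{\mathrm{d}t}\,\mathcal{E}^{n+1}(t)\;\leq\; C\bigl(\mathcal{E}^n(t)\bigr)\,\mathcal{E}^{n+1}(t)\;+\;C\|U\|_{H^{k+1}(\mathbb{R})}^2,
\]
where $\mathcal{E}^{n+1}:=\sum_{|\alpha|+\sigma\leq k}\|W_{\alpha,\sigma}^{n+1}\|_{L^2_{\ell+\sigma}}^2 + \sum_{|\alpha|\leq k}\int_{\mathbb{R}}\frac{(W_{\alpha}^{n+1}|_{y=0})^2}{\beta-\partial_{yy}u^n/\partial_y u^n}\,\mathrm{d}x + \|\omega_y^{n+1}\|_{L^2_{\ell+1}}^2$, and $C(\cdot)$ is a continuous increasing function. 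Together with Lemma $\ref{Sect2_Preliminary_Lemma0}$, which guarantees that the monotonicity $\partial_y u^n>0$ and the pointwise bound $|(1+y)\partial_{yy}u^n/\partial_y u^n|_\infty\lesssim M|(1+y)\partial_{yy}u_0/\partial_y u_0|_\infty$ are preserved for each iterate provided one restricts to a short time interval, a direct Gr\"onwall argument yields the uniform bound
\[
\sup_{t\in[0,T]}\mathcal{E}^{n+1}(t)\;\leq\;2\bigl(\mathcal{E}^0(0)+C_0\|U\|_{H^{k+1}}^2\bigr),\qquad \forall\,n\geq 0,
\]
for some $T>0$ depending only on the initial data, $U$, $\beta\geq\delta_\beta$, and $\ell\geq\ell_0$.

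With uniform bounds in hand, convergence is obtained by estimating differences. Setting $\delta u^{n+1}:=u^{n+1}-u^n$ and subtracting successive equations in $(\ref{Sect1_PrandtlEq_Iteration})$, one obtains a linear system for $\delta u^{n+1}$ whose forcing involves $\delta u^n$ and $\delta v^n$. Following the pattern of the stability analysis outlined in Step 3 of the introduction (equations $(\ref{Sect1_SolDifference_Eq})$--$(\ref{Sect1_Stability_VorticityEq_1})$), I would derive in a lower-regularity weighted norm (say $\mathcal{H}^{k-2}_{\ell-1}$) an inequality of the form
\[
\frac{\mathrm{d}}{\mathrm{d}t}\|\delta u^{n+1}\|_*^2 \leq C\|\delta u^{n+1}\|_*^2 + C\|\delta u^n\|_*^2,
\]
which, again via Gr\"onwall on $[0,T]$ and possibly after shrinking $T$, produces a geometric contraction. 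This shows $\{(u^n,v^n)\}$ is Cauchy in the lower-order norm, hence converges to some $(u,v)$; combined with the uniform bounds in the higher norm and weak-$*$ compactness, the limit lies in the full regularity class $(\ref{Sect1_Solution_Regularity})$, and passing to the limit in $(\ref{Sect1_PrandtlEq_Iteration})$ shows $(u,v)$ solves $(\ref{Sect1_PrandtlEq})$ classically.

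The main technical obstacle, in my view, will be not the Gr\"onwall closure itself but the verification that the propagated-in-time pointwise bounds $\omega^n>0$ and $|(1+y)\partial_{yy}u^n/\partial_y u^n|_\infty\lesssim M|\cdot|_{t=0}$ remain valid for every iterate on a \emph{common} time interval. This requires a careful induction together with the maximum-principle/Taylor-expansion arguments of Lemma $\ref{Sect2_Preliminary_Lemma0}$, adapted to each linear iterate; only once this is guaranteed can one legitimately invoke Lemmas $\ref{Sect2_Preliminary_Lemma1}$--$\ref{Sect2_Preliminary_Lemma3}$ at level $n+1$ with constants independent of $n$, which is what makes the entire scheme work.
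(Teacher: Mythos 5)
The paper states Lemma \ref{Sect3_Existence_Lemma_Robin} ``without proof,'' saying only that local existence follows by applying Gr\"onwall's inequality to the a priori estimates. Your proposal is a faithful and more detailed reconstruction of exactly this: you use the same iteration scheme $(\ref{Sect1_PrandtlEq_Iteration})$ and the same $W_{\alpha,\sigma}^{n+1}$-machinery that the paper invokes for Theorem~\ref{Sect3_Existence_Thm_Robin}, close the estimates via Gr\"onwall over a short time interval rather than via the ODE comparison principle used in the large-time case, and obtain the limit solution through a contraction-in-lower-norm argument combined with the uniform high-norm bounds. This matches the paper's intended route, and the contraction step you supply for convergence of the iterates is a detail that the paper's proof of Theorem~\ref{Sect3_Existence_Thm_Robin} passes over silently (it only asserts that the limit exists ``in our solution spaces'').

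Two points deserve emphasis. First, you correctly flag the propagation of $\omega^n>0$ and of the pointwise bound $|(1+y)\partial_{yy}u^n/\partial_y u^n|_\infty$ through the iteration: these are the hypotheses of Lemmas~\ref{Sect2_Preliminary_Lemma0}--\ref{Sect2_Preliminary_Lemma3}, and without verifying them uniformly in $n$ on a common time interval the energy estimates at level $n+1$ do not apply. The paper is largely silent on this, so identifying it as the real technical hurdle is a genuine and valuable observation. Second, a mild overstatement: you describe $(\ref{Sect1_PrandtlEq_Iteration})$ as a linear \emph{parabolic} problem for $u^{n+1}$, but the term $v^{n+1}\partial_y u^n$ with $v^{n+1}=-\int_0^y\partial_x u^{n+1}\,\mathrm{d}\tilde y$ makes the equation linear but \emph{nonlocal} in the tangential direction, so ``standard linear parabolic theory'' does not apply off the shelf. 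One needs either a Galerkin scheme in which the nonlocal term is handled weakly, or a further inner fixed-point in $v$, before the per-step solvability is rigorous. This does not break your argument but should be stated more carefully.
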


The large time existence theorem of $(\ref{Sect1_PrandtlEq})$ is stated as follows:
\begin{theorem}\label{Sect3_Existence_Thm_Robin}
Assume the conditions are the same with the conditions in Theorem $\ref{Sect1_Main_Thm}$. For any fixed finite number $T\in (0,+\infty)$, there exist sufficiently small real number $0<\ve_1 = o(T^{-1})$ and
suitably large real numbers $\ell_0>1$, $\delta_{\beta}>0$ such that if $\ell\geq\ell_0$, $\beta\in [\delta_{\beta},+\infty)$, the compatible initial data satisfies the conditions $(\ref{Sect1_Data_Conditions1})$, then the Robin boundary problem $(\ref{Sect1_PrandtlEq})$ admits a classical solution $(\omega,u,v)$
in $[0,T]$ satisfying the regularities $(\ref{Sect1_Solution_Regularity})$.
\end{theorem}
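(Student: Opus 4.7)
The plan is to construct the solution as the limit of the linear iteration scheme $(\ref{Sect1_PrandtlEq_Iteration})$ starting from $(u^0,v^0)\equiv(u_0,v_0)$. Given $(u^n,v^n)$, the iterate $(u^{n+1},v^{n+1})$ solves a linear transport-diffusion problem on $[0,T_n]\times\mathbb{R}_{+}^2$ with Robin boundary condition, so its existence and weighted Sobolev regularity on some time interval follow from standard linear parabolic theory applied exactly as in Lemma $\ref{Sect3_Existence_Lemma_Robin}$. The substance of the theorem then reduces to proving: (i) uniform-in-$n$ weighted Sobolev bounds on the entire prescribed interval $[0,T]$, (ii) propagation of Oleinik's monotonicity $\omega^{n+1}>0$ and the two-sided algebraic decay $\omega^{n+1}\sim (1+y)^{-\theta}$ along the iteration, and (iii) convergence of $\{(u^{n+1},v^{n+1})\}$ in a norm at least one derivative below the top order, so that the limit lies in the top order solution space by weak compactness and satisfies $(\ref{Sect1_PrandtlEq})$ classically.

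For step (i), apply the energy identities of Lemmas $\ref{Sect2_1_Existence_Estimate_Lemma}$, $\ref{Sect2_2_MixDerivatives_Lemma}$, $\ref{Sect2_2_TildeW_Lemma}$ verbatim to the linear systems $(\ref{Sect3_Existence_VorticityEq_1})$ and $(\ref{Sect3_Existence_VorticityEq_2})$. The commutators, the lower order terms denoted $Q_1^n,Q_2^n,Q_3^n$ in $(\ref{Sect3_Quantity_Definition_1})$, and the $\mathcal{L}$-type prefactors now all depend on the previous iterate, and Lemma $\ref{Sect2_Preliminary_Lemma2}$--$\ref{Sect2_Preliminary_Lemma3}$ provide the needed weighted bounds in terms of $W_{\alpha,\sigma}^n$. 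Setting
\[
E^n(t)=\sum_{|\alpha|+\sigma\leq k}\|W_{\alpha,\sigma}^n\|_{L^2_{\ell+\sigma}(\mathbb{R}_{+}^2)}^2+\sum_{|\alpha|\leq k}\int_{\mathbb{R}}\frac{(W_\alpha^n|_{y=0})^2}{\beta-\frac{\partial_{yy}u^n}{\partial_y u^n}|_{y=0}}\,\mathrm{d}x,
\]
one obtains a schematic inequality of the form $\frac{\mathrm{d}}{\mathrm{d}t}E^{n+1}+\|\text{dissipation}\|^2 \lem (E^n)^{(s-1)/2}E^{n+1}+(E^n)^{s/2}+\|U\|_{H^{k+1}}^2$ with $s\geq 3$. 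Under the induction hypothesis $\sup_{[0,T]}E^n\leq M^2 E^0$, the Gronwall/ODE comparison argument of Lemma $\ref{Sect2_FullRegularity_Lemma}$ reproduces $\sup_{[0,T]}E^{n+1}\leq M^2 E^0$ provided $\ve_1=o(T^{-1})$ so that the smallness condition $(\ref{Sect2_FullRegularity_3})$ holds with $E^0$ in place of the nonlinear energy, thereby closing the induction on $[0,T]$. Step (ii) follows from the maximum principle applied to the linear parabolic equation for $\omega^{n+1}$ and the Taylor-expansion pointwise argument of Lemma $\ref{Sect2_Preliminary_Lemma0}$, inductively in $n$; the choice $\beta\geq\delta_\beta:=\max_{[0,T]}\big|\tfrac{\partial_{yy}u^n}{\partial_y u^n}|_{y=0}\big|_{\infty}+1$, uniform in $n$ thanks to the uniform bound on $E^n$, keeps the Robin denominator bounded below.

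For step (iii), the differences $(u^{n+1}-u^n,v^{n+1}-v^n)$ satisfy a linear system of exactly the structure of $(\ref{Sect1_SolDifference_Eq})$ with $\bar u,\bar v,\bar\omega$ replaced by suitable weightings of $(u^n,v^n)$ and $(u^{n-1},v^{n-1})$; the associated $\W$-variables obey analogues of $(\ref{Sect1_Stability_VorticityEq_1})$. Applying the stability estimates $(\ref{Sect1_Estimates_4})$--$(\ref{Sect1_Estimates_6})$ at regularity $p=k-1$, using the uniform top-order bound from (i) to control the coefficients, and shrinking $\ve_1$ once more gives a contraction
\[
\sum_{|\alpha|+\sigma\leq k-1}\|W_{\alpha,\sigma}^{n+1}-W_{\alpha,\sigma}^n\|_{L^2_{\ell+\sigma}}^2\leq q\sum_{|\alpha|+\sigma\leq k-1}\|W_{\alpha,\sigma}^n-W_{\alpha,\sigma}^{n-1}\|_{L^2_{\ell+\sigma}}^2
\]
with some $q\in(0,1)$ uniform on $[0,T]$, hence $\{W_{\alpha,\sigma}^n\}$ is Cauchy in $\mathcal{H}^{k-1}_{\ell-1}$. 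The limit $(u,v)$ lies in the top order space $\mathcal{H}^k_{\ell-1}$ by weak lower semicontinuity, solves $(\ref{Sect1_PrandtlEq})$ classically by passing to the limit in $(\ref{Sect1_PrandtlEq_Iteration})$, satisfies Oleinik's monotonicity by passing to the limit in (ii), and its $v$-component obeys the stated $L^\infty_{y,\ell-1}(L^2_{t,x})$ estimate via Lemma $\ref{Sect2_Preliminary_Lemma3}$. The main obstacle is step (i): the commutator terms involving $\partial_{t,x}^\alpha\partial_y^\sigma v^{n+1}$ in $(\ref{Sect3_Existence_VorticityEq_2})$ a priori lose one $x$-derivative and one order of decay, and they must be traded, via the divergence-free identity $\partial_y v^{n+1}=-\partial_x u^{n+1}$ and the weighted Hardy inequality of Lemma $\ref{Sect2_Preliminary_Lemma1}$, for controllable quantities before the induction on $E^n$ can be closed; the suitable largeness of $\ell_0$ is precisely what absorbs the leftover boundary contributions into the interior dissipation $\|\partial_y W_{\alpha,\sigma}^{n+1}\|_{L^2_{\ell+\sigma}}^2$.
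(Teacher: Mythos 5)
Your proposal follows essentially the same route as the paper: iterate the linearized scheme $(\ref{Sect1_PrandtlEq_Iteration})$, apply the weighted energy identities of Lemmas $\ref{Sect2_1_Existence_Estimate_Lemma}$--$\ref{Sect2_2_TildeW_Lemma}$ to the linear systems $(\ref{Sect3_Existence_VorticityEq_1})$--$(\ref{Sect3_Existence_VorticityEq_2})$ with the $\mathcal{L}_n$ coefficients controlled by the previous iterate, close the $n$-induction on $[0,T]$ via the ODE comparison principle under the smallness $\ve_1 = o(T^{-1})$, and pass to the limit. The one place you go beyond the paper's terse proof is step (iii): the contraction estimate at regularity $k-1$, obtained by applying the Section~4 stability machinery to successive differences, is a concrete and compatible way to make precise what the paper dispatches with ``it is also standard to verify'' that the limit satisfies $(\ref{Sect1_PrandtlEq})$.
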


\begin{proof}
Similar to the estimate $(\ref{Sect2_FullRegularity_1})$, we have the following estimate for $W_{\alpha,\sigma}^{n+1}$:
\begin{equation}\label{Sect3_ApproximateSol_Regularity_1}
\begin{array}{ll}
\frac{\mathrm{d}}{\mathrm{d}t}
\Big[\sum\limits_{|\alpha|+\sigma\leq k}\|W_{\alpha,\sigma}^{n+1}\|_{L_{\ell+\sigma}^2(\mathbb{R}_{+}^2)}^2
+ \sum\limits_{|\alpha|\leq k}\int\limits_{\mathbb{R}} \frac{1}{\beta - \frac{\partial_{yy}u^n}{\partial_y u^n}|_{y=0}}
 (W_{\alpha}^{n+1}|_{y=0})^2 \,\mathrm{d}x \Big] \\[10pt]\quad
+ \sum\limits_{|\alpha|+\sigma\leq k}\|\partial_y W_{\alpha,\sigma}^{n+1}\|_{L_{\ell+\sigma}^2(\mathbb{R}_{+}^2)}^2 \\[10pt]

\leq \lambda_1\mathcal{L}_n\sum\limits_{|\alpha|+\sigma\leq k}\|W_{\alpha,\sigma}^{n+1}\|_{L_{\ell+\sigma}^2(\mathbb{R}_{+}^2)}^2

+ \lambda_1\mathcal{L}_n\sum\limits_{|\alpha|\leq k}\int\limits_{\mathbb{R}} \frac{1}{\beta - \frac{\partial_{yy}u^n}{\partial_y u^n}|_{y=0}}
 (W_{\alpha}^{n+1}|_{y=0})^2 \,\mathrm{d}x \\[10pt]\quad

+ \lambda_1\|U\|_{H^{k+1}(\mathbb{R})}^2 + \lambda_1 \mathcal{L}_n.
\end{array}
\end{equation}

By using the induction method and integrating the growth rate of $\mathcal{L}_n$, it is easy to prove the grow rate of approximate solutions which is the same with the grow rate of Prandtl solutions.
Similar to the proof of Lemma $\ref{Sect2_FullRegularity_Lemma}$ and
by using the comparison principle of ODE, we can prove that there exist a class of sufficiently small data such that
\begin{equation}\label{Sect3_ApproximateSol_Regularity_2}
\begin{array}{ll}
\sum\limits_{|\alpha|+\sigma\leq k}\Big[\|W_{\alpha,\sigma}^{n+1}\|_{L_{\ell+\sigma}^2(\mathbb{R}_{+}^2)}^2
+ \|\partial_y W_{\alpha,\sigma}^{n+1}\|_{L_{\ell+\sigma}^2([0,T]\times\mathbb{R}_{+}^2)}^2 \Big] \\[9pt]\quad
 + \sum\limits_{|\alpha|\leq k}\int\limits_{\mathbb{R}} \frac{1}{\beta+\delta_{\beta}}
 (W_{\alpha}^{n+1}|_{y=0})^2 \,\mathrm{d}x

<+\infty,
\end{array}
\end{equation}
then we have the following regularities:
\begin{equation}\label{Sect3_ApproximateSol_Regularity_3}
\begin{array}{ll}
\omega^{n+1} \in \mathcal{H}_{\ell}^k(\mathbb{R}_{+}^2), \hspace{1.3cm}
\omega^{n+1},\ \partial_y\omega^{n+1} \in \mathcal{H}_{\ell}^k([0,T]\times\mathbb{R}_{+}^2),
\\[5pt]

u^{n+1}-U \in \mathcal{H}_{\ell-1}^k(\mathbb{R}_{+}^2)\cap \mathcal{H}_{\ell-1}^k([0,T]\times\mathbb{R}_{+}^2),
\\[5pt]

\partial_y^{j} u^{n+1}|_{y=0} \in H^{k-j}(\mathbb{R})\cap H^{k-j}([0,T]\times\mathbb{R}), \hspace{0.3cm} 0\leq j\leq k,\\[5pt]

\partial_{t,x}^{\alpha} v^{n+1} + y\cdot \partial_{t,x}^{\alpha}\partial_x U
\in L_{y, \ell-1}^{\infty}(L_{t,x}^2), \hspace{0.3cm} |\alpha|\leq k-1.
\end{array}
\end{equation}

The regularities $(\ref{Sect3_ApproximateSol_Regularity_3})$ is uniform with respect to $n$, thus there exist functions $\omega,u,v$
lie in our solution spaces, then $\omega,u,v$
 satisfy the regularities $(\ref{Sect1_Solution_Regularity})$.
 It is also standard to verify $\omega,u,v$ satisfy the Prandtl system $(\ref{Sect1_PrandtlEq})$.
\end{proof}

The local existence of $(\ref{Sect1_PrandtlEq_Dirichlet})$ is easy to prove by by applying Gr$\ddot{o}$nwall's inequality to a priori estimates, we give the following lemma without proof.
\begin{lemma}\label{Sect3_Existence_Lemma_Dirichlet}
Considering the Dirichlet boundary problem $(\ref{Sect1_PrandtlEq_Dirichlet})$
under Oleinik's monotonicity assumption, for any integer $k\geq 6$, $U(t,x)\in C^{k+1}([0,+\infty)\times\mathbb{R})$, $U(t,x)>0$, $\|U(t,x)\|_{H^{k+1}(\mathbb{R})}<+\infty$ and $\|\omega_0\|_{\mathcal{H}_{\ell}^k(\mathbb{R}_{+}^2)}<+\infty$,
there exist suitably large real numbers $\ell_0>1$ such that if $\ell\geq\ell_0$, the compatible initial data satisfies
\begin{equation}\label{Sect3_Data_Conditions2_LocalExistence}
\left\{\begin{array}{ll}
\omega_0>0, \quad u_0|_{y=0}=0, \quad \lim\limits_{y\rto +\infty} u_0 =U|_{t=0}, \\[8pt]

0< c_1 (1+y)^{-\theta} \leq \omega_0 \leq c_2 (1+y)^{-\theta}, \ \theta>\frac{\ell+1}{2},
\end{array}\right.
\end{equation}
 then there exist $T>0$ and a classical solution $(\omega,u,v)$ to the Prandtl system $(\ref{Sect1_PrandtlEq_Dirichlet})$ in $[0,T]$ satisfying the regularities $(\ref{Sect1_Solution_Regularity_Dirichlet})$.
\end{lemma}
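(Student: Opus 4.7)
The plan is to run the natural Dirichlet analogue of the iteration scheme $(\ref{Sect1_PrandtlEq_Iteration})$ and close it on a short time interval by Gr\"onwall's inequality, following the blueprint developed in Theorem $\ref{Sect3_Existence_Thm_Robin}$ for the Robin case.

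First, I would define the iteration by $(u^0,v^0)=(u_0,v_0)$ and, for $n\geq 0$, take $u^{n+1}$ to be the solution of the linearised system $(\ref{Sect1_PrandtlEq_Iteration})$ with the Robin boundary condition replaced by $u^{n+1}|_{y=0}=v^{n+1}|_{y=0}=0$; then $v^{n+1}$ is recovered from the divergence-free condition. For a fixed $(u^n,v^n)$ with $\partial_y u^n>0$, the equation for $u^{n+1}$ is a linear parabolic equation in $y$ with a transport term in $x$, so existence of a unique classical $u^{n+1}$ in weighted Sobolev spaces is standard. Using Lemma $\ref{Sect2_Preliminary_Lemma0}$ applied to the linear $\omega^{n+1}$-equation, and shrinking $T$ if necessary, the monotonicity $\partial_y u^{n+1}>0$ and the two-sided algebraic decay $0<\tilde{c}_1(t)(1+y)^{-\theta}\leq \omega^{n+1}\leq \tilde{c}_2(t)(1+y)^{-\theta}$ propagate down the iteration.

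Next, I would derive the equations for $W_{\alpha,\sigma}^{n+1}=\partial_y u^n\,\partial_y\!\big(\partial_{t,x}^{\alpha}\partial_y^{\sigma} u^{n+1}/\partial_y u^n\big)$ exactly as in $(\ref{Sect3_Existence_VorticityEq_1})$--$(\ref{Sect3_Existence_VorticityEq_2})$, except that the Robin boundary conditions at $y=0$ are replaced by their $\beta\rto+\infty$ limits, namely $(\ref{Sect1_BC_Dirichlet_1})$ for $\sigma=0$ and the analogous Dirichlet limit of $(\ref{Sect3_Existence_VorticityEq_2})_2$ for $\sigma\geq 1$. Multiplying by $(1+y)^{2(\ell+\sigma)}W_{\alpha,\sigma}^{n+1}$, integrating on $\mathbb{R}_{+}^2$, and estimating each commutator, transport term and lower-order contribution via Lemmas $\ref{Sect2_Preliminary_Lemma1}$--$\ref{Sect2_Preliminary_Lemma3}$ together with the tangential/normal energy arguments of Section 2, I obtain a differential inequality of the shape
\begin{equation*}
\tfrac{\mathrm{d}}{\mathrm{d}t}E^{n+1}(t)+D^{n+1}(t)\lem \big(\mathcal{L}_n+\|U\|_{H^{k+1}(\mathbb{R})}^2\big)\big(1+E^{n+1}(t)\big),
\end{equation*}
where $E^{n+1}:=\sum_{|\alpha|+\sigma\leq k}\|W_{\alpha,\sigma}^{n+1}\|_{L^2_{\ell+\sigma}(\mathbb{R}_{+}^2)}^2$, $D^{n+1}:=\sum_{|\alpha|+\sigma\leq k}\|\partial_y W_{\alpha,\sigma}^{n+1}\|_{L^2_{\ell+\sigma}(\mathbb{R}_{+}^2)}^2$, and $\mathcal{L}_n$ collects the lower-order Sobolev norms of $(u^n,v^n)$ already controlled at step $n$.

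Finally, by Gr\"onwall's inequality applied inductively in $n$, there exists $T>0$ depending only on $E^0$, $\|U\|_{H^{k+1}([0,T]\times\mathbb{R})}$ and $\ell_0$ such that $E^{n+1}(t)\leq M E^0$ and $\int_0^T D^{n+1}\,\mathrm{d}s\lem E^0$ uniformly in $n$ on $[0,T]$. A standard contraction argument on $\delta u^{n+1}:=u^{n+1}-u^n$ in a lower-regularity weighted space (analogous to the stability estimates of Section 4) then yields a limit $(u,v)$, which is verified to be a classical solution of $(\ref{Sect1_PrandtlEq_Dirichlet})$ satisfying $(\ref{Sect1_Solution_Regularity_Dirichlet})$ by weak lower semicontinuity and the trace theorem. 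The main obstacle is that, in the Dirichlet limit, the stabilising boundary evolution term $\tfrac{1}{\beta-u_{yy}/u_y}(W_\alpha|_{y=0})^2$ present in the Robin estimate $(\ref{Sect1_Existence_Estimate1})$ disappears, so the trace $W_\alpha^{n+1}|_{y=0}$ must be absorbed into the interior dissipation $\|\partial_y W_\alpha^{n+1}\|_{L^2_{\ell}(\mathbb{R}_{+}^2)}^2$ via the trace theorem together with the largeness of $\ell_0$; this is precisely where the uniform-in-$\beta$ character of the a priori estimates of Section 2 is used and where the weights $(1+y)^{\ell+\sigma}$ compensate for the $O(y)$-growth of $v^{n+1}$.
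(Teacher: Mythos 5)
Your proposal is correct and follows exactly the route the paper intends: the paper itself states this lemma \emph{without proof}, asserting only that it follows by applying Gr\"onwall's inequality to the a priori estimates, and your argument fills in precisely those details, mirroring the iteration scheme and energy estimates of Theorem $\ref{Sect3_Existence_Thm_Robin}$ (Theorem $\ref{Sect3_Existence_Thm_Dirichlet}$) specialized to the Dirichlet boundary condition and to a short time interval. Your observation about absorbing the trace term via the trace theorem and the largeness of $\ell_0$ matches the mechanism in $(\ref{Sect2_1_Estimates_10})$, with the one small imprecision that the iteration equation is not merely a linear parabolic equation with transport but also carries a nonlocal term through $v^{n+1}=-\int_0^y\partial_x u^{n+1}\,\mathrm{d}\tilde{y}$, which does not affect the linearity or the energy estimates.
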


The large time existence theorem of $(\ref{Sect1_PrandtlEq_Dirichlet})$ is stated as follows:
\begin{theorem}\label{Sect3_Existence_Thm_Dirichlet}
Assume the conditions are the same with the conditions in Theorem $\ref{Sect1_Main_Thm_Dirichlet}$.
For any fixed finite number $T\in (0,+\infty)$, there exist sufficiently small real number $0<\ve_2 =o(T^{-1})$
and suitably large real numbers $\ell_0>1$ such that if $\ell\geq\ell_0$, the compatible initial data and $U(t,x)$ satisfy
the conditions $(\ref{Sect1_Data_Conditions2})$, then the Dirichlet boundary problem $(\ref{Sect1_PrandtlEq_Dirichlet})$ admits a classical solution $(\omega,u,v)$
in $[0,T]$ satisfying the regularities $(\ref{Sect1_Solution_Regularity_Dirichlet})$.
\end{theorem}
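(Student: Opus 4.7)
The plan is to mirror the proof of Theorem \ref{Sect3_Existence_Thm_Robin}, replacing the Robin boundary condition by the Dirichlet one at every step, or equivalently obtained by formally passing $\beta\rto+\infty$. I would set $(u^0,v^0)\equiv(u_0,v_0)$ and define $(u^{n+1},v^{n+1})$ as the solution to the linear system $(\ref{Sect1_PrandtlEq_Iteration})$ whose Robin boundary condition is replaced by $u^{n+1}|_{y=0}=v^{n+1}|_{y=0}=0$. The evolution equations for $W_{\alpha,\sigma}^{n+1}=\partial_y u^n\,\partial_y\!\left(\partial_{t,x}^{\alpha}\partial_y^{\sigma}u^{n+1}/\partial_y u^n\right)$ are then obtained from $(\ref{Sect3_Existence_VorticityEq_1})$ and $(\ref{Sect3_Existence_VorticityEq_2})$ by deleting every term carrying the factor $1/(\beta-\partial_{yy}u^n/\partial_y u^n)$: the $\sigma=0$ boundary condition reduces to the limit $(\ref{Sect1_BC_Dirichlet_1})$, and the $\sigma\geq 1$ boundary condition retains only the polynomial-in-$(\partial_{yy}u^n/\partial_y u^n)^m$ part of $\mathcal{P}_1$. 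The zeroth iterate meets the hypotheses of $(\ref{Sect1_Data_Conditions2})$ by construction.

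Next I would apply the a priori machinery of Section 2 line-by-line to this linear iterate, bounding the coefficients depending on $(u^n,v^n)$ by a lower-order polynomial $\mathcal{L}_n$ via the induction hypothesis. Summing the tangential estimate (Lemma \ref{Sect2_1_Existence_Estimate_Lemma}), the normal-derivative estimate (Lemma \ref{Sect2_2_MixDerivatives_Lemma}), and the $\omega_y$-estimate (Lemma \ref{Sect2_2_TildeW_Lemma}), then letting $\beta\rto+\infty$, yields the differential inequality
\begin{equation*}
\begin{array}{ll}
\frac{\mathrm{d}}{\mathrm{d}t}\sum\limits_{|\alpha|+\sigma\leq k}\|W_{\alpha,\sigma}^{n+1}\|_{L_{\ell+\sigma}^2(\mathbb{R}_{+}^2)}^2
+ \sum\limits_{|\alpha|+\sigma\leq k}\|\partial_y W_{\alpha,\sigma}^{n+1}\|_{L_{\ell+\sigma}^2(\mathbb{R}_{+}^2)}^2 \\[4pt]
\leq \lambda_2\mathcal{L}_n \sum\limits_{|\alpha|+\sigma\leq k}\|W_{\alpha,\sigma}^{n+1}\|_{L_{\ell+\sigma}^2(\mathbb{R}_{+}^2)}^2
+ \lambda_2\|U\|_{H^{k+1}(\mathbb{R})}^2 + \lambda_2\mathcal{L}_n,
\end{array}
\end{equation*}
for some constant $\lambda_2>0$. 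Inducting on $n$ and applying the comparison principle for ODE exactly as in $(\ref{Sect2_FullRegularity_2_Dirichlet})$--$(\ref{Sect2_FullRegularity_4_Dirichlet})$ produces the $n$-independent bound
\begin{equation*}
\sum\limits_{|\alpha|+\sigma\leq k}\Big[\|W_{\alpha,\sigma}^{n+1}\|_{L_{\ell+\sigma}^2(\mathbb{R}_{+}^2)}^2
+\|\partial_y W_{\alpha,\sigma}^{n+1}\|_{L_{\ell+\sigma}^2([0,T]\times\mathbb{R}_{+}^2)}^2\Big]
\leq M^2 \sum\limits_{|\alpha|+\sigma\leq k}\|W_{\alpha,\sigma}^0\|_{L_{\ell+\sigma}^2(\mathbb{R}_{+}^2)}^2,
\end{equation*}
precisely under the smallness condition $(\ref{Sect2_FullRegularity_3_Dirichlet})$, which determines $\ve_2=o(T^{-1})$.

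From this uniform-in-$n$ bound I would extract a weakly convergent subsequence whose limit $(\omega,u,v)$ lies in the complete function spaces of $(\ref{Sect1_Solution_Regularity_Dirichlet})$; Lemma \ref{Sect2_Preliminary_Lemma0} then propagates the two-sided algebraic bound $c_1(1+y)^{-\theta}\leq\omega^n\leq c_2(1+y)^{-\theta}$ to the limit, so that Oleinik's monotonicity $\omega>0$ and the hypothesis $|(1+y)u_{yy}/u_y|_\infty<+\infty$ of Lemmas \ref{Sect2_Preliminary_Lemma1}--\ref{Sect2_Preliminary_Lemma2} persist. Standard passage-to-the-limit in the linear iteration $(\ref{Sect1_PrandtlEq_Iteration})$ (with Dirichlet boundary conditions) then shows $(\omega,u,v)$ is a classical solution of $(\ref{Sect1_PrandtlEq_Dirichlet})$ on $[0,T]$ with the regularity $(\ref{Sect1_Solution_Regularity_Dirichlet})$. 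The main obstacle, as in the Robin case, is the top-order commutator $[\partial_{t,x}^{\alpha}\partial_y^{\sigma},\,\partial_y u^n]v^{n+1}$: since $v^{n+1}=-\int_0^y\partial_x u^{n+1}\,\mathrm{d}\tilde{y}$ grows like $O(y)$ and its top derivatives only admit the weighted control of Lemma \ref{Sect2_Preliminary_Lemma3}, this commutator can be absorbed into the viscous term $\|\partial_y W_{\alpha,\sigma}^{n+1}\|_{L_{\ell+\sigma}^2}^2$ only when $\ell_0$ is taken suitably large, both to force the absorption constant $q\in(0,1)$ strictly below one and to keep the Hardy reduction of Lemma \ref{Sect2_Preliminary_Lemma2} uniform along the iteration.
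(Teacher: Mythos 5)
Your proposal follows the paper's own route: mirror the Robin-case iteration $(\ref{Sect1_PrandtlEq_Iteration})$ with Dirichlet boundary conditions, derive the differential inequality $(\ref{Sect3_ApproximateSol_Regularity_1_Dirichlet})$ by the Section 2 machinery, close it with the ODE comparison principle under the smallness condition $(\ref{Sect2_FullRegularity_3_Dirichlet})$ to get an $n$-uniform bound, and pass to the limit in the complete weighted spaces. One small imprecision worth fixing: in the $\sigma\geq 1$ boundary condition of $(\ref{Sect3_Existence_VorticityEq_2})$ the terms of the form $\frac{1}{\beta-\cdots}(W^{n+1}_{\alpha_i+\cdot}-\beta\,\partial_{t,x}^{\alpha_i}U)(\cdots)^{\sigma_i}$ do not simply vanish as $\beta\rto+\infty$ --- the $\frac{\beta}{\beta-\cdots}\,\partial_{t,x}^{\alpha_i}U$ piece persists and contributes a $U$-dependent source, which is why the $\|U\|_{H^{k+1}}^2$ term appears on the right of $(\ref{Sect3_ApproximateSol_Regularity_1_Dirichlet})$; only the $\frac{1}{\beta-\cdots}W^{n+1}$ pieces drop out.
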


\begin{proof}

Similar to the estimate $(\ref{Sect2_FullRegularity_2})$, we have the following estimate for $W_{\alpha,\sigma}^{n+1}$:
\begin{equation}\label{Sect3_ApproximateSol_Regularity_1_Dirichlet}
\begin{array}{ll}
\frac{\mathrm{d}}{\mathrm{d}t}\sum\limits_{|\alpha|+\sigma\leq k}\|W_{\alpha,\sigma}^{n+1}\|_{L_{\ell+\sigma}^2(\mathbb{R}_{+}^2)}^2
+ \sum\limits_{|\alpha|+\sigma\leq k}\|\partial_y W_{\alpha,\sigma}^{n+1}\|_{L_{\ell+\sigma}^2(\mathbb{R}_{+}^2)}^2 \\[10pt]

\leq \lambda_2\mathcal{L}_n\sum\limits_{|\alpha|+\sigma\leq k}\| W_{\alpha,\sigma}^{n+1}\|_{L_{\ell+\sigma}^2(\mathbb{R}_{+}^2)}^2
+ \lambda_2\|U\|_{H^{k+1}(\mathbb{R})}^2  + \lambda_2 \mathcal{L}_n.
\end{array}
\end{equation}

Similar to the proof of Theorem $\ref{Sect3_Existence_Thm_Robin}$, we can prove the following regularities:
\begin{equation}\label{Sect3_ApproximateSol_Regularity_2_Dirichlet}
\begin{array}{ll}
\omega^{n+1} \in \mathcal{H}_{\ell}^k(\mathbb{R}_{+}^2), \hspace{1.3cm}
\omega^{n+1},\ \partial_y\omega^{n+1} \in \mathcal{H}_{\ell}^k([0,T]\times\mathbb{R}_{+}^2),
\\[5pt]

u^{n+1}-U \in \mathcal{H}_{\ell-1}^k(\mathbb{R}_{+}^2)\cap \mathcal{H}_{\ell-1}^k([0,T]\times\mathbb{R}_{+}^2),
\\[5pt]

\partial_y^{j} u^{n+1}|_{y=0} \in H^{k-j}(\mathbb{R})\cap H^{k-j}([0,T]\times\mathbb{R}), \hspace{0.3cm} 0\leq j\leq k,\\[5pt]

\partial_{t,x}^{\alpha} v^{n+1} + y\cdot \partial_{t,x}^{\alpha}\partial_x U
\in L_{y, \ell-1}^{\infty}(L_{t,x}^2), \hspace{0.3cm} |\alpha|\leq k-1.
\end{array}
\end{equation}

The regularities $(\ref{Sect3_ApproximateSol_Regularity_2_Dirichlet})$ is uniform with respect to $n$, thus there exist functions $\omega,u,v$
lie in our solution spaces, then $\omega,u,v$
 satisfy the regularities $(\ref{Sect1_Solution_Regularity_Dirichlet})$.
 It is also standard to verify $\omega,u,v$ satisfy the Prandtl system $(\ref{Sect1_PrandtlEq_Dirichlet})$.
\end{proof}

\subsection{The Derivatives on the Boundary}
In this subsection, we study the asymptotic behaviors of the derivatives on the boundary as $\beta\rto +\infty$.
\begin{lemma}\label{Sect5_Lemmas_Rate}
As $\beta\rto +\infty$,
\begin{equation}\label{Sect5_Lemmas_Rate_0}
\begin{array}{ll}
\big\|u|_{y=0}\big\|_{H^k(\mathbb{R})}=O(\frac{1}{\sqrt{\beta}}),\quad \big\|\omega|_{y=0}\big\|_{H^k(\mathbb{R})}=O(\sqrt{\beta}),
\end{array}
\end{equation}
\end{lemma}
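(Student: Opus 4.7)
The plan is to exploit the Robin boundary condition $\omega|_{y=0} = \beta\, u|_{y=0}$ together with the uniform (in $\beta$) boundary integral estimate on $W_{\alpha}|_{y=0}$ produced by Lemma \ref{Sect2_FullRegularity_Lemma}. First I would record the algebraic identity obtained by evaluating $W_{\alpha} = u_y\partial_y\bigl(\tfrac{\partial_{t,x}^{\alpha}(u-U)}{u_y}\bigr)$ at $y=0$ and using $u_y|_{y=0}=\beta u|_{y=0}$, which gives
\begin{equation*}
W_{\alpha}|_{y=0}
= \partial_{t,x}^{\alpha} u|_{y=0}\Bigl[\beta - \tfrac{u_{yy}}{u_y}\big|_{y=0}\Bigr]
+ \partial_{t,x}^{\alpha} U \cdot \tfrac{u_{yy}}{u_y}\big|_{y=0}.
\end{equation*}
Solving for $\partial_{t,x}^{\alpha} u|_{y=0}$ yields
\begin{equation*}
\partial_{t,x}^{\alpha} u|_{y=0}
= \frac{W_{\alpha}|_{y=0} - \partial_{t,x}^{\alpha} U \cdot \tfrac{u_{yy}}{u_y}|_{y=0}}
       {\beta - \tfrac{u_{yy}}{u_y}|_{y=0}}.
\end{equation*}

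Next I would import the closed a priori bound (\ref{Sect2_FullRegularity_4}), which controls
$\sum_{|\alpha|\leq k}\int_{\mathbb{R}} \frac{1}{\beta - \frac{u_{yy}}{u_y}|_{y=0}}(W_{\alpha}|_{y=0})^2\,\mathrm{d}x$
by a constant depending only on the initial data and $\|U\|_{H^{k+1}}$, \emph{uniformly} in $\beta\in[\delta_{\beta},+\infty)$. Since Lemma \ref{Sect2_Preliminary_Lemma0} ensures $|u_{yy}/u_y|_{y=0}|_\infty$ stays bounded on $[0,T]$, for $\beta$ large enough we have $\beta/2 \leq \beta - u_{yy}/u_y|_{y=0} \leq 2\beta$. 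Multiplying through, this gives
\begin{equation*}
\sum_{|\alpha|\leq k}\bigl\|W_{\alpha}|_{y=0}\bigr\|_{L^2(\mathbb{R})}^2 \leq C\beta,
\end{equation*}
so $\|W_{\alpha}|_{y=0}\|_{L^2}=O(\sqrt{\beta})$.

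Plugging this into the algebraic identity above, and noting that $\|\partial_{t,x}^{\alpha} U\|_{L^2}$ and $|u_{yy}/u_y|_{y=0}|_\infty$ are $O(1)$, I obtain
\begin{equation*}
\bigl\|\partial_{t,x}^{\alpha} u|_{y=0}\bigr\|_{L^2(\mathbb{R})}
\lesssim \frac{1}{\beta}\bigl(\|W_{\alpha}|_{y=0}\|_{L^2} + \|\partial_{t,x}^{\alpha} U\|_{L^2}\bigr)
= O\bigl(\tfrac{1}{\sqrt{\beta}}\bigr),
\end{equation*}
and summing over $|\alpha|\leq k$ gives $\|u|_{y=0}\|_{H^k(\mathbb{R})}=O(1/\sqrt{\beta})$. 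The second estimate is then immediate from the Robin relation $\partial_{t,x}^{\alpha}\omega|_{y=0} = \beta\,\partial_{t,x}^{\alpha} u|_{y=0}$, which multiplies the first bound by a factor of $\beta$ to yield $\|\omega|_{y=0}\|_{H^k(\mathbb{R})}=O(\sqrt{\beta})$.

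The one place that needs care is verifying that the constant $C$ in the bound on $\int(W_{\alpha}|_{y=0})^2/(\beta - u_{yy}/u_y)\,\mathrm{d}x$ really is independent of $\beta$ as $\beta\to+\infty$: this traces back to the smallness hypothesis $\tfrac{1}{\sqrt{\beta}}\|\omega_0|_{y=0}\|_{\mathcal{H}_\ell^k}\leq\varepsilon_1$ in (\ref{Sect1_Data_Conditions1}), which was designed precisely so that the initial value of the boundary integral $\int(W_{\alpha}|_{t=0,y=0})^2/(\beta - u_{yy}/u_y|_{t=0,y=0})\,\mathrm{d}x$ is bounded uniformly in $\beta$; Remark \ref{Sect1_Remark}(i) is the conceptual explanation for why this particular scaling of the boundary data is required. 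Once this uniform-in-$\beta$ starting bound is in hand, the rest of the argument is the two-line algebraic manipulation sketched above.
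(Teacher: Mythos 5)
Your proposal is correct and follows essentially the same route as the paper: both start from the uniform-in-$\beta$ bound $\sum_{|\alpha|\leq k}\int_{\mathbb{R}}\frac{1}{\beta-\frac{u_{yy}}{u_y}|_{y=0}}(W_{\alpha}|_{y=0})^2\,\mathrm{d}x\leq C$ from Lemma \ref{Sect2_FullRegularity_Lemma} and the algebraic identity $W_{\alpha}|_{y=0}=(\beta-\frac{u_{yy}}{u_y})\partial_{t,x}^{\alpha}u|_{y=0}+\partial_{t,x}^{\alpha}U\,\frac{u_{yy}}{u_y}$, then read off the $\beta$-scaling. The only cosmetic difference is that you first extract $\|W_{\alpha}|_{y=0}\|_{L^2}=O(\sqrt{\beta})$ and then divide, whereas the paper expands the square directly inside the weighted integral; the algebra is equivalent, and your closing remark correctly traces the uniformity of the constant to the $\frac{1}{\sqrt{\beta}}\|\omega_0|_{y=0}\|$ scaling in the data hypotheses.
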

\begin{proof}
By Lemma $\ref{Sect2_FullRegularity_Lemma}$, we have the following a priori estimate:
\begin{equation}\label{Sect5_Lemmas_Rate_1}
\begin{array}{ll}
\sum\limits_{|\alpha|\leq k}\int\limits_{\mathbb{R}} \frac{1}{\beta - \frac{u_{yy}}{u_y}|_{y=0}}
 (W_{\alpha}|_{y=0})^2 \,\mathrm{d}x

\leq M^2\sum\limits_{|\alpha|+\sigma\leq k}\|W_{\alpha,\sigma}|_{t=0}\|_{L_{\ell+\sigma}^2(\mathbb{R}_{+}^2)}^2 \\[11pt]\quad

+ M^2\sum\limits_{|\alpha|\leq k}\Big[\int\limits_{\mathbb{R}} \frac{1}{\beta - \frac{u_{yy}}{u_y}|_{t=0,y=0}}
 (W_{\alpha}|_{t=0,y=0})^2 \,\mathrm{d}x + M^2\|U\|_{H^{k+1}(\mathbb{R})}^2 := C_9.
\end{array}
\end{equation}

Since $W_{\alpha}|_{y=0} =(\beta -\frac{u_{yy}}{u_y})\partial_{t,x}^{\alpha}u|_{y=0} + \partial_{t,x}^{\alpha}U\frac{u_{yy}}{u_y}$,
$(\ref{Sect5_Lemmas_Rate_1})$ implies
\begin{equation}\label{Sect5_Lemmas_Rate_2}
\begin{array}{ll}
\int\limits_{\mathbb{R}} \frac{1}{\beta - \frac{u_{yy}}{u_y}}
 [(\beta -\frac{u_{yy}}{u_y})\partial_{t,x}^{\alpha}u|_{y=0} + \partial_{t,x}^{\alpha}U\frac{u_{yy}}{u_y}]^2 \,\mathrm{d}x \leq C_9,
\end{array}
\end{equation}
then
\vspace{-0.2cm}
\begin{equation}\label{Sect5_Lemmas_Rate_3}
\begin{array}{ll}
\int\limits_{\mathbb{R}}
 (\beta - \frac{u_{yy}}{u_y})(\partial_{t,x}^{\alpha}u|_{y=0})^2 \,\mathrm{d}x
 \leq C_9  + \frac{\delta_{\beta}^2}{\beta-\delta_{\beta}}\|U\|_{H^{\alpha}(\mathbb{R})}^2 <+\infty, \\[10pt]

(\beta + \delta_{\beta})\int\limits_{\mathbb{R}}
(\partial_{t,x}^{\alpha}u|_{y=0})^2 \,\mathrm{d}x
 \leq 2 C_9  + C\mathcal{L}\|U\|_{H^{\alpha}(\mathbb{R})}^2 <+\infty,
\end{array}
\end{equation}
Then $\big\|\partial_{t,x}^{\alpha}u|_{y=0}\big\|_{L^2(\mathbb{R})}=O(\frac{1}{\sqrt{\beta}})$ as $\beta\rto +\infty$.
Thus, Lemma $\ref{Sect5_Lemmas_Rate}$ is proved.
\end{proof}

%%% find 4
\section{Interior Estimates and Boundary Estimates for the Stability}
In this section, we develop a priori estimates for the stability of the Robin boundary problem $(\ref{Sect1_PrandtlEq})$. Note that these estimates are uniform with respect to $\beta$, then we have a priori estimates for the stability of the Dirichlet boundary problem $(\ref{Sect1_PrandtlEq_Dirichlet})$ by passing to the limit.

For the proof of the stability, the lower order term $\mathcal{L}$ is related to $\bar{u},\bar{v}$ rather than $\delta u,\delta v$,
thus $\mathcal{L}$ bounded by the norms of $W_{\alpha,\sigma}$.

When $\sigma =0$, we have the estimate for $\W_{\alpha}$:
\begin{lemma}\label{Sect4_1_Stability_Estimate_Lemma}
When $\sigma =0$, $\W_{\alpha}$ satisfies $(\ref{Sect1_Stability_VorticityEq_1})$ with $\sigma=0$, then
$(\ref{Sect1_Stability_VorticityEq_1})$ with $\sigma=0$ produces the  a priori estimate $(\ref{Sect1_Estimates_4})$.
\end{lemma}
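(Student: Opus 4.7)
The plan is to repeat the energy--method pattern of Lemma~\ref{Sect2_1_Existence_Estimate_Lemma}, but now on the linear system $(\ref{Sect1_Stability_VorticityEq_1})_{\sigma=0}$ whose coefficients involve the averages $\bar u,\bar v,\bar u_y,\bar u_{yy}$ while the unknown $\W_\alpha$ is built from the difference $\delta u=u^1-u^2$. First I multiply the interior equation by $(1+y)^{2\ell}\W_\alpha$ and integrate over $\mathbb{R}_+^2$; then I multiply the boundary equation $(\ref{Sect1_Stability_VorticityEq_1})_2$ by $\W_\alpha|_{y=0}$ and integrate over $\mathbb{R}$; finally I add the two identities so that the internal trace $\int_\mathbb{R}\partial_y\W_\alpha|_{y=0}\cdot\W_\alpha|_{y=0}\,dx$ that appears from integration by parts in $y$ cancels against its counterpart from the boundary identity. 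The result is
\begin{equation*}
\tfrac12\tfrac{d}{dt}\|\W_\alpha\|_{L^2_\ell}^2+\tfrac12\tfrac{d}{dt}\!\!\int_{\mathbb R}\!\tfrac{1}{\beta-\frac{\bar u_{yy}}{\bar u_y}|_{y=0}}(\W_\alpha|_{y=0})^2\,dx+\|\partial_y\W_\alpha\|_{L^2_\ell}^2=\text{(bulk+boundary terms)}.
\end{equation*}

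I next estimate the right--hand side piece by piece. The transport terms $\bar u\,\partial_x\W_\alpha$ and $\bar v\,\partial_y\W_\alpha$ are handled by integration by parts, which produces $\bar u_x+\bar v_y=0$ plus an $\ell(1+y)^{-1}\bar v$ contribution; the factor $(1+y)^{-1}\bar v$ is controlled by Lemma~\ref{Sect2_Preliminary_Lemma3}, giving a factor $\mathcal{L}+\|U\|$. The coefficient terms $Q_4\W_\alpha$ and $\bar u_y\partial_y[\frac{\partial_{t,x}^\alpha\delta u}{\bar u_y}Q_5]$ are exactly of the shape covered by Lemma~\ref{Sect2_Preliminary_Lemma2}, so they yield $\mathcal{L}\|\W_\alpha\|_{L^2_\ell}^2$. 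For the four commutator sources on the right of $(\ref{Sect1_Stability_VorticityEq_1})_1$, I split each of them by Leibniz: one factor always carries $\delta u$ or $\delta v$ (producing a $\W_{\alpha'}$ or, after $\partial_x\delta u=-\partial_y\delta v$ and the divergence--free identity, a $\W_{\alpha',1}$ with $|\alpha'|\le |\alpha|-1$), while the other factor carries a derivative of $\bar u$ or $\bar v$ (bounded in $L^\infty$ by $\mathcal{L}$ expressed through the $W_{\alpha',\sigma'}$ norms of the averaged flow). The commutator $[\partial_{t,x}^\alpha,\bar u_y]\delta v$ is the delicate one since $\delta v$ grows in~$y$; here I use Lemma~\ref{Sect2_Preliminary_Lemma3} to rewrite $(1+y)^{-1}\partial_{t,x}^{\alpha_2}\delta v$ in terms of $\partial_{t,x}^{\alpha_2}\partial_x\delta u$, and then express $\partial_y\partial_{t,x}^{\alpha_1}\delta u$ via the definition of $\W$ together with $\partial_{t,x}^{\alpha_1}\delta u\cdot\frac{\bar u_{yy}}{\bar u_y}$, controlled by Hardy's inequality and Lemma~\ref{Sect2_Preliminary_Lemma1}. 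When $\alpha_1=\alpha$ the top--order piece has to be integrated by parts once more in $y$ to move $\partial_y$ off $\delta u$ and avoid loss of regularity, exactly as in $(\ref{Sect2_1_Estimates_5_2})$.

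For the boundary identity I first notice that the $\partial_t$ and $u\partial_x$ actions on $\W_\alpha|_{y=0}$, divided by $\beta-\frac{\bar u_{yy}}{\bar u_y}$, generate the $\frac{d}{dt}$--boundary term above together with a lower--order term $\frac12\partial_t(\frac{\bar u_{yy}}{\bar u_y})(\beta-\frac{\bar u_{yy}}{\bar u_y})^{-2}(\W_\alpha|_{y=0})^2$; Lemma~\ref{Sect2_Preliminary_Lemma0} and the suitable largeness of $\ell_0$ make this absorbable. The summation terms on the right of $(\ref{Sect1_Stability_VorticityEq_1})_2$ are bounded as products of an $L^\infty$--norm of $\partial_{t,x}^{\alpha_1}\bar u$ (bounded by $\mathcal{L}$ plus $\|U\|$) and $\W_{\alpha_2+(0,1)}|_{y=0}$ or $\W_{\alpha_2}|_{y=0}$, both of which can be controlled via the weighted boundary norm $\int_{\mathbb R}\frac{(\W_{\alpha'}|_{y=0})^2}{\beta-\bar u_{yy}/\bar u_y}\,dx$ after absorbing a small fraction of $\|\partial_y\W_\alpha\|_{L^2_\ell}^2$ using the trace inequality (valid because $\beta\ge\delta_\beta$ is bounded away from $\bar u_{yy}/\bar u_y$). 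Collecting all these estimates and applying Cauchy--Schwarz to split each product into the factor $(\mathcal{L}\sum\|W_{\alpha',1}\|+\mathcal{L}\sum\|W_{\alpha'}\|+\|U\|+\mathcal{L})$ multiplying the sum of $\|\W_{\alpha',\ldots}\|^2$'s produces exactly $(\ref{Sect1_Estimates_4})$.

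The principal obstacle, as usual for Prandtl--type estimates, is the unbounded vertical velocity: the commutator $[\partial_{t,x}^\alpha,\bar u_y]\delta v$ and the symmetric term $\bar v\partial_y\W_\alpha$ both introduce $O(y)$ factors that are incompatible with the weight $(1+y)^\ell$ unless one systematically trades $\partial_x\delta u$ for $-\partial_y\delta v$ (and vice versa) via the divergence--free condition, and then uses Hardy's inequality to convert the resulting $(1+y)^{-1}\delta v$ into an $L^2$--bound on $\partial_x\delta u=\W_{\alpha',1}$--type quantities. Making this exchange cleanly at the top order $|\alpha_1|=|\alpha|$ --- where one cannot afford to let a derivative fall on $\W_\alpha$ itself --- is the step that forces the appearance of $\|\W_{\alpha',1}\|_{L^2_{\ell+1}}^2$ with $|\alpha'|\le|\alpha|-1$ on the right--hand side of $(\ref{Sect1_Estimates_4})$, and it is where the suitable largeness of $\ell_0$ is decisive so that the borrowed fraction $q\|\partial_y\W_\alpha\|_{L^2_\ell}^2$ can be absorbed into the dissipation on the left.
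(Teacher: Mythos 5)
Your proposal matches the paper's argument in structure and in all the load-bearing ideas: you perform the same weighted energy estimate on $(\ref{Sect1_Stability_VorticityEq_1})_1$, pair it with the boundary identity from $(\ref{Sect1_Stability_VorticityEq_1})_2$ so the trace term $\int_{\mathbb R}\partial_y\W_\alpha|_{y=0}\cdot\W_\alpha|_{y=0}\,dx$ cancels, handle the transport and coefficient terms via integration by parts and Lemmas~\ref{Sect2_Preliminary_Lemma2}--\ref{Sect2_Preliminary_Lemma3}, and convert the $O(y)$ factors $\bar v$ and $\delta v$ in the third and fourth commutators using the divergence-free relation together with Hardy's inequality, which is exactly how the $\sum_{|\alpha'|\le|\alpha|-1}\|\W_{\alpha',1}\|^2_{L^2_{\ell+1}}$ term enters $(\ref{Sect1_Estimates_4})$. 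One small attribution slip: in discussing $[\partial_{t,x}^\alpha,\bar u_y]\delta v$ you speak of expressing $\partial_y\partial_{t,x}^{\alpha_1}\delta u$ via $\W$, but $\alpha_1$ falls on the averaged field $\bar u_y$, so that factor produces $W_{\alpha_1,1}$ (an $\mathcal L$-type quantity from the averaged flow, as in $(\ref{Sect4_1_Estimates_5_4})$) rather than a $\W$-quantity --- this does not affect your conclusion, since the product structure (one factor in $W$, one in $\W$) is what yields the form of $(\ref{Sect1_Estimates_4})$.
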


\begin{proof}
When $\sigma=0$, multiple $(\ref{Sect1_Stability_VorticityEq_1})_1$ with $ (1+y)^{2\ell}  \W_{\alpha}$, integrate in $\mathbb{R}_{+}^2$,
we get
\begin{equation}\label{Sect4_1_Estimates_1}
\begin{array}{ll}
\frac{1}{2}\frac{\mathrm{d}}{\mathrm{d}t} \|\W_{\alpha}\|_{L_{\ell}^2}^2
+ \|\partial_y \W_{\alpha}\|_{L_{\ell}^2}^2

+ \int\limits_{\mathbb{R}} \partial_y \W_{\alpha}|_{y=0}
\cdot \W_{\alpha}|_{y=0} \,\mathrm{d}x
:= {I\!I}_1 + {I\!I}_2,
\end{array}
\end{equation}
where
\begin{equation*}
\begin{array}{ll}
{I\!I}_1 :=
- \iint\limits_{\mathbb{R}_{+}^2} \Big(\bar{u}\partial_x \W_{\alpha} + \bar{v}\partial_y \W_{\alpha}
+ Q_4\W_{\alpha,\sigma}
+ \bar{u}_y\partial_y[\frac{\partial_{t,x}^{\alpha}\delta u}{\bar{u}_y} Q_5] \Big) \\[13pt]\hspace{1.05cm}
\cdot (1+y)^{2\ell} \W_{\alpha} \mathrm{d}x\mathrm{d}y

-2\ell \iint\limits_{\mathbb{R}_{+}^2} \partial_y\W_{\alpha} \cdot (1+y)^{2\ell-1}\W_{\alpha}\mathrm{d}x\mathrm{d}y
\end{array}
\end{equation*}

\begin{equation*}
\begin{array}{ll}
{I\!I}_2 :=
- \iint\limits_{\mathbb{R}_{+}^2} \bar{u}_y\partial_y\big(\frac{[\partial_{t,x}^{\alpha},\, \bar{u} \partial_x]\delta u}{\bar{u}_y}\big)
\cdot (1+y)^{2\ell} \W_{\alpha} \mathrm{d}x\mathrm{d}y \\[11pt]\hspace{1.01cm}
- \iint\limits_{\mathbb{R}_{+}^2} \bar{u}_y\partial_y\big(\frac{[\partial_{t,x}^{\alpha},\, \bar{u}_x]\delta u}{\bar{u}_y}\big)
\cdot (1+y)^{2\ell} \W_{\alpha} \mathrm{d}x\mathrm{d}y \\[11pt]\hspace{1cm}
- \iint\limits_{\mathbb{R}_{+}^2} \bar{u}_y\partial_y\big(\frac{[\partial_{t,x}^{\alpha},\, \bar{v}\partial_y]\delta u}{\bar{u}_y}\big)
\cdot (1+y)^{2\ell} \W_{\alpha} \mathrm{d}x\mathrm{d}y \\[11pt]\hspace{1cm}
- \iint\limits_{\mathbb{R}_{+}^2} \bar{u}_y\partial_y\big(\frac{[\partial_{t,x}^{\alpha},\, \bar{u}_y]\delta v}{\bar{u}_y}\big)
\cdot (1+y)^{2\ell} \W_{\alpha} \mathrm{d}x\mathrm{d}y.
\end{array}
\end{equation*}

\noindent
Multiple $(\ref{Sect1_Stability_VorticityEq_1})_2$ with $  \W_{\alpha}$, integrate in $\mathbb{R}$,
we get
\begin{equation}\label{Sect4_1_Estimates_2}
\begin{array}{ll}
\frac{1}{2}\frac{\mathrm{d}}{\mathrm{d}t} \int\limits_{\mathbb{R}} \frac{1}{\beta - \frac{\bar{u}_{yy}}{\bar{u}_y}|_{y=0}}
 (\W_{\alpha}|_{y=0})^2 \,\mathrm{d}x

- \int\limits_{\mathbb{R}} \partial_y \W_{\alpha}|_{y=0}
\cdot \W_{\alpha}|_{y=0} \,\mathrm{d}x
:= {I\!I}_3 + {I\!I}_4,
\end{array}
\end{equation}
where
\begin{equation*}
\begin{array}{ll}
{I\!I}_3 := \frac{1}{2} \int\limits_{\mathbb{R}} \partial_t(\frac{\bar{u}_{yy}}{\bar{u}_y})
 \frac{1}{(\beta - \frac{\bar{u}_{yy}}{\bar{u}_y})^2} (\W_{\alpha})^2 \,\mathrm{d}x

- \int\limits_{\mathbb{R}}  \big(\frac{1}{\beta - \frac{\bar{u}_{yy}}{\bar{u}_y}}
 \bar{u}\partial_x \W_{\alpha} \big) \cdot \W_{\alpha} \,\mathrm{d}x \\[10pt]\hspace{1cm}

+ \int\limits_{\mathbb{R}}  [\frac{\bar{u}_{yy}}{\bar{u}_y}\W_{\alpha}
- \frac{1}{\beta - \frac{\bar{u}_{yy}}{\bar{u}_y}}\W_{\alpha}Q_6] \cdot \W_{\alpha} \,\mathrm{d}x, \\[12pt]

{I\!I}_4 := - \sum\limits_{\alpha_1>0}\int\limits_{\mathbb{R}} (\partial_{t,x}^{\alpha_1}\bar{u} \cdot
\frac{\W_{\alpha_2+(0,1)}}{\beta-\frac{\bar{u}_{yy}}{\bar{u}_y}}
+ \partial_{t,x}^{\alpha_1}\partial_x\bar{u} \cdot
\frac{\W_{\alpha_2}}{\beta-\frac{\bar{u}_{yy}}{\bar{u}_y}})  \cdot \W_{\alpha} \,\mathrm{d}x.
\end{array}
\end{equation*}

By $(\ref{Sect4_1_Estimates_1}) + (\ref{Sect4_1_Estimates_2})$, we get
\begin{equation}\label{Sect4_1_Estimates_3}
\begin{array}{ll}
\frac{\mathrm{d}}{\mathrm{d}t} \|\W_{\alpha}\|_{L_{\ell}^2}^2
+\frac{\mathrm{d}}{\mathrm{d}t} \int\limits_{\mathbb{R}} \frac{1}{\beta - \frac{\bar{u}_{yy}}{\bar{u}_y}|_{y=0}}
 (\W_{\alpha}|_{y=0})^2 \,\mathrm{d}x
+ 2\|\partial_y \W_{\alpha}\|_{L_{\ell}^2}^2 \\[8pt]

=2({I\!I}_1 + {I\!I}_2 + {I\!I}_3 + {I\!I}_4).
\end{array}
\end{equation}

It is easy to obtain the estimate of ${I\!I}_1$:
\begin{equation}\label{Sect4_1_Estimates_4}
\begin{array}{ll}
{I\!I}_1 \leq q\|\partial_y \W_{\alpha}\|_{L_{\ell}^2}^2 + C\mathcal{L}\|\W_{\alpha}\|_{L_{\ell}^2}^2 \\[10pt]\hspace{1cm}
+ \iint\limits_{\mathbb{R}_{+}^2} (\W_{\alpha})^2
\cdot[\partial_y\bar{v} (1+y)^{2\ell} + 2\ell(1+y)^{2\ell-1}\bar{v}] \mathrm{d}x\mathrm{d}y
\\[9pt]\hspace{0.55cm}

\leq q\|\partial_y \W_{\alpha}\|_{L_{\ell}^2}^2 + C_{10}\mathcal{L}\|\W_{\alpha}\|_{L_{\ell}^2}^2.
\end{array}
\end{equation}

Next, we estimate the first term of $I\!I_2$:
\begin{equation}\label{Sect4_1_Estimates_5_1}
\begin{array}{ll}
- \iint\limits_{\mathbb{R}_{+}^2} \bar{u}_y\partial_y\big(\frac{[\partial_{t,x}^{\alpha},\, \bar{u} \partial_x]\delta u}{\bar{u}_y}\big)
\cdot (1+y)^{2\ell} \W_{\alpha} \mathrm{d}x\mathrm{d}y \\[9pt]

= - \iint\limits_{\mathbb{R}_{+}^2}\sum\limits_{\alpha_1>0}
(1+y)^{\ell}\bar{u}_y\partial_y\big(\frac{\partial_{t,x}^{\alpha_2}\partial_x\delta u}{\bar{u}_y}\big)\cdot \partial_{t,x}^{\alpha_1}\bar{u}
\cdot (1+y)^{\ell} \W_{\alpha} \mathrm{d}x\mathrm{d}y \\[11pt]\quad

- \iint\limits_{\mathbb{R}_{+}^2}\sum\limits_{\alpha_1>0}
(1+y)^{\ell-1}\partial_{t,x}^{\alpha_2}\partial_x\delta u \cdot (1+y)^{\ell}\partial_{t,x}^{\alpha_1}\bar{\omega}
\cdot (1+y)^{\ell} \W_{\alpha} \mathrm{d}x\mathrm{d}y \\[11pt]

\lem \big(\mathcal{L}\sum\limits_{|\alpha^{\prime}|\leq |\alpha|}\|W_{\alpha^{\prime}}\|_{L_{\ell}^2}\big)
\cdot \big(\sum\limits_{|\alpha^{\prime}|\leq |\alpha|}\|\W_{\alpha^{\prime}}\|_{L_{\ell}^2}^2 \big).
\end{array}
\end{equation}

We estimate the second term of $I\!I_2$:
\begin{equation*}
\begin{array}{ll}
- \iint\limits_{\mathbb{R}_{+}^2} \bar{u}_y\partial_y\big(\frac{[\partial_{t,x}^{\alpha},\, \bar{u}_x]\delta u}{\bar{u}_y}\big)
\cdot (1+y)^{2\ell} \W_{\alpha} \mathrm{d}x\mathrm{d}y \\[9pt]

= - \iint\limits_{\mathbb{R}_{+}^2} \sum\limits_{\alpha_1>0} (1+y)^{\ell}\bar{u}_y\partial_y\big(\frac{\partial_{t,x}^{\alpha_2}\delta u}{\bar{u}_y}\big)
\cdot\partial_{t,x}^{\alpha_1}\partial_x\bar{u}
\cdot (1+y)^{\ell} \W_{\alpha} \mathrm{d}x\mathrm{d}y
\end{array}
\end{equation*}

\begin{equation}\label{Sect4_1_Estimates_5_2}
\begin{array}{ll}
\quad
- \iint\limits_{\mathbb{R}_{+}^2} \sum\limits_{\alpha_1>0} (1+y)^{\ell-1}\partial_{t,x}^{\alpha_2}\delta u \cdot (1+y)^{\ell}
\partial_{t,x}^{\alpha_1}\partial_x\bar{\omega}\cdot (1+y)^{\ell} \W_{\alpha} \mathrm{d}x\mathrm{d}y
\\[11pt]

\lem \big(\mathcal{L}\sum\limits_{|\alpha^{\prime}|\leq |\alpha|+1}\|W_{\alpha^{\prime}}\|_{L_{\ell}^2}\big)
\cdot \big(\sum\limits_{|\alpha^{\prime}|\leq |\alpha|}\|\W_{\alpha^{\prime}}\|_{L_{\ell}^2}^2 \big).
\end{array}
\end{equation}

We estimate the third term of $I\!I_2$:
\begin{equation}\label{Sect4_1_Estimates_5_3}
\begin{array}{ll}
- \iint\limits_{\mathbb{R}_{+}^2} \bar{u}_y\partial_y\big(\frac{[\partial_{t,x}^{\alpha},\, \bar{v}\partial_y]\delta u}{\bar{u}_y}\big)
\cdot (1+y)^{2\ell} \W_{\alpha} \mathrm{d}x\mathrm{d}y \\[7pt]

= - \iint\limits_{\mathbb{R}_{+}^2}\sum\limits_{\alpha_1>0} (1+y)^{-1}\partial_{t,x}^{\alpha_1}\bar{v}\cdot
(1+y)^{\ell+1}\bar{u}_y\partial_y\big(\frac{\partial_{t,x}^{\alpha_2}
\partial_y\delta u}{\bar{u}_y}\big)
\cdot (1+y)^{\ell} \W_{\alpha} \mathrm{d}x\mathrm{d}y \\[9pt]\quad
+ \iint\limits_{\mathbb{R}_{+}^2}\sum\limits_{\alpha_1>0} \partial_{t,x}^{\alpha_1}\partial_x \bar{u} \cdot
(1+y)^{\ell}\partial_{t,x}^{\alpha_2}\delta\omega
\cdot (1+y)^{\ell} \W_{\alpha} \mathrm{d}x\mathrm{d}y \\[9pt]

\lem \big(\mathcal{L}\sum\limits_{|\alpha^{\prime}|\leq |\alpha|+1}\|W_{\alpha^{\prime}}\|_{L_{\ell}^2}\big)
\cdot \big(\|\W_{\alpha}\|_{L_{\ell}^2}^2
+ \sum\limits_{|\alpha^{\prime}|\leq |\alpha|-1}\|\W_{\alpha^{\prime},1}\|_{L_{\ell+1}^2}^2 \big) \\[11pt]\quad

+ \big(\mathcal{L}\sum\limits_{|\alpha^{\prime}|\leq |\alpha|+1}\|W_{\alpha^{\prime}}\|_{L_{\ell}^2}\big)
\cdot \big(\sum\limits_{|\alpha^{\prime}|\leq |\alpha|}\|\W_{\alpha^{\prime}}\|_{L_{\ell}^2}^2 \big).
\end{array}
\end{equation}

We estimate the fourth term of $I\!I_2$:
\begin{equation}\label{Sect4_1_Estimates_5_4}
\begin{array}{ll}
- \iint\limits_{\mathbb{R}_{+}^2} \bar{u}_y\partial_y\big(\frac{[\partial_{t,x}^{\alpha},\, \bar{u}_y]\delta v}{\bar{u}_y}\big)
\cdot (1+y)^{2\ell} \W_{\alpha} \mathrm{d}x\mathrm{d}y \\[9pt]

= - \iint\limits_{\mathbb{R}_{+}^2}\sum\limits_{\alpha_1>0} (1+y)^{\ell+1}\bar{u}_y\partial_y\big(\frac{\partial_{t,x}^{\alpha_1}\partial_y\bar{u}}{\bar{u}_y}\big)
(1+y)^{-1}\partial_{t,x}^{\alpha_2}\delta v
\cdot (1+y)^{\ell} \W_{\alpha} \mathrm{d}x\mathrm{d}y \\[9pt]\quad
+ \iint\limits_{\mathbb{R}_{+}^2}\sum\limits_{\alpha_1>0} (1+y)^{\ell}\partial_{t,x}^{\alpha_1}\bar{\omega}\cdot (1+y)^{\ell-1}\partial_x\partial_{t,x}^{\alpha_2}
\delta u \cdot (1+y)^{\ell} \W_{\alpha} \mathrm{d}x\mathrm{d}y
\\[14pt]

\lem \sum\limits_{|\alpha^{\prime}|\leq |\alpha|}\big(\mathcal{L}\|W_{\alpha^{\prime},1}\|_{L_{\ell+1}^2}
+ \mathcal{L}\|W_{\alpha^{\prime}}\|_{L_{\ell}^2}\big)
\cdot \big(\sum\limits_{|\alpha^{\prime}|\leq |\alpha|}\|\W_{\alpha^{\prime}}\|_{L_{\ell}^2}^2 \big).
\end{array}
\end{equation}

\vspace{-0.2cm}
Thus,
\begin{equation}\label{Sect4_1_Estimates_5}
\begin{array}{ll}
{I\!I}_2
\leq C_{11} \big(\mathcal{L}\sum\limits_{|\alpha^{\prime}|\leq |\alpha|}\|W_{\alpha^{\prime},1}\|_{L_{\ell+1}^2}
+ \mathcal{L}\sum\limits_{|\alpha^{\prime}|\leq |\alpha|+1}\|W_{\alpha^{\prime}}\|_{L_{\ell}^2}\big) \\[11pt]\hspace{1cm}
\cdot \big(\sum\limits_{|\alpha^{\prime}|\leq |\alpha|-1}\|\W_{\alpha^{\prime},1}\|_{L_{\ell+1}^2}^2
+ \sum\limits_{|\alpha^{\prime}|\leq |\alpha|}\|\W_{\alpha^{\prime}}\|_{L_{\ell}^2}^2 \big).
\end{array}
\end{equation}

\vspace{-0.2cm}
Next we estimate $I\!I_3$:
\begin{equation}\label{Sect4_1_Estimates_6}
\begin{array}{ll}
I\!I_3 \leq q\|\partial_y \W_{\alpha}\|_{L_{\ell}^2}^2 + C_{12}\mathcal{L}\int\limits_{\mathbb{R}}
 \frac{1}{\beta - \frac{\bar{u}_{yy}}{\bar{u}_y}|_{y=0}} (\W_{\alpha}|_{y=0})^2 \,\mathrm{d}x.
\end{array}
\end{equation}

\vspace{-0.2cm}
Next we estimate $I\!I_4$:
\begin{equation}\label{Sect4_1_Estimates_7}
\begin{array}{ll}
I\!I_4 \lem (\sum\limits_{|\alpha^{\prime}|\leq |\alpha|+1}\|W_{\alpha^{\prime}}\|_{L^2(\mathbb{R})} + \|U\|_{H^{|\alpha|+1}})\sum\limits_{|\alpha^{\prime}|\leq |\alpha|}\int\limits_{\mathbb{R}}
 \frac{1}{\beta - \frac{\bar{u}_{yy}}{\bar{u}_y}|_{y=0}} (\W_{\alpha^{\prime}}|_{y=0})^2 \,\mathrm{d}x.
\end{array}
\end{equation}

Plug $(\ref{Sect4_1_Estimates_4}),  (\ref{Sect4_1_Estimates_5}), (\ref{Sect4_1_Estimates_6}), (\ref{Sect4_1_Estimates_7})$ into
$(\ref{Sect4_1_Estimates_3})$, we get
\begin{equation}\label{Sect4_1_Estimates_8}
\begin{array}{ll}
\frac{\mathrm{d}}{\mathrm{d}t} \|\W_{\alpha}\|_{L_{\ell}^2}^2
+\frac{\mathrm{d}}{\mathrm{d}t} \int\limits_{\mathbb{R}} \frac{1}{\beta - \frac{\bar{u}_{yy}}{\bar{u}_y}|_{y=0}}
 (\W_{\alpha}|_{y=0})^2 \,\mathrm{d}x
+ \|\partial_y \W_{\alpha}\|_{L_{\ell}^2}^2 \\[9pt]

\lem \big(\mathcal{L}\sum\limits_{|\alpha^{\prime}|\leq |\alpha|}\|W_{\alpha^{\prime},1}\|_{L_{\ell+1}^2}
+ \mathcal{L}\sum\limits_{|\alpha^{\prime}|\leq |\alpha|+1}\|W_{\alpha^{\prime}}\|_{L_{\ell}^2} + \|U\|_{H^{|\alpha|+1}} +\mathcal{L}\big) \\[11pt]\quad
\cdot \Big[\sum\limits_{|\alpha^{\prime}|\leq |\alpha|-1}\|\W_{\alpha^{\prime},1}\|_{L_{\ell+1}^2}^2
+ \sum\limits_{|\alpha^{\prime}|\leq |\alpha|} \big(\|\W_{\alpha^{\prime}}\|_{L_{\ell}^2}^2
+ \int\limits_{\mathbb{R}}
 \frac{1}{\beta - \frac{\bar{u}_{yy}}{\bar{u}_y}|_{y=0}} (\W_{\alpha^{\prime}}|_{y=0})^2 \,\mathrm{d}x \big) \Big].
\end{array}
\end{equation}

Thus, Lemma $\ref{Sect4_1_Stability_Estimate_Lemma}$ is proved.
\end{proof}

When $\sigma \geq 1,\ |\alpha|\leq p-\sigma$, we have the estimate for $\W_{\alpha,\sigma}$:
\begin{lemma}\label{Sect4_2_Stability_Estimate_Lemma}
When $\sigma \geq 1$, $\W_{\alpha,\sigma}$ satisfies $(\ref{Sect1_Stability_VorticityEq_1})$ with $\sigma\geq 1$, then
$(\ref{Sect1_Stability_VorticityEq_1})$ with $\sigma\geq 1$ produces the a priori estimate $(\ref{Sect1_Estimates_5})$.
\end{lemma}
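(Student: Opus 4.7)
The plan is to imitate the energy method used in Lemma \ref{Sect2_2_MixDerivatives_Lemma} (for $W_{\alpha,\sigma}$) and Lemma \ref{Sect4_1_Stability_Estimate_Lemma} (for $\W_{\alpha}$), but now adapted to the stability variable $\W_{\alpha,\sigma}$ when $\sigma\geq 1$. First, I would multiply the interior evolution equation $(\ref{Sect1_Stability_VorticityEq_1})_1$ by the weighted test function $(1+y)^{2\ell+2\sigma}\W_{\alpha,\sigma}$ and integrate over $\mathbb{R}_{+}^2$; this produces $\tfrac12 \tfrac{d}{dt}\|\W_{\alpha,\sigma}\|^2_{L^2_{\ell+\sigma}} + \|\partial_y\W_{\alpha,\sigma}\|^2_{L^2_{\ell+\sigma}}$, an interior boundary residue $\int_{\mathbb{R}}\partial_y\W_{\alpha,\sigma}|_{y=0}\cdot\W_{\alpha,\sigma}|_{y=0}\,\mathrm{d}x$, plus transport, lower order $Q_4$-, $Q_5$-, and commutator contributions. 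Next, I would pair the third (boundary) equation of $(\ref{Sect1_Stability_VorticityEq_1})$ with $\W_{\alpha,\sigma}$ on $\mathbb{R}$; this yields $-\int_{\mathbb{R}}\partial_y\W_{\alpha,\sigma}|_{y=0}\cdot\W_{\alpha,\sigma}|_{y=0}\,\mathrm{d}x$ together with $\int \tfrac{\bar u_{yy}}{\bar u_y}\W_{\alpha,\sigma}^2\,\mathrm{d}x$ and the $\mathcal{P}_2$-polynomial contributions. Adding the two identities cancels the residue and produces the desired differential inequality.

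For the interior terms I would use: Lemma \ref{Sect2_Preliminary_Lemma2} to absorb the $Q_4\W_{\alpha,\sigma}$ and $\bar u_y\partial_y[\frac{\partial_{t,x}^{\alpha}\partial_y^{\sigma}\delta u}{\bar u_y}Q_5]$ contributions into $\mathcal{L}\|\W_{\alpha,\sigma}\|^2_{L^2_{\ell+\sigma}}$; integration by parts on the transport pieces $\bar u\partial_x\W_{\alpha,\sigma}$, $\bar v\partial_y\W_{\alpha,\sigma}$, where the boundary trace vanishes since $\bar v|_{y=0}=0$ and weights are bounded via $\|(1+y)^{-1}\bar v\|_{L^\infty}<\infty$ from Lemma \ref{Sect2_Preliminary_Lemma3}; and Hardy's inequality to convert $\partial_{t,x}^{\alpha'}\partial_y^{\sigma'}\delta u$ into $W_{\alpha',\sigma'}$-type norms inside the commutators. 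The $2(\ell+\sigma)$-weight cross term $-2(\ell+\sigma)\iint \partial_y\W_{\alpha,\sigma}(1+y)^{2\ell+2\sigma-1}\W_{\alpha,\sigma}$ is handled by Young with coefficient $q\ll 1$ against $\|\partial_y\W_{\alpha,\sigma}\|^2_{L^2_{\ell+\sigma}}$, which is where the large-$\ell_0$ assumption enters.

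The boundary contributions from $\mathcal{P}_2$ decompose into three kinds of terms: (a) lower $\sigma$-layer pieces $\W_{\alpha',\sigma-1-m}(\bar u_{yy}/\bar u_y)^m$, which are bounded on the trace by the trace inequality and then converted, via the interpolation $\|f|_{y=0}\|_{L^2}\lesssim\|f\|_{L^2}^{1/2}\|\partial_y f\|_{L^2}^{1/2}$, into terms that can be absorbed by the $q\sum\|\partial_y\W_{\alpha',\sigma'}\|^2$ contribution on the right-hand side of $(\ref{Sect1_Estimates_5})$; (b) $\W_{\alpha'}$ trace pieces weighted by $\tfrac{1}{\beta-\bar u_{yy}/\bar u_y}(\bar u_{yy}/\bar u_y)^\sigma$, which produce exactly the $\int_{\mathbb{R}}\tfrac{1}{\beta-\bar u_{yy}/\bar u_y|_{y=0}}(\W_\alpha|_{y=0})^2\,\mathrm{d}x$ term on the right; (c) lower order pieces involving $\partial_{t,x}^{\alpha'}(\bar u-U)$ and $\partial_x\partial_{t,x}^{\alpha'}(\bar u-U)$ traces which, after expressing them via the $W_{\alpha',\sigma'}$ and using Lemma \ref{Sect2_Preliminary_Lemma1}, fold into $Q_7(\alpha,\sigma)$ defined in $(\ref{Sect1_Estimates_6})$. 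Combining all three and bounding the total size of the coefficients by $Q_7$ yields the first case of $(\ref{Sect1_Estimates_5})$ for $0<\sigma\leq p-1$, $|\alpha|\leq p-\sigma$; the $|\alpha|=0$ case follows by the same argument since the commutator terms that require $|\alpha|\geq 1$ (namely the $\sum_{|\alpha'|\leq|\alpha|-1}\|\W_{\alpha',\sigma+1}\|^2$ contribution) drop out.

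The main obstacle, as in Lemma \ref{Sect2_2_MixDerivatives_Lemma}, is the commutator $\bar u_y\partial_y\bigl(\frac{[\partial_{t,x}^{\alpha}\partial_y^{\sigma},\bar v\partial_y]\delta u}{\bar u_y}\bigr)$. When all of $\partial_{t,x}^{\alpha}\partial_y^{\sigma}$ fall on $\delta u$ and $\partial_y$ acts as $\partial_y$ on $\delta u$, this contains $\partial_{t,x}^{\alpha_2}\partial_y^{\sigma_2+1}\delta v$, which has one more $y$-derivative than the solution space controls. The resolution is to use the divergence-free identity $\partial_y\delta v=-\partial_x\delta u$ to rewrite every occurrence of $\partial_y^{\sigma_2+1}\delta v$ as $-\partial_x\partial_y^{\sigma_2}\delta u$, converting a lost $y$-derivative into a tangential $x$-derivative. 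In the top case $\alpha_1=\alpha$, $\sigma_1=\sigma$, the remaining $\bar v$ factor is controlled by $\|(1+y)^{-1}\bar v\|_{L^\infty}$ from Lemma \ref{Sect2_Preliminary_Lemma3}, and the resulting $\iint(1+y)^{2\ell+2\sigma}(\partial_{t,x}^{\alpha}\partial_y^{\sigma}\delta\omega)^2[\partial_y\bar v + (2\ell+2\sigma)(1+y)^{-1}\bar v]\,\mathrm{d}x\mathrm{d}y$ is absorbed by $\mathcal{L}\|\W_{\alpha,\sigma}\|^2$. In the lower cases $|\alpha_1|\leq|\alpha|-1$, after integration by parts moving $\partial_y$ off $\partial_{t,x}^{\alpha_1}\partial_y^{\sigma_1}\delta u$, the factor $\partial_{t,x}^{\alpha_2}\partial_y^{\sigma_2}\delta v$ with $\sigma_2\leq\sigma$ is tamed by the weight $(1+y)^{\sigma_2-1}$ argument already used in $(\ref{Sect2_2_Estimates_9})$, producing exactly the $\sum_{|\alpha'|\leq|\alpha|-1}\|\W_{\alpha',\sigma+1}\|^2_{L^2_{\ell+\sigma+1}}$ term in $(\ref{Sect1_Estimates_5})$.
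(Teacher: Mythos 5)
Your plan follows essentially the same route as the paper's proof: pair the interior equation with $(1+y)^{2\ell+2\sigma}\W_{\alpha,\sigma}$ and the boundary condition with $\W_{\alpha,\sigma}$ so that the trace residue $\int_{\mathbb R}\partial_y\W_{\alpha,\sigma}|_{y=0}\W_{\alpha,\sigma}|_{y=0}$ cancels; absorb the $Q_4,Q_5$ and weight cross-terms with Lemma~\ref{Sect2_Preliminary_Lemma2} and Young's inequality using the large-$\ell_0$ margin; handle the four commutators with Hardy's inequality, the bound $\|(1+y)^{-1}\bar v\|_{L^\infty}$ from Lemma~\ref{Sect2_Preliminary_Lemma3}, and the divergence-free identity; and decompose the boundary $\mathcal{P}_2$ contribution into the three species (lower $\sigma$-layers, the $\W_{\alpha'}$ traces, the $\bar u$-data pieces) that fold into $Q_7$ and the small $q$-weighted $\|\partial_y\W\|^2$ terms. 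This is the paper's argument.

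One local error in your description of the commutator terms: the third commutator $\bar u_y\partial_y\bigl(\tfrac{[\partial_{t,x}^{\alpha}\partial_y^{\sigma},\bar v\partial_y]\delta u}{\bar u_y}\bigr)$ involves only $\bar v$ and $\delta u$ --- it does not contain $\delta v$. The Leibniz expansion reads $\sum_{|\alpha_1|+\sigma_1>0}\partial_{t,x}^{\alpha_1}\partial_y^{\sigma_1}\bar v\cdot\partial_{t,x}^{\alpha_2}\partial_y^{\sigma_2+1}\delta u$, so the potential loss is in $\partial_y^{\sigma_1}\bar v$ (for $\sigma_1\geq 1$) and in $\partial_y^{\sigma+1}\delta u$ (when $\sigma_1=0,\,\alpha_1\neq 0$, giving the $\sum_{|\alpha'|\leq|\alpha|-1}\|\W_{\alpha',\sigma+1}\|^2$ term); the divergence-free substitution needed here is $\partial_y^{\sigma_1}\bar v=-\partial_x\partial_y^{\sigma_1-1}\bar u$, not $\partial_y\delta v=-\partial_x\delta u$. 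The quantity $\delta v$ appears in the separate fourth commutator $\bar u_y\partial_y\bigl(\tfrac{[\partial_{t,x}^{\alpha}\partial_y^{\sigma},\bar u_y]\delta v}{\bar u_y}\bigr)$, where $\partial_y^{\sigma_2}\delta v=-\partial_x\partial_y^{\sigma_2-1}\delta u$ for $\sigma_2\geq 1$ is what you use, exactly as you wrote. So the mechanism you describe is right, but you have attached it to the wrong commutator and conflated the two; apply $\partial_y\bar v=-\partial_x\bar u$ to the $\bar v\partial_y$ commutator and $\partial_y\delta v=-\partial_x\delta u$ to the $\bar u_y$ commutator and the argument closes as in $(\ref{Sect4_2_Estimates_5_3})$, $(\ref{Sect4_2_Estimates_5_4})$, $(\ref{Sect4_2_Estimates_5_5})$.
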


\begin{proof}
When $\sigma\geq 1$, multiple $(\ref{Sect1_Stability_VorticityEq_1})_1$ with $ (1+y)^{2\ell+2\sigma}  \W_{\alpha,\sigma}$, integrate in $\mathbb{R}_{+}^2$,
we get
\begin{equation}\label{Sect4_2_Estimates_1}
\begin{array}{ll}
\frac{1}{2}\frac{\mathrm{d}}{\mathrm{d}t} \|\W_{\alpha,\sigma}\|_{L_{\ell+\sigma}^2}^2
+ \|\partial_y \W_{\alpha,\sigma}\|_{L_{\ell+\sigma}^2}^2
+ \int\limits_{\mathbb{R}} \partial_y \W_{\alpha,\sigma}|_{y=0}
\cdot \W_{\alpha,\sigma}|_{y=0} \,\mathrm{d}x

:= {I\!I}_5 + {I\!I}_6,
\end{array}
\end{equation}
where
\begin{equation*}
\begin{array}{ll}
{I\!I}_5 := -(2\ell+2\sigma) \iint\limits_{\mathbb{R}_{+}^2} \partial_y\W_{\alpha} \cdot (1+y)^{2\ell+2\sigma-1}\W_{\alpha}\mathrm{d}x\mathrm{d}y
\\[9pt]\hspace{1.05cm}
- \iint\limits_{\mathbb{R}_{+}^2} \Big(\bar{u}\partial_x \W_{\alpha,\sigma} + \bar{v}\partial_y \W_{\alpha,\sigma}
+ Q_4\W_{\alpha,\sigma}
+ \bar{u}_y\partial_y[\frac{\partial_{t,x}^{\alpha}\partial_y^{\sigma}\delta u}{\bar{u}_y} Q_5] \Big) \\[11pt]\hspace{1.05cm}
\cdot (1+y)^{2\ell+2\sigma} \W_{\alpha,\sigma} \mathrm{d}x\mathrm{d}y \\[10pt]

{I\!I}_6 :=
- \iint\limits_{\mathbb{R}_{+}^2} \bar{u}_y\partial_y\big(\frac{[\partial_{t,x}^{\alpha}\partial_y^{\sigma},\, \bar{u} \partial_x]\delta u}{\bar{u}_y}\big)
\cdot (1+y)^{2\ell+2\sigma} \W_{\alpha,\sigma} \mathrm{d}x\mathrm{d}y \\[8pt]\hspace{1cm}
- \iint\limits_{\mathbb{R}_{+}^2} \bar{u}_y\partial_y\big(\frac{[\partial_{t,x}^{\alpha}\partial_y^{\sigma},\, \bar{u}_x]\delta u}{\bar{u}_y}\big)
\cdot (1+y)^{2\ell+2\sigma} \W_{\alpha,\sigma} \mathrm{d}x\mathrm{d}y \\[8pt]\hspace{1cm}
- \iint\limits_{\mathbb{R}_{+}^2} \bar{u}_y\partial_y\big(\frac{[\partial_{t,x}^{\alpha}\partial_y^{\sigma},\, \bar{v}\partial_y]\delta u}{\bar{u}_y}\big)
\cdot (1+y)^{2\ell+2\sigma} \W_{\alpha,\sigma} \mathrm{d}x\mathrm{d}y \\[8pt]\hspace{1cm}
- \iint\limits_{\mathbb{R}_{+}^2} \bar{u}_y\partial_y\big(\frac{[\partial_{t,x}^{\alpha}\partial_y^{\sigma},\, \bar{u}_y]\delta v}{\bar{u}_y}\big)
\cdot (1+y)^{2\ell+2\sigma} \W_{\alpha,\sigma} \mathrm{d}x\mathrm{d}y.
\end{array}
\end{equation*}

\noindent
Multiple $(\ref{Sect1_Stability_VorticityEq_1})_2$ with $  \W_{\alpha,\sigma}$, integrate in $\mathbb{R}$,
we get
\begin{equation}\label{Sect4_2_Estimates_2}
\begin{array}{ll}
\frac{1}{2}\frac{\mathrm{d}}{\mathrm{d}t} \int\limits_{\mathbb{R}} \frac{1}{\beta - \frac{\bar{u}_{yy}}{\bar{u}_y}|_{y=0}}
 (\W_{\alpha,\sigma}|_{y=0})^2 \,\mathrm{d}x

- \int\limits_{\mathbb{R}} \partial_y \W_{\alpha,\sigma}|_{y=0}
\cdot \W_{\alpha,\sigma}|_{y=0} \,\mathrm{d}x

:= {I\!I}_7,
\end{array}
\end{equation}
where
\vspace{-0.2cm}
\begin{equation*}
\begin{array}{ll}
{I\!I}_7 := \int\limits_{\mathbb{R}}  \mathcal{P}_2\Big(\mathcal{L}+ \sum\limits_{|\alpha^{\prime}|\leq |\alpha|+1}\sum\limits_{m=0}^{\sigma} (W_{\alpha^{\prime},\sigma-m}) (\frac{\bar{u}_{yy}}{\bar{u}_y})^m \\[8pt]\hspace{1cm}
 + \partial_x \partial_{t,x}^{\alpha^{\prime}} (\bar{u}-U)(\frac{\bar{u}_{yy}}{\bar{u}_y})^{\sigma},

\sum\limits_{|\alpha^{\prime}|\leq |\alpha|+1}\sum\limits_{m=0}^{\sigma-1} (\W_{\alpha^{\prime},\sigma-1-m}) (\frac{\bar{u}_{yy}}{\bar{u}_y})^m
 \\[9pt]\hspace{1cm}
 + \sum\limits_{|\alpha^{\prime}|\leq |\alpha|+1}\frac{1}{\beta - \frac{\bar{u}_{yy}}{\bar{u}_y}}\W_{\alpha^{\prime}}(\frac{\bar{u}_{yy}}{\bar{u}_y})^{\sigma}\Big) \cdot \W_{\alpha,\sigma} \,\mathrm{d}x.
\end{array}
\end{equation*}

By $(\ref{Sect4_2_Estimates_1}) + (\ref{Sect4_2_Estimates_2})$, we get
\begin{equation}\label{Sect4_2_Estimates_3}
\begin{array}{ll}
\frac{\mathrm{d}}{\mathrm{d}t} \|\W_{\alpha,\sigma}\|_{L_{\ell}^2}^2
+\frac{\mathrm{d}}{\mathrm{d}t} \int\limits_{\mathbb{R}} \frac{1}{\beta - \frac{\bar{u}_{yy}}{\bar{u}_y}}
 (\W_{\alpha,\sigma})^2 \,\mathrm{d}x
+ 2\|\partial_y \W_{\alpha,\sigma}\|_{L_{\ell}^2}^2

=2({I\!I}_5 + {I\!I}_6 + {I\!I}_7).
\end{array}
\end{equation}

Similar to $(\ref{Sect4_1_Estimates_4})$, we have
\begin{equation}\label{Sect4_2_Estimates_4}
\begin{array}{ll}
{I\!I}_5 \leq q\| \partial_y\W_{\alpha,\sigma}\|_{L_{\ell+\sigma}^2}^2
+ C_{13}\mathcal{L}\|\W_{\alpha,\sigma}\|_{L_{\ell+\sigma}^2}^2.
\end{array}
\end{equation}

Next we estimate the first term of $I\!I_6$:
\begin{equation}\label{Sect4_2_Estimates_5_1}
\begin{array}{ll}
- \iint\limits_{\mathbb{R}_{+}^2} \bar{u}_y\partial_y\big(\frac{[\partial_{t,x}^{\alpha}\partial_y^{\sigma},\, \bar{u} \partial_x]\delta u}{\bar{u}_y}\big)
\cdot (1+y)^{2\ell+2\sigma} \W_{\alpha,\sigma} \mathrm{d}x\mathrm{d}y \\[6pt]

= - \iint\limits_{\mathbb{R}_{+}^2}\sum\limits_{|\alpha_1|+\sigma_1>0}
(1+y)^{\ell+\sigma_2}\bar{u}_y\partial_y\big(\frac{\partial_{t,x}^{\alpha_2}\partial_y^{\sigma_2}\partial_x\delta u}{\bar{u}_y}\big)\cdot
(1+y)^{\ell+\sigma_1 -1}\partial_{t,x}^{\alpha_1}\partial_y^{\sigma_1}\bar{u}

\\[11pt]\quad
\cdot (1+y)^{\ell+\sigma} \W_{\alpha,\sigma} \mathrm{d}x\mathrm{d}y \\[6pt]\quad

- \iint\limits_{\mathbb{R}_{+}^2}\sum\limits_{|\alpha_1|+\sigma_1>0}
(1+y)^{\ell+\sigma_2-1}\partial_{t,x}^{\alpha_2}\partial_y^{\sigma_2}\partial_x\delta u \cdot (1+y)^{\ell+\sigma_1}\partial_{t,x}^{\alpha_1}\partial_y^{\sigma_1}\bar{\omega}

\\[8pt]\quad
\cdot (1+y)^{\ell+\sigma} \W_{\alpha,\sigma} \mathrm{d}x\mathrm{d}y \\[7pt]

\lem Q_7 \big(\sum\limits_{|\alpha^{\prime}|\leq |\alpha|,\sigma^{\prime}\leq \sigma}\|\W_{\alpha^{\prime},\sigma^{\prime}}\|_{L_{\ell+\sigma^{\prime}}^2}^2
+ \sum\limits_{|\alpha^{\prime}|\leq |\alpha|+1,\sigma^{\prime}\leq \sigma-1}\|\W_{\alpha^{\prime},\sigma^{\prime}}\|_{L_{\ell+\sigma^{\prime}}^2}^2\big).
\end{array}
\end{equation}

We estimate the second term of $I\!I_6$:
\begin{equation}\label{Sect4_2_Estimates_5_2}
\begin{array}{ll}
- \iint\limits_{\mathbb{R}_{+}^2} \bar{u}_y\partial_y\big(\frac{[\partial_{t,x}^{\alpha}\partial_y^{\sigma},\, \bar{u}_x]\delta u}{\bar{u}_y}\big)
\cdot (1+y)^{2\ell+2\sigma} \W_{\alpha,\sigma} \mathrm{d}x\mathrm{d}y \\[9pt]

= - \iint\limits_{\mathbb{R}_{+}^2} \sum\limits_{\alpha_1>0} (1+y)^{\ell+\sigma_2}\bar{u}_y\partial_y
\big(\frac{\partial_{t,x}^{\alpha_2}\partial_y^{\sigma_2}\delta u}{\bar{u}_y}\big)
\cdot (1+y)^{\ell+\sigma_1-1}\partial_{t,x}^{\alpha_1}\partial_y^{\sigma_1}\partial_x\bar{u} \\[9pt]\quad
\cdot (1+y)^{\ell+\sigma} \W_{\alpha,\sigma} \mathrm{d}x\mathrm{d}y \\[6pt]\quad

- \iint\limits_{\mathbb{R}_{+}^2} \sum\limits_{\alpha_1>0} (1+y)^{\ell+\sigma_2-1}\partial_{t,x}^{\alpha_2}\partial_y^{\sigma_2}\delta u \cdot (1+y)^{\ell+\sigma_1}
\partial_{t,x}^{\alpha_1}\partial_y^{\sigma_1}\partial_x\bar{\omega} \\[9pt]\quad
\cdot (1+y)^{\ell+\sigma} \W_{\alpha,\sigma} \mathrm{d}x\mathrm{d}y
\\[7pt]

\lem Q_7 \sum\limits_{|\alpha^{\prime}|\leq |\alpha|,\sigma^{\prime}\leq \sigma}\|\W_{\alpha^{\prime},\sigma^{\prime}}\|_{L_{\ell+\sigma^{\prime}}^2}^2.
\end{array}
\end{equation}

We estimate the third term of $I\!I_6$.

When $\sigma\leq p-1, 0<|\alpha|\leq p-\sigma$, we have
\begin{equation}\label{Sect4_2_Estimates_5_3}
\begin{array}{ll}
- \iint\limits_{\mathbb{R}_{+}^2} \bar{u}_y\partial_y\big(\frac{[\partial_{t,x}^{\alpha}\partial_y^{\sigma},\, \bar{v}\partial_y]\delta u}{\bar{u}_y}\big)
\cdot (1+y)^{2\ell+2\sigma} \W_{\alpha,\sigma} \mathrm{d}x\mathrm{d}y \\[7pt]

= - \iint\limits_{\mathbb{R}_{+}^2}\sum\limits_{|\alpha_1|+\sigma_1>0} (1+y)^{\sigma_1-1}\partial_{t,x}^{\alpha_1}\partial_y^{\sigma_1}\bar{v}\cdot
(1+y)^{\ell+\sigma_2+1}\bar{u}_y\partial_y\big(\frac{\partial_{t,x}^{\alpha_2}\partial_y^{\sigma_2}
\partial_y\delta u}{\bar{u}_y}\big) \\[9pt]\quad
\cdot (1+y)^{\ell+\sigma} \W_{\alpha,\sigma} \mathrm{d}x\mathrm{d}y \\[7pt]\quad

+ \iint\limits_{\mathbb{R}_{+}^2}\sum\limits_{|\alpha_1|+\sigma_1>0} (1+y)^{\ell+\sigma_1-1}
\partial_{t,x}^{\alpha_1}\partial_y^{\sigma_1}\partial_x \bar{u} \cdot
(1+y)^{\ell+\sigma_2}\partial_{t,x}^{\alpha_2}\partial_y^{\sigma_2}\delta\omega \\[9pt]\quad
\cdot (1+y)^{\ell+\sigma} \W_{\alpha,\sigma} \mathrm{d}x\mathrm{d}y \\[7pt]

\lem Q_7 \big(\sum\limits_{|\alpha^{\prime}|\leq |\alpha|,\sigma^{\prime}\leq \sigma}\|\W_{\alpha^{\prime},\sigma^{\prime}}\|_{L_{\ell+\sigma^{\prime}}^2}^2
+ \sum\limits_{|\alpha^{\prime}|\leq |\alpha|-1}\|\W_{\alpha^{\prime},\sigma+1}\|_{L_{\ell+\sigma+1}^2}^2 \big).
\end{array}
\end{equation}

When $\sigma\leq p, |\alpha|=0$, we have
\begin{equation*}
\begin{array}{ll}
- \iint\limits_{\mathbb{R}_{+}^2} \bar{u}_y\partial_y\big(\frac{[\partial_{t,x}^{\alpha}\partial_y^{\sigma},\, \bar{v}\partial_y]\delta u}{\bar{u}_y}\big)
\cdot (1+y)^{2\ell+2\sigma} \W_{\alpha,\sigma} \mathrm{d}x\mathrm{d}y \\[8pt]

= \iint\limits_{\mathbb{R}_{+}^2}  \sum\limits_{\sigma_1>0}\partial_y^{\sigma_1}\bar{v}\partial_y^{\sigma_2+1}\delta u
\cdot (1+y)^{2\ell+2\sigma} \partial_y\W_{\alpha,\sigma} \mathrm{d}x\mathrm{d}y \hspace{3cm}
\end{array}
\end{equation*}

\begin{equation}\label{Sect4_2_Estimates_5_4}
\begin{array}{ll}
\quad
+ \iint\limits_{\mathbb{R}_{+}^2}  \sum\limits_{\sigma_1>0} (1+y)^{\sigma_1-1}\partial_y^{\sigma_1}\bar{v}
(1+y)^{\ell+\sigma_2}\partial_y^{\sigma_2+1}\delta u \\[8pt]\quad
\cdot\big((1+y)\frac{\bar{u}_{yy}}{\bar{u}_y}(1+y)^{\ell+\sigma}\W_{\alpha,\sigma}
+ (2\ell+2\sigma) (1+y)^{\ell+\sigma} \W_{\alpha,\sigma}
\big) \mathrm{d}x\mathrm{d}y \\[8pt]

\lem \iint\limits_{\mathbb{R}_{+}^2} -\sum\limits_{\sigma_1>0}
(1+y)^{\ell+\sigma_1-1}\partial_y^{\sigma_1}\partial_x\bar{u}\cdot (1+y)^{\ell+\sigma_2}\partial_y^{\sigma_2+1}\delta u
\cdot (1+y)^{\ell+\sigma} \W_{\alpha,\sigma} \\[8pt]\quad

+ \sum\limits_{\sigma_1>0}(1+y)^{\sigma_1-1}\partial_y^{\sigma_1}\bar{v}\cdot (1+y)^{\ell+\sigma_2+1}\partial_y^{\sigma_2+2}\delta u
\cdot (1+y)^{\ell+\sigma} \W_{\alpha,\sigma} \\[8pt]\quad

+ (2\ell+2\sigma)\sum\limits_{\sigma_1>0}(1+y)^{\sigma_1-1}\partial_y^{\sigma_1}\bar{v}\cdot(1+y)^{\ell+\sigma_2}\partial_y^{\sigma_2+1}\delta u
\cdot (1+y)^{\ell+\sigma} \W_{\alpha,\sigma} \mathrm{d}x\mathrm{d}y \\[8pt]\quad

+ \iint\limits_{\mathbb{R}_{+}^2}  \sum\limits_{\sigma_1>0} (1+y)^{\sigma_1-1}\partial_y^{\sigma_1}\bar{v}
(1+y)^{\ell+\sigma_2}\partial_y^{\sigma_2+1}\delta u \\[8pt]\quad
\cdot\big((1+y)\frac{\bar{u}_{yy}}{\bar{u}_y}(1+y)^{\ell+\sigma}\W_{\alpha,\sigma}
+ (2\ell+2\sigma) (1+y)^{\ell+\sigma} \W_{\alpha,\sigma}
\big) \mathrm{d}x\mathrm{d}y \\[8pt]

\lem Q_7 \sum\limits_{|\alpha^{\prime}|\leq |\alpha|,\sigma^{\prime}\leq \sigma}\|\W_{\alpha^{\prime},\sigma^{\prime}}\|_{L_{\ell+\sigma^{\prime}}^2}^2 .
\end{array}
\end{equation}

\vspace{-0.2cm}
We estimate the fourth term of $I\!I_6$:
\begin{equation}\label{Sect4_2_Estimates_5_5}
\begin{array}{ll}
- \iint\limits_{\mathbb{R}_{+}^2} \bar{u}_y\partial_y\big(\frac{[\partial_{t,x}^{\alpha}\partial_y^{\sigma},\, \bar{u}_y]\delta v}{\bar{u}_y}\big)
\cdot (1+y)^{2\ell+2\sigma} \W_{\alpha,\sigma} \mathrm{d}x\mathrm{d}y \\[9pt]

= - \iint\limits_{\mathbb{R}_{+}^2}\sum\limits_{\alpha_1>0} (1+y)^{\ell+\sigma_1+1}\bar{u}_y\partial_y\big(\frac{\partial_{t,x}^{\alpha_1}\partial_y^{\sigma_1}\partial_y\bar{u}}{\bar{u}_y}\big)
(1+y)^{\sigma_2-1}\partial_{t,x}^{\alpha_2}\partial_y^{\sigma_2}\delta v \\[9pt]\quad
\cdot (1+y)^{\ell+\sigma} \W_{\alpha,\sigma} \mathrm{d}x\mathrm{d}y \\[9pt]\quad

+ \iint\limits_{\mathbb{R}_{+}^2}\sum\limits_{\alpha_1>0} (1+y)^{\ell+\sigma_1}\partial_{t,x}^{\alpha_1}\partial_y^{\sigma_1}\bar{\omega}\cdot (1+y)^{\ell+\sigma_2-1}\partial_x\partial_{t,x}^{\alpha_2}\partial_y^{\sigma_2}\delta u \\[9pt]\quad
\cdot (1+y)^{\ell+\sigma} \W_{\alpha,\sigma} \mathrm{d}x\mathrm{d}y \\[9pt]

\lem Q_7 \sum\limits_{|\alpha^{\prime}|\leq |\alpha|+1,\sigma^{\prime}\leq \sigma-1}\|\W_{\alpha^{\prime},\sigma^{\prime}}\|_{L_{\ell+\sigma^{\prime}}^2}^2.
\end{array}
\end{equation}

Thus, when $0<\sigma\leq p-1,\ 0<|\alpha|\leq p-\sigma$,
\begin{equation}\label{Sect4_2_Estimates_5_6}
\begin{array}{ll}
{I\!I}_6
\lem Q_7 \cdot \big(\sum\limits_{|\alpha^{\prime}|\leq |\alpha|,\sigma^{\prime}\leq \sigma}\|\W_{\alpha^{\prime},\sigma^{\prime}}\|_{L_{\ell+\sigma^{\prime}}^2}^2
+ \sum\limits_{|\alpha^{\prime}|\leq |\alpha|+1,\sigma^{\prime}\leq \sigma-1}\|\W_{\alpha^{\prime},\sigma^{\prime}}\|_{L_{\ell+\sigma^{\prime}}^2}^2
\\[15pt]\hspace{0.95cm}
+ \sum\limits_{|\alpha^{\prime}|\leq |\alpha|-1}\|\W_{\alpha^{\prime},\sigma+1}\|_{L_{\ell+\sigma+1}^2}^2\big).
\end{array}
\end{equation}

When $0<\sigma\leq p,\ |\alpha|=0$,
\begin{equation}\label{Sect4_2_Estimates_5_7}
\begin{array}{ll}
{I\!I}_6
\lem Q_7 \cdot \big(\sum\limits_{|\alpha^{\prime}|\leq |\alpha|,\sigma^{\prime}\leq \sigma}\|\W_{\alpha^{\prime},\sigma^{\prime}}\|_{L_{\ell+\sigma^{\prime}}^2}^2
+ \sum\limits_{|\alpha^{\prime}|\leq |\alpha|+1,\sigma^{\prime}\leq \sigma-1}\|\W_{\alpha^{\prime},\sigma^{\prime}}\|_{L_{\ell+\sigma^{\prime}}^2}^2\big).
\end{array}
\end{equation}

\vspace{-0.2cm}
Next we estimate $I\!I_7$:
\begin{equation}\label{Sect4_2_Estimates_6}
\begin{array}{ll}
I\!I_7 \leq
Q_7 \cdot\big(\sum\limits_{|\alpha^{\prime}| \leq |\alpha|+1, \sigma^{\prime}\leq \sigma-1}\|\W_{\alpha^{\prime},\sigma^{\prime}}|_{y=0}\|_{L^2}^2
\\[12pt]\hspace{1.75cm}
+ \sum\limits_{|\alpha^{\prime}|\leq |\alpha|+1}\int\limits_{\mathbb{R}}
 \frac{1}{\beta - \frac{\bar{u}_{yy}}{\bar{u}_y}|_{y=0}} (\W_{\alpha^{\prime}}|_{y=0})^2 \,\mathrm{d}x\big),
\end{array}
\end{equation}
where $Q_7$ is defined as $(\ref{Sect1_Estimates_6})$. Note that the second term of $Q_7$ is bounded.
Since $\sigma+1\leq k$ and $|\alpha^{\prime}|+\sigma^{\prime} \leq p$, we have
\begin{equation}\label{Sect4_2_Estimates_8}
\begin{array}{ll}
\sum\limits_{|\alpha^{\prime}| \leq |\alpha|, \sigma^{\prime}\leq \sigma}\|W_{\alpha^{\prime},\sigma^{\prime}}|_{y=0}\|_{L^2(\mathbb{R})}

\lem \sum\limits_{|\alpha^{\prime}| \leq |\alpha|, \sigma^{\prime}\leq \sigma+1}\|W_{\alpha^{\prime},\sigma^{\prime}}\|_{L_{\ell+\sigma^{\prime}(\mathbb{R}_{+}^2)}^2}.
\end{array}
\end{equation}

\begin{equation}\label{Sect4_2_Estimates_9}
\begin{array}{ll}
I\!I_7
\leq q\sum\limits_{|\alpha^{\prime}| \leq |\alpha|+1, \sigma^{\prime}\leq \sigma-1}\|\partial_y \W_{\alpha^{\prime},\sigma^{\prime}}\|_{L_{\ell+\sigma}^2}^2
\\[10pt]\hspace{1cm}
+ Q_7\sum\limits_{|\alpha^{\prime}|\leq |\alpha|+1}\int\limits_{\mathbb{R}}
 \frac{1}{\beta - \frac{\bar{u}_{yy}}{\bar{u}_y}|_{y=0}} (\W_{\alpha^{\prime}}|_{y=0})^2 \,\mathrm{d}x,
\end{array}
\end{equation}
where $q$ is small if $\ell_0$ is suitably large compared with $Q_7$.

When $0<\sigma\leq p-1,\ 0<|\alpha|\leq p-\sigma$, plug $(\ref{Sect4_2_Estimates_4}),  (\ref{Sect4_2_Estimates_5_6}), (\ref{Sect4_2_Estimates_9})$ into
$(\ref{Sect4_2_Estimates_3})$,
\begin{equation}\label{Sect4_2_Estimates_10}
\begin{array}{ll}
\frac{\mathrm{d}}{\mathrm{d}t} \|\W_{\alpha,\sigma}\|_{L_{\ell+\sigma}^2}^2
+\frac{\mathrm{d}}{\mathrm{d}t} \int\limits_{\mathbb{R}} \frac{1}{\beta - \frac{\bar{u}_{yy}}{\bar{u}_y}|_{y=0}}
 (\W_{\alpha}|_{y=0})^2 \,\mathrm{d}x
+ \|\partial_y \W_{\alpha,\sigma}\|_{L_{\ell+\sigma}^2}^2 \\[9pt]

\leq  Q_7 \big(\sum\limits_{|\alpha^{\prime}|\leq |\alpha|,\sigma^{\prime}\leq \sigma}\|\W_{\alpha^{\prime},\sigma^{\prime}}\|_{L_{\ell+\sigma^{\prime}}^2}^2
+ \sum\limits_{|\alpha^{\prime}|\leq |\alpha|+1,\sigma^{\prime}\leq \sigma-1}\|\W_{\alpha^{\prime},\sigma^{\prime}}\|_{L_{\ell+\sigma^{\prime}}^2}^2
\\[9pt]\quad
+ \sum\limits_{|\alpha^{\prime}|\leq |\alpha|-1}\|\W_{\alpha^{\prime},\sigma+1}\|_{L_{\ell+\sigma+1}^2}^2
+ \sum\limits_{|\alpha^{\prime}|\leq |\alpha|+1}\int\limits_{\mathbb{R}} \frac{1}{\beta - \frac{\bar{u}_{yy}}{\bar{u}_y}|_{y=0}} (\W_{\alpha}|_{y=0})^2 \,\mathrm{d}x \big) \\[12pt]\quad

+ q\sum\limits_{|\alpha^{\prime}| \leq |\alpha|+1, \sigma^{\prime}\leq \sigma-1}\|\partial_y \W_{\alpha^{\prime},\sigma^{\prime}}\|_{L_{\ell+\sigma}^2}^2.
\end{array}
\end{equation}

When $\sigma\leq p,\ |\alpha|=0$, plug $(\ref{Sect4_2_Estimates_4}),  (\ref{Sect4_2_Estimates_5_7}), (\ref{Sect4_2_Estimates_9})$ into
$(\ref{Sect4_2_Estimates_3})$, we get
\begin{equation}\label{Sect4_2_Estimates_11}
\begin{array}{ll}
\frac{\mathrm{d}}{\mathrm{d}t} \|\W_{\alpha,\sigma}\|_{L_{\ell+\sigma}^2}^2
+\frac{\mathrm{d}}{\mathrm{d}t} \int\limits_{\mathbb{R}} \frac{1}{\beta - \frac{\bar{u}_{yy}}{\bar{u}_y}|_{y=0}}
 (\W_{\alpha}|_{y=0})^2 \,\mathrm{d}x
+ \|\partial_y \W_{\alpha,\sigma}\|_{L_{\ell+\sigma}^2}^2 \\[9pt]

\leq   Q_7 \big(\sum\limits_{|\alpha^{\prime}|\leq |\alpha|,\sigma^{\prime}\leq \sigma}\|\W_{\alpha^{\prime},\sigma^{\prime}}\|_{L_{\ell+\sigma^{\prime}}^2}^2
+ \sum\limits_{|\alpha^{\prime}|\leq |\alpha|+1,\sigma^{\prime}\leq \sigma-1}\|\W_{\alpha^{\prime},\sigma^{\prime}}\|_{L_{\ell+\sigma^{\prime}}^2}^2
\\[12pt]\quad
+ \sum\limits_{|\alpha^{\prime}|\leq |\alpha|+1}\int\limits_{\mathbb{R}} \frac{1}{\beta - \frac{\bar{u}_{yy}}{\bar{u}_y}|_{y=0}} (\W_{\alpha}|_{y=0})^2 \,\mathrm{d}x \big)

+ q\sum\limits_{|\alpha^{\prime}| \leq |\alpha|+1, \sigma^{\prime}\leq \sigma-1}\|\partial_y \W_{\alpha^{\prime},\sigma^{\prime}}\|_{L_{\ell+\sigma}^2}^2.
\end{array}
\end{equation}

Thus, Lemma $\ref{Sect4_2_Stability_Estimate_Lemma}$ is proved.
\end{proof}

%%% find 5
\section{The Stability and Uniqueness of the Prandtl Equations}
In this section, we prove the stability and uniqueness of classical solutions to the Prandtl systems.

\begin{theorem}\label{Sect5_Unique_Stable_Thm}
Assume $(u^1,v^1)$ and $(u^2,v^2)$ be two classical solutions to the Robin boundary problem $(\ref{Sect1_PrandtlEq})$ with the initial vorticity $\omega_0^1$ and $\omega_0^2$ respectively, where $\omega_0^1$ and $\omega_0^2$ satisfy the conditions in Theorem $\ref{Sect1_Main_Thm}$, then
\begin{equation}\label{Sect5_Unique_Stable}
\begin{array}{ll}
\sum\limits_{|\alpha|+\sigma\leq p}\|\W_{\alpha,\sigma}\|_{L_{\ell+\sigma}^2}^2
+ \sum\limits_{|\alpha|\leq p}\int\limits_{\mathbb{R}} \frac{1}{\beta - \frac{\bar{u}_{yy}}{\bar{u}_y}|_{y=0}}
 (\W_{\alpha}|_{y=0})^2 \,\mathrm{d}x \\[10pt]

\leq C(\ve_1,T)\big[\|\omega^1_0-\omega^2_0\|_{\mathcal{H}_{\ell}^p(\mathbb{R}_{+}^2)}
+ \frac{1}{\beta -\delta_{\beta}} \big\|\omega_0^1|_{y=0} -\omega_0^2|_{y=0} \big\|_{H^p(\mathbb{R})}^2 \big],
\end{array}
\end{equation}
for all $p\leq k-1$.
\end{theorem}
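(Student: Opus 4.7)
The plan is to add the differential inequalities in Lemma \ref{Sect4_1_Stability_Estimate_Lemma} and Lemma \ref{Sect4_2_Stability_Estimate_Lemma} together, summed over all multi-indices with $|\alpha|+\sigma\leq p$, and then close the resulting system with a Gr\"onwall-type argument. Specifically, introduce the energy functional
\begin{equation*}
\mathcal{E}(t):=\sum_{|\alpha|+\sigma\leq p}\|\W_{\alpha,\sigma}\|_{L_{\ell+\sigma}^2}^2
+\sum_{|\alpha|\leq p}\int_{\mathbb{R}}\frac{1}{\beta-\frac{\bar{u}_{yy}}{\bar{u}_y}|_{y=0}}(\W_{\alpha}|_{y=0})^2\,\mathrm{d}x,
\end{equation*}
and the dissipation $\mathcal{D}(t):=\sum_{|\alpha|+\sigma\leq p}\|\partial_y\W_{\alpha,\sigma}\|_{L_{\ell+\sigma}^2}^2$. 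Summing $(\ref{Sect1_Estimates_4})$ and $(\ref{Sect1_Estimates_5})$ gives, schematically, $\frac{\mathrm{d}}{\mathrm{d}t}\mathcal{E}(t)+\mathcal{D}(t)\lesssim \Phi(t)\mathcal{E}(t)+q\mathcal{D}(t)$, where $\Phi(t)$ is controlled by $Q_7$ plus similar factors, all of which are finite thanks to the a priori bounds already established in Lemma \ref{Sect2_FullRegularity_Lemma} for $u^1$ and $u^2$ (both appearing in $\bar u,\bar v,\bar\omega$ through $M\,\ve_1$-type bounds on $[0,T]$). Since $\ell_0$ is taken suitably large, the small constant $q$ in front of $\mathcal{D}(t)$ on the right can be absorbed into the left side, leaving $\frac{\mathrm{d}}{\mathrm{d}t}\mathcal{E}(t)+\tfrac{1}{2}\mathcal{D}(t)\leq C\Phi(t)\mathcal{E}(t)$ for a time-integrable $\Phi$.

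The key bookkeeping is the ordering of indices forced by the coupling on the right-hand sides of $(\ref{Sect1_Estimates_4})$ and $(\ref{Sect1_Estimates_5})$. Following the same ordering used in the proof of Lemma \ref{Sect2_FullRegularity_Lemma}, I would first estimate $\W_{\alpha,\sigma}$ with the largest $\sigma$ for fixed $|\alpha|+\sigma$ (so that the terms $\sum_{|\alpha'|\leq|\alpha|-1}\|\W_{\alpha',\sigma+1}\|_{L_{\ell+\sigma+1}^2}^2$ have already been controlled or are absent once $|\alpha|+\sigma=p$), and likewise treat $\W_{\alpha',\sigma'}$ with $|\alpha'|\leq|\alpha|+1,\sigma'\leq\sigma-1$ earlier in the hierarchy. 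With this ordering, every right-hand side appears either on the left-hand side of $\mathcal{E}$ (via the $L^2$-norms of $\W_{\alpha,\sigma}$ and its boundary traces) or on the dissipation side (where $q$-smallness lets them be absorbed), so the Gr\"onwall step produces $\mathcal{E}(t)\leq C(\ve_1,T)\,\mathcal{E}(0)$.

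To convert $\mathcal{E}(0)$ into the right-hand side of $(\ref{Sect5_Unique_Stable})$, use $\W_{\alpha,\sigma}|_{t=0}=(\partial_y u_0^1+\partial_y u_0^2)\partial_y\bigl(\tfrac{\partial_{t,x}^{\alpha}\partial_y^{\sigma}\delta u_0}{\partial_y u_0^1+\partial_y u_0^2}\bigr)$, where time derivatives at $t=0$ are determined by the compatibility conditions (expressible in terms of space derivatives through the Prandtl equations). By Hardy's inequality as in Lemma \ref{Sect2_Preliminary_Lemma1}, $\|\W_{\alpha,\sigma}|_{t=0}\|_{L^2_{\ell+\sigma}}^2\lesssim \|\partial_{t,x}^{\alpha}\partial_y^{\sigma}(\omega_0^1-\omega_0^2)\|_{L^2_{\ell+\sigma}}^2$, giving the interior initial contribution $\lesssim\|\omega_0^1-\omega_0^2\|_{\mathcal{H}_\ell^p}^2$. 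The boundary initial piece $\int_{\mathbb{R}}\frac{1}{\beta-\frac{\bar u_{yy}}{\bar u_y}|_{t=0,y=0}}(\W_\alpha|_{t=0,y=0})^2\,\mathrm{d}x$ is bounded by $\frac{1}{\beta-\delta_\beta}\|\partial_{t,x}^{\alpha}(\omega_0^1-\omega_0^2)|_{y=0}\|_{L^2}^2$ after using $\W_\alpha|_{y=0}=(\beta-\tfrac{\bar u_{yy}}{\bar u_y})\partial_{t,x}^{\alpha}\delta u|_{y=0}+\ldots$ together with the Robin condition to express traces in terms of $\omega_0^j|_{y=0}$. Combining these two pieces yields $(\ref{Sect5_Unique_Stable})$, and taking $\omega_0^1=\omega_0^2$ gives uniqueness as an immediate corollary.

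The main technical obstacle will be the interplay on the right-hand sides of $(\ref{Sect1_Estimates_5})$: the term $\sum_{|\alpha'|\leq|\alpha|-1}\|\W_{\alpha',\sigma+1}\|_{L^2_{\ell+\sigma+1}}^2$ raises the normal derivative count by one while dropping a tangential derivative, so one must verify that the induction on the lexicographic ordering of $(|\alpha|,\sigma)$ actually terminates below the cutoff $p\leq k-1$ (the choice $p\leq k-1$ is precisely what gives one free normal derivative to spend on this term, keeping $\sigma+1\leq k$). A secondary difficulty is ensuring that the factor $\Phi(t)$, which involves $\|\bar u\|$-type norms appearing through $Q_7$, stays integrable uniformly in $\beta\in[\delta_\beta,+\infty]$; this however follows from Lemma \ref{Sect2_FullRegularity_Lemma} applied to $u^1,u^2$ separately, since those a priori bounds are already uniform in $\beta$.
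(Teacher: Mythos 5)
Your proposal matches the paper's own proof essentially step for step: sum the differential inequalities of Lemmas \ref{Sect4_1_Stability_Estimate_Lemma} and \ref{Sect4_2_Stability_Estimate_Lemma} over $|\alpha|+\sigma\leq p$, absorb the $q$-weighted dissipation terms, integrate via Gr\"onwall using the $\int_0^t Q_7\,\mathrm{d}t$ bound furnished by Lemma \ref{Sect2_FullRegularity_Lemma} applied to $u^1,u^2$, and then convert the initial value of the energy into $\|\omega_0^1-\omega_0^2\|_{\mathcal{H}_\ell^p}$ plus the boundary-trace term. Your added discussion of the index ordering and the reason for the cutoff $p\leq k-1$ is a correct and useful explication of what the paper leaves implicit.
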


\begin{proof}
Based on the a priori estimates $(\ref{Sect1_Estimates_4}), (\ref{Sect1_Estimates_5})$, we have
\begin{equation}\label{Sect5_Stability_1}
\begin{array}{ll}
\frac{\mathrm{d}}{\mathrm{d}t}\sum\limits_{|\alpha|+\sigma\leq p} \|\W_{\alpha,\sigma}\|_{L_{\ell+\sigma}^2}^2

+ \frac{\mathrm{d}}{\mathrm{d}t}\sum\limits_{|\alpha|\leq p} \int\limits_{\mathbb{R}} \frac{1}{\beta - \frac{\bar{u}_{yy}}{\bar{u}_y}|_{y=0}}
 (\W_{\alpha}|_{y=0})^2 \,\mathrm{d}x \\[12pt]

\leq C_{19}\sum\limits_{|\alpha|\leq k,\sigma\leq k-|\alpha|}Q_7(\alpha,\sigma) \cdot \Big(

\sum\limits_{|\alpha|+\sigma\leq p} \|\W_{\alpha,\sigma}\|_{L_{\ell+\sigma}^2}^2 \\[12pt]\quad

+ \sum\limits_{|\alpha|\leq p} \int\limits_{\mathbb{R}} \frac{1}{\beta - \frac{\bar{u}_{yy}}{\bar{u}_y}|_{y=0}}
 (\W_{\alpha}|_{y=0})^2 \,\mathrm{d}x \Big),
\end{array}
\end{equation}
where $s\geq 1$.

Integrate $(\ref{Sect5_Stability_1})$ from $t=0$ to $T$, we get
\begin{equation}\label{Sect5_Stability_2}
\begin{array}{ll}
\sum\limits_{|\alpha|+\sigma\leq p}\|\W_{\alpha,\sigma}\|_{L_{\ell+\sigma}^2}^2
+ \sum\limits_{|\alpha|\leq p}\int\limits_{\mathbb{R}} \frac{1}{\beta - \frac{\bar{u}_{yy}}{\bar{u}_y}|_{y=0}}
 (\W_{\alpha}|_{y=0})^2 \,\mathrm{d}x \\[10pt]

\leq \Big(\sum\limits_{|\alpha|+\sigma\leq p}\|\W_{\alpha,\sigma}|_{t=0}\|_{L_{\ell+\sigma}^2}^2
+ \sum\limits_{|\alpha|\leq p}\int\limits_{\mathbb{R}} \frac{1}{\beta - \frac{\bar{u}_{yy}}{\bar{u}_y}|_{t=0,y=0}}
 (\W_{\alpha}|_{t=0,y=0})^2 \,\mathrm{d}x \Big) \\[10pt]\quad

\cdot \exp\{ C_{19}\int\limits_0^t \sum\limits_{|\alpha|\leq k,\sigma\leq k-|\alpha|}Q_7(\alpha,\sigma) \mathrm{d}t \} \\[10pt]

\leq \Big(\sum\limits_{|\alpha|+\sigma\leq p}\|\W_{\alpha,\sigma}|_{t=0}\|_{L_{\ell+\sigma}^2}^2
+ \sum\limits_{|\alpha|\leq p}\int\limits_{\mathbb{R}} \frac{1}{\beta - \frac{\bar{u}_{yy}}{\bar{u}_y}|_{t=0,y=0}}
 (\W_{\alpha}|_{t=0,y=0})^2 \,\mathrm{d}x \Big) \\[10pt]\quad

\cdot \exp\Big\{C_{19} T\Big[\sum\limits_{|\alpha|+\sigma\leq k}\|W_{\alpha,\sigma}|_{t=0}\|_{L_{\ell+\sigma}^2(\mathbb{R}_{+}^2)}^2
+ \sum\limits_{|\alpha|\leq k}\int\limits_{\mathbb{R}} \frac{1}{\beta - \frac{u_{yy}}{u_y}}
 (W_{\alpha})^2|_{t=0,y=0} \,\mathrm{d}x \\[9pt]\quad

 + \|U\|_{H^{k+1}([0,T]\times\mathbb{R})}^2 \Big]^{\frac{s}{2}}
 \cdot M^\frac{s}{2} \Big\} \\[11pt]

\leq C(\ve_1,T) \Big(\sum\limits_{|\alpha|+\sigma\leq p}\|\W_{\alpha,\sigma}|_{t=0}\|_{L_{\ell+\sigma}^2}^2
+ \sum\limits_{|\alpha|\leq p}\int\limits_{\mathbb{R}} \frac{1}{\beta - \frac{\bar{u}_{yy}}{\bar{u}_y}|_{t=0,y=0}}
 (\W_{\alpha}|_{t=0,y=0})^2 \,\mathrm{d}x \Big) \\[12pt]

\leq C(\ve_1,T)\big[\|\omega^1_0-\omega^2_0\|_{\mathcal{H}_{\ell}^p(\mathbb{R}_{+}^2)}
+ \frac{1}{\beta -\delta_{\beta}} \|\omega_0^1|_{y=0} -\omega_0^2|_{y=0}\|_{H^p(\mathbb{R})}^2 \big].
\end{array}
\end{equation}
Thus, the stability of $(\ref{Sect1_PrandtlEq})$ has been proved.

Next, we prove the uniqueness of the Robin boundary problem $(\ref{Sect1_PrandtlEq})$.
Assume $u_0^1 = u_0^2$ in $\mathbb{R}_{+}^2$, then $u_0^1-u_0^2\equiv 0$ implies that $\omega_0^1- \omega_0^2\equiv 0$.
On the boundary $\{y=0\}$, $\omega_0^1|_{y=0}- \omega_0^2|_{y=0} = \beta(u_0^1|_{y=0}-u_0^2|_{y=0}) = 0$.
By the inequality $(\ref{Sect5_Unique_Stable})$, we have the uniqueness of the Robin boundary problem $(\ref{Sect1_PrandtlEq})$.

Thus, Theorem $\ref{Sect5_Unique_Stable_Thm}$ is proved.
\end{proof}

When $\beta=+\infty$, we have the following theorem:
\begin{theorem}\label{Sect5_Unique_Stable_Thm_Dirichlet}
Assume $(u^1,v^1)$ and $(u^2,v^2)$ be two classical solutions to the Dirichlet boundary problem $(\ref{Sect1_PrandtlEq_Dirichlet})$ with the initial vorticity $\omega_0^1$ and $\omega_0^2$ respectively, where $\omega_0^1$ and $\omega_0^2$ satisfy the conditions in Theorem $\ref{Sect1_Main_Thm_Dirichlet}$, then
\begin{equation}\label{Sect5_Unique_Stable_Dirichlet}
\begin{array}{ll}
\sum\limits_{|\alpha|+\sigma\leq p} \|\W_{\alpha,\sigma}\|_{L_{\ell+\sigma}^2([0,T]\times\mathbb{R}_{+}^2)}^2

\leq  C(\ve_2,T)\big[\|\omega^1_0-\omega^2_0\|_{\mathcal{H}_{\ell}^p(\mathbb{R}_{+}^2)},
\end{array}
\end{equation}
for all $p\leq k-1$.
\end{theorem}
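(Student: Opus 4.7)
The plan is to mirror the argument used for Theorem \ref{Sect5_Unique_Stable_Thm} and obtain the Dirichlet stability estimate by passing to the limit $\beta\to+\infty$ in the a priori estimates of Section 4. Since the a priori estimates for $\W_{\alpha,\sigma}$ derived in Lemmas \ref{Sect4_1_Stability_Estimate_Lemma} and \ref{Sect4_2_Stability_Estimate_Lemma} are uniform in $\beta$, and since $\frac{1}{\beta-\bar{u}_{yy}/\bar{u}_y|_{y=0}}\to 0$ as $\beta\to+\infty$, the boundary term $\int_{\mathbb{R}}\frac{1}{\beta-\bar{u}_{yy}/\bar{u}_y|_{y=0}}(\W_\alpha|_{y=0})^2\,\mathrm{d}x$ vanishes in the limit, along with its initial counterpart. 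The limit of the boundary equation $(\ref{Sect1_Stability_VorticityEq_1})_2$ becomes $(\ref{Sect1_BC_Dirichlet_2})$, which is consistent with the Dirichlet condition $\delta u|_{y=0}=0$.

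First I would sum the estimates $(\ref{Sect1_Estimates_4})$ and $(\ref{Sect1_Estimates_5})$ over $|\alpha|+\sigma\leq p$, take $\beta\to+\infty$, absorb the small term $q\sum\|\partial_y\W_{\alpha^\prime,\sigma^\prime}\|^2_{L_{\ell+\sigma^\prime}^2}$ into the dissipation on the left, and obtain a closed differential inequality of the form
\begin{equation*}
\frac{\mathrm{d}}{\mathrm{d}t}\sum_{|\alpha|+\sigma\leq p}\|\W_{\alpha,\sigma}\|_{L_{\ell+\sigma}^2}^2
+\sum_{|\alpha|+\sigma\leq p}\|\partial_y\W_{\alpha,\sigma}\|_{L_{\ell+\sigma}^2}^2
\leq C\,Q_7^\ast(t)\sum_{|\alpha|+\sigma\leq p}\|\W_{\alpha,\sigma}\|_{L_{\ell+\sigma}^2}^2,
\end{equation*}
where $Q_7^\ast(t)$ is the limit of $Q_7$ without the boundary integral term. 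Lemma \ref{Sect2_FullRegularity_Lemma} applied to each of $(u^1,v^1)$ and $(u^2,v^2)$ guarantees that $\int_0^T Q_7^\ast(t)\,\mathrm{d}t\leq C(\ve_2,T)<+\infty$, so Grönwall's inequality yields
\begin{equation*}
\sum_{|\alpha|+\sigma\leq p}\|\W_{\alpha,\sigma}(t)\|_{L_{\ell+\sigma}^2}^2
\leq C(\ve_2,T)\sum_{|\alpha|+\sigma\leq p}\|\W_{\alpha,\sigma}|_{t=0}\|_{L_{\ell+\sigma}^2}^2.
\end{equation*}
Integrating in time over $[0,T]$ and using Lemma \ref{Sect2_Preliminary_Lemma1} (applied to $\bar u$ rather than $u$) to bound $\|\W_{\alpha,\sigma}|_{t=0}\|_{L_{\ell+\sigma}^2}$ by $\|\omega_0^1-\omega_0^2\|_{\mathcal{H}_\ell^p(\mathbb{R}_+^2)}$, we obtain $(\ref{Sect5_Unique_Stable_Dirichlet})$. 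Uniqueness follows immediately: if $\omega_0^1=\omega_0^2$ then $\W_{\alpha,\sigma}|_{t=0}\equiv 0$, so $\W_{\alpha,\sigma}\equiv 0$ in $[0,T]\times\mathbb{R}_+^2$, and integrating the identity $\bar u_y\partial_y(\delta u/\bar u_y)=\W_{0,0}$ together with $\delta u|_{y=0}=0$ yields $\delta u\equiv 0$.

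The main obstacles are twofold. First, one must justify that all the $\beta$-uniform estimates from Section 4, in particular the bounds arising from ${I\!I}_7$ and the $Q_6$ term in $(\ref{Sect1_Quantity_Definition_2})$, remain valid under the passage to the limit; this requires verifying that the cancellations which produced the factor $\frac{1}{\beta-\bar u_{yy}/\bar u_y}$ on the right-hand side do not reintroduce singular boundary contributions when $\beta=+\infty$. Concretely, the terms on the boundary that were previously absorbed into the weighted trace energy $\int\frac{1}{\beta-\bar{u}_{yy}/\bar{u}_y|_{y=0}}(\W_\alpha|_{y=0})^2\,\mathrm{d}x$ must now be absorbed into the interior dissipation $\|\partial_y\W_{\alpha,\sigma}\|_{L_{\ell+\sigma}^2}^2$ by trace inequalities combined with the suitably large choice of $\ell_0$. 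Second, one must verify that the initial quantity $\|\W_{\alpha,\sigma}|_{t=0}\|_{L_{\ell+\sigma}^2}$, which involves $\bar u_y|_{t=0}=(\partial_y u_0^1+\partial_y u_0^2)/2$, is actually controlled by $\|\omega_0^1-\omega_0^2\|_{\mathcal{H}_\ell^p(\mathbb{R}_+^2)}$ via Hardy's inequality and Oleinik's monotonicity of $\bar u_y|_{t=0}$, combined with the algebraic decay bounds supplied by Lemma \ref{Sect2_Preliminary_Lemma0}. Once these two points are settled, the remaining computations are a direct replication of the proof of Theorem \ref{Sect5_Unique_Stable_Thm} with the boundary trace terms dropped.
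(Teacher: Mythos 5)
Your proposal is correct and follows essentially the same path as the paper: pass to the limit $\beta\to+\infty$ in the $\beta$-uniform a priori estimates of Section 4 so that the weighted trace terms $\int_{\mathbb{R}}\frac{1}{\beta-\bar u_{yy}/\bar u_y}(\W_\alpha|_{y=0})^2\,\mathrm{d}x$ drop out, obtain the scalar differential inequality driven by $Q_7$, apply Gr\"onwall, and then read off uniqueness by taking $\omega_0^1=\omega_0^2$. The additional remarks you make about absorbing the boundary contributions into the interior dissipation via the large-$\ell_0$ choice, and about controlling $\|\W_{\alpha,\sigma}|_{t=0}\|$ through the Hardy-type equivalence with $\|\delta\omega_0\|_{\mathcal H_\ell^p}$, are exactly the points the paper leaves implicit; your treatment just makes them explicit.
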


\begin{proof}
Similar to $(\ref{Sect5_Stability_1})$,
\begin{equation}\label{Sect5_Stability_1_Dirichlet}
\begin{array}{ll}
\sum\limits_{|\alpha|+\sigma\leq p} \frac{\mathrm{d}}{\mathrm{d}t} \|\W_{\alpha,\sigma}\|_{L_{\ell+\sigma}^2}^2
\leq C_{20}\sum\limits_{|\alpha|\leq k,\sigma\leq k-|\alpha|}Q_7(\alpha,\sigma) \cdot

\sum\limits_{|\alpha|+\sigma\leq p} \|\W_{\alpha,\sigma}\|_{L_{\ell+\sigma}^2}^2,
\end{array}
\end{equation}
where $s\geq 1$.

Integrate $(\ref{Sect5_Stability_1_Dirichlet})$ from $t=0$ to $T$, we get
\begin{equation}\label{Sect5_Stability_2_Dirichlet}
\begin{array}{ll}
\sum\limits_{|\alpha|+\sigma\leq p}\|\W_{\alpha,\sigma}\|_{L_{\ell+\sigma}^2}^2 \\[9pt]

\leq \sum\limits_{|\alpha|+\sigma\leq p}\|\W_{\alpha,\sigma}|_{t=0}\|_{L_{\ell+\sigma}^2}^2
\cdot \exp\{ \int\limits_0^t C_{20}\sum\limits_{|\alpha|\leq k,\sigma\leq k-|\alpha|}Q_7(\alpha,\sigma) \mathrm{d}t \} \\[12pt]

\leq \sum\limits_{|\alpha|+\sigma\leq p}\|\W_{\alpha,\sigma}|_{t=0}\|_{L_{\ell+\sigma}^2}^2
\cdot \exp\Big\{C_{20} T\Big[\sum\limits_{|\alpha|+\sigma\leq k}\|W_{\alpha,\sigma}|_{t=0}\|_{L_{\ell+\sigma}^2(\mathbb{R}_{+}^2)}^2

\\[10pt]\quad
 + \|U\|_{H^{k+1}([0,T]\times\mathbb{R})}^2 \Big]^{\frac{s}{2}}
 \cdot M^{\frac{s}{2}} \Big\}

\leq C(\ve_2,T) \sum\limits_{|\alpha|+\sigma\leq p}\|\W_{\alpha,\sigma}|_{t=0}\|_{L_{\ell+\sigma}^2}^2 \\[13pt]

\leq C(\ve_2,T)\|\omega^1_0-\omega^2_0\|_{\mathcal{H}_{\ell}^p(\mathbb{R}_{+}^2)}^2.
\end{array}
\end{equation}

Thus, the stability of $(\ref{Sect1_PrandtlEq_Dirichlet})$ has been proved.

Next, we prove the uniqueness of the Dirichlet boundary problem $(\ref{Sect1_PrandtlEq_Dirichlet})$.

Assume $u_0^1 = u_0^2$ in $\mathbb{R}_{+}^2$, $u_0^1-u_0^2\equiv 0$ implies that $\omega_0^1- \omega_0^2\equiv 0$.
By the inequality $(\ref{Sect5_Unique_Stable_Dirichlet})$, we have the uniqueness of the Robin boundary problem $(\ref{Sect1_PrandtlEq})$.

Thus, Theorem $\ref{Sect5_Unique_Stable_Thm_Dirichlet}$ is proved.
\end{proof}

\appendix
%%% find 6
\section{Derivation of the Equations and their Boundary Conditions}

In this appendix, we derive the equations and boundary conditions.
The first three lemmas are used to prove the existence of the Prandtl equations.

\begin{lemma}\label{Appendix_1_Lemma_1}
If $u$ satisfies the Prandtl equations $(\ref{Sect1_PrandtlEq})$, then
$W_{\alpha} =u_y\partial_y\big(\frac{\partial_{t,x}^{\alpha} \tilde{u}}{u_y}\big)$ satisfies the system $(\ref{Sect1_Existence_VorticityEq_1})$.
If $\beta=+\infty$, $(\ref{Sect1_Existence_VorticityEq_1})_2$ is equivalent to $(\ref{Sect1_BC_Dirichlet_1})$.
\end{lemma}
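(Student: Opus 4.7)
The plan is to derive the system $(\ref{Sect1_Existence_VorticityEq_1})$ in three stages: the evolution equation in the interior, the boundary condition on $\{y=0\}$, and the passage to the Dirichlet limit. The overarching idea is that $W_\alpha$ is tailored to exploit Oleinik's monotonicity: dividing $\partial_{t,x}^\alpha\tilde u$ by $u_y>0$ converts the problematic transport term $v u_y$ into terms that can be absorbed by the viscous term after applying $u_y\partial_y(\cdot)$.

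First I would apply $\partial_{t,x}^\alpha$ to the momentum equation $u_t+uu_x+vu_y+p_x=u_{yy}$ and subtract $\partial_{t,x}^\alpha$ of Bernoulli's law $(\ref{Sect1_Bernoulli_Law})$ to eliminate $p_x$ in favor of $U$. This produces an equation for $\partial_{t,x}^\alpha \tilde u$ in which the genuinely nonlinear structure is concentrated in the commutators $[\partial_{t,x}^\alpha,u\partial_x]\tilde u$, $[\partial_{t,x}^\alpha,u_y]v$, and $[\partial_{t,x}^\alpha,\tilde u\partial_x]U$, together with the leading transport operator $\partial_t+u\partial_x-\partial_{yy}$ acting on $\partial_{t,x}^\alpha\tilde u$. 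Next I would divide through by $u_y$, apply $\partial_y$, and multiply by $u_y$, i.e.\ apply the operator $u_y\partial_y u_y^{-1}$. Using the vorticity equation $\omega_t+u\omega_x+v\omega_y=\omega_{yy}$ (derived by differentiating the momentum equation once in $y$), the extra terms arising from differentiating $u_y$ organize themselves into the coefficient $Q_1=-u_{yt}/u_y-uu_{yx}/u_y-u_{yyy}/u_y+2(u_{yy}/u_y)^2$ and the three explicit nonlinear terms on the left-hand side of $(\ref{Sect1_Existence_VorticityEq_1})_1$ of the form $u_y\partial_y(\partial_{t,x}^\alpha\tilde u\cdot u_{yt}/u_y^2)$ etc. The crucial simplification is that the transport term $v u_y$ is absorbed into $u_y\partial_y(\cdot)$ because $\partial_y v=-u_x$ is tangential.

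For the boundary condition, I restrict the Prandtl equation to $\{y=0\}$ where $v=0$, giving $u_t+uu_x+p_x=u_{yy}$, and apply $\partial_{t,x}^\alpha$ on the trace. The Robin condition $u_y=\beta u$ on $\{y=0\}$ implies $\partial_{t,x}^\alpha u_y=\beta\partial_{t,x}^\alpha u$, so I can solve for $\partial_{t,x}^\alpha u|_{y=0}$ in terms of $W_\alpha|_{y=0}$ via the identity $W_\alpha=\partial_{t,x}^\alpha u_y-(u_{yy}/u_y)\partial_{t,x}^\alpha\tilde u$, which on $\{y=0\}$ becomes $W_\alpha=(\beta-u_{yy}/u_y)\partial_{t,x}^\alpha u+(u_{yy}/u_y)\partial_{t,x}^\alpha U$. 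The denominator $\beta-u_{yy}/u_y$ in $(\ref{Sect1_Existence_VorticityEq_1})_2$ is exactly what appears when one inverts this relation, and the coefficient $Q_2$ arises from differentiating it in $t$ and $x$. Substituting into $\partial_{t,x}^\alpha$ of the trace equation and collecting the explicit $U$-contributions yields $Q_3$; what remains are the commutators $[\partial_{t,x}^\alpha,u\partial_x]\tilde u$ and $[\partial_{t,x}^\alpha,\tilde u\partial_x]U$ on the right.

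Finally, I would verify the Dirichlet limit: as $\beta\to+\infty$, the prefactor $(\beta-u_{yy}/u_y)^{-1}$ on the left of $(\ref{Sect1_Existence_VorticityEq_1})_2$ vanishes, killing the time-tangential transport terms, while $\beta/(\beta-u_{yy}/u_y)\to 1$ inside $Q_3$. Since $u|_{y=0}=0$ in the Dirichlet case, one has $\tilde u|_{y=0}=-U$ and $u|_{y=0}\partial_x(\cdots)=0$, which collapses the commutators into the clean expression on the right of $(\ref{Sect1_BC_Dirichlet_1})$. The initial condition for $W_\alpha$ is immediate from the definition. The main obstacle I anticipate is bookkeeping: the derivation involves several layers of Leibniz expansions and the careful rewriting of commutators so that every resulting term either becomes part of the explicit principal operator, $Q_1$, $Q_2$, $Q_3$, or one of the listed commutators; no new terms may be lost or duplicated, and the factor $u_y$ must be threaded through consistently so that the final identity holds pointwise.
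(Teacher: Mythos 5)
Your proposal follows the same route as the paper: subtract Bernoulli's law, apply $\partial_{t,x}^{\alpha}$, conjugate by $u_y$ (i.e.\ apply $u_y\partial_y u_y^{-1}$), use the divergence-free relation $\partial_y\partial_{t,x}^{\alpha}v=-\partial_x\partial_{t,x}^{\alpha}u$ to cancel the dangerous part of the normal transport, derive the boundary equation from the trace of the momentum equation together with the Robin condition $\partial_y\partial_{t,x}^{\alpha}u=\beta\partial_{t,x}^{\alpha}u$, and recover the Dirichlet condition as the $\beta\to+\infty$ limit via $(\beta-\tfrac{u_{yy}}{u_y})^{-1}\to 0$, $\beta/(\beta-\tfrac{u_{yy}}{u_y})\to 1$, $[\partial_{t,x}^{\alpha},u\partial_x]\tilde u|_{y=0}\to 0$ and $[\partial_{t,x}^{\alpha},\tilde u\partial_x]U|_{y=0}\to -[\partial_{t,x}^{\alpha},U\partial_x]U$. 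This is correct and matches the paper's argument.

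One small inaccuracy worth flagging before you carry out the computation: you invoke the vorticity equation $\omega_t+u\omega_x+v\omega_y=\omega_{yy}$ to ``organize'' the extra terms into $Q_1$ and the three explicit nonlinear terms. In fact no dynamical identity is needed at that step. The coefficient $Q_1=-\tfrac{u_{yt}}{u_y}-\tfrac{uu_{yx}}{u_y}-\tfrac{u_{yyy}}{u_y}+2(\tfrac{u_{yy}}{u_y})^2$ and the terms $u_y\partial_y\big(\tfrac{\partial_{t,x}^{\alpha}\tilde u}{u_y}\tfrac{u_{yt}}{u_y}\big)$, etc., arise purely from the Leibniz identities for $u_y\partial_y\big(\tfrac{\partial_t(\cdot)}{u_y}\big)$, $u_y\partial_y\big(\tfrac{\partial_x(\cdot)}{u_y}\big)$, $u_y\partial_y\big(\tfrac{\partial_{yy}(\cdot)}{u_y}\big)$ acting on $\partial_{t,x}^{\alpha}\tilde u$ (the paper's $(\text{A.2})$--$(\text{A.4})$). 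In particular, $Q_1$ contains $-u_{yyy}/u_y$ with a minus sign, so $Q_1$ is \emph{not} a multiple of $(\omega_t+u\omega_x-\omega_{yy})/u_y$ and cannot be simplified via the vorticity equation. If you carry out the algebra directly you will find the correct terms without this detour; invoking the vorticity equation here would lead you to look for a cancellation that is not there.
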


\begin{proof}
We have the following transformation of the equations:
\begin{equation}\label{Appendix_1_Existence_VorticityEq_1}
\begin{array}{ll}
u_t + u u_x + v u_y + p_x= u_{yy}, \\[8pt]

u_t + u u_x + v u_y -U_t -U U_x= u_{yy}, \\[9pt]

\partial_y\frac{\partial_t \partial_{t,x}^{\alpha} u}{u_y} + u \partial_y\frac{\partial_x\partial_{t,x}^{\alpha} u}{u_y}
+ \partial_x\partial_{t,x}^{\alpha} u+ \partial_y\partial_{t,x}^{\alpha} v
- \partial_y\frac{\partial_t \partial_{t,x}^{\alpha} U + U \partial_x \partial_{t,x}^{\alpha} U}{u_y} \\[9pt]\quad

= \partial_y\frac{\partial_{yy}\partial_{t,x}^{\alpha} u}{u_y} - \partial_y\frac{[\partial_{t,x}^{\alpha},\, u\partial_x]u}{u_y}
- \partial_y\frac{[\partial_{t,x}^{\alpha},\, u_y]v}{u_y}
+ \partial_y\frac{[\partial_{t,x}^{\alpha},\, U\partial_x]U}{u_y}, \\[12pt]

\partial_y\frac{\partial_t \partial_{t,x}^{\alpha} \tilde{u}}{u_y} + u \partial_y\frac{\partial_x\partial_{t,x}^{\alpha} \tilde{u}}{u_y}
- \partial_y\frac{\partial_{yy}\partial_{t,x}^{\alpha} \tilde{u}}{u_y} =
- \partial_y\frac{[\partial_{t,x}^{\alpha},\, u_y]v}{u_y} \\[9pt]\quad
- \partial_y\frac{[\partial_{t,x}^{\alpha},\, u\partial_x]\tilde{u}}{u_y}
- \partial_y\frac{[\partial_{t,x}^{\alpha},\, \tilde{u}\partial_x]U}{u_y}
-\tilde{u} \partial_y\frac{\partial_x\partial_{t,x}^{\alpha} U}{u_y}, \\[12pt]

u_y\partial_y\frac{\partial_t \partial_{t,x}^{\alpha} \tilde{u}}{u_y} + u u_y\partial_y\frac{\partial_x\partial_{t,x}^{\alpha} \tilde{u}}{u_y}
- u_y\partial_y\frac{\partial_{yy}\partial_{t,x}^{\alpha} \tilde{u}}{u_y} =
- u_y\partial_y\frac{[\partial_{t,x}^{\alpha},\, u_y]v}{u_y} \\[9pt]\quad
- u_y\partial_y\frac{[\partial_{t,x}^{\alpha},\, u\partial_x]\tilde{u}}{u_y}
- u_y\partial_y\frac{[\partial_{t,x}^{\alpha},\, \tilde{u}\partial_x]U}{u_y}
-\tilde{u} u_y\partial_y\frac{\partial_x\partial_{t,x}^{\alpha} U}{u_y},
\end{array}
\end{equation}

We calculate the following three terms, the first term is
\begin{equation}\label{Appendix_1_Existence_VorticityEq_2_1}
\begin{array}{ll}
u_y\partial_y\frac{\partial_{yy}\partial_{t,x}^{\alpha} \tilde{u}}{u_y}
= u_y\partial_y[\partial_y\frac{\partial_y\partial_{t,x}^{\alpha} \tilde{u}}{u_y}
+ \frac{\partial_y\partial_{t,x}^{\alpha} \tilde{u}}{u_y}\frac{u_{yy}}{u_y}] \\[9pt]

= u_y\partial_y[\partial_y(\partial_y\frac{\partial_{t,x}^{\alpha} \tilde{u}}{u_y} + \frac{\partial_{t,x}^{\alpha} \tilde{u}}{u_y}\frac{u_{yy}}{u_y})
+ (\partial_y\frac{\partial_{t,x}^{\alpha} \tilde{u}}{u_y} + \frac{\partial_{t,x}^{\alpha} \tilde{u}}{u_y}\frac{u_{yy}}{u_y})\frac{u_{yy}}{u_y}] \\[9pt]

= u_y\partial_y[\frac{1}{u_y}\partial_y W_{\alpha}
+ \frac{\partial_{t,x}^{\alpha} \tilde{u}}{u_y}\partial_y(\frac{u_{yy}}{u_y})]
+ u_y\partial_y[\frac{u_{yy}}{(u_y)^2}W_{\alpha} + \frac{\partial_{t,x}^{\alpha} \tilde{u}}{u_y}(\frac{u_{yy}}{u_y})^2] \\[9pt]

= u_y\partial_y[\frac{1}{u_y}\partial_y W_{\alpha} + \frac{u_{yy}}{(u_y)^2}W_{\alpha}]
+ u_y\partial_y[\frac{\partial_{t,x}^{\alpha} \tilde{u}}{u_y}\partial_y(\frac{u_{yy}}{u_y})
+ \frac{\partial_{t,x}^{\alpha} \tilde{u}}{u_y}(\frac{u_{yy}}{u_y})^2] \\[9pt]

= \partial_{yy} W_{\alpha} + (\frac{u_{yyy}}{u_y}-2(\frac{u_{yy}}{u_y})^2)W_{\alpha}
+ u_y\partial_y(\frac{\partial_{t,x}^{\alpha} \tilde{u}}{u_y}\frac{u_{yyy}}{u_y}).
\end{array}
\end{equation}

The second term is
\begin{equation}\label{Appendix_1_Existence_VorticityEq_2_2}
\begin{array}{ll}
u_y\partial_y\frac{\partial_t \partial_{t,x}^{\alpha} \tilde{u}}{u_y}
= u_y\partial_y(\partial_t\frac{\partial_{t,x}^{\alpha} \tilde{u}}{u_y}
+ \frac{\partial_{t,x}^{\alpha} \tilde{u}}{u_y}\frac{u_{yt}}{u_y})

=  u_y\partial_t(\partial_y\frac{\partial_{t,x}^{\alpha} \tilde{u}}{u_y})
+ u_y\partial_y(\frac{\partial_{t,x}^{\alpha} \tilde{u}}{u_y}\frac{u_{yt}}{u_y}) \\[9pt]

=  u_y\partial_t(\frac{1}{u_y}W_{\alpha})
+ u_y\partial_y(\frac{\partial_{t,x}^{\alpha} \tilde{u}}{u_y}\frac{u_{yt}}{u_y})

=  \partial_t W_{\alpha} - \frac{u_{yt}}{u_y}W_{\alpha}
+ u_y\partial_y(\frac{\partial_{t,x}^{\alpha} \tilde{u}}{u_y}\frac{u_{yt}}{u_y}).
\end{array}
\end{equation}

The third term is
\begin{equation}\label{Appendix_1_Existence_VorticityEq_2_3}
\begin{array}{ll}
u_y\partial_y\frac{\partial_x \partial_{t,x}^{\alpha} \tilde{u}}{u_y}
= u_y\partial_y(\partial_x\frac{\partial_{t,x}^{\alpha} \tilde{u}}{u_y}
+ \frac{\partial_{t,x}^{\alpha} \tilde{u}}{u_y}\frac{u_{yx}}{u_y})

=  u_y\partial_x(\partial_y\frac{\partial_{t,x}^{\alpha} \tilde{u}}{u_y})
+ u_y\partial_y(\frac{\partial_{t,x}^{\alpha} \tilde{u}}{u_y}\frac{u_{yx}}{u_y}) \\[9pt]

=  u_y\partial_x(\frac{1}{u_y}W_{\alpha})
+ u_y\partial_y(\frac{\partial_{t,x}^{\alpha} \tilde{u}}{u_y}\frac{u_{yx}}{u_y})

=  \partial_x W_{\alpha} - \frac{u_{yx}}{u_y}W_{\alpha}
+ u_y\partial_y(\frac{\partial_{t,x}^{\alpha} \tilde{u}}{u_y}\frac{u_{yx}}{u_y}).
\end{array}
\end{equation}

Plug $(\ref{Appendix_1_Existence_VorticityEq_2_1})$, $(\ref{Appendix_1_Existence_VorticityEq_2_2})$, $(\ref{Appendix_1_Existence_VorticityEq_2_3})$ into the last equation of $(\ref{Appendix_1_Existence_VorticityEq_1})$, we get
\begin{equation}\label{Appendix_1_Existence_VorticityEq_3}
\begin{array}{ll}
\partial_t W_{\alpha} + u\partial_x W_{\alpha} - \partial_{yy} W_{\alpha}
- (\frac{u_{yt}}{u_y}+ \frac{u u_{yx}}{u_y} + \frac{u_{yyy}}{u_y}-2(\frac{u_{yy}}{u_y})^2) W_{\alpha} \\[8pt]\quad
+ u_y\partial_y(\frac{\partial_{t,x}^{\alpha} \tilde{u}}{u_y}\frac{u_{yt}}{u_y})
+ u u_y\partial_y(\frac{\partial_{t,x}^{\alpha} \tilde{u}}{u_y}\frac{u_{yx}}{u_y})
- u_y\partial_y(\frac{\partial_{t,x}^{\alpha} \tilde{u}}{u_y}\frac{u_{yyy}}{u_y})\\[10pt]

= - u_y\partial_y\frac{[\partial_{t,x}^{\alpha},\, u_y]v}{u_y}
- u_y\partial_y\frac{[\partial_{t,x}^{\alpha},\, u\partial_x]\tilde{u}}{u_y}
- u_y\partial_y\frac{[\partial_{t,x}^{\alpha},\, \tilde{u}\partial_x]U}{u_y}
-\tilde{u} u_y\partial_y\frac{\partial_x\partial_{t,x}^{\alpha} U}{u_y},
\end{array}
\end{equation}

Next, we derive the boundary condition for $W_{\alpha}$ by using the following equations on the boundary, where $v|_{y=0}=0$. We can use the following equation on the boundary:
\begin{equation}\label{Appendix_1_Existence_BC_1}
\left\{\begin{array}{ll}
u_t + u u_x -U_t -UU_x= u_{yy}, \\[7pt]
(u_y -\beta u)|_{y=0} =0,
\end{array}\right.
\end{equation}

Apply the tangential differential operator $\partial_{t,x}^{\alpha}$ to $(\ref{Appendix_1_Existence_BC_1})_1$, then we get
\begin{equation}\label{Appendix_1_Existence_BC_2}
\begin{array}{ll}
\partial_t\partial_{t,x}^{\alpha} u + u \partial_x\partial_{t,x}^{\alpha} u -\partial_t\partial_{t,x}^{\alpha}U -U\partial_x\partial_{t,x}^{\alpha}U - \partial_{yy}\partial_{t,x}^{\alpha} u \\[8pt]\quad
= -[\partial_{t,x}^{\alpha},\,u\partial_x]u + [\partial_{t,x}^{\alpha},\,U\partial_x]U, \\[13pt]

\partial_t\partial_{t,x}^{\alpha} \tilde{u} + u \partial_x\partial_{t,x}^{\alpha} \tilde{u} - \partial_{yy}\partial_{t,x}^{\alpha} \tilde{u}
= -\tilde{u}\partial_x\partial_{t,x}^{\alpha}U
-[\partial_{t,x}^{\alpha},\,u\partial_x]\tilde{u} -[\partial_{t,x}^{\alpha},\,\tilde{u}\partial_x]U,
\end{array}
\end{equation}

Apply the tangential differential operator $\partial_{t,x}^{\alpha}$ to $(\ref{Appendix_1_Existence_BC_1})_2$, then we get
\begin{equation}\label{Appendix_1_Existence_BC_3}
\begin{array}{ll}
\partial_{t,x}^{\alpha} \tilde{u}= \frac{1}{\beta}\partial_y\partial_{t,x}^{\alpha} \tilde{u} -\partial_{t,x}^{\alpha} U.
\end{array}
\end{equation}

On the boundary, we calculate $\frac{\partial_{t,x}^{\alpha}\tilde{u}}{u_y}$:
\begin{equation}\label{Appendix_1_Existence_BC_4}
\begin{array}{ll}
W_{\alpha} = u_y\partial_y \frac{\partial_{t,x}^{\alpha}\tilde{u}}{u_y}
= \partial_y\partial_{t,x}^{\alpha}\tilde{u}- \partial_{t,x}^{\alpha}\tilde{u} \frac{u_{yy}}{u_y}
= \partial_{t,x}^{\alpha}\tilde{u}(\beta - \frac{u_{yy}}{u_y})
+ \beta \partial_{t,x}^{\alpha} U ,\\[10pt]

\partial_{t,x}^{\alpha}\tilde{u} =\frac{1}{\beta - \frac{u_{yy}}{u_y}}(W_{\alpha} - \beta \partial_{t,x}^{\alpha} U) ,
\end{array}
\end{equation}

While
\begin{equation}\label{Appendix_1_Existence_BC_5}
\begin{array}{ll}
\partial_{yy} \partial_{t,x}^{\alpha}\tilde{u} = \partial_y[\partial_y(\frac{\partial_{t,x}^{\alpha}\tilde{u}}{u_y}u_y)]
= \partial_y[\partial_y(\frac{\partial_{t,x}^{\alpha}\tilde{u}}{u_y})u_y
+ \frac{\partial_{t,x}^{\alpha}\tilde{u}}{u_y}u_{yy}] \\[9pt]

= \partial_y W_{\alpha} + \frac{u_{yy}}{u_y}W_{\alpha} + \partial_{t,x}^{\alpha}\tilde{u}\frac{u_{yyy}}{u_y} \\[9pt]

= \partial_y W_{\alpha} + \frac{u_{yy}}{u_y}W_{\alpha} + \frac{u_{yyy}}{u_y}\cdot
\frac{1}{\beta - \frac{u_{yy}}{u_y}}W_{\alpha}
- \frac{u_{yyy}}{u_y}\cdot
\frac{\beta}{\beta - \frac{u_{yy}}{u_y}} \partial_{t,x}^{\alpha} U,
\end{array}
\end{equation}

By using $(\ref{Appendix_1_Existence_BC_3})$, we calculate $\partial_t\partial_{t,x}^{\alpha} \tilde{u}$:
\begin{equation}\label{Appendix_1_Existence_BC_6}
\begin{array}{ll}
\partial_t\partial_{t,x}^{\alpha} \tilde{u}

= \partial_t\Big[\frac{1}{\beta - \frac{u_{yy}}{u_y}}(W_{\alpha} - \beta \partial_{t,x}^{\alpha} U)\Big] \\[10pt]

= \frac{1}{\beta - \frac{u_{yy}}{u_y}}\partial_t W_{\alpha}
+ \frac{(\frac{u_{yy}}{u_y})_t}{(\beta - \frac{u_{yy}}{u_y})^2}W_{\alpha}
- \partial_t\Big[\frac{\beta}{\beta - \frac{u_{yy}}{u_y}} \partial_{t,x}^{\alpha} U\Big].
\end{array}
\end{equation}

Similarly, we calculate $\frac{\partial_x\partial_{t,x}^{\alpha} \tilde{u}}{u_y}$:
\begin{equation}\label{Appendix_1_Existence_BC_7}
\begin{array}{ll}
\partial_x\partial_{t,x}^{\alpha} \tilde{u}

= \partial_x\Big[\frac{1}{\beta - \frac{u_{yy}}{u_y}}(W_{\alpha} - \beta \partial_{t,x}^{\alpha} U)\Big] \\[10pt]

= \frac{1}{\beta - \frac{u_{yy}}{u_y}}\partial_x W_{\alpha}
+ \frac{(\frac{u_{yy}}{u_y})_x}{(\beta - \frac{u_{yy}}{u_y})^2}W_{\alpha}
- \partial_x\Big[\frac{\beta}{\beta - \frac{u_{yy}}{u_y}} \partial_{t,x}^{\alpha} U\Big].
\end{array}
\end{equation}

It follows from the last equation in $(\ref{Appendix_1_Existence_BC_2})$, we have
\begin{equation}\label{Appendix_1_Existence_BC_8}
\begin{array}{ll}
\frac{1}{\beta - \frac{u_{yy}}{u_y}}\partial_t W_{\alpha}
+ \frac{(\frac{u_{yy}}{u_y})_t}{(\beta - \frac{u_{yy}}{u_y})^2}W_{\alpha}
- \partial_t\Big[\frac{\beta}{\beta - \frac{u_{yy}}{u_y}} \partial_{t,x}^{\alpha} U\Big] \\[10pt]\quad

+ \frac{u}{\beta - \frac{u_{yy}}{u_y}} \partial_x W_{\alpha}
+ \frac{u(\frac{u_{yy}}{u_y})_x}{(\beta - \frac{u_{yy}}{u_y})^2}W_{\alpha}
- u\partial_x\Big[\frac{\beta}{\beta - \frac{u_{yy}}{u_y}} \partial_{t,x}^{\alpha} U\Big] \\[10pt]\quad

-\partial_y W_{\alpha} -\frac{u_{yy}}{u_y}W_{\alpha}
- \frac{u_{yyy}}{u_y}\cdot \frac{1}{\beta - \frac{u_{yy}}{u_y}}W_{\alpha}
+ \frac{u_{yyy}}{u_y}\cdot \frac{\beta}{\beta - \frac{u_{yy}}{u_y}} \partial_{t,x}^{\alpha} U

\\[10pt]
= -\tilde{u}\partial_x\partial_{t,x}^{\alpha}U
-[\partial_{t,x}^{\alpha},\,u\partial_x]\tilde{u} -[\partial_{t,x}^{\alpha},\,\tilde{u}\partial_x]U,

\\[15pt]
\frac{1}{\beta - \frac{u_{yy}}{u_y}}(\partial_t W_{\alpha} + u\partial_x W_{\alpha})
- \partial_y W_{\alpha} - \frac{u_{yy}}{u_y}W_{\alpha}
+ Q_2 \cdot\frac{1}{\beta - \frac{u_{yy}}{u_y}}W_{\alpha}

\\[10pt]
= -[\partial_{t,x}^{\alpha},\,u\partial_x]\tilde{u} -[\partial_{t,x}^{\alpha},\,\tilde{u}\partial_x]U
+ Q_3.
\end{array}
\end{equation}
where $Q_2,Q_3$ are defined in $(\ref{Sect1_Quantity_Definition_1})$.

When $\beta =+\infty$, we have the Dirichlet boundary condition, then we derive the boundary condition for the vorticity $W_{\alpha}$ by using the following equations on the boundary:
\begin{equation}\label{Appendix_1_Existence_DirichletBC_1}
\left\{\begin{array}{ll}
u_{yy} = p_x =-U_t -UU_x, \\[8pt]
u|_{y=0} =0,
\end{array}\right.
\end{equation}

Apply the tangential differential operator $\partial_{t,x}^{\alpha}$ to the two equations in $(\ref{Appendix_1_Existence_DirichletBC_1})$, then we get
\begin{equation*}
\begin{array}{ll}
-\partial_t \partial_{t,x}^{\alpha} U -U\partial_x \partial_{t,x}^{\alpha} U
- [\partial_{t,x}^{\alpha},\, U\partial_x]U

= \partial_y W_{\alpha} + \frac{u_{yy}}{u_y}W_{\alpha}
- \frac{u_{yyy}}{u_y} \partial_{t,x}^{\alpha} U,
\end{array}
\end{equation*}

Thus, we get the boundary condition for $W_{\alpha}$ when $\beta=+\infty$:
\begin{equation}\label{Appendix_1_Existence_DirichletBC_4}
\begin{array}{ll}
-\partial_y W_{\alpha} - \frac{u_{yy}}{u_y}W_{\alpha}
= \partial_t \partial_{t,x}^{\alpha} U +U\partial_x \partial_{t,x}^{\alpha} U
+ [\partial_{t,x}^{\alpha},\, U\partial_x]U - \frac{u_{yyy}}{u_y} \partial_{t,x}^{\alpha} U.
\end{array}
\end{equation}

Note that $\lim\limits_{\beta\rto +\infty}Q_3 = \partial_t\partial_{t,x}^{\alpha}U + U\partial_x\partial_{t,x}^{\alpha}U
- \frac{u_{yyy}}{u_y}\partial_{t,x}^{\alpha} U$. When $\beta=+\infty$, $(\ref{Sect1_PrandtlEq})_2$ is equivalent to $(\ref{Sect1_BC_Dirichlet_1})$. Thus, Lemma $\ref{Appendix_1_Lemma_1}$ is proved.
\end{proof}

When $\sigma\geq 1$, we derive the equations for $W_{\alpha,\sigma} =\partial_y\big(\frac{\partial_{t,x}^{\alpha}\partial_y^{\sigma}u}{\partial_y u}\big)$.
\begin{lemma}\label{Appendix_2_Lemma_1}
If $u$ satisfies the Prandtl equations $(\ref{Sect1_PrandtlEq})$, $\sigma\geq 1$, then
$W_{\alpha,\sigma}$ satisfies the system $(\ref{Sect1_Existence_VorticityEq_2})$.
\end{lemma}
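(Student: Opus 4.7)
\textbf{Proof Plan for Lemma \ref{Appendix_2_Lemma_1}.}

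The plan is to mirror the derivation of Lemma \ref{Appendix_1_Lemma_1}, with two simplifications and one extra complication coming from the $\sigma$ vertical derivatives. The simplification is that $U(t,x)$ and $p(t,x)$ are $y$-independent, so $\partial_y^{\sigma}U = \partial_y^{\sigma}p_x = 0$ whenever $\sigma\ge 1$, which removes all the Euler-flow source terms that appeared in $(\ref{Sect1_Existence_VorticityEq_1})$ and in particular reduces the interior equation to one without $U$-contributions. The complication is that the boundary condition requires expressing $\partial_{t,x}^{\alpha}\partial_y^{\sigma+2}u|_{y=0}$ through the Prandtl equation and then unwinding all the resulting $y$-derivatives via the recursive identity that converts $\partial_y$ of $\partial_{t,x}^{\alpha'}\partial_y^{\sigma'}u$ into $W_{\alpha',\sigma'}$ plus $\frac{u_{yy}}{u_y}$-tails, which is what generates the polynomial $\mathcal{P}_1$.

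For the interior equation, first I would apply $\partial_{t,x}^{\alpha}\partial_y^{\sigma}$ to the momentum equation $u_t+uu_x+vu_y-u_{yy}=-p_x$. Since $\partial_y^{\sigma}p_x=0$, the result reads
\[
\partial_t\partial_{t,x}^{\alpha}\partial_y^{\sigma}u+u\partial_x\partial_{t,x}^{\alpha}\partial_y^{\sigma}u+u_y\partial_{t,x}^{\alpha}\partial_y^{\sigma}v-\partial_{yy}\partial_{t,x}^{\alpha}\partial_y^{\sigma}u=-[\partial_{t,x}^{\alpha}\partial_y^{\sigma},u\partial_x]u-[\partial_{t,x}^{\alpha}\partial_y^{\sigma},u_y]v,
\]
where the leading $u_y\partial_{t,x}^{\alpha}\partial_y^{\sigma}v$ is split out exactly as in Lemma \ref{Appendix_1_Lemma_1}. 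Dividing the equation by $u_y$ and then applying $\partial_y$, the pair $\partial_x\partial_{t,x}^{\alpha}\partial_y^{\sigma}u+\partial_y\partial_{t,x}^{\alpha}\partial_y^{\sigma}v$ arises and vanishes by the differentiated divergence-free condition, which is the crucial cancellation. Finally I would reuse the three algebraic identities proved in $(\ref{Appendix_1_Existence_VorticityEq_2_1})$--$(\ref{Appendix_1_Existence_VorticityEq_2_3})$ (with $\partial_{t,x}^{\alpha}\tilde u$ replaced by $\partial_{t,x}^{\alpha}\partial_y^{\sigma}u$; the term $\partial_y^{\sigma}U=0$ makes the two definitions agree) to rewrite $u_y\partial_y\big(\partial_t\partial_{t,x}^{\alpha}\partial_y^{\sigma}u/u_y\big)$, $u_y\partial_y\big(\partial_x\partial_{t,x}^{\alpha}\partial_y^{\sigma}u/u_y\big)$, $u_y\partial_y\big(\partial_{yy}\partial_{t,x}^{\alpha}\partial_y^{\sigma}u/u_y\big)$ in terms of $\partial_t W_{\alpha,\sigma}$, $\partial_x W_{\alpha,\sigma}$, $\partial_{yy}W_{\alpha,\sigma}$, the $Q_1 W_{\alpha,\sigma}$ coefficient, and the three ``$\partial_y(\cdots/u_y)$'' terms on the left-hand side of $(\ref{Sect1_Existence_VorticityEq_2})_1$. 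Multiplying through by $u_y$ then produces the interior equation verbatim.

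For the boundary condition, I would work directly with the identity
\[
-\partial_y W_{\alpha,\sigma}-\frac{u_{yy}}{u_y}W_{\alpha,\sigma}=-\partial_{t,x}^{\alpha}\partial_y^{\sigma+2}u+\frac{u_{yyy}}{u_y}\partial_{t,x}^{\alpha}\partial_y^{\sigma}u,
\]
which follows from a direct differentiation of the definition $W_{\alpha,\sigma}=\partial_{t,x}^{\alpha}\partial_y^{\sigma+1}u-\frac{u_{yy}}{u_y}\partial_{t,x}^{\alpha}\partial_y^{\sigma}u$. Since $v|_{y=0}=0$, the Prandtl equation on the boundary reduces to $u_{yy}|_{y=0}=(u_t+uu_x+p_x)|_{y=0}$, so $\partial_{t,x}^{\alpha}\partial_y^{\sigma+2}u|_{y=0}=\partial_{t,x}^{\alpha}\partial_y^{\sigma}(u_t+uu_x+vu_y)|_{y=0}$ (the $p_x$ drops out for $\sigma\ge 1$). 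I would expand the Leibniz sums $\partial_y^{\sigma}(uu_x)$ and $\partial_y^{\sigma}(vu_y)$, converting vertical derivatives of $v$ via $\partial_y v=-u_x$ and then using the recursion $\partial_{t,x}^{\alpha'}\partial_y^{\sigma'+1}u=W_{\alpha',\sigma'}+\frac{u_{yy}}{u_y}\partial_{t,x}^{\alpha'}\partial_y^{\sigma'}u$ (for $\sigma'\ge 1$) and the Robin trace identity $\partial_{t,x}^{\alpha'}u|_{y=0}=\frac{1}{\beta-u_{yy}/u_y}(W_{\alpha'}-\beta\partial_{t,x}^{\alpha'}U)$ (for $\sigma'=0$, from $(\ref{Appendix_1_Existence_BC_4})$). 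Iterating this substitution down to the $W$-level gives the $\sum_{m=0}^{\sigma_i-1}W_{\cdot,\sigma_i-1-m}(u_{yy}/u_y)^m$ plus $\frac{1}{\beta-u_{yy}/u_y}(W_{\cdot}-\beta\partial_{t,x}^{\cdot}U)(u_{yy}/u_y)^{\sigma_i}$ structures appearing in $\mathcal{P}_1$. The initial condition is obtained by evaluating the definition of $W_{\alpha,\sigma}$ at $t=0$.

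The main obstacle will be the boundary bookkeeping: tracking the exact coefficients and combinatorics produced by the repeated Leibniz expansion of $\partial_y^{\sigma}(uu_x+vu_y)$ together with the two-step recursion (plain for $\sigma'\ge 1$, Robin-twisted for $\sigma'=0$), and verifying that the result collapses precisely into the polynomial $\mathcal{P}_1$ whose explicit form is declared in $(\ref{Appendix_2_Sigma1_Existence_BC_5})$. The interior equation itself is routine once the $\sigma=0$ derivation of Lemma \ref{Appendix_1_Lemma_1} is in hand, because the only structural change is that $U$-source terms vanish and every occurrence of $\partial_{t,x}^{\alpha}\tilde u$ is replaced by $\partial_{t,x}^{\alpha}\partial_y^{\sigma}u$.
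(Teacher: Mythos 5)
Your proposal matches the paper's derivation essentially verbatim: the interior equation is obtained by applying $\partial_{t,x}^{\alpha}\partial_y^{\sigma}$ to the momentum equation (with $\partial_y^{\sigma}p_x=0$), dividing by $u_y$, applying $\partial_y$, cancelling $\partial_x\partial_{t,x}^{\alpha}\partial_y^{\sigma}u+\partial_y\partial_{t,x}^{\alpha}\partial_y^{\sigma}v$ by the divergence-free condition, and reusing the three algebraic identities $(\ref{Appendix_1_Existence_VorticityEq_2_1})$--$(\ref{Appendix_1_Existence_VorticityEq_2_3})$ with $\tilde u$ replaced by $\partial_y^{\sigma}u$; the boundary condition comes from the identity $\partial_y W_{\alpha,\sigma}+\frac{u_{yy}}{u_y}W_{\alpha,\sigma}=\partial_{t,x}^{\alpha}\partial_y^{\sigma+2}u-\frac{u_{yyy}}{u_y}\partial_{t,x}^{\alpha}\partial_y^{\sigma}u$ together with the Prandtl equation restricted to $y=0$, the Leibniz expansions of $\partial_y^{\sigma}(uu_x)$ and $\partial_y^{\sigma}(vu_y)$ with $\partial_y v=-u_x$, and the downward recursion on $\sigma'$ ending in the Robin trace identity. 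One minor slip: the $\sigma'=0$ trace identity $(\ref{Appendix_1_Existence_BC_4})$ expresses $\partial_{t,x}^{\alpha'}\tilde u|_{y=0}$, not $\partial_{t,x}^{\alpha'}u|_{y=0}$, and the recursion indeed lands on $\tilde u$ at its last step (since $W_{\alpha'}=W_{\alpha',0}$ is defined through $\tilde u$); you cite the correct equation so this is just a transcription typo, but it should read $\tilde u$.
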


\begin{proof}
We have the following transformation of the equations:
\begin{equation}\label{Appendix_2_VorticityEq_1}
\begin{array}{ll}
u_t + u u_x + v u_y + p_x= u_{yy}, \\[8pt]

\frac{\partial_t \partial_{t,x}^{\alpha}\partial_y^{\sigma} u}{u_y} + \frac{u \partial_x\partial_{t,x}^{\alpha}\partial_y^{\sigma} u}{u_y}
+ \partial_{t,x}^{\alpha}\partial_y^{\sigma} v
= \frac{\partial_{yy}\partial_{t,x}^{\alpha}\partial_y^{\sigma} u}{u_y} - \frac{[\partial_{t,x}^{\alpha}\partial_y^{\sigma},\, u\partial_x]u}{u_y}
- \frac{[\partial_{t,x}^{\alpha}\partial_y^{\sigma},\, u_y]v}{u_y}, \\[10pt]

\partial_y\frac{\partial_t \partial_{t,x}^{\alpha}\partial_y^{\sigma} u}{u_y} + u \partial_y\frac{\partial_x\partial_{t,x}^{\alpha}\partial_y^{\sigma} u}{u_y}
+ \partial_x\partial_{t,x}^{\alpha}\partial_y^{\sigma} u+ \partial_y\partial_{t,x}^{\alpha}\partial_y^{\sigma} v  \\[7pt]\quad
= \partial_y\frac{\partial_{yy}\partial_{t,x}^{\alpha}\partial_y^{\sigma} u}{u_y} - \partial_y\frac{[\partial_{t,x}^{\alpha}\partial_y^{\sigma},\, u\partial_x]u}{u_y}
- \partial_y\frac{[\partial_{t,x}^{\alpha}\partial_y^{\sigma},\, u_y]v}{u_y}, \\[11pt]

u_y\partial_y\frac{\partial_t \partial_{t,x}^{\alpha}\partial_y^{\sigma} u}{u_y}
+ u u_y\partial_y\frac{\partial_x\partial_{t,x}^{\alpha}\partial_y^{\sigma} u}{u_y}
- u_y\partial_y\frac{\partial_{yy}\partial_{t,x}^{\alpha}\partial_y^{\sigma} u}{u_y}  \\[7pt]\quad
=  - u_y\partial_y\frac{[\partial_{t,x}^{\alpha}\partial_y^{\sigma},\, u\partial_x]u}{u_y}
- u_y\partial_y\frac{[\partial_{t,x}^{\alpha}\partial_y^{\sigma},\, u_y]v}{u_y},
\end{array}
\end{equation}

We calculate the following three terms:

Similar to $(\ref{Appendix_1_Existence_VorticityEq_2_1})$, we have
\begin{equation}\label{Appendix_2_VorticityEq_2}
\begin{array}{ll}
u_y\partial_y\frac{\partial_{yy}\partial_{t,x}^{\alpha}\partial_y^{\sigma} u}{u_y}
= u_y\partial_y\big[\partial_y\frac{\partial_y \partial_{t,x}^{\alpha}\partial_y^{\sigma} u}{u_y} + \frac{\partial_y \partial_{t,x}^{\alpha}\partial_y^{\sigma} u}{u_y}\frac{u_{yy}}{u_y}\big] \\[10pt]

= u_y\partial_y\big[\partial_y(\partial_y\frac{\partial_{t,x}^{\alpha}\partial_y^{\sigma} u}{u_y}
+ \frac{\partial_{t,x}^{\alpha}\partial_y^{\sigma} u}{u_y}\frac{u_{yy}}{u_y})
+ (\partial_y\frac{\partial_{t,x}^{\alpha}\partial_y^{\sigma} u}{u_y}
+ \frac{\partial_{t,x}^{\alpha}\partial_y^{\sigma} u}{u_y}\frac{u_{yy}}{u_y})
\frac{u_{yy}}{u_y}\big] \\[10pt]

= u_y\partial_y\big[\frac{1}{u_y}\partial_y W_{\alpha,\sigma}
+ \frac{\partial_{t,x}^{\alpha}\partial_y^{\sigma} u}{u_y}\partial_y(\frac{u_{yy}}{u_y})]
+ u_y\partial_y\big[\frac{u_{yy}}{(u_y)^2}W_{\alpha,\sigma}
+ \frac{\partial_{t,x}^{\alpha}\partial_y^{\sigma} u}{u_y}(\frac{u_{yy}}{u_y})^2\big] \\[10pt]

= u_y\partial_y\big[\frac{1}{u_y}\partial_y W_{\alpha,\sigma}
+ \frac{u_{yy}}{(u_y)^2}W_{\alpha,\sigma}]
+ u_y\partial_y\big[\frac{\partial_{t,x}^{\alpha}\partial_y^{\sigma} u}{u_y}\partial_y(\frac{u_{yy}}{u_y})
+ \frac{\partial_{t,x}^{\alpha}\partial_y^{\sigma} u}{u_y}(\frac{u_{yy}}{u_y})^2\big] \\[10pt]

= \partial_{yy} W_{\alpha,\sigma}
+ (\frac{u_{yyy}}{u_y} - 2(\frac{u_{yy}}{u_y})^2)W_{\alpha,\sigma}
+ u_y\partial_y\big[\frac{\partial_{t,x}^{\alpha}\partial_y^{\sigma} u}{u_y} \frac{u_{yyy}}{u_y} \big],
\end{array}
\end{equation}

Similar to $(\ref{Appendix_1_Existence_VorticityEq_2_2})$, we have
\begin{equation}\label{Appendix_2_VorticityEq_3}
\begin{array}{ll}
u_y\partial_y\frac{\partial_t \partial_{t,x}^{\alpha}\partial_y^{\sigma} u}{u_y}
= u_y\partial_y[\partial_t(\frac{\partial_{t,x}^{\alpha}\partial_y^{\sigma} u}{u_y})]
+ u_y\partial_y[\frac{\partial_{t,x}^{\alpha}\partial_y^{\sigma} u}{u_y}\frac{u_{yt}}{u_y}] \\[10pt]

= u_y\partial_t[\frac{1}{u_y}W_{\alpha,\sigma}]
+ u_y\partial_y[\frac{\partial_{t,x}^{\alpha}\partial_y^{\sigma} u}{u_y}\frac{u_{yt}}{u_y}]

=\partial_t W_{\alpha,\sigma} -\frac{u_{yt}}{u_y}W_{\alpha,\sigma}
+ u_y\partial_y[\frac{\partial_{t,x}^{\alpha}\partial_y^{\sigma} u}{u_y}\frac{u_{yt}}{u_y}],
\end{array}
\end{equation}

Similar to $(\ref{Appendix_1_Existence_VorticityEq_2_3})$, we have
\begin{equation}\label{Appendix_2_VorticityEq_4}
\begin{array}{ll}
u_y\partial_y\frac{\partial_x \partial_{t,x}^{\alpha}\partial_y^{\sigma} u}{u_y}
= u_y\partial_y[\partial_x(\frac{\partial_{t,x}^{\alpha}\partial_y^{\sigma} u}{u_y})]
+ u_y\partial_y[\frac{\partial_{t,x}^{\alpha}\partial_y^{\sigma} u}{u_y}\frac{u_{yx}}{u_y}] \\[10pt]

= u_y\partial_x[\frac{1}{u_y}W_{\alpha,\sigma}]
+ u_y\partial_y[\frac{\partial_{t,x}^{\alpha}\partial_y^{\sigma} u}{u_y}\frac{u_{yx}}{u_y}]

=\partial_x W_{\alpha,\sigma} -\frac{u_{yx}}{u_y}W_{\alpha,\sigma}
+ u_y\partial_y[\frac{\partial_{t,x}^{\alpha}\partial_y^{\sigma} u}{u_y}\frac{u_{yx}}{u_y}],
\end{array}
\end{equation}

Plug $(\ref{Appendix_2_VorticityEq_2})$, $(\ref{Appendix_2_VorticityEq_2})$, $(\ref{Appendix_2_VorticityEq_2})$ into the last equation of $(\ref{Appendix_2_VorticityEq_1})$
\begin{equation}\label{Appendix_2_VorticityEq_5}
\begin{array}{ll}
\partial_t W_{\alpha,\sigma} + u\partial_x W_{\alpha,\sigma}
- \partial_{yy} W_{\alpha,\sigma} +Q_1 W_{\alpha,\sigma} \\[9pt]\quad

+ u_y\partial_y[\frac{\partial_{t,x}^{\alpha}\partial_y^{\sigma} u}{u_y}\frac{u_{yt}}{u_y}]
+ u u_y\partial_y[\frac{\partial_{t,x}^{\alpha}\partial_y^{\sigma} u}{u_y}\frac{u_{yx}}{u_y}]
- u_y\partial_y\big[\frac{\partial_{t,x}^{\alpha}\partial_y^{\sigma} u}{u_y} \frac{u_{yyy}}{u_y} \big] \\[12pt]

=  - u_y\partial_y\frac{[\partial_{t,x}^{\alpha}\partial_y^{\sigma},\, u\partial_x]u}{u_y}
- u_y\partial_y\frac{[\partial_{t,x}^{\alpha}\partial_y^{\sigma},\, u_y]v}{u_y}.
\end{array}
\end{equation}

Next we derive the boundary condition for $W_{\alpha,\sigma}$, where $\partial_{t,x}^{\alpha^{\prime}}v|_{y=0} =0$ for some $\alpha^{\prime}$.
Note that $\partial_{t,x}^{\alpha}\partial_y^{\sigma}(u_y -\beta u)|_{y=0}$ may not be zero when $\sigma\geq 1$.
\begin{equation}\label{Appendix_2_Sigma1_Existence_BC_1}
\begin{array}{ll}
\partial_{yy} \partial_{t,x}^{\alpha}\partial_y^{\sigma}u = \partial_y[\partial_y(\frac{\partial_{t,x}^{\alpha}\partial_y^{\sigma}u}{u_y}u_y)]
= \partial_y[\partial_y(\frac{\partial_{t,x}^{\alpha}\partial_y^{\sigma}u}{u_y})u_y
+ \frac{\partial_{t,x}^{\alpha}\partial_y^{\sigma}u}{u_y}u_{yy}] \\[9pt]

= \partial_y W_{\alpha,\sigma} + \frac{u_{yy}}{u_y}W_{\alpha,\sigma} + \partial_{t,x}^{\alpha}\partial_y^{\sigma}u\frac{u_{yyy}}{u_y},
\end{array}
\end{equation}

When $\sigma\geq 1$, we have the following transformation of the equations:
\begin{equation}\label{Appendix_2_Sigma1_Existence_BC_2}
\begin{array}{ll}
u_t + u u_x + v u_y + p_x= u_{yy}, \\[10pt]

\partial_{yy}\partial_{t,x}^{\alpha}\partial_y^{\sigma} u
= \partial_t \partial_{t,x}^{\alpha}\partial_y^{\sigma} u + u \partial_x\partial_{t,x}^{\alpha}\partial_y^{\sigma} u
+ [\partial_{t,x}^{\alpha}\partial_y^{\sigma},\, u\partial_x]u
+ \partial_{t,x}^{\alpha}\partial_y^{\sigma} (v u_y), \\[10pt]

\partial_y W_{\alpha,\sigma} + \frac{u_{yy}}{u_y}W_{\alpha,\sigma}

= \partial_t \partial_{t,x}^{\alpha}\partial_y^{\sigma} u + u \partial_x\partial_{t,x}^{\alpha}\partial_y^{\sigma} u
+ [\partial_{t,x}^{\alpha}\partial_y^{\sigma},\, u\partial_x]u \\[8pt]\qquad
+ \partial_{t,x}^{\alpha} (v \partial_y^{\sigma+1} u)
+ \sum\limits_{\sigma_1\geq 1,\sigma_2\leq \sigma-1}\partial_{t,x}^{\alpha} (\partial_y^{\sigma_1} v \partial_y^{\sigma_2+1} u)
- \partial_{t,x}^{\alpha}\partial_y^{\sigma}u\frac{u_{yyy}}{u_y}, \\[12pt]\quad

= \partial_t \partial_{t,x}^{\alpha}\partial_y^{\sigma} u + u \partial_x\partial_{t,x}^{\alpha}\partial_y^{\sigma} u
+ [\partial_{t,x}^{\alpha}\partial_y^{\sigma},\, u\partial_x]u \\[8pt]\qquad
- \sum\limits_{\sigma_1\geq 1,\sigma_2\leq \sigma-1}\partial_{t,x}^{\alpha} (\partial_y^{\sigma_1-1} \partial_x u \partial_y^{\sigma_2+1} u)
- \partial_{t,x}^{\alpha}\partial_y^{\sigma}u\frac{u_{yyy}}{u_y}
\\[12pt]\quad

= \partial_t \partial_{t,x}^{\alpha}\partial_y^{\sigma} u + u \partial_x\partial_{t,x}^{\alpha}\partial_y^{\sigma} u
- \partial_{t,x}^{\alpha}\partial_y^{\sigma}u\frac{u_{yyy}}{u_y} \\[8pt]\qquad
+ \sum\limits_{|\alpha_1|+\sigma_1>0}(\partial_{t,x}^{\alpha_1}\partial_y^{\sigma_1}u) (\partial_{t,x}^{\alpha_2}\partial_y^{\sigma_2}\partial_x u)
- \sum\limits_{|\alpha_1|<|\alpha|}(\partial_{t,x}^{\alpha_1} \partial_y^{\sigma_1} \partial_x u)
(\partial_{t,x}^{\alpha_2}\partial_y^{\sigma_2} u).
\end{array}
\end{equation}

By the induction method, we have
\begin{equation}\label{Appendix_2_Sigma1_Existence_BC_3}
\begin{array}{ll}
\partial_t \partial_{t,x}^{\alpha}\partial_y^{\sigma} u
= \partial_y (\frac{\partial_t \partial_{t,x}^{\alpha}\partial_y^{\sigma-1} u}{u_y}u_y)
= W_{\alpha+(1,0),\sigma-1} + \partial_t \partial_{t,x}^{\alpha}\partial_y^{\sigma-1} u\frac{u_{yy}}{u_y} \\[8pt]

= W_{\alpha+(1,0),\sigma-1} + W_{\alpha+(1,0),\sigma-2} \frac{u_{yy}}{u_y}
 + \partial_t \partial_{t,x}^{\alpha}\partial_y^{\sigma-2} u(\frac{u_{yy}}{u_y})^2 = \cdots \\[8pt]

= \sum\limits_{m=0}^{\sigma-1} W_{\alpha+(1,0),\sigma-1-m} (\frac{u_{yy}}{u_y})^m
 + \partial_t \partial_{t,x}^{\alpha} \tilde{u}(\frac{u_{yy}}{u_y})^{\sigma}.
\end{array}
\end{equation}

Similarly,
\begin{equation}\label{Appendix_2_Sigma1_Existence_BC_4}
\begin{array}{ll}
\partial_x \partial_{t,x}^{\alpha}\partial_y^{\sigma} u

= \sum\limits_{m=0}^{\sigma-1} W_{\alpha+(0,1),\sigma-1-m} (\frac{u_{yy}}{u_y})^m
 + \partial_x \partial_{t,x}^{\alpha} \tilde{u}(\frac{u_{yy}}{u_y})^{\sigma}.
\end{array}
\end{equation}

Plug $(\ref{Appendix_2_Sigma1_Existence_BC_3})$, $(\ref{Appendix_2_Sigma1_Existence_BC_4})$ into $(\ref{Appendix_2_Sigma1_Existence_BC_2})$, we have
\begin{equation}\label{Appendix_2_Sigma1_Existence_BC_5}
\begin{array}{ll}
\partial_y W_{\alpha,\sigma} + \frac{u_{yy}}{u_y}W_{\alpha,\sigma}

= \sum\limits_{m=0}^{\sigma-1} (W_{\alpha+(1,0),\sigma-1-m} + u W_{\alpha+(0,1),\sigma-1-m}) (\frac{u_{yy}}{u_y})^m \\[8pt]\quad
 + \partial_t \partial_{t,x}^{\alpha} \tilde{u}(\frac{u_{yy}}{u_y})^{\sigma}
 + u\partial_x \partial_{t,x}^{\alpha} \tilde{u}(\frac{u_{yy}}{u_y})^{\sigma}

 - \sum\limits_{m=0}^{\sigma-1} W_{\alpha,\sigma-1-m} (\frac{u_{yy}}{u_y})^m \frac{u_{yyy}}{u_y}\\[8pt]\quad
 - \partial_{t,x}^{\alpha} \tilde{u}(\frac{u_{yy}}{u_y})^{\sigma} \frac{u_{yyy}}{u_y}

 + \sum\limits_{|\alpha_1|+\sigma_1>0} \Big(\sum\limits_{m=0}^{\sigma_1-1} W_{\alpha_1,\sigma_1-1-m} (\frac{u_{yy}}{u_y})^m
 + \partial_x \partial_{t,x}^{\alpha_1} \tilde{u}(\frac{u_{yy}}{u_y})^{\sigma_1} \Big) \\[10pt]\quad

 \cdot\Big(\sum\limits_{m=0}^{\sigma_2-1} W_{\alpha_2 +(0,1),\sigma_2-1-m} (\frac{u_{yy}}{u_y})^m
 + \partial_x\partial_{t,x}^{\alpha_2} \tilde{u}(\frac{u_{yy}}{u_y})^{\sigma_2} \Big)  \\[10pt]\quad

 + \sum\limits_{|\alpha_1|\leq |\alpha|-1}\Big(\sum\limits_{m=0}^{\sigma_1-1} W_{\alpha_1+(0,1),\sigma_1-1-m} (\frac{u_{yy}}{u_y})^m
 + \partial_x \partial_{t,x}^{\alpha_1} \tilde{u}(\frac{u_{yy}}{u_y})^{\sigma_1}\Big) \\[10pt]\quad

 \cdot\Big(\sum\limits_{m=0}^{\sigma_2-1} W_{\alpha_2,\sigma_2-1-m} (\frac{u_{yy}}{u_y})^m
 + \partial_{t,x}^{\alpha_2} \tilde{u}(\frac{u_{yy}}{u_y})^{\sigma_2} \Big), \hspace{2cm}
\end{array}
\end{equation}

Since $\partial_{t,x}^{\alpha}\tilde{u} =\frac{1}{\beta - \frac{u_{yy}}{u_y}}(W_{\alpha} - \beta \partial_{t,x}^{\alpha} U)$,
there exists a polynomial $\mathcal{P}_1$ whose degree is less than or equal to $2$, such that
\begin{equation}\label{Appendix_2_Sigma1_Existence_BC_6}
\begin{array}{ll}
-\partial_y W_{\alpha,\sigma} - \frac{u_{yy}}{u_y}W_{\alpha,\sigma}

= \mathcal{P}_1\Big(\sum\limits_{m=0}^{\sigma_1-1} (W_{\alpha_1+(1,0),\sigma_1-1-m}) (\frac{u_{yy}}{u_y})^m \\[8pt]\quad
 + \frac{1}{\beta - \frac{u_{yy}}{u_y}}(W_{\alpha_1+(1,0)} - \beta \partial_x\partial_{t,x}^{\alpha_1} U)(\frac{u_{yy}}{u_y})^{\sigma_1},

 \sum\limits_{m=0}^{\sigma_2-1} (W_{\alpha_2+(0,1),\sigma_2-1-m} (\frac{u_{yy}}{u_y})^m \\[9pt]\quad
 + \frac{1}{\beta - \frac{u_{yy}}{u_y}}(W_{\alpha_2+(0,1)} - \beta \partial_{t,x}^{\alpha_2} U)(\frac{u_{yy}}{u_y})^{\sigma_2} \Big).
\end{array}
\end{equation}
where $\alpha_1\leq\alpha, \alpha_2\leq\alpha, \sigma_1\leq\sigma,\sigma_2\leq\sigma$.
\end{proof}

\begin{lemma}\label{Appendix_3_Lemma_1}
If $u$ satisfies the Prandtl equations $(\ref{Sect1_PrandtlEq})$, then $\tilde{W} = \omega_y$ satisfies $(\ref{Sect1_VorticityY_Eq})$.
\end{lemma}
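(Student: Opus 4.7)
The plan is a direct two-step differentiation of the momentum equation in $y$, followed by a rewriting of one first-order coefficient as a non-local expression in $\tilde W$.

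First, I would differentiate $(\ref{Sect1_PrandtlEq})_1$ in $y$. Since $p_x=p_x(t,x)$ is $y$-independent, and since the incompressibility relation $(\ref{Sect1_PrandtlEq})_2$ gives $v_y=-u_x$, the cross terms $u_yu_x+v_yu_y=u_yu_x-u_xu_y$ cancel, leaving the vorticity equation
\begin{equation*}
\omega_t+u\omega_x+v\omega_y=\omega_{yy}, \qquad \omega=u_y.
\end{equation*}
Differentiating this equation once more in $y$ and using $v_y=-u_x$ produces
\begin{equation*}
\tilde W_t+u\tilde W_x+v\tilde W_y-u_x\tilde W+u_y\omega_x=\tilde W_{yy}.
\end{equation*}
The only thing separating this from $(\ref{Sect1_VorticityY_Eq})_1$ is the term $u_y\omega_x$, which I would rewrite using the far-field condition $u\to U(t,x)$ as $y\to+\infty$ (so $\omega\to 0$). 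This yields
\begin{equation*}
u_y=\omega=-\int_y^{+\infty}\omega_y\,\mathrm d\tilde y=-\int_y^{+\infty}\tilde W\,\mathrm d\tilde y,
\end{equation*}
and substituting converts $u_y\omega_x$ into $-\omega_x\int_y^{+\infty}\tilde W\,\mathrm d\tilde y$, giving $(\ref{Sect1_VorticityY_Eq})_1$.

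Next, for the boundary condition, I would evaluate the vorticity equation $\omega_t+u\omega_x+v\omega_y=\omega_{yy}$ at $y=0$. Since $v|_{y=0}=0$ by $(\ref{Sect1_PrandtlEq})_3$, the convective normal term drops out, and using $\omega=u_y$, $\omega_{yy}=\tilde W_y$, we obtain $\tilde W_y|_{y=0}=u_{yt}+uu_{yx}$, which is $(\ref{Sect1_VorticityY_Eq})_2$. The initial condition $\tilde W|_{t=0}=\partial_{yy}u_0$ follows immediately from the definition $\tilde W=\omega_y=u_{yy}$ and $u|_{t=0}=u_0$.

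No step of the argument is an obstacle; the only nontrivial point is the use of the decay $\omega\to 0$ at $y=+\infty$, which is built into the functional framework $\mathcal H_\ell^k$ with $\ell>1$ and the matching condition $\lim_{y\to+\infty}u=U$. Once this is noted, the lemma reduces to the two differentiations and the integration-by-parts identity above.
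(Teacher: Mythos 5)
Your proof is correct and follows essentially the same route as the paper: differentiate the momentum equation twice in $y$ (the first differentiation gives the standard vorticity equation, the second gives the $\tilde W$ equation with the extra $u_y\omega_x$ term), rewrite $u_y=\omega=-\int_y^{+\infty}\tilde W\,\mathrm{d}\tilde y$ using the far-field decay, and obtain the boundary condition by evaluating the once-differentiated momentum equation at $y=0$ using $v|_{y=0}=0$ and $v_y=-u_x$.
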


\begin{proof}
We have the following transformations of the equations:
\begin{equation}\label{Appendix_3_Eq1}
\begin{array}{ll}
\omega_t + u\omega_x + v\omega_y =\omega_{yy}, \\[7pt]

\partial_t \omega_y + u_y\omega_x + u\partial_x\omega_y + v_y \omega_y + v\partial_y\omega_y =\partial_{yy}\omega_y, \\[7pt]

\tilde{W}_t + \omega\omega_x + u\tilde{W}_x +v\tilde{W}_y -u_x\tilde{W}  =\tilde{W}_{yy}, \\[6pt]

\tilde{W}_t + u\tilde{W}_x +v\tilde{W}_y -u_x\tilde{W} - \omega_x\int_y^{+\infty} \tilde{W}\,\mathrm{d}\tilde{y} =\tilde{W}_{yy}.
\end{array}
\end{equation}

\vspace{-0.3cm}
We have the boundary condition for $\tilde{W}$:
\begin{equation}\label{Appendix_3_Eq2}
\begin{array}{ll}
\tilde{W}_y = \omega_{yy} = u_{yyy} = u_{yt} + (u u_x)_y + (v u_y)_y = u_{yt} + u u_{yx}.
\end{array}
\end{equation}
When $\beta =+\infty$, $\tilde{W}_y = \partial_y p_x =0$, is compatible with $(\ref{Appendix_3_Eq2})$.

Thus, Lemma $\ref{Appendix_3_Lemma_1}$ is proved.
\end{proof}

The last lemma is used to prove the stability of the Prandtl equations.
\begin{lemma}\label{Appendix_4_Lemma_1}
Assume $u^1,u^2$ are two Prandtl solutions, $\delta u=u^1-u^2$ satisfies the Prandtl system $(\ref{Sect1_SolDifference_Eq})$,
$W_{\alpha,\sigma}$ satisfies the system $(\ref{Sect1_Stability_VorticityEq_1})$.
\end{lemma}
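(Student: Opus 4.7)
My strategy is to mirror the derivations of Lemmas \ref{Appendix_1_Lemma_1} and \ref{Appendix_2_Lemma_1}, transplanted from the self-equations $(\ref{Sect1_PrandtlEq})$ to the difference equation $(\ref{Sect1_SolDifference_Eq})$. One replaces the transport flow $(u,v)$ by the average $(\bar u,\bar v)$, the shifted unknown $\tilde u=u-U$ by $\delta u$, and acknowledges the two extra zeroth-order-in-$\delta u$ terms $(\delta u)\bar u_x$ and $(\delta v)\bar u_y$ in $(\ref{Sect1_SolDifference_Eq})_1$ that have no counterpart in Lemma \ref{Appendix_1_Lemma_1}. Apart from these, the algebraic skeleton is identical, so the derivation is largely a transcription.

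First I would apply $\partial_{t,x}^\alpha\partial_y^\sigma$ to $(\ref{Sect1_SolDifference_Eq})_1$, divide the resulting identity by $\bar u_y$, apply $\partial_y$, and multiply by $\bar u_y$. Using the three identities $(\ref{Appendix_1_Existence_VorticityEq_2_1})$--$(\ref{Appendix_1_Existence_VorticityEq_2_3})$ (with $u,U,\tilde u$ replaced by $\bar u,0,\delta u$), the contributions of $\partial_t$, $\bar u\partial_x$, and $\partial_{yy}$ become $\partial_t\W_{\alpha,\sigma}$, $\bar u\partial_x\W_{\alpha,\sigma}$, and $\partial_{yy}\W_{\alpha,\sigma}$, each modulo prescribed multiples of $\W_{\alpha,\sigma}$ and one residual of the schematic form $\bar u_y\partial_y(\tfrac{\partial_{t,x}^\alpha\partial_y^\sigma\delta u}{\bar u_y}R)$. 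The transport term $\bar v\partial_y\partial_{t,x}^\alpha\partial_y^\sigma\delta u$ is processed analogously to yield $\bar v\partial_y\W_{\alpha,\sigma}$ plus a $(\bar v\bar u_{yy}/\bar u_y)\W_{\alpha,\sigma}$ correction. Bundling all $\W_{\alpha,\sigma}$-coefficients reproduces exactly $Q_4$ from $(\ref{Sect1_Quantity_Definition_2})$, and bundling all residuals reproduces $Q_5$. Finally, $\partial_{t,x}^\alpha\partial_y^\sigma((\delta u)\bar u_x)$ and $\partial_{t,x}^\alpha\partial_y^\sigma((\delta v)\bar u_y)$, after the $\bar u_y\partial_y(\cdot/\bar u_y)$ operation, become precisely the two commutator remainders $\bar u_y\partial_y([\partial_{t,x}^\alpha\partial_y^\sigma,\bar u_x]\delta u/\bar u_y)$ and $\bar u_y\partial_y([\partial_{t,x}^\alpha\partial_y^\sigma,\bar u_y]\delta v/\bar u_y)$ on the right of $(\ref{Sect1_Stability_VorticityEq_1})_1$, while the $\bar u\partial_x$ and $\bar v\partial_y$ commutators produce the other two right-hand-side terms.

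Next I would derive the boundary conditions. For $\sigma=0$: apply $\partial_{t,x}^\alpha$ to the Robin condition $(\delta u)_y-\beta\delta u=0$ and invoke the analogue of $(\ref{Appendix_1_Existence_BC_4})$, namely $\W_\alpha|_{y=0}=(\beta-\bar u_{yy}/\bar u_y)\partial_{t,x}^\alpha\delta u|_{y=0}$, to express $\partial_{t,x}^\alpha\delta u|_{y=0}$ in terms of $\W_\alpha|_{y=0}$. Restricting $(\ref{Sect1_SolDifference_Eq})_1$ to $\{y=0\}$ where $\delta v=0$, solving for $\partial_{yy}\partial_{t,x}^\alpha\delta u|_{y=0}$, and rearranging as in $(\ref{Appendix_1_Existence_BC_8})$ delivers the second equation of $(\ref{Sect1_Stability_VorticityEq_1})$; the new term $(\delta u)\bar u_x$ supplies precisely the $\bar u_x$ piece of $Q_6$, and the $\bar u$-commutator on the boundary produces the two sums over $\alpha_1>0$. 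For $\sigma\geq 1$: restrict the interior equation to $\{y=0\}$, use $\delta v|_{y=0}=0$ and $\partial_y\delta v|_{y=0}=-\partial_x\delta u|_{y=0}$, and iterate the identity $\partial_t\partial_{t,x}^\alpha\partial_y^\sigma\delta u=\W_{\alpha+(1,0),\sigma-1}+\partial_t\partial_{t,x}^\alpha\partial_y^{\sigma-1}\delta u\cdot\bar u_{yy}/\bar u_y$ together with its $\partial_x$-analogue exactly $\sigma$ times, as in $(\ref{Appendix_2_Sigma1_Existence_BC_3})$--$(\ref{Appendix_2_Sigma1_Existence_BC_4})$. Substituting the $\sigma=0$ relation at the bottom of the recursion yields the polynomial form $\mathcal{P}_2$ displayed in the third line of $(\ref{Sect1_Stability_VorticityEq_1})$.

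The hard part will be purely bookkeeping: correctly partitioning the many lower-order terms between the coefficients $Q_4,Q_5,Q_6$ on the left and the explicit commutator remainders on the right, and verifying that the $\bar v\partial_y$ contribution together with the two novel cross terms $(\delta u)\bar u_x$ and $(\delta v)\bar u_y$ distribute exactly as written. Everything else — the commutator manipulations, the extraction of principal $\W$-terms from the $\bar u_y\partial_y(\cdot/\bar u_y)$ identities, and the recursive reduction on the boundary — is a direct transcription of the existence derivations in Lemmas \ref{Appendix_1_Lemma_1} and \ref{Appendix_2_Lemma_1}, so the verification should follow line by line once the coefficient identification is pinned down.
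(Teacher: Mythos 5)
Your proposal is correct and takes essentially the same route as the paper's own derivation: equation $(\ref{Appendix_4_Stability_VorticityEq_1})$ carries out exactly the $\bar u_y\partial_y(\,\cdot\,/\bar u_y)$ transcription you describe; the identities $(\ref{Appendix_4_Stability_VorticityEq_2})$--$(\ref{Appendix_4_Stability_VorticityEq_5})$ are the transplanted versions of $(\ref{Appendix_1_Existence_VorticityEq_2_1})$--$(\ref{Appendix_1_Existence_VorticityEq_2_3})$ (together with the additional $\bar v\,\partial_y$ identity you anticipated); bundling the coefficient and residual terms yields $Q_4$ and $Q_5$ as in $(\ref{Appendix_4_Stability_VorticityEq_6})$; and the boundary conditions for $\sigma=0$ and $\sigma\ge 1$ are obtained in $(\ref{Appendix_4_Stability0_BC_0})$--$(\ref{Appendix_4_Stability0_BC_7})$ and $(\ref{Appendix_4_Stability1_BC_1})$--$(\ref{Appendix_4_Stability1_BC_8})$ precisely by the restriction-and-recursion scheme you outline. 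The only point worth noting for your own bookkeeping is that the principal parts $\bar u_x\,\partial_{t,x}^\alpha\partial_y^\sigma\delta u$ and $\bar u_y\,\partial_{t,x}^\alpha\partial_y^\sigma\delta v$ do not go to the commutator side: the latter cancels against $\partial_x\partial_{t,x}^\alpha\partial_y^\sigma\delta u$ via divergence-freeness, and the former is absorbed into $Q_4$, $Q_5$ once one also uses the compensating term $\partial_{t,x}^{\alpha}\partial_y^{\sigma}\delta u\,(\partial_{xy}\bar u-\bar u_x\bar u_{yy}/\bar u_y)$ that arises from pulling $\bar u$ and $\bar v$ inside the $\partial_y$-derivative — only the genuine commutators $[\partial_{t,x}^\alpha\partial_y^\sigma,\bar u_x]\delta u$ and $[\partial_{t,x}^\alpha\partial_y^\sigma,\bar u_y]\delta v$ land on the right.
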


\begin{proof}
We have the following transformation of the equations:
\begin{equation}\label{Appendix_4_Stability_VorticityEq_1}
\begin{array}{ll}
(\delta u)_t + \bar{u} (\delta u)_x + (\delta u) \bar{u}_x + \bar{v}(\delta u)_y + (\delta v) \bar{u}_y - \partial_{yy}(\delta u) = 0, \\[11pt]

\frac{\partial_t\partial_{t,x}^{\alpha}\partial_y^{\sigma} \delta u}{\bar{u}_y}
+ \frac{\bar{u} \partial_x\partial_{t,x}^{\alpha}\partial_y^{\sigma} \delta u}{\bar{u}_y}
+ \frac{\partial_{t,x}^{\alpha}\partial_y^{\sigma}\delta u \bar{u}_x}{\bar{u}_y}
+ \frac{\bar{v}\partial_y\partial_{t,x}^{\alpha}\partial_y^{\sigma} \delta u}{\bar{u}_y}
+ \partial_{t,x}^{\alpha}\partial_y^{\sigma}\delta v
- \frac{\partial_{yy}\partial_{t,x}^{\alpha}\partial_y^{\sigma} \delta u}{\bar{u}_y} \\[7pt]\quad
= -\frac{[\partial_{t,x}^{\alpha}\partial_y^{\sigma},\, \bar{u} \partial_x]\delta u}{\bar{u}_y}
-\frac{[\partial_{t,x}^{\alpha}\partial_y^{\sigma},\, \bar{u}_x]\delta u}{\bar{u}_y}
-\frac{[\partial_{t,x}^{\alpha}\partial_y^{\sigma},\, \bar{v}\partial_y]\delta u}{\bar{u}_y}
-\frac{[\partial_{t,x}^{\alpha}\partial_y^{\sigma},\, \bar{u}_y]\delta v}{\bar{u}_y}, \\[13pt]

\partial_y\big(\frac{\partial_t\partial_{t,x}^{\alpha}\partial_y^{\sigma} \delta u}{\bar{u}_y}\big)
+ \bar{u}\partial_y\big(\frac{\partial_x\partial_{t,x}^{\alpha}\partial_y^{\sigma} \delta u}{\bar{u}_y}\big)
+ \partial_x\partial_{t,x}^{\alpha}\partial_y^{\sigma} \delta u
+ \frac{\bar{u}_x\partial_y\partial_{t,x}^{\alpha}\partial_y^{\sigma}\delta u}{\bar{u}_y} \\[9pt]\quad
- \bar{u}_x\frac{\partial_{t,x}^{\alpha}\partial_y^{\sigma}\delta u}{\bar{u}_y}\frac{\bar{u}_{yy}}{\bar{u}_y}
+ \frac{\partial_{t,x}^{\alpha}\partial_y^{\sigma}\delta u }{\bar{u}_y}\partial_{xy}\bar{u}

+ \frac{\bar{v}_y\partial_y\partial_{t,x}^{\alpha}\partial_y^{\sigma} \delta u}{\bar{u}_y}
+ \bar{v}\partial_y\big(\frac{\partial_y\partial_{t,x}^{\alpha}\partial_y^{\sigma} \delta u}{\bar{u}_y}\big) \\[9pt]\quad
+ \partial_y\partial_{t,x}^{\alpha}\partial_y^{\sigma}\delta v

- \partial_y\big(\frac{\partial_{yy}\partial_{t,x}^{\alpha}\partial_y^{\sigma} \delta u}{\bar{u}_y}\big)
= -\partial_y\big(\frac{[\partial_{t,x}^{\alpha}\partial_y^{\sigma},\, \bar{u} \partial_x]\delta u}{\bar{u}_y}\big)
-\partial_y\big(\frac{[\partial_{t,x}^{\alpha}\partial_y^{\sigma},\, \bar{u}_x]\delta u}{\bar{u}_y}\big) \\[9pt]\quad
-\partial_y\big(\frac{[\partial_{t,x}^{\alpha}\partial_y^{\sigma},\, \bar{v}\partial_y]\delta u}{\bar{u}_y}\big)
-\partial_y\big(\frac{[\partial_{t,x}^{\alpha}\partial_y^{\sigma},\, \bar{u}_y]\delta v}{\bar{u}_y}\big), \\[14pt]

\bar{u}_y\partial_y\big(\frac{\partial_t\partial_{t,x}^{\alpha}\partial_y^{\sigma} \delta u}{\bar{u}_y}\big)
+ \bar{u}\bar{u}_y\partial_y\big(\frac{\partial_x\partial_{t,x}^{\alpha}\partial_y^{\sigma} \delta u}{\bar{u}_y}\big)
+ \bar{v}\bar{u}_y\partial_y\big(\frac{\partial_y\partial_{t,x}^{\alpha}\partial_y^{\sigma} \delta u}{\bar{u}_y}\big)
- \bar{u}_y\partial_y\big(\frac{\partial_{yy}\partial_{t,x}^{\alpha}\partial_y^{\sigma} \delta u}{\bar{u}_y}\big)
\\[9pt]\quad
+ \partial_{t,x}^{\alpha}\partial_y^{\sigma}\delta u (\partial_{xy}\bar{u} - \bar{u}_x \frac{\bar{u}_{yy}}{\bar{u}_y})

= -\bar{u}_y\partial_y\big(\frac{[\partial_{t,x}^{\alpha}\partial_y^{\sigma},\, \bar{u} \partial_x]\delta u}{\bar{u}_y}\big)
-\bar{u}_y\partial_y\big(\frac{[\partial_{t,x}^{\alpha}\partial_y^{\sigma},\, \bar{u}_x]\delta u}{\bar{u}_y}\big) \\[9pt]\quad
-\bar{u}_y\partial_y\big(\frac{[\partial_{t,x}^{\alpha}\partial_y^{\sigma},\, \bar{v}\partial_y]\delta u}{\bar{u}_y}\big)
-\bar{u}_y\partial_y\big(\frac{[\partial_{t,x}^{\alpha}\partial_y^{\sigma},\, \bar{u}_y]\delta v}{\bar{u}_y}\big),
\end{array}
\end{equation}

We calculate the following four terms:

Similar to $(\ref{Appendix_1_Existence_VorticityEq_2_1})$, we have
\begin{equation}\label{Appendix_4_Stability_VorticityEq_2}
\begin{array}{ll}
\bar{u}_y\partial_y\frac{\partial_{yy}\partial_{t,x}^{\alpha}\partial_y^{\sigma}\delta u}{\bar{u}_y}
= \bar{u}_y\partial_y\big[\partial_y\frac{\partial_y \partial_{t,x}^{\alpha}\partial_y^{\sigma}\delta u}{\bar{u}_y} + \frac{\partial_y \partial_{t,x}^{\alpha}\partial_y^{\sigma}\delta u}{\bar{u}_y}\frac{\bar{u}_{yy}}{\bar{u}_y}\big] \\[10pt]

= \bar{u}_y\partial_y\big[\partial_y(\partial_y\frac{\partial_{t,x}^{\alpha}\partial_y^{\sigma}\delta u}{\bar{u}_y}
+ \frac{\partial_{t,x}^{\alpha}\partial_y^{\sigma}\delta u}{\bar{u}_y}\frac{\bar{u}_{yy}}{\bar{u}_y})
+ (\partial_y\frac{\partial_{t,x}^{\alpha}\partial_y^{\sigma}\delta u}{\bar{u}_y}
+ \frac{\partial_{t,x}^{\alpha}\partial_y^{\sigma}\delta u}{\bar{u}_y}\frac{\bar{u}_{yy}}{\bar{u}_y})
\frac{\bar{u}_{yy}}{\bar{u}_y}\big] \\[10pt]

= \bar{u}_y\partial_y\big[\frac{1}{\bar{u}_y}\partial_y \W_{\alpha,\sigma}
+ \frac{\partial_{t,x}^{\alpha}\partial_y^{\sigma}\delta u}{\bar{u}_y}\partial_y(\frac{\bar{u}_{yy}}{\bar{u}_y})]
+ \bar{u}_y\partial_y\big[\frac{\bar{u}_{yy}}{(\bar{u}_y)^2}\W_{\alpha,\sigma}
+ \frac{\partial_{t,x}^{\alpha}\partial_y^{\sigma}\delta u}{\bar{u}_y}(\frac{\bar{u}_{yy}}{\bar{u}_y})^2\big] \\[10pt]

= \bar{u}_y\partial_y\big[\frac{1}{\bar{u}_y}\partial_y \W_{\alpha,\sigma}
+ \frac{\bar{u}_{yy}}{(\bar{u}_y)^2}\W_{\alpha,\sigma}]
+ \bar{u}_y\partial_y\big[\frac{\partial_{t,x}^{\alpha}\partial_y^{\sigma}\delta u}{\bar{u}_y}\partial_y(\frac{\bar{u}_{yy}}{\bar{u}_y})
+ \frac{\partial_{t,x}^{\alpha}\partial_y^{\sigma}\delta u}{\bar{u}_y}(\frac{\bar{u}_{yy}}{\bar{u}_y})^2\big] \\[10pt]

= \partial_{yy} \W_{\alpha,\sigma}
+ (\frac{\bar{u}_{yyy}}{\bar{u}_y} - 2(\frac{\bar{u}_{yy}}{\bar{u}_y})^2)\W_{\alpha,\sigma}
+ \bar{u}_y\partial_y\big[\frac{\partial_{t,x}^{\alpha}\partial_y^{\sigma}\delta u}{\bar{u}_y} \frac{\bar{u}_{yyy}}{\bar{u}_y} \big],
\end{array}
\end{equation}

Similar to $(\ref{Appendix_1_Existence_VorticityEq_2_2})$, we have
\begin{equation}\label{Appendix_4_Stability_VorticityEq_3}
\begin{array}{ll}
\bar{u}_y\partial_y\frac{\partial_t \partial_{t,x}^{\alpha}\partial_y^{\sigma}\delta u}{\bar{u}_y}
= \bar{u}_y\partial_y[\partial_t(\frac{\partial_{t,x}^{\alpha}\partial_y^{\sigma}\delta u}{\bar{u}_y})]
+ \bar{u}_y\partial_y[\frac{\partial_{t,x}^{\alpha}\partial_y^{\sigma}\delta u}{\bar{u}_y}\frac{\bar{u}_{yt}}{\bar{u}_y}] \\[8pt]

=\partial_t \W_{\alpha,\sigma} -\frac{\bar{u}_{yt}}{\bar{u}_y}\W_{\alpha,\sigma}
+ \bar{u}_y\partial_y[\frac{\partial_{t,x}^{\alpha}\partial_y^{\sigma}\delta u}{\bar{u}_y}\frac{\bar{u}_{yt}}{\bar{u}_y}],
\end{array}
\end{equation}

Similar to $(\ref{Appendix_1_Existence_VorticityEq_2_3})$, we have
\begin{equation}\label{Appendix_4_Stability_VorticityEq_4}
\begin{array}{ll}
\bar{u}_y\partial_y\frac{\partial_x \partial_{t,x}^{\alpha}\partial_y^{\sigma}\delta u}{\bar{u}_y}
= \bar{u}_y\partial_y[\partial_x(\frac{\partial_{t,x}^{\alpha}\partial_y^{\sigma}\delta u}{\bar{u}_y})]
+ \bar{u}_y\partial_y[\frac{\partial_{t,x}^{\alpha}\partial_y^{\sigma}\delta u}{\bar{u}_y}\frac{\bar{u}_{yx}}{\bar{u}_y}] \\[8pt]

=\partial_x \W_{\alpha,\sigma} -\frac{\bar{u}_{yx}}{\bar{u}_y}\W_{\alpha,\sigma}
+ \bar{u}_y\partial_y[\frac{\partial_{t,x}^{\alpha}\partial_y^{\sigma}\delta u}{\bar{u}_y}\frac{\bar{u}_{yx}}{\bar{u}_y}],
\end{array}
\end{equation}

Moreover,
\begin{equation}\label{Appendix_4_Stability_VorticityEq_5}
\begin{array}{ll}
\bar{u}_y\partial_y\frac{\partial_y \partial_{t,x}^{\alpha}\partial_y^{\sigma}\delta u}{\bar{u}_y}
= \bar{u}_y\partial_y[\partial_y(\frac{\partial_{t,x}^{\alpha}\partial_y^{\sigma}\delta u}{\bar{u}_y})]
+ \bar{u}_y\partial_y[\frac{\partial_{t,x}^{\alpha}\partial_y^{\sigma}\delta u}{\bar{u}_y}\frac{\bar{u}_{yy}}{\bar{u}_y}] \\[8pt]

=\partial_y \W_{\alpha,\sigma} -\frac{\bar{u}_{yy}}{\bar{u}_y}\W_{\alpha,\sigma}
+ \bar{u}_y\partial_y[\frac{\partial_{t,x}^{\alpha}\partial_y^{\sigma}\delta u}{\bar{u}_y}\frac{\bar{u}_{yy}}{\bar{u}_y}],
\end{array}
\end{equation}

Plug $(\ref{Appendix_4_Stability_VorticityEq_2}), (\ref{Appendix_4_Stability_VorticityEq_3}), (\ref{Appendix_4_Stability_VorticityEq_4}),
(\ref{Appendix_4_Stability_VorticityEq_5})$
into the last equation of $(\ref{Appendix_4_Stability_VorticityEq_1})$, we get
\begin{equation}\label{Appendix_4_Stability_VorticityEq_6}
\begin{array}{ll}
\partial_t \W_{\alpha,\sigma} + \bar{u}\partial_x \W_{\alpha,\sigma} + \bar{v}\partial_y \W_{\alpha,\sigma} - \partial_{yy} \W_{\alpha,\sigma}
+ \W_{\alpha,\sigma}[-\frac{\bar{u}_{yt}+\bar{u}\bar{u}_{yx} +\bar{v}\bar{u}_{yy} + \bar{u}_{yyy}}{\bar{u}_y} \\[8pt]\quad
+ 2(\frac{\bar{u}_{yy}}{\bar{u}_y})^2]

+ \bar{u}_y\partial_y[\frac{\partial_{t,x}^{\alpha}\partial_y^{\sigma}\delta u}{\bar{u}_y}\frac{\bar{u}_{yt}
+ \bar{u}\bar{u}_{yx} + \bar{v}\bar{u}_{yy} - \bar{u}_{yyy}}{\bar{u}_y}]

= -\bar{u}_y\partial_y\big(\frac{[\partial_{t,x}^{\alpha}\partial_y^{\sigma},\, \bar{u} \partial_x]\delta u}{\bar{u}_y}\big) \\[8pt]\quad
-\bar{u}_y\partial_y\big(\frac{[\partial_{t,x}^{\alpha}\partial_y^{\sigma},\, \bar{u}_x]\delta u}{\bar{u}_y}\big)
-\bar{u}_y\partial_y\big(\frac{[\partial_{t,x}^{\alpha}\partial_y^{\sigma},\, \bar{v}\partial_y]\delta u}{\bar{u}_y}\big)
-\bar{u}_y\partial_y\big(\frac{[\partial_{t,x}^{\alpha}\partial_y^{\sigma},\, \bar{u}_y]\delta v}{\bar{u}_y}\big),
\end{array}
\end{equation}

Next we derive the boundary condition for $\W_{\alpha,\sigma}$. When $\sigma=0$, we denote $\W_{\alpha} = \W_{\alpha,0}$, then we firstly derive the boundary condition for $\W_{\alpha}$. Since $(\delta v)|_{y=0}=\bar{v}|_{y=0}=0$, the following equations hold on the boundary $\{y=0\}$:
\begin{equation}\label{Appendix_4_Stability0_BC_0}
\left\{\begin{array}{ll}
(\delta u)_t + \bar{u} (\delta u)_x + (\delta u) \bar{u}_x - (\delta u)_{yy} = 0,\\[8pt]
(\delta u)_y = \beta (\delta u).
\end{array}\right.
\end{equation}

Apply the tangential differential operator $\partial_{t,x}^{\alpha}$ to $(\ref{Appendix_4_Stability0_BC_0})_1$, we get
\begin{equation}\label{Appendix_4_Stability0_BC_1}
\begin{array}{ll}
\partial_t\partial_{t,x}^{\alpha} \delta u + \bar{u} \partial_x\partial_{t,x}^{\alpha} \delta u + \partial_{t,x}^{\alpha} \delta u \bar{u}_x
- \partial_{yy}\partial_{t,x}^{\alpha}\delta u = -[\partial_{t,x}^{\alpha},\,\bar{u} \partial_x]\delta u-[\partial_{t,x}^{\alpha},\,\bar{u}_x]\delta u.
\end{array}
\end{equation}

Apply the tangential differential operator $\partial_{t,x}^{\alpha}$ to $(\ref{Appendix_4_Stability0_BC_0})_2$, we get
\begin{equation}\label{Appendix_4_Stability0_BC_2}
\begin{array}{ll}
\partial_y\partial_{t,x}^{\alpha} \delta u = \beta\partial_{t,x}^{\alpha} \delta u.
\end{array}
\end{equation}

On the boundary $\{y=0\}$, $\W_{\alpha}|_{y=0}$ and $\partial_{t,x}^{\alpha} \delta u|_{y=0}$ have the relationship:
\begin{equation*}
\begin{array}{ll}
\W_{\alpha} = \bar{u}_y\partial_y(\frac{\partial_{t,x}^{\alpha} \delta u}{\bar{u}_y})
= \partial_y\partial_{t,x}^{\alpha} \delta u
-\partial_{t,x}^{\alpha} \delta u \frac{\bar{u}_{yy}}{\bar{u}_y}
= \partial_{t,x}^{\alpha}\delta u (\beta -\frac{\bar{u}_{yy}}{\bar{u}_y}).
\end{array}
\end{equation*}
Namely,
\begin{equation}\label{Appendix_4_Stability0_BC_3}
\begin{array}{ll}
\partial_{t,x}^{\alpha} \delta u|_{y=0} = \frac{\W_{\alpha}}{\beta-\frac{\bar{u}_{yy}}{\bar{u}_y} }\big|_{y=0}.
\end{array}
\end{equation}

Similar to $(\ref{Appendix_1_Existence_BC_5})$, we calculate $\partial_{yy} \partial_{t,x}^{\alpha}\delta u$:
\begin{equation}\label{Appendix_4_Stability0_BC_4}
\begin{array}{ll}
\partial_{yy} \partial_{t,x}^{\alpha}\delta u = \partial_y[\partial_y(\frac{\partial_{t,x}^{\alpha}\delta u}{\bar{u}_y}\bar{u}_y)]
= \partial_y[\partial_y(\frac{\partial_{t,x}^{\alpha}\delta u}{\bar{u}_y})\bar{u}_y
+ \frac{\partial_{t,x}^{\alpha}\delta u}{\bar{u}_y}\bar{u}_{yy}] \\[9pt]

= \partial_y \W_{\alpha} + \frac{\bar{u}_{yy}}{\bar{u}_y}\W_{\alpha} + \partial_{t,x}^{\alpha}\delta u\frac{\bar{u}_{yyy}}{\bar{u}_y}

= \partial_y \W_{\alpha} + \frac{\bar{u}_{yy}}{\bar{u}_y}\W_{\alpha} + \frac{\bar{u}_{yyy}}{\bar{u}_y}\cdot
\frac{1}{\beta - \frac{\bar{u}_{yy}}{\bar{u}_y}}\W_{\alpha},
\end{array}
\end{equation}

Similar to $(\ref{Appendix_1_Existence_BC_6})$, we calculate $\partial_t\partial_{t,x}^{\alpha} \delta u$:
\begin{equation}\label{Appendix_4_Stability0_BC_5}
\begin{array}{ll}
\partial_t\partial_{t,x}^{\alpha} \delta u
= \partial_t\Big[\frac{1}{\beta - \frac{\bar{u}_{yy}}{\bar{u}_y}}\W_{\alpha}\Big]

= \frac{1}{\beta - \frac{\bar{u}_{yy}}{\bar{u}_y}}\partial_t \W_{\alpha}
+ \frac{(\frac{\bar{u}_{yy}}{\bar{u}_y})_t}{(\beta - \frac{\bar{u}_{yy}}{\bar{u}_y})^2}\W_{\alpha}.
\end{array}
\end{equation}

Similar to $(\ref{Appendix_1_Existence_BC_7})$, we calculate $\partial_x\partial_{t,x}^{\alpha} \delta u$:
\begin{equation}\label{Appendix_4_Stability0_BC_6}
\begin{array}{ll}
\partial_x\partial_{t,x}^{\alpha} \delta u
= \partial_x\Big[\frac{1}{\beta - \frac{\bar{u}_{yy}}{\bar{u}_y}}\W_{\alpha}\Big]

= \frac{1}{\beta - \frac{\bar{u}_{yy}}{\bar{u}_y}}\partial_x \W_{\alpha}
+ \frac{(\frac{\bar{u}_{yy}}{\bar{u}_y})_x}{(\beta - \frac{\bar{u}_{yy}}{\bar{u}_y})^2}\W_{\alpha}.
\end{array}
\end{equation}

It follows from $(\ref{Appendix_4_Stability0_BC_1})$ that
\begin{equation}\label{Appendix_4_Stability0_BC_7}
\begin{array}{ll}
\frac{1}{\beta - \frac{\bar{u}_{yy}}{\bar{u}_y}}(\partial_t \W_{\alpha} + \bar{u}\partial_x \W_{\alpha})
- \partial_y \W_{\alpha} - \frac{\bar{u}_{yy}}{\bar{u}_y}\W_{\alpha} \\[9pt]\quad

+ \frac{1}{\beta - \frac{\bar{u}_{yy}}{\bar{u}_y}}\W_{\alpha}
\big[ \frac{(\frac{\bar{u}_{yy}}{\bar{u}_y})_t}{\beta - \frac{\bar{u}_{yy}}{\bar{u}_y}}
+ \frac{\bar{u}(\frac{\bar{u}_{yy}}{\bar{u}_y})_x}{\beta - \frac{\bar{u}_{yy}}{\bar{u}_y}}
- \frac{\bar{u}_{yyy}}{\bar{u}_y} + \bar{u}_x
\big] \\[12pt]
 = -\sum\limits_{\alpha_1>0}\partial_{t,x}^{\alpha_1}\bar{u}\partial_{t,x}^{\alpha_2}\partial_x\delta u
 -\sum\limits_{\alpha_1>0}\partial_{t,x}^{\alpha_1}\partial_x\bar{u} \partial_{t,x}^{\alpha_2}\delta u

 \\[12pt]
 = -\sum\limits_{\alpha_1>0}\partial_{t,x}^{\alpha_1}\bar{u}\cdot \frac{\W_{\alpha_2+(0,1)}}{\beta-\frac{\bar{u}_{yy}}{\bar{u}_y} }
 -\sum\limits_{\alpha_1>0}\partial_{t,x}^{\alpha_1}\partial_x\bar{u} \cdot \frac{\W_{\alpha_2}}{\beta-\frac{\bar{u}_{yy}}{\bar{u}_y} }.
\end{array}
\end{equation}

If $\beta=+\infty$, we have the equations on the boundary: $\partial_{yy} \partial_{t,x}^{\alpha} \delta u = 0$.
It follows from $(\ref{Appendix_4_Stability0_BC_4})$ that
\begin{equation}\label{Appendix_4_Stability0_Dirichlet_2}
\begin{array}{ll}
\partial_y \W_{\alpha} + 2\frac{\bar{u}_{yy}}{\bar{u}_y}\W_{\alpha} =0.
\end{array}
\end{equation}
Let $\beta\rto +\infty$ in $(\ref{Appendix_4_Stability0_BC_7})$, we get exactly $(\ref{Appendix_4_Stability0_Dirichlet_2})$.

When $\sigma\geq 1$, $\partial_{t,x}^{\alpha}\partial_y^{\sigma}((\delta u)_y -\beta (\delta u))|_{y=0}$ may not be zero.
Similar to $(\ref{Appendix_4_Stability0_BC_4})$, we calculate $\partial_{yy} \partial_{t,x}^{\alpha}\partial_y^{\sigma}\delta u$:
\begin{equation}\label{Appendix_4_Stability1_BC_1}
\begin{array}{ll}
\partial_{yy} \partial_{t,x}^{\alpha}\partial_y^{\sigma}\delta u = \partial_y[\partial_y(\frac{\partial_{t,x}^{\alpha}\partial_y^{\sigma}\delta u}
{\bar{u}_y}\bar{u}_y)]
= \partial_y[\partial_y(\frac{\partial_{t,x}^{\alpha}\partial_y^{\sigma} \delta u}{\bar{u}_y})\bar{u}_y
+ \frac{\partial_{t,x}^{\alpha}\partial_y^{\sigma}\delta u}{\bar{u}_y}\bar{u}_{yy}] \\[9pt]

= \partial_y \W_{\alpha,\sigma} + \frac{\bar{u}_{yy}}{\bar{u}_y}\W_{\alpha,\sigma}
+ \partial_{t,x}^{\alpha}\partial_y^{\sigma}\delta u\frac{\bar{u}_{yyy}}{\bar{u}_y},
\end{array}
\end{equation}

When $\sigma\geq 1$, we have the following transformation of the equations on the boundary:
\begin{equation}\label{Appendix_4_Stability1_BC_2}
\begin{array}{ll}
(\delta u)_t + \bar{u} (\delta u)_x + (\delta u) \bar{u}_x + \bar{v}(\delta u)_y + (\delta v) \bar{u}_y - \partial_{yy}(\delta u) = 0, \\[11pt]

\partial_t\partial_{t,x}^{\alpha}\partial_y^{\sigma} \delta u + \bar{u} \partial_x\partial_{t,x}^{\alpha}\partial_y^{\sigma} \delta u + \partial_{t,x}^{\alpha}\partial_y^{\sigma}\delta u \bar{u}_x
+ \bar{v}\partial_y\partial_{t,x}^{\alpha}\partial_y^{\sigma} \delta u + \partial_{t,x}^{\alpha}\partial_y^{\sigma}\delta v \bar{u}_y \\[8pt]\quad
- \partial_{yy}\partial_{t,x}^{\alpha}\partial_y^{\sigma} \delta u
= -[\partial_{t,x}^{\alpha}\partial_y^{\sigma},\, \bar{u} \partial_x]\delta u
-[\partial_{t,x}^{\alpha}\partial_y^{\sigma},\, \bar{u}_x]\delta u
-[\partial_{t,x}^{\alpha}\partial_y^{\sigma},\, \bar{v}\partial_y]\delta u \\[8pt]\quad
-[\partial_{t,x}^{\alpha}\partial_y^{\sigma},\, \bar{u}_y]\delta v,
\end{array}
\end{equation}

Since $\partial_{t,x}^{\alpha^{\prime}}\delta v|_{y=0} = \partial_{t,x}^{\alpha^{\prime}}\bar{v}|_{y=0} =0$,
\begin{equation*}
\begin{array}{ll}
\partial_t\partial_{t,x}^{\alpha}\partial_y^{\sigma} \delta u + \bar{u} \partial_x\partial_{t,x}^{\alpha}\partial_y^{\sigma} \delta u + \partial_{t,x}^{\alpha}\partial_y^{\sigma}\delta u \bar{u}_x
+ \partial_{t,x}^{\alpha}\partial_y^{\sigma}\delta v \bar{u}_y
- \partial_{yy}\partial_{t,x}^{\alpha}\partial_y^{\sigma} \delta u \\[9pt]

= - \sum\limits_{|\alpha_1|+\sigma_1>0}\partial_{t,x}^{\alpha_1}\partial_y^{\sigma_1}\bar{u} \partial_x\partial_{t,x}^{\alpha_2}\partial_y^{\sigma_2}\delta u
-\sum\limits_{|\alpha_1|+\sigma_1>0}\partial_{t,x}^{\alpha_1}\partial_y^{\sigma_1}\partial_x\bar{u}\partial_{t,x}^{\alpha_2}\partial_y^{\sigma_2}\delta u \\[12pt]\quad
-\sum\limits_{\sigma_1>0}\partial_{t,x}^{\alpha_1}\partial_y^{\sigma_1}\bar{v}\partial_{t,x}^{\alpha_2}\partial_y^{\sigma_2+1}\delta u
-\sum\limits_{|\alpha_1|+\sigma_1>0,\sigma_2>0}\partial_{t,x}^{\alpha_1}\partial_y^{\sigma_1+1}\bar{u}\partial_{t,x}^{\alpha_2}\partial_y^{\sigma_2}\delta v
\end{array}
\end{equation*}

\begin{equation}\label{Appendix_4_Stability1_BC_3}
\begin{array}{ll}
= - \sum\limits_{|\alpha_1|+\sigma_1>0}\partial_{t,x}^{\alpha_1}\partial_y^{\sigma_1}\bar{u} \partial_x\partial_{t,x}^{\alpha_2}\partial_y^{\sigma_2}\delta u
-\sum\limits_{|\alpha_1|+\sigma_1>0}\partial_{t,x}^{\alpha_1}\partial_y^{\sigma_1}\partial_x\bar{u}\partial_{t,x}^{\alpha_2}\partial_y^{\sigma_2}\delta u \\[12pt]\quad
+\sum\limits_{\sigma_1>0}\partial_{t,x}^{\alpha_1}\partial_y^{\sigma_1-1}\partial_x\bar{u}\partial_{t,x}^{\alpha_2}\partial_y^{\sigma_2+1}\delta u
+\sum\limits_{|\alpha_1|+\sigma_1>0,\sigma_2>0}\partial_{t,x}^{\alpha_1}\partial_y^{\sigma_1+1}\bar{u}\partial_{t,x}^{\alpha_2}\partial_y^{\sigma_2-1}
\partial_x\delta u \\[13pt]

= - \sum\limits_{|\alpha_1|+\sigma_1>0}\partial_{t,x}^{\alpha_1}\partial_y^{\sigma_1}\bar{u} \partial_x\partial_{t,x}^{\alpha_2}\partial_y^{\sigma_2}\delta u
-\sum\limits_{|\alpha_1|+\sigma_1>0}\partial_{t,x}^{\alpha_1}\partial_y^{\sigma_1}\partial_x\bar{u}\partial_{t,x}^{\alpha_2}\partial_y^{\sigma_2}\delta u \\[12pt]\quad
+\sum\limits_{\sigma_2\leq\sigma}\partial_{t,x}^{\alpha_1}\partial_y^{\sigma_1}\partial_x\bar{u}\partial_{t,x}^{\alpha_2}\partial_y^{\sigma_2}\delta u
+\sum\limits_{\sigma_2> 0}\partial_{t,x}^{\alpha_1}\partial_y^{\sigma_1+1}\bar{u}\partial_{t,x}^{\alpha_2}\partial_y^{\sigma_2-1}
\partial_x\delta u,
\end{array}
\end{equation}
then
\begin{equation}\label{Appendix_4_Stability1_BC_4}
\begin{array}{ll}
\partial_{yy}\partial_{t,x}^{\alpha}\partial_y^{\sigma} \delta u =
\partial_t\partial_{t,x}^{\alpha}\partial_y^{\sigma} \delta u + \bar{u} \partial_x\partial_{t,x}^{\alpha}\partial_y^{\sigma} \delta u + \partial_{t,x}^{\alpha}\partial_y^{\sigma}\delta u \bar{u}_x
+ \partial_{t,x}^{\alpha}\partial_y^{\sigma}\delta v \bar{u}_y \\[10pt]\quad

+ \sum\limits_{|\alpha_1|+\sigma_1>0}\partial_{t,x}^{\alpha_1}\partial_y^{\sigma_1}\bar{u} \partial_x\partial_{t,x}^{\alpha_2}\partial_y^{\sigma_2}\delta u
+ \sum\limits_{|\alpha_1|+\sigma_1>0}\partial_{t,x}^{\alpha_1}\partial_y^{\sigma_1}\partial_x\bar{u}\partial_{t,x}^{\alpha_2}\partial_y^{\sigma_2}\delta u \\[13pt]\quad
-\sum\limits_{\sigma_2\leq\sigma}\partial_{t,x}^{\alpha_1}\partial_y^{\sigma_1}\partial_x\bar{u}\partial_{t,x}^{\alpha_2}\partial_y^{\sigma_2}\delta u
-\sum\limits_{\sigma_2> 0}\partial_{t,x}^{\alpha_1}\partial_y^{\sigma_1+1}\bar{u}\partial_{t,x}^{\alpha_2}\partial_y^{\sigma_2-1}
\partial_x\delta u.
\end{array}
\end{equation}

By the induction method, we have
\begin{equation}\label{Appendix_4_Stability1_BC_5}
\begin{array}{ll}
\partial_t \partial_{t,x}^{\alpha}\partial_y^{\sigma} \delta u
= \partial_y (\frac{\partial_t \partial_{t,x}^{\alpha}\partial_y^{\sigma-1} \delta u}{\bar{u}_y}\bar{u}_y)
= \W_{\alpha+(1,0),\sigma-1} + \partial_t \partial_{t,x}^{\alpha}\partial_y^{\sigma-1} \delta u\frac{\bar{u}_{yy}}{\bar{u}_y} \\[9pt]

= \W_{\alpha+(1,0),\sigma-1} + \W_{\alpha+(1,0),\sigma-2} \frac{\bar{u}_{yy}}{\bar{u}_y}
 + \partial_t \partial_{t,x}^{\alpha}\partial_y^{\sigma-2} \delta u(\frac{\bar{u}_{yy}}{\bar{u}_y})^2 = \cdots \\[8pt]

= \sum\limits_{m=0}^{\sigma-1} \W_{\alpha+(1,0),\sigma-1-m} (\frac{\bar{u}_{yy}}{\bar{u}_y})^m
 + \partial_t \partial_{t,x}^{\alpha} \delta u(\frac{\bar{u}_{yy}}{\bar{u}_y})^{\sigma}.
\end{array}
\end{equation}

Similarly,
\begin{equation}\label{Appendix_4_Stability1_BC_6}
\begin{array}{ll}
\partial_x \partial_{t,x}^{\alpha}\partial_y^{\sigma} \delta u

= \sum\limits_{m=0}^{\sigma-1} \W_{\alpha+(0,1),\sigma-1-m} (\frac{\bar{u}_{yy}}{\bar{u}_y})^m
 + \partial_x \partial_{t,x}^{\alpha} \delta u(\frac{\bar{u}_{yy}}{\bar{u}_y})^{\sigma}.
\end{array}
\end{equation}

Plug $(\ref{Appendix_4_Stability1_BC_1})$, $(\ref{Appendix_4_Stability1_BC_5})$, $(\ref{Appendix_4_Stability1_BC_6})$ into $(\ref{Appendix_4_Stability1_BC_4})$, we have
\begin{equation}\label{Appendix_4_Stability1_BC_7}
\begin{array}{ll}
\partial_y \W_{\alpha,\sigma} + \frac{\bar{u}_{yy}}{\bar{u}_y}\W_{\alpha,\sigma}

= \sum\limits_{m=0}^{\sigma-1} (\W_{\alpha+(1,0),\sigma-1-m} + \bar{u} \W_{\alpha+(0,1),\sigma-1-m}) (\frac{\bar{u}_{yy}}{\bar{u}_y})^m \\[8pt]\quad
 + \partial_t \partial_{t,x}^{\alpha} \delta u(\frac{\bar{u}_{yy}}{\bar{u}_y})^{\sigma}
 + u\partial_x \partial_{t,x}^{\alpha} \delta u(\frac{\bar{u}_{yy}}{\bar{u}_y})^{\sigma}

 - \sum\limits_{m=0}^{\sigma-1} \W_{\alpha,\sigma-1-m} (\frac{\bar{u}_{yy}}{\bar{u}_y})^m (\frac{\bar{u}_{yyy}}{\bar{u}_y} - \bar{u}_x)\\[8pt]\quad
 - \partial_{t,x}^{\alpha} \delta u(\frac{\bar{u}_{yy}}{\bar{u}_y})^{\sigma} (\frac{\bar{u}_{yyy}}{\bar{u}_y} - \bar{u}_x)

 +\bar{u}_y \sum\limits_{m=0}^{\sigma-2}(\W_{\alpha+(1,0),\sigma-2-m} (\frac{\bar{u}_{yy}}{\bar{u}_y})^m \\[8pt]\quad
 + \partial_{t,x}^{\alpha}\partial_x \delta u(\frac{\bar{u}_{yy}}{\bar{u}_y})^{\sigma-1})

 - \bar{u}_y \partial_{t,x}^{\alpha}\partial_x\delta u
 +\mathcal{P}_3\Big(\sum\limits_{m=0}^{\sigma_1} (W_{\alpha_1+(0,1),\sigma_1-m}) (\frac{\bar{u}_{yy}}{\bar{u}_y})^m \\[8pt]\quad
 + \partial_x \partial_{t,x}^{\alpha_1} (\bar{u}-U)(\frac{\bar{u}_{yy}}{\bar{u}_y})^{\sigma_1},

 \sum\limits_{m=0}^{\sigma_2-1} (\W_{\alpha_2,\sigma_2-1-m} (\frac{\bar{u}_{yy}}{\bar{u}_y})^m
 + \partial_{t,x}^{\alpha_2} \delta u(\frac{\bar{u}_{yy}}{\bar{u}_y})^{\sigma_2} \Big),
\end{array}
\end{equation}
where $\mathcal{P}_3(\cdot,\cdot)$ is a quadratic polynomial, whose explicit form is determined by the last four terms of
$(\ref{Appendix_4_Stability1_BC_4})$. We rewrite $(\ref{Appendix_4_Stability1_BC_7})$ into the following form:
\begin{equation}\label{Appendix_4_Stability1_BC_8}
\begin{array}{ll}
-\partial_y \W_{\alpha,\sigma} - \frac{\bar{u}_{yy}}{\bar{u}_y}\W_{\alpha,\sigma} \\[6pt]

= \mathcal{P}_2\Big(\mathcal{L}+ \sum\limits_{|\alpha^{\prime}|\leq |\alpha|+1}\sum\limits_{m=0}^{\sigma} (W_{\alpha^{\prime},\sigma-m}) (\frac{\bar{u}_{yy}}{\bar{u}_y})^m
 + \partial_x \partial_{t,x}^{\alpha^{\prime}} (\bar{u}-U)(\frac{\bar{u}_{yy}}{\bar{u}_y})^{\sigma}, \\[10pt]\quad

\sum\limits_{|\alpha^{\prime}|\leq |\alpha|+1}\sum\limits_{m=0}^{\sigma-1} (\W_{\alpha^{\prime},\sigma-1-m}) (\frac{\bar{u}_{yy}}{\bar{u}_y})^m
 + \sum\limits_{|\alpha^{\prime}|\leq |\alpha|+1}\frac{1}{\beta - \frac{\bar{u}_{yy}}{\bar{u}_y}}\W_{\alpha^{\prime}}(\frac{\bar{u}_{yy}}{\bar{u}_y})^{\sigma}\Big),
\end{array}
\end{equation}
where $\mathcal{P}_2$ is a polynomial with lower degrees.

Thus, Lemma $\ref{Appendix_4_Lemma_1}$ is proved.
\end{proof}

%%% find 7
\section*{Acknowledgements}
%The author is grateful to anonymous referees for their many helpful suggestions.
The author is
%also
thankful for the support of the Center of Mathematical Sciences and Applications, Harvard University.
This paper is supported by the scholarship of Chinese Scholarship Council (No. 201500090074).

%\newpage
\bibliographystyle{siam}
\addcontentsline{toc}{section}{References}
\bibliography{FuzhouWu_PrandtlEq}

\end{document}